\tikzset{>=latex}
\tikzstyle{tight}=[font=\scriptsize, inner sep=1pt, outer sep=1pt]
\def\co{\colon\thinspace\relax}
\newtheorem{theorem}{Theorem}[section]
\newtheorem*{theorem*}{Theorem}
\newtheorem{lemma}[theorem]{Lemma}
\newtheorem{corollary}[theorem]{Corollary}
\newtheorem{proposition}[theorem]{Proposition}
\theoremstyle{definition}
\newtheorem{definition}[theorem]{Definition}
\newtheorem{assumptions}[theorem]{Assumptions}
\newtheorem{example}[theorem]{Example}
\newtheorem{remark}[theorem]{Remark}
\newcommand*{\math@version@bold}{bold}
\DeclareMathOperator\DD{
	\textrm{%
		\usefont{T2A}{cmr}{\ifx\math@version\math@version@bold bx\else m\fi}{n}%
		\CYRD
	}%
} 
\newcounter{dummy}
\newcommand\myitem[1][]{%
	\item[\textnormal{(}#1\textnormal{)}]\refstepcounter{dummy}%
	\def\@currentlabel{\textnormal{(}#1\textnormal{)}}%
}
\renewcommand{\geq}{\geqslant}
\renewcommand{\leq}{\leqslant}
\newcommand{\tauRight}{\raisebox{-7pt}{\includegraphics[scale=1.3]{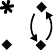}}}
\newcommand{\tauBottom}{\raisebox{-8pt}{\includegraphics[scale=1.3]{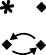}}}
\newcommand{\rightPunctures}{\mathrm{(}\raisebox{-3pt}{\includegraphics[scale=0.7]{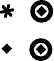}}\mathrm{)}}
\newcommand{\bottomPunctures}{\mathrm{(}\raisebox{-3pt}{\includegraphics[scale=0.7]{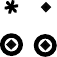}}\mathrm{)}}
\newcommand{\diagonalPunctures}{\mathrm{(}\raisebox{-3pt}{\includegraphics[scale=0.7]{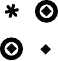}}\mathrm{)}}
\newcommand{\fieldTwoElements}{\mathbb{F}}
\newcommand{\F}{{\mathbb{F}}}
\newcommand{\Z}{\mathbb{Z}}
\newcommand{\ZZ}{\mathbb{Z}/2}
\newcommand{\BNAlgH}{\mathcal{B}}
\newcommand{\A}{\mathcal{A}}
\newcommand{\B}{\mathcal{B}}
\newcommand{\tM}{\widetilde{M}}
\newcommand{\tR}{\widetilde{R}}
\newcommand{\Binf}{\mathcal{B}^\infty}
\newcommand{\BinfU}{\mathcal{B}^*[U]}
\newcommand{\BinfUU}{\mathcal{B}^*[U,U^{-1}]}
\newcommand{\arcHor}{\mathbf{a}^\bullet}
\newcommand{\arcVer}{\mathbf{a}^\circ}
\newcommand{\arcDiag}{\mathbf{a}^{\scriptscriptstyle\blacksquare}}
\newcommand{\AnUU}{\mathcal{A}^*[U,U^{-1}]}
\newcommand{\Ad}{\operatorname{\mathcal{A}}^\partial}
\newcommand{\Aminus}{\operatorname{\mathcal{A}}^-}
\newcommand{\hol}{{\circ}}
\newcommand{\sol}{{\bullet}}
\newcommand{\Tw}{\text{Tw}}
\newcommand{\W}{\mathcal{W}}
\DeclareMathOperator{\Coh}{Coh}
\newcommand{\Spec}{\text{Spec}\:}
\DeclareMathOperator{\Perf}{Perf}
\DeclareMathOperator{\MF}{MF}
\newcommand{\sg}{\mathit{sg}}
\newcommand{\tg}{\tilde{g}}
\DeclareMathOperator{\Cobb}{\Cob_{\bullet}}
\DeclareMathOperator{\Cobl}{\Cob_{/{\mathit{l}}}}
\DeclareMathOperator{\id}{id}
\DeclareMathOperator{\Mod}{Mod}
\DeclareMathOperator{\GL}{GL}
\DeclareMathOperator{\SL}{SL}
\DeclareMathOperator{\Mor}{Mor}
\DeclareMathOperator{\End}{End}
\DeclareMathOperator{\Cob}{Cob}
\newcommand{\Diag}{\mathcal{D}} 
\DeclareMathOperator{\HFK}{\widehat{HFK}}
\newcommand{\HF}{\operatorname{HF}}
\DeclareMathOperator{\Hast}{H_\ast}
\DeclareMathOperator{\HFT}{HFT}
\newcommand{\CFTminus}{\operatorname{CFT}^-}
\DeclareMathOperator{\CFTd}{CFT^\partial}
\newcommand{\KhTl}[1]{{\llbracket #1 \rrbracket}_{/l}} 
\newcommand{\KhTb}[1]{{\llbracket #1 \rrbracket}_{\bullet}} 
\DeclareMathOperator{\Kh}{Kh}
\DeclareMathOperator{\BN}{BN}
\newcommand{\Khr}{\widetilde{\Kh}}
\newcommand{\BNr}{\widetilde{\BN}}
\newcommand{\rKh}{\mathbf{r}}
\newcommand{\sKh}{\mathbf{s}}
\newcommand{\InfConLift}[1]{\bar{#1}}
\newcommand{\Lgamma}{\InfConLift{\gamma}}
\newif\ifA
\newif\ifi
\newif\ifm
\newif\ifd
\newcommand{\thth}[1]{
	\operatorname{%
		\ifd%
			\partial%
		\fi%
		\ifA%
			\ifi%
				\mathring{\mathrm{A}}%
			\else%
				\mathrm{A}%
			\fi%
		\else%
			\ifi%
				\mathring{\Theta}%
			\else%
				\Theta%
			\fi%
		\fi%
		\ifm%
			\ifA%
				^{\!\mirror}%
			\else%
				^{\mirror}%
			\fi%
		\fi%
		_{#1}%
	}%
}
\newcommand{\HFsup}{\mathrm{HF}}
\newcommand{\Khsup}{\mathrm{Kh}}
\newcommand{\Gsup}{\mathit{G}}
\newcommand{\makemirror}[2]  {\newcommand{#1}{\mtrue#2\mfalse}}
\newcommand{\makeALink}[2]   {\newcommand{#1}{\Atrue#2\Afalse}}
\newcommand{\makeInterior}[2]{\newcommand{#1}{\itrue#2\ifalse}}
\newcommand{\makeBoundary}[2]{\newcommand{#1}{\dtrue#2\dfalse}}
\newcommand{\Thin}{\thth{}}
\newcommand{\ThinZ}{\thth{\Z}}
\newcommand{\ThinZZ}{\thth{\ZZ}}
\newcommand{\ThinG}{\thth{\Gsup}}
\newcommand{\ThinHF}{\thth{\HFsup}}
\newcommand{\ThinKh}{\thth{\Khsup}}
\makemirror{\mirrorThin}  {\Thin}
\makemirror{\mirrorThinZ} {\ThinZ} 
\makemirror{\mirrorThinZZ}{\ThinZZ} 
\makemirror{\mirrorThinG} {\ThinG} 
\makemirror{\mirrorThinHF}{\ThinHF} 
\makemirror{\mirrorThinKh}{\ThinKh} 
\makeBoundary{\BdryThin}  {\Thin}
\makeBoundary{\BdryThinZ} {\ThinZ} 
\makeBoundary{\BdryThinZZ}{\ThinZZ} 
\makeBoundary{\BdryThinG} {\ThinG} 
\makeBoundary{\BdryThinHF}{\ThinHF} 
\makeBoundary{\BdryThinKh}{\ThinKh} 
\makemirror{\mirrorBdryThin}  {\BdryThin}
\makemirror{\mirrorBdryThinZ} {\BdryThinZ} 
\makemirror{\mirrorBdryThinZZ}{\BdryThinZZ} 
\makemirror{\mirrorBdryThinG} {\BdryThinG} 
\makemirror{\mirrorBdryThinHF}{\BdryThinHF} 
\makemirror{\mirrorBdryThinKh}{\BdryThinKh} 
\makeInterior{\IntThin}  {\Thin}
\makeInterior{\IntThinZ} {\ThinZ} 
\makeInterior{\IntThinZZ}{\ThinZZ} 
\makeInterior{\IntThinG} {\ThinG} 
\makeInterior{\IntThinHF}{\ThinHF} 
\makeInterior{\IntThinKh}{\ThinKh} 
\makemirror{\mirrorIntThin}  {\IntThin} 
\makemirror{\mirrorIntThinZ} {\IntThinZ} 
\makemirror{\mirrorIntThinZZ}{\IntThinZZ} 
\makemirror{\mirrorIntThinG} {\IntThinG} 
\makemirror{\mirrorIntThinHF}{\IntThinHF} 
\makemirror{\mirrorIntThinKh}{\IntThinKh} 
\makeALink{\ALink}  {\Thin}
\makeALink{\ALinkHF}{\ThinHF} 
\makeALink{\ALinkKh}{\ThinKh} 
\makemirror{\mirrorALink}  {\ALink}
\makemirror{\mirrorALinkHF}{\ALinkHF} 
\makemirror{\mirrorALinkKh}{\ALinkKh} 
\makeBoundary{\BdryALink}  {\ALink}
\makeBoundary{\BdryALinkHF}{\ALinkHF} 
\makeBoundary{\BdryALinkKh}{\ALinkKh} 
\makemirror{\mirrorBdryALink}  {\BdryALink}
\makemirror{\mirrorBdryALinkHF}{\BdryALinkHF} 
\makemirror{\mirrorBdryALinkKh}{\BdryALinkKh} 
\makeInterior{\IntALink}  {\ALink}
\makeInterior{\IntALinkHF}{\ALinkHF} 
\makeInterior{\IntALinkKh}{\ALinkKh} 
\makemirror{\mirrorIntALink}  {\IntALink} 
\makemirror{\mirrorIntALinkHF}{\IntALinkHF} 
\makemirror{\mirrorIntALinkKh}{\IntALinkKh} 
\newcommand{\mirror}{\operatorname{m}} 
\newcommand{\FourPuncturedSphere}{S^2_4}
\newcommand{\FourPuncturedSphereKh}{S^2_{4,\ast}}
\newcommand{\ThreePuncturedSphere}{S^2_{3}}
\newcommand{\PuncturedPlane}{\mathbb{R}^2\smallsetminus \Z^2}
\newcommand{\QPI}{\operatorname{\mathbb{Q}P}^1}
\newcommand{\vc}[1]{\vcenter{\hbox{#1}}}%
\newcommand{\mypic}[2]{%
  \newcommand{#2}{%
    \vc{%
      \includegraphics[page=#1]%
      {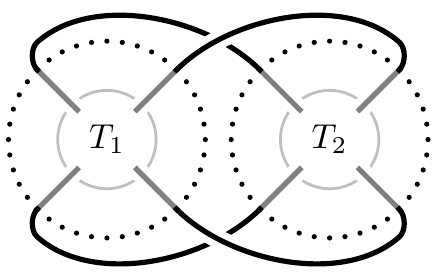}%
    }%
  }%
}%
\begin{document}
\title{Khovanov multicurves are linear}

\author{Artem Kotelskiy}
\address{Mathematics Department \\ Stony Brook University}
\email{artofkot@gmail.com}

\author{Liam Watson}
\address{Department of Mathematics \\ University of British Columbia}
\email{liam@math.ubc.ca}
\thanks{AK is supported by an AMS-Simons travel grant. LW is supported by an NSERC discovery/accelerator grant and was partially supported by funding from the Simons Foundation and the Centre de Recherches Math\'ematiques, through the Simons-CRM scholar-in-residence program. CZ is supported by the Emmy Noether Programme of the DFG, Project number 412851057.}

\author{Claudius Zibrowius}
\address{Faculty of Mathematics \\ University of Regensburg}
\email{claudius.zibrowius@posteo.net}

\begin{abstract}
In previous work we introduced a Khovanov multicurve invariant  \(\Khr\) associated with Conway tangles. Applying ideas from homological mirror symmetry we show that \(\Khr\) is subject to strong geography restrictions: Every component of the invariant is linear, in the sense that it admits a lift to a curve homotopic to a straight line in an appropriate planar cover of the tangle boundary.
\end{abstract}
\maketitle

\section{Introduction}\label{sec:intro}

A Conway tangle is a proper embedding of two intervals and a finite (possibly empty) set of circles into the three-dimensional ball $B^3$.
We consider such embeddings up to ambient isotopy fixing the boundary pointwise. 
Given a Conway tangle $T$ there are two associated objects in the Fukaya category of the four-punctured sphere $\partial B^3\smallsetminus\partial T$, namely, the bigraded multicurves
\[
\HFT(T)
\quad
\text{and}
\quad
\Khr(T)
\] 
The first invariant was defined by the third author using Heegaard Floer theory \cite{pqMod}; it should be understood as a relative version of knot Floer homology \(\HFK\) 
\cite{OSHFK,Jake}.
Similarly, the second invariant should be understood as a relative version of reduced Khovanov homology; it was defined by the authors as a geometric interpretation of Bar-Natan's Khovanov homology for tangles \cite{KWZ}. 
Remarkably, these two multicurve invariants share many formal properties. 
For instance, both satisfy gluing theorems that recover the corresponding link homology theory in terms of the Lagrangian Floer theory \cite{KWZ,pqMod}. 
Also, both invariants detect rational tangles, or more generally, they detect if a tangle is split \cite{LMZ,KLMWZ}.  

The purpose of this paper is to show that the similarity between \(\HFT(T)\) and \(\Khr(T)\) extends to the fundamental structure of the invariants themselves. 
The results that we show in this paper have already been applied to resolve several problems:
they were the key to the proof of an equivariant version of the Cosmetic Surgery Conjecture \cite{KLMWZ}; 
they form the basis of the thin gluing criteria established in~\cite{KWZ-thin}; and
they allow one to give an elementary reproof of the Cosmetic Crossing Conjecture for split links \cite{KLMWZ}, originally due to Joshua Wang \cite{Wang}.
In addition, our results also explain phenomena observed by the second author in the study of the Khovanov homology of strongly invertible knots \cite{Watson2017,KWZ-strong}. 

Setting aside technicalities such as local systems and gradings, we now briefly explain the main structural result established in this paper.
By definition, both \(\HFT(T)\) and \(\Khr(T)\) take the form of collections of immersed curves on the four-punctured sphere \(B^3\smallsetminus\partial T\), considered up to regular isotopy. 
In each case, the immersed curves in question are rather highly structured. To describe this structure, we first pass to a planar cover of $S^2 \smallsetminus 4\text{pt} =\partial B^3 \smallsetminus \partial T$ that factors through the toroidal two-fold cover: 
\[
\left(\mathbb R^2 \smallsetminus \mathbb Z^2\right)  
\to 
\left(\mathbb T^2 \smallsetminus 4\text{pt}\right) 
\to 
\left(S^2 \smallsetminus 4\text{pt}\right) 
\]
Our interest is in those immersed curves whose lift to this cover are homotopic to straight lines; in general terms, the lines of interest fall into two classes, which we call \emph{rational} and \emph{special}. This dichotomy will be discussed in detail (see Definition~\ref{def:RationalVsSpecialKht}), but the important point is that the difference between the two classes amounts to how the lines interact with lifts of the tangle ends in the planar cover. In \cite{pqSym}, the third author solved the geography question for components of \(\HFT(T)\):

\begin{theorem}\label{thm:geography:HFT:intro}
Every component of \(\HFT(T)\) is either rational or special.
\end{theorem}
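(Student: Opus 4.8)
The plan is to work on the algebraic side. Recall from \cite{pqMod} that \(\HFT(T)\) is extracted from the peculiar module \(\CFTd(T)\), a curved type~D structure over the algebra associated to \(\FourPuncturedSphere\), and that there is a dictionary translating such modules, up to homotopy equivalence, into immersed multicurves with local systems on \(\FourPuncturedSphere\). Under this dictionary it suffices to show, for each component \(\gamma\), that (i)~\(\gamma\) admits a lift to \(\PuncturedPlane\) homotopic to a straight line, and (ii)~this line lies in one of the two families of Definition~\ref{def:RationalVsSpecialKht}.

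For step~(i) I would exploit the Alexander bigrading carried by \(\CFTd(T)\). The algebra acts on generators with prescribed bigrading shifts, so after lifting everything to \(\PuncturedPlane\) the bigrading of a generator records, up to the deck group action, the position of the corresponding intersection point of \(\gamma\) with a fixed pair of parametrizing arcs. Because this assignment is a genuine \(\Z\)-grading, the two sides of every immersed bigon or polygon swept out by \(\gamma\) must carry matching grading changes; traversing \(\gamma\) once, the lifted coordinate transverse to the putative line is thereby forced to vary monotonically, so the lift cannot wind back on itself. Concretely, I would pass to a minimal model of \(\CFTd(T)\) and verify that the surviving structure maps take the restricted shape characteristic of line-like curves, using the bigrading to eliminate the remaining possibilities; a closed curve whose lift is taut in this sense is homotopic to a straight line, and since \(\CFTd(T)\) is finitely generated this line has rational slope, so \(\gamma\) is compact.

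For step~(ii), granted linearity, it remains to constrain how the line runs past the lifted tangle ends, that is, past the lattice \(\Lattice\subset\PuncturedPlane\). The local behavior of a component near a puncture of \(\FourPuncturedSphere\) is controlled by the idempotent structure and the generating arrows of the algebra, and a direct enumeration of the ways a straight line can pass a lattice point compatibly with such a local model produces exactly two options, which assemble into the rational and the special families respectively.

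The step I expect to be the main obstacle is the exclusion of exotic, non-linear components in step~(i): showing that the minimal model of \(\CFTd(T)\) really has the restricted form claimed, rather than containing structure maps that produce a curve spiralling around lattice points without ever straightening out. This is where one must use the two Alexander gradings together with the \(\delta\)-grading in tandem, and perhaps also invoke the behavior of \(\HFT\) under Conway mutation --- which permutes the components of \(\HFT(T)\) and so constrains the admissible slopes. The remaining ingredients, namely the multicurve dictionary, the puncture-local enumeration, and compactness, should be routine given the machinery of \cite{pqMod}.
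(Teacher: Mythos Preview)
Your proposal misses the central mechanism. The paper states explicitly that the proof of this theorem (carried out in \cite{pqSym}) rests on the existence of an \emph{extension} of \(\CFTd(T)\) to a curved type~D structure \(\CFTminus(T)\) over a larger algebra \(\Aminus\), together with the epimorphism \(\Aminus\rightarrow\Ad\). This extra structure is what obstructs wrapping. Your argument for step~(i) does not invoke any such extension; instead you try to squeeze linearity out of the Alexander bigrading alone.

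That is where the gap lies. The bigrading on \(\CFTd(T)\) constrains the lifted \emph{positions} of intersection points with the parametrizing arcs, but it does not by itself force the transverse coordinate to vary monotonically along the curve. A curve that wraps around a puncture can be perfectly compatible with a consistent \(\Z\)-grading: the grading shifts along consecutive arrows match regardless of whether the peg-board representative turns at a lattice point. Concretely, look at the Khovanov analogue treated in Sections~\ref{sec:Kh:geography:non-special} and~\ref{sec:no_wrapping_around_special} of the paper: there, ruling out wrapping around the special puncture requires the extension to \(\BinfU\) (Theorem~\ref{thm:extension}), and wrapping around non-special punctures is excluded using that \(\DD_1(T)\) is a mapping cone of \(H\cdot\id\) (Proposition~\ref{prop:higher_powers_when_wrapping}); neither argument is a grading argument. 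The same phenomenon occurs for \(\HFT\), and the extension to \(\Aminus\) plays the role there that the \(\BinfU\)-extension plays here. Your sketch of ``verifying that the surviving structure maps take the restricted shape characteristic of line-like curves, using the bigrading to eliminate the remaining possibilities'' is precisely the step that fails without this additional input. Finally, note that the dependence you suggest on mutation invariance runs backwards: in \cite{pqSym} the geography result is an \emph{ingredient} in the proof of \(\delta\)-graded mutation invariance, not a consequence of it.
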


\(\HFT(T)\) is a geometric interpretation of an algebraic invariant, namely, a curved type~D structure \(\CFTd(T)\) over an algebra~\(\Ad\). The proof of Theorem~\ref{thm:geography:HFT:intro} is based on the existence of an extension of \(\CFTd(T)\) to a curved type D structure \(\CFTminus(T)\) over an algebra~\(\Aminus\) that comes with an epimorphism \(\Aminus\rightarrow\Ad\). This additional structure and, ultimately, the geography result that it leads to play a key role in establishing $\delta$-graded mutation invariance in link Floer homology. To some degree, this extension remains internal to tangle Floer homology; its existence appeals to the Heegaard diagram present in the definition of the invariant. 

Similarly, \(\Khr(T)\) is a geometric interpretation of a type~D structure \(\DD_1(T)^{\BNAlgH}\) over an algebra~\(\BNAlgH\). This type~D structure recasts Bar-Natan's tangle variant of Khovanov homology \cite{BarNatanKhT}. 
Given this starting point, it is less clear where the requisite additional structure comes from. Nevertheless, in this paper, we show:

\begin{theorem}\label{thm:geography:Khr:intro}
	Every component of \(\Khr(T)\) is either rational or special.
\end{theorem}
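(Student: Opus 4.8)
The plan is to follow the template of Theorem~\ref{thm:geography:HFT:intro}: produce algebraic structure on the Khovanov invariant that is invisible from the tangle but nonetheless forces the curves to be geometrically rigid. First I would reduce the statement to a purely algebraic one about the reduced type~D structure $\DD_1(T)^{\BNAlgH}$ up to homotopy equivalence, since $\Khr(T)$ is by construction its geometric model. Then, in parallel with the extension of $\CFTd(T)$ to $\CFTminus(T)$ over $\Aminus\twoheadrightarrow\Ad$, I would introduce a ``minus'' version $\BinfU$ of the Bar-Natan algebra --- a deformation of $\BNAlgH$ carrying a polynomial variable $U$ together with an algebra epimorphism $\BinfU\twoheadrightarrow\BNAlgH$ (morally ``$U\mapsto 0$'') --- and prove that $\DD_1(T)^{\BNAlgH}$ always lifts along this epimorphism to a type~D structure $\DD_1(T)^{\BinfU}$. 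Granting such a lift, the remaining task is to show that the multicurve associated to any reduced type~D structure over $\BinfU$ has only components whose lifts to $\PuncturedPlane$ are straight lines, and to read off the rational/special dichotomy of Definition~\ref{def:RationalVsSpecialKht} from the $U$-structure.

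Constructing the lift is the crux, and, as the introduction notes, there is no Heegaard diagram to appeal to. I would look for the missing deformation inside Bar-Natan's dotted cobordism category: working over $\mathbb{F}_2[U]$ with the Frobenius algebra $\mathbb{F}_2[X]/(X^2+UX)$ (the universal characteristic-two deformation), one obtains a $U$-deformed cobordism category, and the tangle invariant of \cite{BarNatanKhT} should admit a well-defined lift to this setting whose specialisation at $U=0$ is $\DD_1(T)^{\BNAlgH}$. To make this consistent for all Conway tangles, the cleanest route is probably to verify that it is compatible with the elementary tangle moves, or to set it up in the bimodule formalism in which the undeformed invariant is built in \cite{KWZ}. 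This is also the point at which ``ideas from homological mirror symmetry'' can intervene: one can try to identify $\BinfU$ with the endomorphism algebra of a generating object in a category of matrix factorisations (or a partially wrapped Fukaya category of $\FourPuncturedSphere$) for which ``turning off the superpotential'' is the natural deformation, and transport the lifting problem to that side, where it may become more transparent. Throughout one has to keep track of the bigrading and the local systems, so that the eventual linearity statement holds with all decorations in place.

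For the geometric half I would classify the indecomposable components --- immersed curves with local systems --- that admit a compatible $U$-action. Over $\BNAlgH$ alone such a component can be an essentially arbitrary immersed curve (the Haiden--Katzarkov--Kontsevich classification of indecomposables is exactly the structural input underlying the definition of $\Khr$); but a compatible $U$-action is rigid. The homology of the $U$-deformed complex attached to a single component should be a finitely generated $\mathbb{F}_2[U]$-module whose torsion data determine a well-defined ``slope'', and any turn or winding of the lift --- the a priori dangerous non-linear curves, such as those winding around a tangle end --- would generate arrows incompatible with $\partial^2=0$ over $\BinfU$. Realising the prescribed slope by an honest curve on $\FourPuncturedSphere$ then forces the lift to be a straight line; whether that line is rational or special is governed by how the $U$-torsion pairs with the generators of $\BNAlgH$, mirroring how the two variables of $\Aminus$ govern the analogous dichotomy in the Heegaard Floer setting.

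I expect the main obstacle to be the first half: establishing that the deformation $\DD_1(T)^{\BinfU}$ exists for every Conway tangle and is sufficiently unique that the induced slopes are honest invariants. Once the lift is secured, passing from the ``$U$-structure'' to ``straight line'' should be a finite, if delicate, analysis of reduced type~D structures over $\BinfU$, closely parallel to the argument for $\HFT(T)$ in \cite{pqSym}, together with the geometric bookkeeping needed to separate rational from special components.
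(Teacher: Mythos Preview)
Your strategy --- extend $\DD_1(T)^{\BNAlgH}$ over a larger algebra $\BinfU$ and use the extension to obstruct wrapping --- is the paper's, but you have conflated two separate arguments into one, and the conflated version fails.

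The algebra $\Binf$ is the endomorphism algebra of $\arcVer\oplus\arcHor$ in $\W(\ThreePuncturedSphere)$, the wrapped Fukaya category of the sphere with the \emph{special} puncture filled back in. Its nontrivial higher $A_\infty$ products count disks passing over the point~$\ast$, so the extension obstructs wrapping around~$\ast$ only: a curve turning at~$\ast$ forces a differential sequence such as $\DotB\xrightarrow{D}\DotB\xrightarrow{S}\DotC\xrightarrow{D}\DotC\xrightarrow{S}\DotB$, and then $\mu_4(S,D,S,D)=U^2$ contributes an uncancellable term to the structure relation of any lift. For the three non-special punctures there is no such $\mu_k$, hence no obstruction from extendability. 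Concretely, $\BNr(P_{2,-3})$ is extendable over $\BinfU$ by Theorem~\ref{thm:extension} yet wraps around a non-special puncture (see the example following Definition~\ref{def:RationalVsSpecialKht}); so your assertion that the curve underlying any extendable complex must be linear is false.

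The paper therefore treats non-special wrapping by an independent, more elementary argument that does not use the extension at all. Because $\DD_1(T)$ is by definition the cone on $H\cdot\id_{\DD(T)}$, one shows directly (Proposition~\ref{prop:higher_powers_when_wrapping}) that in curve form its differential contains only linear combinations of $D$, $S$, $S^2$; combined with naturality under twisting (Theorem~\ref{thm:Kh:Twisting}) this already excludes wrapping around non-special punctures (Theorem~\ref{thm:almost_no_wrapping}). Your plan is missing this ingredient. A secondary point: the Frobenius deformation $\fieldTwoElements[X]/(X^2+UX)$ you propose is essentially the $H$-variable already built into $\BNAlgH$ and carries no higher $A_\infty$ data; the paper instead constructs the lift via Khovanov--Rozansky matrix factorizations and identifies their endomorphism dg algebra with $\Binf$ through homological mirror symmetry for $\ThreePuncturedSphere$ (Theorem~\ref{thm:from_hms}).
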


 The proof of Theorem~\ref{thm:geography:Khr:intro} is also based on an extension of the algebraic invariant: We extend \(\DD_1(T)^{\BNAlgH}\) to a type~D structure \(\DD_1(T)^{\BinfU}\) over an algebra~\(\BinfU\). However, the algebra \(\BinfU\) is an \(A_\infty\) algebra and the extension \(\DD_1(T)^{\BinfU}\) reaches outside of Bar-Natan's framework. Namely, we leverage the matrix factorizations framework~\cite{KR_mf_I,KR_mf_II} to define an $A_\infty$ enhancement of Bar-Natan's cobordism category $\Cob_{/l}$, and in the case of Conway tangles, we describe this $A_\infty$ enhancement explicitly. The latter description depends on a particular quasi-isomorphism of algebras, provided by the homological mirror symmetry of the three-punctured sphere~\cite{AAEKO,LekPol,Orlov}. 
 
\section{\texorpdfstring{The tangle invariant \(\Khr\)}{The tangle invariant Khr}}\label{sec:review:Kh}

We review some properties of the immersed curve invariant \(\Khr\) of Conway tangles from~\cite{KWZ}. 
We work exclusively over the two-element field \(\fieldTwoElements\), with remarks about other coefficient systems when appropriate.

\subsection{\texorpdfstring{The definition of \(\Khr\)}{The definition of Khr}}\label{sec:review:Kh:definition}
 
Let \(T\) be an oriented \textbf{pointed} Conway tangle, that is a Conway tangle \(T\) in the three-ball \(B^3\) with a choice of distinguished tangle end, which we usually mark by~\(\ast\) and call the \textbf{special} tangle end. 
Given $T$ the invariant \(\Khr(T)\) that takes the form of a multicurve on a four-punctured sphere \(\FourPuncturedSphereKh\), which we can naturally identify with the boundary of \(B^3\) minus the four tangle ends \(\partial T\). 
Here, a multicurve is a collection of immersed curves with local systems. 
To make this precise, consider the cases of compact curves and non-compact curves separately.
A compact curve in \(\FourPuncturedSphereKh\) is an immersion of \(S^1\), considered up to homotopy, that defines a primitive element of \(\pi_1(\FourPuncturedSphereKh)\), together with a local system, ie an invertible matrix over \(\fieldTwoElements\) 
considered up to matrix similarity. 
Local systems can be viewed as vector bundles up to isomorphism, where either \(\fieldTwoElements\) is equipped with the discrete topology or the bundle is equipped with a flat connection. 
We will always drop local systems from our notation when they are trivial, ie if they are equal to the unique one-dimensional local system.
A non-compact curve is a non-null-homotopic immersion of an interval into the four-punctured sphere with ends on the two of the three non-special punctures of \(\FourPuncturedSphereKh\). Local systems are always trivial in this case.
Both types of curves carry a bigrading (explained below) and multiple parallel immersed curves in the same bigrading are set to be equivalent to a single curve with a local system that is the direct sum of the individual local systems. 

With this terminology in place, we can sketch the construction of \(\Khr(T)\). It is defined in two steps. 

First, one fixes a diagram \(\Diag_T\) of the pointed tangle $T$. Bar-Natan associates with such a diagram a bigraded chain complex \(\KhTl{\Diag_T}\) over a certain cobordism category \(\Cobl\), whose objects are crossingless tangle diagrams \cite{BarNatanKhT}. 
This complex is a tangle invariant up to bigraded chain homotopy; we denote it by \(\KhTl{T}\). 
Thanks to a process Bar-Natan calls \textit{delooping}~\cite[Observation~4.18]{KWZ}, any chain complex over \(\Cobl\) can be written as a chain complex over the full subcategory \(\End_{\Cobl}(\Lo\oplus\Li)\) of \(\Cobl\) generated by the crossingless tangles without closed components. 
This subcategory is isomorphic to the following quiver algebra \cite[Theorem~1.1]{KWZ}:
\begin{equation}\label{eq:B_quiver}
\BNAlgH =
\fieldTwoElements\Big[
\begin{tikzcd}[row sep=2cm, column sep=1.5cm]
\DotB
\arrow[leftarrow,in=145, out=-145,looseness=5]{rl}[description]{{}_{\bullet}D_{\bullet}}
\arrow[leftarrow,bend left]{r}[description]{{}_{\bullet}S_{\circ}}
&
\DotC
\arrow[leftarrow,bend left]{l}[description]{{}_{\circ}S_{\bullet}}
\arrow[leftarrow,in=35, out=-35,looseness=5]{rl}[description]{{}_{\circ}D_{\circ}}
\end{tikzcd}
\Big]\Big/\Big(
\parbox[c]{120pt}{\footnotesize\centering
${}_{\bullet}D_{\bullet} \cdot {}_{\bullet}S_{\circ}=0={}_{\bullet}S_{\circ}\cdot  {}_{\circ}D_{\circ}$\\
${}_{\circ}D_{\circ}\cdot  {}_{\circ}S_{\bullet}=0={}_{\circ}S_{\bullet}\cdot  {}_{\bullet}D_{\bullet}$
}\Big)
\end{equation}
The objects \(\Lo\) and \(\Li\) correspond to \(\DotB\) and \(\DotC\), respectively. We denote the idempotent constant paths on \(\DotB\) and \(\DotC\) by \(\iota_{\bullet}\) and \(\iota_{\circ}\), respectively. We will sometimes abuse notation by using $S$  for either ${}_{\circ}S_{\bullet}$  or ${}_{\bullet}S_{\circ}$ and using $D$ for either ${}_{\circ}D_{\circ}$  or ${}_{\bullet}D_{\bullet}$.
In addition, the subscript $\star\in\{\DotC,\DotB\}$ on the left or right of an algebra element $a$ will always indicate that multiplying by $\iota_\star$ from the left or right respectively preserves the element $a$. This allows shorthand notation such as $S_{\circ}={}_{\bullet}S_{\circ}$ and $S^3_{\bullet}=  {}_{\circ}S_{\bullet}   \cdot {}_{\bullet}S_{\circ} \cdot  {}_{\circ}S_{\bullet}$.
The algebra \(\BNAlgH\) carries a bigrading: The quantum grading \(q\) and the delta grading  \(\delta \) are determined by 
\[
\text{gr}(D_{\bullet}) = \text{gr}(D_{\circ}) = q^{-2}\delta^{-1}
\qquad 
\text{and}
\qquad
\text{gr}(S_{\bullet}) = \text{gr}(S_{\circ}) = q^{-1} \delta^{-\frac{1}{2}} 
\] 
Differentials of bigraded chain complexes over \(\BNAlgH\) are defined to preserve quantum grading and decrease \(\delta\)-grading by 1. 
The isomorphism \(\End_{\Cobl}(\Lo\oplus\Li) \cong \BNAlgH\) allows us to translate the delooped chain complex \(\KhTl{\Diag_T}\) into a bigraded chain complex \(\DD(\Diag_T)^\BNAlgH\)~\cite[Definition~1.2]{KWZ}. ({Chain complexes over ordinary algebras} have also appeared in the literature under the name of {type~D structures}~\cite[Definition~2.2.23]{LOTBimodules}, and we will therefore use the two terms interchangeably; see~\cite[Proposition 2.13]{KWZ} for more on the equivalence of these objects.)  
By construction, the bigraded chain homotopy type of \(\DD(\Diag_T)^\BNAlgH\) is an invariant of the tangle~\(T\), and thus we will sometimes write  \(\DD(T)^\BNAlgH\) for  \(\DD(\Diag_T)^\BNAlgH\). 
Moreover, using the central element 
\[
H\coloneqq D+S^2 = D_{\bullet} + D_{\circ} + S_{\circ}S_{\bullet} + S_{\bullet}S_{\circ} ~ \in ~ \BNAlgH 
\]   
we define a bigraded chain complex \(\DD_1(\Diag_T)\) as the mapping cone 
\[\DD_1(\Diag_T)\coloneqq [q^{-1}\delta^{\frac 1 2}\DD(\Diag_T)\xrightarrow{H\cdot \id} q^{1}\delta^{\frac 1 2}\DD(\Diag_T)]\]
where  $H\cdot \id (x) = x\otimes H$ for every generator $x$ in $\DD(\Diag_T)$.
The bigraded chain homotopy type of \(\DD_1(\Diag_T)\)  is also a tangle invariant, and we will write  \(\DD_1(T)\) for \(\DD_1(\Diag_T)\). 

\begin{figure}[t]
	\centering
	\begin{subfigure}[t]{0.48\textwidth}
		\centering
		$\PairingTrefoilArcINTRO$
		\caption{The curve associated with $\textcolor{blue}{
				[
				\protect\begin{tikzcd}[nodes={inner sep=2pt}, column sep=13pt,ampersand replacement = \&]
				\protect\DotCblue
				\protect\arrow{r}{S}
				\protect\&
				\protect\DotBblue
				\protect\arrow{r}{D}
				\protect\&
				\protect\DotBblue
				\protect\arrow{r}{S^2}
				\protect\&
				\protect\DotBblue
				\protect\end{tikzcd}
				]}$}\label{fig:exa:classification:curves:arc}
	\end{subfigure}
	\
	\begin{subfigure}[t]{0.48\textwidth}
		\centering
		$\PairingTrefoilLoopINTRO$
		\caption{The curve associated with $\textcolor{red}{
				[
				\protect\begin{tikzcd}[nodes={inner sep=2pt}, column sep=23pt,ampersand replacement = \&]
				\protect\DotCred
				\protect\arrow{r}{D+S^2}
				\protect\&
				\protect\DotCred
				\protect\end{tikzcd}
				]}$}\label{fig:exa:classification:curves:loop}
	\end{subfigure}
	\caption{The geometric interpretation of some chain complexes over the algebra $\BNAlgH$ illustrating the classification theorem in the second part of the construction of \(\BNr(T)\) and \(\Khr(T)\)
	}\label{fig:exa:classification:curves}
\end{figure}
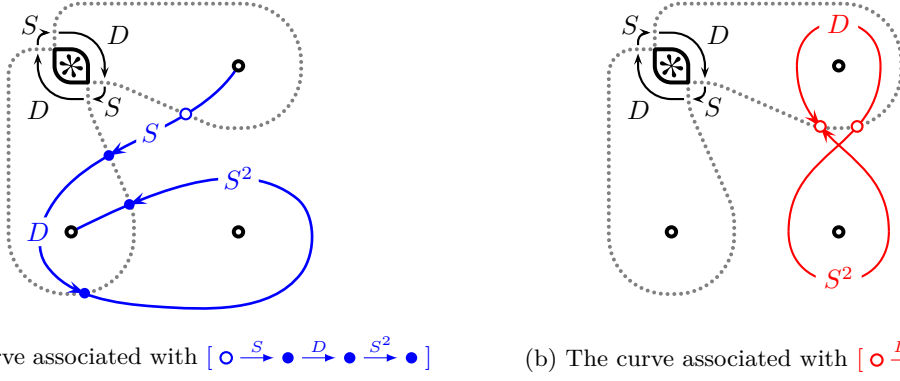

The second step in the definition of \(\Khr(T)\) relies on a classification result, which says that the chain homotopy classes of bigraded chain complexes over \(\BNAlgH\) are in one-to-one correspondence with free homotopy classes of bigraded immersed multicurves with local systems on \(\FourPuncturedSphereKh\) \cite[Theorem~1.5]{KWZ}; see also \cite{HKK}.
The correspondence between chain complexes and immersed multicurves uses a parametrization of \(\FourPuncturedSphereKh\), given by the two dotted arcs shown in Figure~\ref{fig:Kh:example:Curve:Downstairs}.
We will generally assume that the multicurves intersect these arcs minimally. Then, roughly speaking, the intersection points correspond to generators of the according chain complexes and paths between those intersection points correspond to the differentials. 
This is illustrated in Figure~\ref{fig:exa:classification:curves}, cf~\cite[Example~1.6]{KWZ}.
A bigrading on a multicurve then is a bigrading of these intersection points that is consistent with the differential. 
Finally, the multicurve invariant \(\Khr(T)\) is defined as the collection of bigraded immersed curves on \(\FourPuncturedSphereKh\) that corresponds to the bigraded type D structure \(\DD_1(T)\). 
Within the equivalence class of type~D structures that are chain homotopy equivalent to \(\DD_1(T)\), there exist certain distinguished representatives from which the multicurve \(\Khr(T)\) can be read off directly, as in Figure~\ref{fig:exa:classification:curves}. 
We denote such representatives by $\DD_1^c(T)$. 
Similarly, the type~D structure \(\DD(T)\) corresponds to a multicurve \(\BNr(T)\) and we write $\DD^c(T)$ for a type~D structure from which \(\BNr(T)\) can be read off directly.

While \(\Khr(T)\) only consists of compact curves, ie immersed circles, \(\BNr(T)\) also contains \(2^{|T|}\) non-compact components, where \(|T|\) is the number of closed components of \(T\). 
\begin{remark}[Coefficients]
The construction of the tangle invariants \(\DD(T)^\BNAlgH\) and \(\DD_1(T)^\BNAlgH\) can be done over $\Z$, however the classification result only works over fields, and hence so does the construction of the immersed curve invariants. 
\end{remark}
 
One can identify \(\FourPuncturedSphereKh\) with \(\partial B^3\smallsetminus \partial T\) using the parametrization of the latter shown in Figure~\ref{fig:Kh:example:tangle}. 
This identification is natural, in the following sense: 
If a tangle \(T'\) is obtained from \(T\) by adding twists to the tangle ends, the complexes \(\DD(T')\) and \(\DD_1(T')\) determine two new sets of immersed curves \(\BNr(T')\) and \(\Khr(T')\), which agree with the ones obtained by twisting the immersed curves \(\BNr(T)\) and \(\Khr(T)\) accordingly. This assumes that we work over \(\fieldTwoElements\) \cite[Theorem~1.13]{KWZ}, but we expect the same to hold over arbitrary fields.	

\begin{figure}[t]
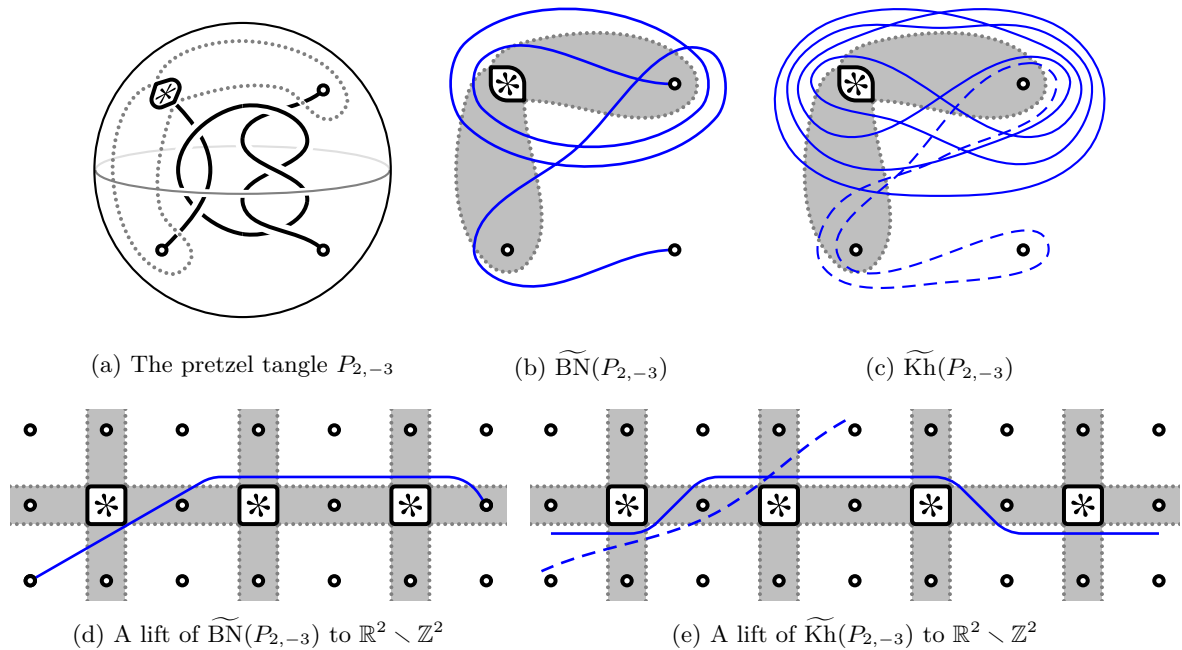

	\centering
	\begin{subfigure}{0.28\textwidth}
		\centering
		\(\pretzeltangleKh\)
		\caption{The pretzel tangle \(P_{2,-3}\)}\label{fig:Kh:example:tangle}
	\end{subfigure}
	\begin{subfigure}{0.28\textwidth}
		\centering
		\(\pretzeltangleDownstairsBNr\)
		\caption{\(\BNr(P_{2,-3})\)}\label{fig:BN:example:Curve:Downstairs}
	\end{subfigure}
	\begin{subfigure}{0.28\textwidth}
	\centering
	\(\pretzeltangleDownstairsKhr\)
	\caption{\(\Khr(P_{2,-3})\)}\label{fig:Kh:example:Curve:Downstairs}
	\end{subfigure}
	\bigskip
	\\
	\begin{subfigure}{0.42\textwidth}
		\centering
		\(\pretzeltangleUpstairsBNr\)
		\caption{A lift of \(\BNr(P_{2,-3})\) to \(\PuncturedPlane\) }\label{fig:BN:example:Curve:Upstairs}
	\end{subfigure}
	\begin{subfigure}{0.55\textwidth}
		\centering
		\(\pretzeltangleUpstairsKhr\)
		\caption{A lift of \(\Khr(P_{2,-3})\) to \(\PuncturedPlane\) }\label{fig:Kh:example:Curve:Upstairs}
	\end{subfigure}
	\caption{The Khovanov and Bar-Natan invariant of the \((2,-3)\)-pretzel tangle }\label{fig:Kh:example}
\end{figure}

\begin{theorem}\label{thm:Kh:Twisting}
	For all 
	\( 
	\tau\in \Mod(\FourPuncturedSphere)
	\), 
	\(
	\Khr(\tau(T))
	=
	\tau(\Khr(T))
	\)
	and 
	\(
	\BNr(\tau(T))
	=
	\tau(\BNr(T))
	\).
\end{theorem}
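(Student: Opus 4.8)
The plan is to reduce the statement to the generating set of the mapping class group $\Mod(\FourPuncturedSphere)$ and then to invoke naturality of the construction under adding twists, which is precisely the content of the discussion preceding the statement (and of \cite[Theorem~1.13]{KWZ}). First I would recall that $\Mod(\FourPuncturedSphere)$ is generated by the two half-twists $\tau$ about a pair of arcs connecting adjacent punctures — concretely, the elementary operations of adding a single crossing at a chosen pair of tangle ends, in either of the two available ``slots''. Thus it suffices to prove the identities $\Khr(\tau(T)) = \tau(\Khr(T))$ and $\BNr(\tau(T)) = \tau(\BNr(T))$ for these two generators $\tau$, and then compose.

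For a single generating twist, the argument proceeds in two parts, matching the two steps in the construction of the invariants. On the algebraic side, adding a crossing to $\Diag_T$ changes the Bar-Natan complex $\KhTl{\Diag_T}$ by tensoring with the elementary complex associated to a single crossing (the complex $\protect\begin{tikzcd}\Li \arrow{r} \& \Lo\protect\end{tikzcd}$ or its mirror, up to grading shift), which after delooping is the type~D–type~A bimodule corresponding to that crossing over $\BNAlgH$. Hence $\DD(\tau(T))^\BNAlgH$ is obtained from $\DD(T)^\BNAlgH$ by box-tensoring with a fixed crossing bimodule, and the same holds for $\DD_1$ since the mapping cone construction with the central element $H$ commutes with this box tensor product (the element $H$ is central, so $H\cdot\id$ is a bimodule morphism and the cone is taken in the homotopy category of bimodules). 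On the geometric side, under the classification equivalence \cite[Theorem~1.5]{KWZ} the box tensor with the crossing bimodule is realized by the corresponding Dehn twist $\tau$ on multicurves on $\FourPuncturedSphereKh$; this is the curved/immersed-curve incarnation of the pairing theorem, and it is exactly what makes the identification of $\FourPuncturedSphereKh$ with $\partial B^3\smallsetminus\partial T$ natural. Combining the two parts gives the identity for a single generator.

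The main obstacle — and the reason the statement needs care rather than being a one-line corollary — is keeping track of the bigrading conventions through the box tensor product, since the crossing bimodules carry nontrivial $q$- and $\delta$-shifts and the half-twist acts on the bigrading of multicurves by a prescribed affine transformation; one must check that the algebraically induced shift matches the geometric one. A second, more bookkeeping-type point is that the two generators of $\Mod(\FourPuncturedSphere)$ correspond to twisting at different pairs of tangle ends, so one must verify that the parametrization fixed in Figure~\ref{fig:Kh:example:tangle} is compatible with both, and that the identities for the generators are consistent on the relations in $\Mod(\FourPuncturedSphere)$ — this consistency is automatic once each generator identity is established, since both sides of the desired equation are honest functorial assignments. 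Finally I would remark, as the paper does, that this argument is currently written over $\fieldTwoElements$; over a general field the crossing bimodules and their grading shifts are expected to behave identically, but the cited naturality result is stated over $\fieldTwoElements$, so I would phrase the proof accordingly.
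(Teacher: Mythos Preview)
Your proposal is essentially correct and matches the approach taken in the original source. Note, however, that the paper under review does not itself prove this theorem: it is stated in the review section and attributed to \cite[Theorem~1.13]{KWZ}, with the actual argument carried out in \cite[Section~8]{KWZ}. From the paper's later discussion (Section~\ref{sec:further_restrictions}) one sees that the proof in \cite{KWZ} proceeds exactly as you outline---by constructing explicit type~AD bimodules ${}_\BNAlgH(\tau_i)^{\BNAlgH}$ for the two generating half-twists $\tau_1,\tau_2$ of $\Mod(\FourPuncturedSphereKh)$ and showing that box-tensoring with them realizes the corresponding Dehn twist on multicurves---so your sketch is on target, and your caveats about bigradings and the restriction to $\fieldTwoElements$ are appropriate.
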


\begin{remark}
	When working over $\fieldTwoElements$, the distinguished tangle end \(\ast\) only plays a role in the second step of the construction of \(\BNr(T)\) and \(\Khr(T)\). 
	If one works away from characteristic 2, it also plays a subtle role in the first step: In this case, there are four different isomorphisms between \(\End_{\Cobl}(\Lo\oplus\Li)\) and \(\BNAlgH\), which only differ by the signs on the basic morphisms \(D_{\bullet}\) and \( D_{\circ}\). Each of these isomorphisms corresponds to a choice of distinguished tangle end; see~\cite[Theorem~4.21, Observation~4.24]{KWZ} for details. 
\end{remark}

\begin{example}\label{exa:Khr:2m3pt}
	We usually draw the four-punctured sphere \(\FourPuncturedSphereKh\) as the plane plus a point at infinity minus the four punctures and indicate its standard parametrization that identifies \(\FourPuncturedSphereKh\) with \(\partial B^3\smallsetminus \partial T\) by two dotted arcs as in  
	Figures~\ref{fig:BN:example:Curve:Downstairs} and~\ref{fig:Kh:example:Curve:Downstairs}. 
	The blue curves in these figures show \(\BNr(P_{2,-3})\) and \(\Khr(P_{2,-3})\), respectively, where \(P_{2,-3}\) is the \((2,-3)\)-pretzel tangle from Figure~\ref{fig:Kh:example:tangle}, cf~\cite[Example~6.7]{KWZ}.
	All components of these curves carry the (unique) one-dimensional local system over $\fieldTwoElements$.
\end{example}

\begin{example}
	For any slope \(s\in\QPI\), \(\BNr(Q_s)\) consists of a single arc which is obtained by pushing the tangle strand that does not end on the distinguished tangle end \(\ast\) onto \(\FourPuncturedSphereKh\). The invariant \(\Khr(Q_s)\) is equal to a figure-eight curve that lies in a small neighbourhood of \(\BNr(Q_s)\) and encloses the two tangle ends on either side, see \cite[Example~6.6]{KWZ}. 
	The local system on this curve is one-dimensional.
	We expect that over arbitrary fields, the underlying curve for \(\Khr(Q_s)\) is the same as over \(\fieldTwoElements\), and that the local systems on these curves are equal to \((-1)\).
\end{example}

Like \(\HFT\), the tangle invariants in Khovanov theory detect rational tangles. 

\begin{theorem}
	\label{thm:Kh:rational_tangle_detection}
	A tangle \(T\) is rational if and only if \(\Khr(T)\) consists of a single figure-eight curve carrying the unique one-dimensional local system. 
\end{theorem}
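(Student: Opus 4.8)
I would treat the two implications separately; the forward one is essentially a computation, while the converse is the substantive direction. If \(T\) is rational, write \(T=\tau(Q_\infty)\) for some \(\tau\in\Mod(\FourPuncturedSphere)\), where \(Q_\infty\) is a trivial tangle. The Bar-Natan complex of \(Q_\infty\) over \(\BNAlgH\) has rank one with vanishing differential, so \(\DD_1(Q_\infty)\) is the rank-two type~D structure obtained as the mapping cone of multiplication by \(H=D+S^2\) on a rank-one type~D structure; under the classification of \cite[Theorem~1.5]{KWZ} this corresponds to a single figure-eight curve carrying the one-dimensional local system (see Figure~\ref{fig:exa:classification:curves:loop} and \cite[Example~6.6]{KWZ}). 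Since \(\Mod(\FourPuncturedSphere)\) acts transitively on figure-eight curves (which are classified by their slope in \(\QPI\)), the naturality statement of Theorem~\ref{thm:Kh:Twisting} shows that \(\Khr(T)=\tau(\Khr(Q_\infty))\) is again a single figure-eight curve with trivial local system.

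Conversely, suppose \(\Khr(T)\) is a single figure-eight curve carrying the one-dimensional local system. By transitivity of the \(\Mod(\FourPuncturedSphere)\)-action on figure-eight curves together with Theorem~\ref{thm:Kh:Twisting}, we may replace \(T\) by \(\tau(T)\) for a suitable mapping class \(\tau\) and so assume \(\Khr(T)=\Khr(Q_\infty)\). The gluing theorem of \cite{KWZ} computes the reduced Khovanov homology \(\Khr(T\cup Q')\) of any closure by a rational tangle \(Q'\) as a Lagrangian Floer pairing of the curve invariants of \(T\) and \(Q'\), and this pairing depends only on those curve invariants; hence \(\Khr(T\cup Q')\cong\Khr(Q_\infty\cup Q')\) for every rational tangle \(Q'\). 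The closures \(Q_\infty\cup Q'\) are two-bridge links, and for the infinitely many rational tangles \(Q'\) obtained from \(Q_0\) by integral twisting the link \(Q_\infty\cup Q'\) is the unknot; for each such \(Q'\) we get \(\dim\Khr(T\cup Q')=1\), so by the unknot-detection theorem for reduced Khovanov homology (Kronheimer--Mrowka) the link \(T\cup Q'\) is the unknot.

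Deducing that \(T\) itself is rational is the step where I expect the main work to lie. Let \(Y=\Sigma_2(B^3,T)\) be the double branched cover of \(B^3\) along \(T\), a compact orientable three-manifold with torus boundary. For each of the infinitely many rational tangles \(Q'\) above, \(\Sigma_2(S^3,T\cup Q')=S^3\) because \(T\cup Q'\) is the unknot; these decompose as Dehn fillings \(Y(\gamma_{Q'})=S^3\) along slopes \(\gamma_{Q'}\subset\partial Y\), which are pairwise distinct as \(Q'\) ranges over the integral-twisting family (consecutive fillings differing by a twist along an essential curve on \(\partial Y\)). A compact orientable three-manifold with torus boundary admitting two distinct Dehn fillings that yield \(S^3\) must be a solid torus: it is the exterior of a knot \(K\subset S^3\) --- the core of one filling solid torus --- admitting a nontrivial \(S^3\)-surgery, so \(K\) is unknotted by the Gordon--Luecke theorem, and irreducibility of \(S^3\) rules out the degenerate possibilities for \(Y\). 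Hence \(Y\cong S^1\times D^2\), and by the classical characterization of rational tangles via their double branched covers (Lickorish, Montesinos) the tangle \(T\) is rational. An alternative would be to argue entirely within the curve formalism --- recasting the figure-eight hypothesis as ``\(\BNr(T)\) is a single embedded arc'' via the mapping-cone relation \(\DD_1=\mathrm{Cone}(\DD\xrightarrow{H}\DD)\) --- but any such route still seems to require a gluing theorem together with unknot detection, so I would proceed as above.
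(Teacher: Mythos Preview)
Your argument is correct in outline but takes a genuinely different route from the paper. The paper (deferring to \cite[Theorem~6.2]{pqMod}) closes \(T\) with a single rational tangle of matching slope, applies Hedden--Ni's detection of the two-component unlink \cite{KhDetectsTwoComponentUnlink}, and then uses a direct topological argument to conclude that \(T\) is rational. You instead produce infinitely many unknot closures via Kronheimer--Mrowka, pass to double branched covers, and invoke Gordon--Luecke together with the branched-double-cover characterisation of rational tangles. Both approaches work; the paper's is more economical in that it needs only one closure and avoids Gordon--Luecke, while yours has the virtue of making the three-manifold topology explicit rather than deferring it to \cite{pqMod}.

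One point needs tightening: Kronheimer--Mrowka's theorem is stated for knots, and you have not verified that \(T\cup Q'\) has a single component---the connectivity of \(T\) is not a priori determined by \(\Khr(T)\) at this stage of the paper. This does follow, since over \(\fieldTwoElements\) the Bar-Natan spectral sequence shows that \(\dim\Khr(L)\) and \(2^{n-1}\) have the same parity (where \(n\) is the number of components of \(L\)), so rank one forces \(n=1\); but you should say so. Your description of the infinite unknot-filling family as ``\(Q_0\) integrally twisted'' is also convention-sensitive; the robust fact you need is simply that the solid torus \(\Sigma_2(B^3,Q_\infty)\) admits infinitely many \(S^3\) Dehn fillings, which already suffices for Gordon--Luecke.
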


\begin{proof}
	This follows from essentially the same arguments as \cite[Theorem~6.2]{pqMod}. 
	Indeed, something more general holds:
	Any tangle invariant detects rational tangles if it satisfies a gluing theorem that recovers a link invariant known to detect the two-component trivial link.	 
	The gluing theorem for \(\Khr\) is stated below (Theorem~\ref{thm:GlueingTheorem:Kh});  and the requisite detection result was proved by Hedden and Ni~\cite{KhDetectsTwoComponentUnlink}, based on Kronheimer and Mrowka's unknot detection for Khovanov homology~\cite{KhDetectsUnknot}.
\end{proof}

Note that \(\Khr(Q_s)\) is not embedded. This is true more generally \cite[Proposition~6.18]{KWZ}: 

\begin{proposition}\label{prop:Khr_not_embedded}
	For any pointed Conway tangle \(T\), no component of \(\Khr(T)\), over any field, is embedded.
\end{proposition}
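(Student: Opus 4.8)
The plan is to deduce the statement from two inputs of~\cite{KWZ}: first, that \(\Khr(T)\) is the multicurve associated with \(\DD_1(T) = [q^{-1}\delta^{\frac12}\DD(T)\xrightarrow{H\cdot\id} q^{1}\delta^{\frac12}\DD(T)]\), the mapping cone of multiplication by the central element \(H = D + S^2\); and second, the dictionary between type~D structures over \(\BNAlgH\) and immersed multicurves, in which the basic paths \(D\) and \(S\) correspond to specific local curve segments near the two parametrizing arcs of \(\FourPuncturedSphereKh\). I would first reduce to a statement about single components. Choosing a reduced representative \(\DD^c(T)=\bigoplus_i\DD^c(\gamma_i)\) indexed by the components \(\gamma_i\) of \(\BNr(T)\), additivity of the mapping cone gives \(\DD_1(T)\cong\bigoplus_i[\DD^c(\gamma_i)\xrightarrow{H\cdot\id}\DD^c(\gamma_i)]\), and each summand is again reduced because \(H\) is a sum of non-identity paths, so \(H\cdot\id\) has no identity component. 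Hence it suffices to show that for each component \(\gamma\) of \(\BNr(T)\), no component of the multicurve underlying \([\DD^c(\gamma)\xrightarrow{H\cdot\id}\DD^c(\gamma)]\) is embedded.

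Geometrically, coning with \(H\) replaces every intersection point \(x\) of \(\gamma\) with a parametrizing arc by a pair \(x^L,x^R\) joined by the new differential \(x^L\mapsto x^R\otimes H = x^R\otimes(D+S^2)\); the two basic paths \(D\) and \(S^2\) are realized by curve segments that leave \(x^L\), round one of the two punctures adjacent to that arc, and return to \(x^R\). Thus the multicurve of \([\DD^c(\gamma)\xrightarrow{H\cdot\id}\DD^c(\gamma)]\) consists of two parallel copies of \(\gamma\) with such ``puncture-rounding'' reconnections inserted at every arc-crossing. When \(\gamma\) is non-compact --- an immersed arc with endpoints on two non-special punctures --- this closes the two copies up, across their endpoints, into a single figure-eight--type closed curve, exactly as in the computation of \(\Khr(Q_s)\); such a curve has a genuine self-intersection and so is not embedded. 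When \(\gamma\) is compact the same reconnections are performed along a closed curve.

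The crux is this compact case: one must show that no embedded essential closed curve on \(\FourPuncturedSphereKh\) arises by inserting these reconnections, and that the self-intersections they create are not all removable while passing to reduced curve form. The heuristic is that the strand of \(\gamma\) through an intersection point \(x\) (together with its parallel pushoff) locally separates a neighbourhood of \(x\) into a component containing one adjacent puncture and a component containing the other, so any segment realizing \(D\) or \(S^2\) --- and \(H\) forces both --- must cross one of these strands; turning this into a rigorous count is precisely the local analysis carried out in~\cite[Proposition~6.18]{KWZ}, which I would reproduce. I expect this to be the main obstacle. Finally, everything works over an arbitrary field: \(\BNAlgH\), the element \(H\), and the mapping cone are all defined over \(\Z\); the only field-dependent ingredient is the classification of type~D structures as multicurves, which is valid over any field; and away from characteristic~\(2\) the sign ambiguity in identifying \(\End_{\Cobl}(\Lo\oplus\Li)\) with \(\BNAlgH\) changes only the signs of \(D_\bullet,D_\circ\), which does not affect embeddedness of the resulting curves.
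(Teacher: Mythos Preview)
The paper does not actually prove this proposition; it merely records it with a citation to \cite[Proposition~6.18]{KWZ}. So there is no argument in the paper to compare against, only the original source.

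Your outline is on the right track and matches the setup of the cited result: reduce to components of \(\BNr(T)\), take the cone of \(H\) on each, and interpret the new \(D\)- and \(S^2\)-arrows geometrically as segments that round the two punctures adjacent to each arc-crossing. The decomposition step is fine, and your treatment of the non-compact case for rational tangles is the standard computation. Two caveats, though. First, ``each summand is again reduced'' is true in the sense that no identity arrows appear, but this is weaker than being in curve form; you then implicitly pass to curve form when you describe the reconnections, so be careful not to conflate the two. Second, for a general non-compact component of \(\BNr(T)\) (which can be much longer than the single-generator arc of \(Q_s\)), the claim that the cone yields ``a single figure-eight--type closed curve'' with an essential self-intersection is not automatic and already requires the same kind of local analysis you defer in the compact case.

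That brings us to the real gap: in the compact case you offer only a heuristic (``any segment realizing \(D\) or \(S^2\) \dots must cross one of these strands'') and then say you would reproduce the argument of \cite[Proposition~6.18]{KWZ}. That is not a proof proposal so much as a pointer to the existing proof. The substantive work---showing that the self-intersections produced by the \(H\)-cone survive the arrow-pushing algorithm and are not removable by homotopy---is exactly what is done in the cited proposition, and you have not supplied an independent argument for it. If you want a self-contained proof, you need to either carry out that local analysis explicitly or find a different obstruction (for instance, via a pairing computation with a suitable test curve) that rules out embedded components directly.
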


\subsection{\texorpdfstring{A gluing theorem for \(\Khr\)}{A gluing theorem for Khr}}\label{sec:review:Kh:gluing}

\begin{figure}[bt]
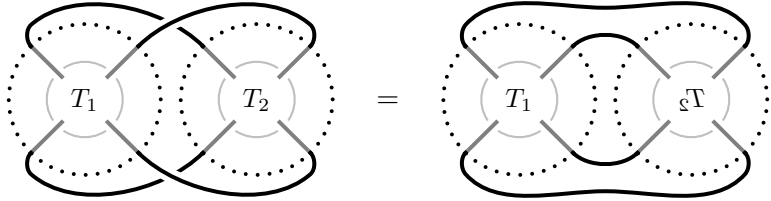

	\centering
	\(
	\tanglepairingI
	\quad = \quad
	\tanglepairingII
	\)
	\caption{Two Conway tangle decompositions defining the link \(T_1\cup T_2\). The tangle \protect\reflectbox{\(T_2\)} is the result of rotating \(T_2\) around the vertical axis. By rotating the entire link on the right-hand side around the vertical axis, we can see that \(T_1\cup T_2=T_2\cup T_1\).}
	\label{fig:tanglepairing} 
\end{figure}

Denote the two-dimensional vector space supported in \(\delta\)-grading \(+\tfrac{1}{2}\) and quantum gradings \(\pm1\) by
\[V\coloneqq \delta^{\frac 1 2} q^1\F \oplus \delta^{\frac 1 2} q^{-1} \F\]

\begin{theorem}[\text{\cite[Theorem~1.9]{KWZ}}]\label{thm:GlueingTheorem:Kh}
	Let \(L=T_1\cup T_2\) be the result of gluing two oriented pointed Conway tangles as in Figure~\ref{fig:tanglepairing} such that the orientations match. Let \(T^*_1\) be the mirror image of \(T_1\) with the orientation of all components reversed. Then  
	\[
	\begin{aligned}
	\Khr(L)\otimes V
	&\cong
	\HF\left(\Khr(T^*_1),\Khr(T_2)\right) \\
	\Khr(L)&\cong \HF\left(\Khr(T_1^*),\BNr(T_2)\right)
	\end{aligned}
	\]
	as relatively bigraded $\F$-vector spaces.
\end{theorem}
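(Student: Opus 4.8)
The plan is to split the statement into three pieces: an algebraic gluing formula for type~D structures over \(\BNAlgH\), a ``pairing'' identification of morphism complexes over \(\BNAlgH\) with Lagrangian Floer homology of the corresponding multicurves, and an algebraic manipulation of the central element \(H\) that accounts for the reduced-versus-unreduced bookkeeping and the factor~\(V\).

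First I would establish the gluing theorem at the level of Bar-Natan complexes. The tangle invariant \(\KhTl{-}\) is functorial under planar composition of tangles, so gluing \(T_1\) and \(T_2\) along the circle separating them in Figure~\ref{fig:tanglepairing} corresponds to composing the cobordism complexes \(\KhTl{T_1}\) and \(\KhTl{T_2}\). After delooping both factors \cite[Observation~4.18]{KWZ} and using \(\End_{\Cobl}(\Lo\oplus\Li)\cong\BNAlgH\), this composition is computed by a morphism complex over \(\BNAlgH\), where the object one pairs \(\DD(T_2)\) against is obtained from \(T_1\) by the operation \(T_1\mapsto T_1^\ast\) of mirroring and reversing all orientations — geometrically, gluing \(T_1\) to \(T_2\) is the same as closing up \(T_1^\ast\) and \(T_2\) inside a single copy of \(\FourPuncturedSphereKh\). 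The composition at the level of \(\DD\) computes Bar-Natan homology of \(L\), and the mapping-cone definition of \(\DD_1\) is designed precisely so that the same composition at the \(\DD_1\) level computes \(\Khr(L)\); concretely one obtains \(\Khr(L)\simeq H_\ast\Mor_{\BNAlgH}(\DD(T_1^\ast),\DD_1(T_2))\) as a relatively bigraded object.

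Next I would do the bookkeeping with \(H\). Since \(H\in\BNAlgH\) is central, the mapping cone of multiplication by \(H\) behaves identically whether precomposed into the first slot or postcomposed into the second, so \(\Mor_{\BNAlgH}(\DD(T_1^\ast),\DD_1(T_2))\simeq\Mor_{\BNAlgH}(\DD_1(T_1^\ast),\DD(T_2))\) with matching shifts; this is the source of the second displayed formula. For two reduced factors one notes that multiplication by \(H\) on a mapping cone of multiplication by \(H\) is null-homotopic via the evident ``identity'' homotopy, hence \(\Mor_{\BNAlgH}(\DD_1(T_1^\ast),\DD_1(T_2))\), being the cone of \(H\) acting on \(\Mor_{\BNAlgH}(\DD_1(T_1^\ast),\DD(T_2))\), splits into two shifted copies of the latter; tracking the \(q^{\mp1}\delta^{1/2}\) shifts built into \(\DD_1\) identifies this splitting with the tensor factor \(V=\delta^{1/2}q^1\F\oplus\delta^{1/2}q^{-1}\F\). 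This recovers the first displayed formula at the algebraic level.

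Finally I would identify the algebraic morphism pairing with Floer homology of curves: for the distinguished representatives \(\DD^c\) and \(\DD_1^c\) one wants \(H_\ast\Mor_{\BNAlgH}(\DD_1^c(T_1^\ast),\DD_1^c(T_2))\cong\HF(\Khr(T_1^\ast),\Khr(T_2))\), and likewise with one \(\BNr\) in place of a \(\Khr\). This is the Fukaya-categorical face of the classification theorem \cite[Theorem~1.5]{KWZ}: the (wrapped) Fukaya category of \(\FourPuncturedSphereKh\) is equivalent to the homotopy category of type~D structures over \(\BNAlgH\) \cite{HKK}, and tangle gluing corresponds to gluing two copies of \(\FourPuncturedSphereKh\) along the separating circle, so the algebraic pairing is computed by the (signed) count of intersection points of the curves. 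The main obstacle is this last step together with the grading analysis: one must check that the equivalence of categories is compatible with the relative \((q,\delta)\)-bigrading and with the mirror/orientation-reversal operation, so that the Floer grading coming from the curves matches the one induced from \(\BNAlgH\) and the \(V\)-factor together with all cone shifts land in precisely the claimed \(\delta\)- and \(q\)-degrees. By comparison, the manipulations with \(H\) are routine.
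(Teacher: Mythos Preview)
The paper does not prove this theorem; it is quoted verbatim from the authors' earlier work \cite[Theorem~1.9]{KWZ} and used as input.  There is therefore nothing in the present paper to compare your proposal against.

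As for the proposal itself: your three-step outline is essentially the strategy carried out in~\cite{KWZ}.  The algebraic gluing of Bar-Natan complexes via the planar-algebra structure, the rewriting of the pairing as a morphism complex over \(\BNAlgH\), the passage between \(\DD\) and \(\DD_1\) via the cone on the central element \(H\), and the final identification of \(\Mor_{\BNAlgH}\) with Lagrangian intersection Floer homology of the classified multicurves --- all of this is the actual architecture of the proof in~\cite{KWZ}.  You have also correctly located where the real work lies: the last step, namely showing that the classification \cite[Theorem~1.5]{KWZ} upgrades to a statement about morphism complexes and is compatible with the bigrading and the mirror operation, is by far the most delicate part and occupies a substantial portion of that paper.

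One point to be careful about in your middle step: the claim that \(\Mor(\DD_1(T_1^\ast),\DD_1(T_2))\) splits as two shifted copies uses that \(H\) acts null-homotopically on \(\DD_1\); this is true (and is recorded in the present paper as Lemma~\ref{lem:H_is_nullhomotopic_on_Khr}), but the splitting of the cone of a null-homotopic map involves the choice of null-homotopy, so to get a \emph{canonical} isomorphism with \(\Khr(L)\otimes V\) one should track this more carefully.  Since the statement is only about relatively bigraded vector spaces, this does not cause trouble, but it is worth being explicit.
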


\(\Khr(T^*_1)\) can be easily computed from  \(\Khr(T_1)\) \cite[Proposition~7.1]{KWZ}. For this, let \(\mirror\) denote the mirror operation, ie the involution of the four-punctured sphere that fixes the four punctures pointwise, fixes the parametrizing arcs setwise, and interchanges the front and back. 

\begin{lemma}\label{lem:mirroring:Khr}
	For any pointed Conway tangle \(T\), \(\Khr(T^*)=\mirror(\Khr(T))\) up to an appropriate bigrading shift.
\end{lemma}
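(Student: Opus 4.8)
The plan is to prove the lemma at the level of the algebraic invariants and then transport the statement to multicurves via the classification theorem \cite[Theorem~1.5]{KWZ}. The input is the classical behaviour of the Bar--Natan bracket under mirroring a diagram (see \cite{BarNatanKhT}, or \cite{KWZ}): if \(\Diag\) is a diagram for \(T\) and \(\overline{\Diag}\) is the mirror diagram (every crossing switched), then the cube of resolutions of \(\overline{\Diag}\) is the cube of \(\Diag\) with the direction of every edge reversed and every saddle cobordism reflected. In categorical terms, \(\KhTl{\overline{\Diag}}\) is obtained from \(\KhTl{\Diag}\), up to an overall grading shift, by reversing all differentials and applying the contravariant reflection autoequivalence of \(\Cobl\). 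After delooping, the reflection functor restricts on \(\BNAlgH=\End_{\Cobl}(\Lo\oplus\Li)\) to the anti-automorphism \(\psi\) that fixes the idempotents \(\iota_{\bullet},\iota_{\circ}\) and the dot elements \(D_{\bullet},D_{\circ}\) while interchanging the two saddles \(S_{\circ}\leftrightarrow S_{\bullet}\); and reversing all differentials corresponds to passing to the \(\F\)-dual of the underlying vector space (the fact that \(\Lo\) and \(\Li\) are each fixed by the reflection is what keeps us over \(\BNAlgH\) rather than its opposite). So the first step is to establish that \(\DD(T^*)^{\BNAlgH}\) is homotopy equivalent to the type~D structure \(\Phi(\DD(T)^{\BNAlgH})\) whose underlying space is the dual of that of \(\DD(T)^{\BNAlgH}\) and whose structure map is the transpose of the original, twisted entrywise by \(\psi\); reversing the orientation of the tangle components only contributes a further grading shift.

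Next I would check compatibility of \(\Phi\) with the passage from \(\DD\) to \(\DD_1\). Since \(H=D_{\bullet}+D_{\circ}+S_{\circ}S_{\bullet}+S_{\bullet}S_{\circ}\) is visibly fixed by \(\psi\), and \(\psi\) (being an anti-automorphism) preserves centrality, applying \(\Phi\) to the mapping cone \(\DD_1(T)^{\BNAlgH}=[\,\DD(T)^{\BNAlgH}\xrightarrow{H\cdot\id}\DD(T)^{\BNAlgH}\,]\) again produces the mapping cone of \(H\cdot\id\) on \(\Phi(\DD(T)^{\BNAlgH})\), up to interchanging the two sides of the cone and an overall bigrading shift; hence \(\DD_1(T^*)^{\BNAlgH}\simeq\Phi(\DD_1(T)^{\BNAlgH})\) up to a bigrading shift. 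The reflection functor preserves the intrinsic quantum and \(\delta\)-grading of cobordisms, so the grading reversal in the mirror formula comes from reversing the homological (cube) direction together with the change of the global normalization shift when \(n_+\) and \(n_-\) are interchanged; combined with the \(q^{\pm1}\delta^{1/2}\) shifts in the cone, one finds that \(\mirror\) negates the \(q\)- and \(\delta\)-gradings, up to an overall shift.

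Finally I would translate \(\Phi\) back to geometry. The curve--complex dictionary of \cite[Theorem~1.5]{KWZ} is set up relative to the two parametrizing arcs of \(\FourPuncturedSphereKh\): the intersection points of a multicurve with the arcs are the generators of the type~D structure (the arc recording the idempotent), and the elementary pieces of the curve between consecutive intersection points record the algebra elements \(S\) and \(D\). Passing to the dual vector space does not change the curve, and twisting by \(\psi\) -- which interchanges the two \(S\)-type arrows and fixes the \(D\)-type arrows -- is realized geometrically by the orientation-reversing self-homeomorphism of \(\FourPuncturedSphereKh\) that fixes the four punctures and the two parametrizing arcs setwise and interchanges front and back, that is, by the map \(\mirror\). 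Since the dictionary is a bijection, it suffices to verify this matching on the finitely many building-block curves (single arcs and short loops, as in Figures~\ref{fig:exa:classification:curves:arc} and~\ref{fig:exa:classification:curves:loop}), which is a direct check; together with the previous paragraph this gives \(\Khr(T^*)=\mirror(\Khr(T))\) up to a bigrading shift, and the same argument with \(\DD\) in place of \(\DD_1\) handles \(\BNr\).

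I expect the last step to be the main obstacle: matching the algebraic mirror operation \(\Phi\) with the geometric map \(\mirror\) on the nose requires being careful about the conventions for the two parametrizing arcs (in particular, confirming that \(\mirror\) fixes each arc rather than swapping them, so that no idempotent swap is introduced) and checking the building-block cases honestly. A close second is the bookkeeping of all the bigrading shifts, including the contribution of the orientation reversal and the interplay between the \(n_\pm\)-normalization and the cone shifts. (An alternative, more indirect route would deduce the identity from the gluing theorem~\ref{thm:GlueingTheorem:Kh}, the duality \(\Kh(m(L))\cong\Kh(L)^{\vee}\), and the fact that a multicurve on \(\FourPuncturedSphereKh\) is determined by its Lagrangian intersection pairings with sufficiently many arcs; but establishing enough faithfulness of the pairing makes this no shorter.)
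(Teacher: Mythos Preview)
The paper does not give a proof of this lemma at all: it is stated as a known fact, with the sentence immediately preceding it pointing to \cite[Proposition~7.1]{KWZ} as the source. So there is no in-paper argument to compare against.

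Your outline is essentially the argument one expects (and, in substance, the one carried out in \cite[Section~7]{KWZ}): identify the effect of mirroring on Bar--Natan's bracket, translate it through delooping to the anti-automorphism of \(\BNAlgH\) swapping \(S_\circ\leftrightarrow S_\bullet\) and fixing \(D_\bullet,D_\circ\), observe that this is exactly what the geometric involution \(\mirror\) does to curves under the dictionary of \cite[Theorem~1.5]{KWZ}, and check compatibility with the cone defining \(\DD_1\) via \(\psi(H)=H\). The concerns you flag---matching \(\Phi\) with \(\mirror\) on the parametrizing arcs, and the bigrading bookkeeping---are the right places to be careful, but they are routine once the conventions are fixed; your proposal is correct.
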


\begin{remark}
	Shumakovitch showed that over \(\fieldTwoElements\) the unreduced Khovanov homology of a link splits into two copies of reduced Khovanov homology, whose \(\delta\)-gradings differ by one \cite{Shum_torsion}. 
	Theorem~\ref{thm:GlueingTheorem:Kh} does \emph{not} compute the unreduced Khovanov homology; instead we have that the \(\delta\)-gradings of the two copies of reduced Khovanov homology are identical. 
	To obtain unreduced Khovanov homology, one can use yet another immersed curve invariant, namely \(\Kh(T)\), which we introduced in~\cite{KWZ} and which satisfies analogous gluing theorems. 
\end{remark}

\subsection{The plan for the rest of the paper}\label{subsec:plan}

We now outline the geography results for the invariant \(\Khr\) and the strategy for our proof.
As pointed out in the introduction, it is useful to consider the immersed curves in the covering space \(\PuncturedPlane\) of \(\FourPuncturedSphereKh\).
This covering space is illustrated in Figures~\ref{fig:BN:example:Curve:Upstairs} and~\ref{fig:Kh:example:Curve:Upstairs}, where the parametrization of \(\FourPuncturedSphereKh\) has been lifted to \(\PuncturedPlane\) and the two non-adjacent faces and their preimages under the covering map are shaded grey. 
The two figures also include the lifts of the components of \(\BNr(P_{2,-3})\) and \(\Khr(P_{2,-3})\), respectively. 
Note that for \(\Khr(P_{2,-3})\), the lift of each component can be isotoped into an arbitrarily small neighbourhood of a straight line of some rational slope \(\nicefrac{p}{q}\in\QPI\) passing through some punctures. 
Our geography result implies that this is true in general for the invariant \(\Khr(T)\). 
Stating this carefully requires some additional terminology.

We will consider closed curves \(\gamma\) in \(\FourPuncturedSphereKh\). 
In view of Proposition~\ref{prop:Khr_not_embedded}, we will restrict to those curves whose free homotopy class does not contain a representative curve that is embedded. 
It is useful to introduce a ``normal form'' for curves in \(\PuncturedPlane\), cf~\cite[Section~7.1]{HRW} and \cite[Section~3]{pqSym}:

\begin{definition}\label{def:peg_board_rep}
	Consider the standard Riemannian metric on \(\PuncturedPlane\), which induces a Riemannian metric on \(\FourPuncturedSphereKh\). 
	Fix some \(\varepsilon\) with \(0<\varepsilon<1/2\). 
	Define an \textbf{\(\bm{\varepsilon}\)-peg-board representative} of a closed curve \(\gamma\) in \(\FourPuncturedSphereKh\) as a representative of the homotopy class of \(\gamma\) which has minimal length among all representatives of distance \(\varepsilon\) to all four punctures in \(\FourPuncturedSphereKh\). 
\end{definition}

The intuition behind this definition is to think of the four punctures of \(\FourPuncturedSphereKh\) as pegs of radii \(\varepsilon\) and then to imagine pulling the curve \(\gamma\) ``tight'', like a rubber band. 
In the setting of Khovanov homology, we will be able to assume that the \(\varepsilon\)-peg-board representative of \(\gamma\) is unique.
Consider an infinite connected lift $\InfConLift{\gamma}\co \mathbb R \to\PuncturedPlane $ of a peg-board representative $\gamma\co S^1\to\FourPuncturedSphereKh$---in the sense that the compositions  $ \mathbb R \xrightarrow{\InfConLift{\gamma}} (\PuncturedPlane) \to\FourPuncturedSphereKh$ and $\mathbb R  \to S^1 \xrightarrow{\gamma} \FourPuncturedSphereKh$ coincide---along with the pegs of radii \(\varepsilon\) that sit at the lattice points of $\PuncturedPlane$. If we then take the limit \(\varepsilon\rightarrow0\), the lift $\InfConLift{\gamma}$ becomes a piecewise linear curve, which we call a \textbf{singular peg-board representative} of \(\gamma\). 
It is unique up to deck transformations.

We say a curve \(\gamma\) in \(\FourPuncturedSphereKh\) \textbf{wraps} around a puncture if there exists some angle \(\alpha>0\) and  $\delta>0$ such that for all \(\delta > \varepsilon >0\), the \(\varepsilon\)-pegboard representative of \(\Lgamma\) changes its direction at a lift of this puncture by an angle \(\geq\alpha\). 
A curve \(\gamma\) is called \textbf{linear} if it does not wrap around any puncture.
If \(\gamma\) is linear, its singular peg-board representative is contained in a single line. However, the converse is false, due to possible $k\pi$-turns of $\gamma$ around punctures. (This is referred to as peg-wrapping in \cite{HRW}.)
\begin{definition}\label{def:RationalVsSpecialKht}
	We call a linear curve \(\gamma\) \textbf{special} if its singular peg-board representative contains the lift of the special puncture $\ast$, and \textbf{rational} otherwise.
\end{definition}

\begin{example}
	The invariant \(\Khr(P_{2,-3})\) shown in Figure~\ref{fig:Kh:example:Curve:Upstairs} consists of a rational curve of slope \(\nicefrac{1}{2}\) and a special curve of slope \(0\). The invariant \(\BNr(P_{2,-3})\) shown in Figure~\ref{fig:BN:example:Curve:Upstairs} is not linear, since it wraps around a non-special puncture, namely the one corresponding to the upper right tangle end.
	The singular peg-board representative of \(\BNr(P_{2,-3})\) consists of two linear segments of slopes \(\nicefrac{1}{2}\) and \(0\). 
\end{example}

\begin{theorem}\label{thm:geography_of_Khr}
	For any pointed Conway tangle \(T\), every component of \(\Khr(T)\) is linear.
\end{theorem}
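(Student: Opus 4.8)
### Proof proposal for Theorem~\ref{thm:geography_of_Khr}

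The plan is to reduce the geography statement to an algebraic property of the type~D structure \(\DD_1(T)^{\BNAlgH}\), and to prove that algebraic property by exhibiting the promised extension \(\DD_1(T)^{\BinfU}\) over the \(A_\infty\) algebra \(\BinfU\) and extracting constraints from it. First I would reformulate "linearity" in terms of the curve dictionary of \cite[Theorem~1.5]{KWZ}: a closed curve \(\gamma\) fails to be linear precisely when its singular peg-board representative genuinely changes direction at a lift of some \emph{non-special} puncture, and this directional change is detected algebraically by the way the differential of \(\DD_1^c(T)\) mixes the two idempotents \(\iota_\bullet,\iota_\circ\) together with powers of \(D\) and \(S\). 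Concretely, wrapping around a non-special puncture forces a segment of the curve that, in the distinguished representative \(\DD_1^c(T)\), appears as an arrow decorated by a word in \(\BNAlgH\) that is not a pure power of \(S\) along a single strand — i.e. something like a \(D\)-arrow flanked in a way incompatible with any straight line through the lattice. So the target becomes: show that in \(\DD_1^c(T)\) every "non-\(S\)-type" arrow can be normalized away, or rather that the only local configurations that survive are those realizing straight-line segments. This is the reformulation I would state as a lemma before doing any geometry.

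The core of the argument is then the construction announced in the introduction: extend \(\DD_1(T)^{\BNAlgH}\) to \(\DD_1(T)^{\BinfU}\) over an \(A_\infty\) algebra \(\BinfU\) equipped with an \(A_\infty\) quasi-isomorphism (or at least an algebra map) down to \(\BNAlgH\), so that the curved/uncurved type~D structure over \(\BNAlgH\) is recovered by base change. I would build \(\BinfU\) by first defining an \(A_\infty\) enhancement of Bar-Natan's cobordism category \(\Cob_{/l}\) using the matrix-factorization model of \cite{KR_mf_I,KR_mf_II}, then specializing to the Conway-tangle setting where \(\End\) of the two crossingless generators is identified — via the homological mirror symmetry equivalence for the three-punctured sphere \cite{AAEKO,LekPol,Orlov} — with an explicit \(A_\infty\) algebra, which is where the formal variable \(U\) (keeping track of the extra grading coming from the potential/curvature term) enters. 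The key step is that \(\DD_1^c(T)\), as an \(A_\infty\) type~D structure over \(\BinfU\), must satisfy the \(A_\infty\) structure equations \(\sum \mu_n(\delta,\dots,\delta)=0\); feeding the distinguished representative into these equations yields relations that the decorating words in \(\BinfU\) cannot satisfy unless each arrow is of the permitted "linear" form. In other words, the higher products \(\mu_{\ge 3}\) of \(\BinfU\) obstruct exactly the curve configurations corresponding to wrapping around non-special punctures, while leaving the special puncture unconstrained (hence the rational/special dichotomy of Theorem~\ref{thm:geography:Khr:intro}, though here we only claim linearity).

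After that, the remaining steps are bookkeeping. I would: (i) translate the algebraic constraint back through the curve dictionary, checking that "all arrows of permitted form" is equivalent to "the singular peg-board representative lies in a single line after allowing \(k\pi\)-turns", using the \(\varepsilon\to 0\) limit and the uniqueness of the peg-board representative asserted in Section~\ref{subsec:plan}; (ii) handle local systems, since a component of \(\Khr(T)\) may carry a nontrivial local system — but linearity is a statement about the underlying curve only, so this reduces to the trivial-local-system case by passing to the associated multicurve without local system; (iii) handle gradings, confirming that the \(A_\infty\) operations on \(\BinfU\) respect the bigrading (and the auxiliary \(U\)-grading) so that the obstruction argument is grading-compatible. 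The main obstacle, by far, is the construction and explicit identification of \(\BinfU\): one must verify that the matrix-factorization enhancement of \(\Cob_{/l}\) is well-defined and functorial, that its restriction to the Conway-tangle subcategory is quasi-isomorphic to the mirror-symmetry model, and that base change along \(\BinfU\to\BNAlgH\) recovers \(\DD_1(T)^{\BNAlgH}\) on the nose up to homotopy — each of these is a substantial coherence check, and getting the grading conventions (especially the \(\delta\)-grading interacting with the curvature term) to line up is where the real work lies. The extraction of linearity from the \(A_\infty\) relations, by contrast, I expect to be a finite case analysis once \(\BinfU\) is in hand, modeled closely on the analogous argument for \(\HFT\) in \cite{pqSym}.
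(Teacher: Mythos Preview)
Your proposal has the roles of the special and non-special punctures reversed, and this inversion is the central gap. You write that non-linearity is ``precisely'' a direction change at a non-special puncture and that the higher \(A_\infty\) products of \(\BinfU\) obstruct exactly those configurations ``while leaving the special puncture unconstrained.'' Both claims are false. A curve is non-linear if it wraps around \emph{any} puncture, special or not; and the extension to \(\BinfU\) is what rules out wrapping around the \emph{special} puncture, not the non-special ones. Geometrically, \(\Binf=\End_{\W(\ThreePuncturedSphere)}(\arcVer\oplus\arcHor)\) is computed in the Fukaya category of the sphere with the special puncture \emph{removed}; its higher products count disks sweeping across the now-filled special puncture, and the existence of the extension says precisely that the curve is unobstructed in \(\ThreePuncturedSphere\), i.e.\ that fishtails around the special puncture cancel. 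This is Theorem~\ref{thm:no_wrapping_around_special}. (Relatedly, you seem to conflate ``special curve'' with ``wrapping around the special puncture''; a special curve in Definition~\ref{def:RationalVsSpecialKht} is a \emph{linear} curve whose line passes through a lift of~\(\ast\), which is a separate phenomenon.)

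The non-special case (Theorem~\ref{thm:almost_no_wrapping}) uses a completely different and much more elementary mechanism that your proposal misses: the mapping-cone identity \(\DD_1(T)=[\DD(T)\xrightarrow{H\cdot\id}\DD(T)]\). From this one shows directly (Proposition~\ref{prop:higher_powers_when_wrapping}), via Clean-Up Lemma manipulations, that the differential of \(\DD_1^c(T)\) only contains linear combinations of \(D,S,S^2\). Wrapping around a non-special puncture is then excluded by naturality under \(\Mod(\FourPuncturedSphere)\) (Theorem~\ref{thm:Kh:Twisting}): after suitable shearing twists the wrapping angle exceeds \(135^\circ\), which forces a component \(D^n\) with \(n>1\) or \(S^m\) with \(m>2\) in the differential, contradicting the proposition. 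No \(A_\infty\) deformation, no matrix factorizations, and no mirror symmetry are needed for this half. The proof is therefore genuinely two-pronged, with the halves using disjoint ingredients; your outline collapses them into a single mechanism aimed at the wrong puncture.
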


The proof occupies Sections~\ref{sec:Kh:geography:non-special} and~\ref{sec:no_wrapping_around_special}: First, in Theorem~\ref{thm:almost_no_wrapping}, we rule out wrapping around non-special punctures; then, in Theorem~\ref{thm:no_wrapping_around_special}, we rule out wrapping around the special puncture. 

With the linearity result in hand, it is possible to further restrict the geography of \(\Khr(T)\); this is the focus of Section~\ref{sec:further_restrictions}. 
For \(n\in\mathbb N\), let \(\rKh_{n}(0)\) and \(\sKh_{2n}(0)\) be the immersed curves in \(\FourPuncturedSphereKh\) that  admit lifts to the curves $\tilde{\rKh}_{n}(0)$ and $\tilde{\sKh}_{2n}(0)$, respectively, in Figure~\ref{fig:geography:Upstairs}; curves for $n=1,2,3$ are illustrated in Figures~\ref{fig:geography:Downstairs1}--\ref{fig:geography:Downstairs3}.
We will refer to the subscripts \(n\), respectively \(2n\), as the \textit{lengths} of those curves. 
For every slope \(\nicefrac{p}{q}\in\QPI\), we  define the curves $\rKh_n(\nicefrac{p}{q})$ and $\sKh_{2n}(\nicefrac{p}{q})$ as the images of \(\rKh_{n}(0)\) and \(\sKh_{2n}(0)\), respectively,  under the action of the matrix
\[
\begin{bmatrix*}[c]
q & r \\
p & s
\end{bmatrix*} \qquad\text{where} \qquad qs-pr=1,
\]
considered as an element the mapping class group $\operatorname{Mod}(\FourPuncturedSphereKh) \cong PSL(2,\Z)$ consisting of mapping classes fixing the special puncture $\ast$. (This transformation maps straight lines of slope 0 to straight lines of slope \(\nicefrac{p}{q}\). The isomorphism $\operatorname{Mod}(\FourPuncturedSphereKh) \cong PSL(2,\Z)$ is induced by the two-fold branched cover $\mathbb T^2 \to \FourPuncturedSphereKh$ and the isomorphism $\operatorname{Mod}(\mathbb T^2) \cong SL(2,\Z)$.)
The local systems on each of these curves are defined to be trivial. 
In Section~\ref{sec:further_restrictions}, we prove:

\begin{theorem}\label{thm:further_geography_of_Khr}
	Every component of the curve invariant \(\Khr(T)\) is equal to either \(\rKh_n( \nicefrac p q)\) or  \(\sKh_{2n}(\nicefrac p q)\) for some \(n\geq 1, \nicefrac p q \in \QPI\).
\end{theorem}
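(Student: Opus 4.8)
The plan is to combine the linearity of \(\Khr(T)\) (Theorem~\ref{thm:geography_of_Khr}) with the fact that \(\DD_1(T)\) is by construction the mapping cone of multiplication by the central element \(H=D+S^2\). Let \(\gamma\subseteq\Khr(T)\) be a component. By Theorem~\ref{thm:geography_of_Khr} it is linear, hence by Definition~\ref{def:RationalVsSpecialKht} rational or special, and its singular peg-board representative lies in a line of some slope \(\nicefrac pq\in\QPI\) (the slope is rational because \(\gamma\) is compact and, by Proposition~\ref{prop:Khr_not_embedded}, not embedded). The mapping class group \(\Mod(\FourPuncturedSphereKh)\cong PSL(2,\Z)\) of classes fixing \(\ast\) acts transitively on slopes; pick \(\tau\) carrying slope \(0\) to \(\nicefrac pq\), realized by the matrix \(\left[\begin{smallmatrix}q&r\\p&s\end{smallmatrix}\right]\) of the statement. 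By Theorem~\ref{thm:Kh:Twisting}, \(\tau^{-1}(\gamma)\) is a slope-\(0\) component of \(\Khr(\tau^{-1}(T))\); since \(\tau^{-1}\) fixes \(\ast\) it preserves the rational/special dichotomy and carries local systems isomorphically, and \(\tau(\rKh_n(0))=\rKh_n(\nicefrac pq)\), \(\tau(\sKh_{2n}(0))=\sKh_{2n}(\nicefrac pq)\) by definition. Hence it suffices to show: for an arbitrary pointed tangle, every slope-\(0\) component of \(\Khr\) equals \(\rKh_n(0)\) or \(\sKh_{2n}(0)\) for some \(n\geq 1\).

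\textbf{Step 2: a combinatorial normal form.} The singular peg-board representative of a slope-\(0\) linear \(\gamma\) lies in a horizontal line \(\ell=\{y=k\}\subseteq\PuncturedPlane\) passing through the lattice points \((m,k)\) (if \(\ell\) missed all lattice points, \(\gamma\) would be freely homotopic to an embedded curve, contradicting Proposition~\ref{prop:Khr_not_embedded}). Since \(\gamma\) is compact it cannot traverse \(\ell\) indefinitely, and since it cannot pass straight through a puncture, its representative traverses a finite segment of \(\ell\) back and forth, making a \(k_i\pi\)-turn at each lattice point met. Of the four puncture classes in \(\Z^2/2\Z^2\), exactly one contains the lifts of \(\ast\); requiring \(\gamma\) rational, resp.\ special, fixes the parity of \(k\) so that \(\ell\) avoids, resp.\ contains, this class. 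Via the classification theorem \cite[Theorem~1.5]{KWZ}, the data of the segment, the turn numbers \(k_i\), and a local system (a conjugacy class of invertible matrix over \(\F\)) corresponds to a reduced cyclic type~D structure over \(\BNAlgH\) whose arrows are powers of \(S\) (crossing the arc separating the two punctures met by \(\ell\)) and powers of \(D\) (wrapping a puncture), twisted by that single local system.

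\textbf{Step 3: exploiting the mapping-cone structure.} As \(H=D+S^2\) carries no idempotent component and \(\DD^c(T)\) is reduced, the cone \([\DD^c(T)\xrightarrow{H\cdot\id}\DD^c(T)]\) is itself reduced, hence equals \(\DD_1^c(T)\); since \(H\) is central and \(\DD^c(T)=\bigoplus_j N_j\) splits into the components of \(\BNr(T)\), we get \(\DD_1^c(T)=\bigoplus_j\operatorname{cone}(H\cdot\id_{N_j})\), so the summand of \(\DD_1^c(T)\) producing \(\gamma\) is a direct summand of \(\operatorname{cone}(H\cdot\id_{N_j})\) for a single \(j\). Feeding the normal form of Step~2 into this, and using the relations \(DS=SD=0\) and \(H^m=D^m+S^{2m}\) in \(\BNAlgH\), one determines which cyclic \(D\)--\(S\) complexes with local systems arise as such summands: the turn numbers are pinned to the uniform pattern realized by \(\rKh_n(0)\); near a lift of \(\ast\) the cone can only insert the \(S^2\)-part of \(H\), and only in pairs, forcing the even length \(2n\) in the special case; and the local system is forced to be a single identity block, hence trivial. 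This identifies \(\gamma\) with \(\rKh_n(0)\) or \(\sKh_{2n}(0)\); reapplying \(\tau\) as in Step~1 gives the theorem. The crux is this last step: linearity already guarantees that each \(\operatorname{cone}(H\cdot\id_{N_j})\) is a sum of linear curves, but extracting the precise turn numbers, and in particular excluding higher-dimensional local systems, requires a careful analysis of how coning off the specific central element \(H=D+S^2\) transforms an arbitrary---possibly non-linear---component of \(\BNr(T)\), making essential use of the reducedness of \(\DD^c(T)\) and of the already-established linearity of the output.
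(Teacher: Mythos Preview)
Your Step~3 is not a proof: you openly call it ``the crux'' and then defer it (``one determines\ldots'', ``requires a careful analysis\ldots''). Two concrete problems sit inside this deferral. First, the claim that the cone \([\DD^c(T)\xrightarrow{H\cdot\id}\DD^c(T)]\) ``is itself reduced, hence equals \(\DD_1^c(T)\)'' is false: being reduced is far weaker than being in the curve normal form that defines \(\DD_1^c(T)\); the paper's Proposition~\ref{prop:higher_powers_when_wrapping} spends real work (Clean-Up Lemma, hand-built simply-faced precurves) passing from the reduced cone to the curve representative. Second, your proposed route---classify the curve summands of \(\operatorname{cone}(H\cdot\id_{N_j})\) for each component \(N_j\) of \(\BNr(T)\)---faces the obstacle that there is no geography result for \(\BNr\); the \(N_j\) are unconstrained (for instance \(\BNr(P_{2,-3})\) is non-linear), so this classification is at least as hard as the theorem itself.

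More seriously, your approach never invokes the extension \(\DD_1(T)^{\BinfU}\) (Theorem~\ref{thm:extension}), and the paper uses it essentially in the special case. In the proof of Theorem~\ref{thm:geography:special_curves}, the \(\mu_4\)-action coming from \(\BinfU\) is what forces the two ``legs'' of a slope-\(0\) special component to have equal length and what kills non-trivial local systems; the null-homotopy \(H\cdot\id\simeq 0\) alone handles only the complementary orientation case. Remark~\ref{rem:comparison_extendibility} makes this dependence explicit and exhibits an extendable type~D structure showing that extendability and the cone property are genuinely independent constraints. Nothing in your Step~3 supplies a substitute for this input, so as written the argument cannot close for special components. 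For rational components your strategy is closer in spirit to the paper's Lemma~\ref{lem:cone_curves}, but there too the actual computation (eliminating non-trivial local systems and pinning the differential to alternate \(D,S^2\)) is not carried out.
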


So in particular, the local system on any component of \(\Khr(T)\) is trivial for any tangle \(T\). 

\begin{figure}[t]
	\centering
	\begin{subfigure}{0.3\textwidth}
		\centering
		\labellist 
		\footnotesize \color{blue}
		\pinlabel $\sKh_{2}(0)$ at 65 115
		\pinlabel $\rKh_{1}(0)$ at 65 28
		\endlabellist
		\includegraphics[scale=1]{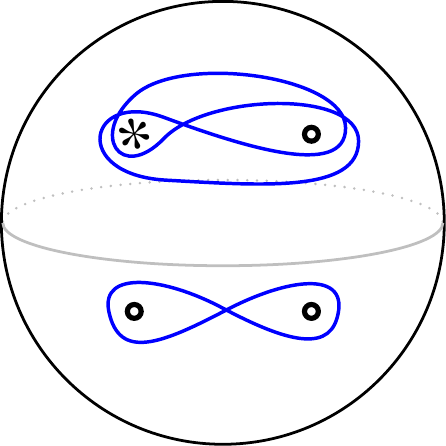}
		\caption{\(n=1\)}\label{fig:geography:Downstairs1}
	\end{subfigure}
	\begin{subfigure}{0.3\textwidth}
		\centering
		\labellist 
		\footnotesize \color{blue}
		\pinlabel $\sKh_{4}(0)$ at 65 118
		\pinlabel $\rKh_{2}(0)$ at 65 13
		\endlabellist
		\includegraphics[scale=1]{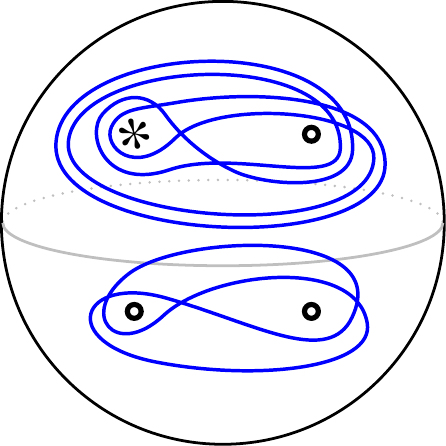}
		\caption{\(n=2\)}\label{fig:geography:Downstairs2}
	\end{subfigure}
	\begin{subfigure}{0.3\textwidth}
		\centering
		\labellist 
		\footnotesize \color{blue}
		\pinlabel $\sKh_{6}(0)$ at 65 121
		\pinlabel $\rKh_{3}(0)$ at 65 10
		\endlabellist
		\includegraphics[scale=1]{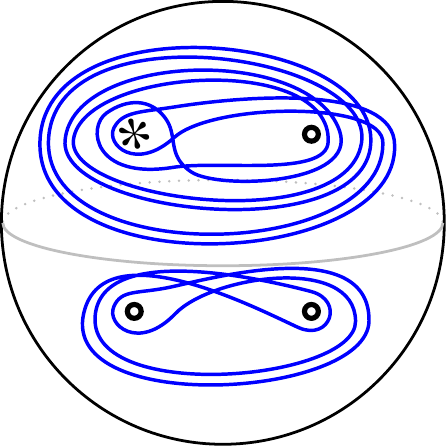}
		\caption{\(n=3\)}\label{fig:geography:Downstairs3}
	\end{subfigure}
	\\
	\begin{subfigure}{0.9\textwidth}
		\centering
		\(\GeographyCovering\)
		\caption{}\label{fig:geography:Upstairs}
	\end{subfigure}
	\caption{The curves \(\rKh_n(0)\) and \(\sKh_{2n}(0)\) (a--c) and their lifts to \(\PuncturedPlane\) (d). }\label{fig:geography}
\end{figure}

\section{Wrapping around non-special tangle ends}\label{sec:Kh:geography:non-special}

\begin{theorem}\label{thm:almost_no_wrapping}
	A curve that wraps around a non-special puncture cannot be a component of \(\Khr(T)\) for any pointed Conway tangle \(T\). 
\end{theorem}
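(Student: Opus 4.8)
The plan is to reduce the statement to a combinatorial property of the type~D structure \(\DD_1(T)^{\BNAlgH}\) and then obstruct that property using the \(A_\infty\) extension \(\DD_1(T)^{\BinfU}\). Since the invariant is natural under the mapping class group (Theorem~\ref{thm:Kh:Twisting}) and the subgroup of \(\Mod(\FourPuncturedSphere)\) fixing \(\ast\), which is isomorphic to \(\PSL(2,\Z)\), acts on the three non-special punctures through the surjection \(\PSL(2,\Z)\twoheadrightarrow S_3\), it suffices to rule out wrapping around one fixed non-special puncture \(p_0\). I would first record what this amounts to for a distinguished representative \(\DD_1^c(T)\), from which \(\Khr(T)\) is read off directly: via the dictionary between the singular peg-board representative of a component and the delooped, reduced cyclic word of \(S\)- and \(D\)-arrows of the corresponding summand of \(\DD_1^c(T)\), wrapping around \(p_0\) is equivalent to the presence of a local sub-pattern of arrows whose peg-board representative turns a genuine corner at \(p_0\) --- as opposed to a neutral \(k\pi\)-turn, which appears as an isolated power of \(D\) flanked by \(S\)-arrows. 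I expect this step to be a fiddly but routine local analysis of planar curves near a single puncture.

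Next I would invoke the extension established earlier in the paper: \(\DD_1(T)^{\BNAlgH}\) lifts to a type~D structure \(\DD_1(T)^{\BinfU}\) over the \(A_\infty\) algebra \(\BinfU = \mathcal{B}^*[U]\), with the reduction \(U\mapsto 0\), which also collapses the higher \(A_\infty\) operations, recovering \(\DD_1(T)^{\BNAlgH}\). This algebra comes from the matrix-factorisation enhancement of Bar-Natan's cobordism category \(\Cobl\) and is pinned down via the homological mirror symmetry of the three-punctured sphere. The geometric upshot I would rely on is that passing to \(\BinfU\) amounts to filling in \(p_0\) and working inside a matrix-factorisation model for the wrapped Fukaya category of the resulting pair of pants, on which the relevant twisted complexes of the generating arcs are again represented by immersed curves --- but now on a surface from which \(p_0\) has been erased --- and these curves may be taken taut. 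It is essential that we extend \(\DD_1\), not \(\DD\): the analogous statement is false for \(\BNr(T)\), which may genuinely wrap around non-special punctures, so the obstruction must genuinely use the variable \(U\).

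Finally I would show that a component of \(\Khr(T)\) wrapping around \(p_0\) is incompatible with this lift. The \(U\mapsto 0\) reduction recovers the \(S^2_4\)-curve of that component from the taut pair-of-pants curve by re-introducing the puncture \(p_0\) and re-tightening around it; but a curve that is taut on the pair of pants cannot acquire a genuine corner at \(p_0\) under this operation --- only a harmless \(k\pi\)-turn --- so no component of \(\Khr(T)\) wraps around \(p_0\), the desired contradiction. (Equivalently, at the level of type~D structures, lifting the offending summand of \(\DD_1^c(T)\) and applying the \(A_\infty\) structure equations over \(\BinfU\) straightens the corner, since the puncture that obstructed the straightening is no longer present; reducing \(U\mapsto 0\) then shortens the turning word, and iterating empties it.) I expect the crux of the argument to be the input underlying the second step --- building the \(A_\infty\) enhancement of \(\Cobl\) from matrix factorisations, matching it with \(\BinfU\) via the mirror-symmetry quasi-isomorphism, and verifying that \(\DD_1(T)\) genuinely lifts, which is where one leaves Bar-Natan's framework behind. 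Granting that, the first step is planar-curve bookkeeping, the reduction to one puncture is immediate from Theorem~\ref{thm:Kh:Twisting}, and the third step is a finite verification inside \(\BinfU\).
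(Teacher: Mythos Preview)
Your proposal has a genuine gap: you have misidentified which puncture the extension to \(\BinfU\) fills in. The algebra \(\Binf\) is the endomorphism algebra of \(\arcVer\oplus\arcHor\) inside \(\W(\ThreePuncturedSphere)\), where \(\ThreePuncturedSphere\) is obtained from \(\FourPuncturedSphereKh\) by filling in the \emph{special} puncture \(\ast\), not a non-special puncture \(p_0\). The higher products in \(\BinfU\) count polygons that pass over \(\ast\); they say nothing about \(p_0\). In particular, your step ``passing to \(\BinfU\) amounts to filling in \(p_0\)'' is false, and the mapping class group elements you are allowed to use all fix \(\ast\), so you cannot relabel \(\ast\) as \(p_0\).

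There is a second, decisive reason the approach cannot work: by Theorem~\ref{thm:extension}, \emph{both} \(\DD(T)\) and \(\DD_1(T)\) extend over \(\BinfU\). Hence both \(\BNr(T)\) and \(\Khr(T)\) lift to unobstructed curves on the pair of pants with \(\ast\) filled in. But \(\BNr(T)\) genuinely can wrap around non-special punctures (see Example~\ref{exa:Khr:2m3pt}), while \(\Khr(T)\) cannot. So whatever distinguishes \(\Khr\) from \(\BNr\) with respect to non-special wrapping is \emph{not} the \(\BinfU\)-extension; your proposed obstruction does not see the difference.

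The paper's proof is in fact entirely elementary and does not touch \(\BinfU\) at all. The key ingredient is Proposition~\ref{prop:higher_powers_when_wrapping}: the differential of the distinguished representative \(\DD_1^c(T)\) contains only linear combinations of \(D\), \(S\), and \(S^2\). This is proved directly from the mapping-cone description \(\DD_1(T)\simeq[\DD^c(T)\xrightarrow{H\cdot\id}\DD^c(T)]\): one uses the Clean-Up Lemma to shorten every \(D^n\) (\(n>1\)) and \(S^m\) (\(m>2\)) against the vertical \(H=D+S^2\) arrows, and then checks that the simplification to a simply-faced precurve preserves this property. Given Proposition~\ref{prop:higher_powers_when_wrapping}, the theorem follows from twisting: if a component of \(\Khr(T)\) wraps around a non-special puncture, one uses Theorem~\ref{thm:Kh:Twisting} to make one adjacent segment horizontal and then shears so that the turning angle exceeds \(135^\circ\); in each of the three possible positions of the puncture this forces a \(D^n\) (\(n>1\)) or \(S^m\) (\(m>2\)) arrow in \(\DD_1^c(T)\), contradicting the proposition. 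The \(\BinfU\)-extension enters only in Section~\ref{sec:no_wrapping_around_special}, where it is used for exactly the puncture it fills in, namely \(\ast\).
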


The proof of Theorem~\ref{thm:almost_no_wrapping} uses the naturality under twisting tangle ends (Theorem~\ref{thm:Kh:Twisting}) in an essential way. If this naturality holds over arbitrary fields, then so does Theorem~\ref{thm:almost_no_wrapping}.

Recall that $\DD^c(T)$ and $\DD_1^c(T)$ are the type D structures from which we can read off the multicurves $\BNr(T)$ and $\Khr(T)$ directly, as in Figure~\ref{fig:exa:classification:curves}.

\begin{proposition}\label{prop:higher_powers_when_wrapping}
	The differential of the type D structure \(\DD_1^c(T)\) only contains linear combinations of \(D,S,S^2\in\BNAlgH\). 
\end{proposition}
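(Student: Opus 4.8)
The plan is to realise $\DD_1^c(T)$ out of an explicit mapping cone and then to clear the higher powers by a finite sequence of changes of basis that are powered by the cone differential. Recall that $\DD_1(T)$ is the mapping cone of $H\cdot\id$ on $\DD(T)$, where $H=D+S^2$. Fix the distinguished representative $\DD^c(T)$ of $\DD(T)$, from which $\BNr(T)$ is read off directly. By the classification of type~D structures over $\BNAlgH$ \cite[Theorem~1.5]{KWZ}, the differential of $\DD^c(T)$ has a rigid shape: every arrow is labelled by $S$, by $S^2$, or by a power $D^k$ with $k\geq 1$, and every composable pair of arrows of $\DD^c(T)$ multiplies to zero in $\BNAlgH$. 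The latter is forced by the relations $D_\bullet S_\circ=S_\circ D_\circ=D_\circ S_\bullet=S_\bullet D_\bullet=0$, and in particular it means that along any path of arrows of $\DD^c(T)$ the $S$-type labels $\{S,S^2\}$ and the $D$-power labels $\{D^k\}$ strictly alternate.

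Writing out $\mathrm{Cone}(H\cdot\id\colon\DD^c(T)\to\DD^c(T))$ produces a type~D structure with two copies $x^{(1)},x^{(2)}$ of each generator $x$ of $\DD^c(T)$, the original arrows of $\DD^c(T)$ in each copy, and one new arrow $x^{(1)}\xrightarrow{\,H\,}x^{(2)}$ for every $x$. Since $H=D+S^2$, the only arrow labels that are not $\fieldTwoElements$-linear combinations of $D,S,S^2$ are the powers $D^k$ with $k\geq 2$ inherited from $\DD^c(T)$, and the plan is to eliminate these one at a time. Given such an arrow $u\xrightarrow{\,D^k\,}w$ of $\DD^c(T)$ (so that $u$ and $w$ carry the same idempotent), apply the change of basis $u^{(2)}\rightsquigarrow u^{(2)}+D^{k-1}\!\cdot\! w^{(1)}$. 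Using the identity $H\cdot D^{k-1}=D^k$ in $\BNAlgH$ one checks that this substitution makes the arrows $u^{(1)}\xrightarrow{\,D^k\,}w^{(1)}$ and $u^{(2)}\xrightarrow{\,D^k\,}w^{(2)}$ cancel, leaving behind only the arrow $u^{(1)}\xrightarrow{\,H\,}(u^{(2)}+D^{k-1}w^{(1)})$, whose label $H$ is harmless. Crucially, no new higher power is created: by the rigid shape of $\DD^c(T)$ the arrows into $u$ and out of $w$ adjacent to the $D^k$-arrow are $S$-type, and $S\cdot D^{k-1}=S^2\cdot D^{k-1}=0$, so every correction term generated by the substitution vanishes. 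Each such move strictly decreases the number of arrows labelled by a $D$-power of exponent $\geq 2$, so after finitely many moves one reaches a type~D structure $\widetilde\DD$, homotopy equivalent to $\DD_1(T)$, whose differentials are all $\fieldTwoElements$-linear combinations of $D,S,S^2$. As $\widetilde\DD$ is reduced, the classification theorem \cite[Theorem~1.5]{KWZ} identifies it with a representative from which the multicurve $\Khr(T)$ is read off directly, i.e.\ with $\DD_1^c(T)$, which proves the proposition.

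The main obstacle is the bookkeeping in the reduction step: one must track how the substitution $u^{(2)}\rightsquigarrow u^{(2)}+D^{k-1}w^{(1)}$ affects every arrow incident to $u^{(2)}$ and to $w^{(1)}$, and verify --- using the vanishing products $D^jS=SD^j=D^jS^2=S^2D^j=0$ together with $H\cdot D^{k-1}=D^k$ --- that the only surviving correction is the single harmless $H$-arrow. The ``all composable products vanish'' feature of $\DD^c(T)$ provided by \cite[Theorem~1.5]{KWZ} is exactly what guarantees both that these corrections vanish and that the procedure terminates; one also keeps track of the bigradings ($H$ has the same bidegree as $D$) to see that the moves are grading-preserving and do not interfere with one another.
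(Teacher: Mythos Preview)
Your clean-up of the higher $D$-powers in the mapping cone of $H\cdot\id$ on $\DD^c(T)$ is essentially the same move the paper makes. Two issues remain, one minor and one serious.

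\textbf{Minor.} Your description of the canonical representative $\DD^c(T)$ is too restrictive: arrow labels can be any $S^m$ with $m\geq 1$, not just $S$ or $S^2$. (At this point in the paper nothing rules out curve segments in $\BNr(T)$ that wind around the special puncture, and the paper's own proof explicitly treats $S^m$ for $m>2$.) This is easy to repair: apply the symmetric clean-up using $H\cdot S^{m-2}=S^m$, exactly as you did for $H\cdot D^{k-1}=D^k$.

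\textbf{Main gap.} Your final sentence does not hold. Being reduced is \emph{not} the same as being a canonical representative from which the multicurve can be read off; the classification theorem only tells you that $\widetilde\DD$ is \emph{homotopy equivalent} to $\DD_1^c(T)$, not equal to it. The complex $\widetilde\DD$ you produce has generators of valence three (an $H$-arrow together with the two arrows inherited from $\DD^c(T)$), so it is visibly not a curve complex. Bringing $\widetilde\DD$ into the form $\DD_1^c(T)$ requires the arrow-sliding machinery of \cite[Section~5]{KWZ}, and the paper points out that the relevant algorithm does \emph{not} preserve the ``only $D,S,S^2$'' property in general --- for instance
\[
\Big[\DotB\xrightarrow{S^2}\DotB\xleftarrow{S}\DotC\xrightarrow{S^2}\DotC\Big]
\ \cong\
\Big[\DotB\xrightarrow{S^3}\DotC\Big]\ \oplus\ \Big[\DotB\xleftarrow{S}\DotC\Big],
\]
so higher powers can reappear. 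The paper therefore does real additional work: it analyses the local pieces of $\widetilde\DD$ (the two ``squares'' with vertical $H=D+S^2$ edges and horizontal $D$- or $S^n$-edges), builds the associated simply-faced precurve by hand via explicit base changes, and checks that this hand-built precurve still has only $D,S,S^2$ labels; only then does the arrow-pushing step (which does preserve the property) yield $\DD_1^c(T)$. Your argument stops before this step and the gap is genuine.
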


Given two type D structures \(X_1^\BNAlgH\) and \(X_2^\BNAlgH\), the space of morphisms \(\Mor(X_1,X_2)\) carries a natural action by the center \(Z(\BNAlgH)\) of \(\BNAlgH\). This action is defined by 
\[
z \cdot f
=
(\id_{X_2}\otimes \mu_{\BNAlgH})
\circ
(f\otimes z)
\quad
\text{for }
f\in\Mor(X_1,X_2),
\text{ and } 
z\in Z(\BNAlgH),
\]
where \(\mu_{\BNAlgH}\) denotes the multiplication in \(\BNAlgH\). 
In particular, for \(z\in\{H,D_{\circ},D_{\bullet},S^2\}\) and \(X=X_1=X_2\), the type D structure homomorphisms 
\begin{equation}\label{eq:morphisms}
H\cdot\id, ~ S^2\cdot \id , ~ D_{\bullet} \cdot \id, ~D_{\circ} \cdot \id ~ \in ~ \Mor(X,X)
\end{equation}
are defined by the formulas
\begin{align*}
H\cdot\id (x) 
&= x \otimes H, 
&
S^2\cdot\id (x) 
&= x \otimes S^2, 
\\ 
D_{\bullet} \cdot \id (x_{\bullet})
&= x_{\bullet} \otimes D_{\bullet},
~
D_{\bullet} \cdot \id (x_{\circ})
= 0,
&
D_{\circ} \cdot \id (x_{\circ})
&= x_{\circ} \otimes D_{\circ},
~
D_{\circ} \cdot \id (x_{\bullet})
= 0,
\end{align*}
where $x_{\bullet}$ and $x_{\circ}$ denote generators in idempotents \(\DotB\) and \(\DotC\), respectively.
The following lemma will be used in this section multiple times. 

\begin{lemma}\label{lem:morphisms_are_preserved}
  Let \(X_1^\BNAlgH\simeq \tilde{X}_1^\BNAlgH\) and \(X_2^\BNAlgH\simeq \tilde{X}_2^\BNAlgH\) be two pairs of chain homotopic type~D structures. 
  Then the induced chain homotopy equivalence \(\Mor(X_1,X_2) \simeq \Mor(\tilde{X}_1,\tilde{X}_2)\) is compatible with the actions of \(Z(\BNAlgH)\). 
  In particular, if \(X=X_1=X_2\) and \(\tilde{X}=\tilde{X}_1=\tilde{X}_2\), the morphism $z \cdot \id\in \Mor(X,X)$ is identified with the morphism $z\cdot \id\in \Mor(\tilde{X},\tilde{X})$ for \(z\in\{H,D_{\circ},D_{\bullet},S^2\}\).
\end{lemma}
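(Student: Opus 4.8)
The plan is to trace the claim back to the observation that the action of a central element \(z\in Z(\BNAlgH)\) on morphism complexes is implemented by composition with the canonical type~D endomorphism \(z\cdot\id\), and that these endomorphisms assemble into a natural transformation of the identity functor. Recall the formal setup: type~D structures over \(\BNAlgH\), together with the morphism complexes \(\Mor(-,-)\) and the composition operation \(\circ\), form (the morphism complexes of) a dg category over \(\F\), in which chain homotopy equivalences admit homotopy inverses. So I would fix homotopy equivalences \(\phi_i\co X_i\to\tilde{X}_i\) together with homotopy inverses \(\bar{\phi}_i\), and note that the induced chain homotopy equivalence \(\Mor(X_1,X_2)\simeq\Mor(\tilde{X}_1,\tilde{X}_2)\) referred to in the statement is, up to homotopy, the map
\[
\Phi\co \Mor(X_1,X_2)\longrightarrow\Mor(\tilde{X}_1,\tilde{X}_2),\qquad \Phi(f)=\phi_2\circ f\circ\bar{\phi}_1 ,
\]
so that it suffices to show \(\Phi\) is compatible with the \(Z(\BNAlgH)\)-actions; in fact I will show that it intertwines them on the chain level.

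The technical heart is the elementary identity
\[
z\cdot f \;=\; f\circ(z\cdot\id_{X_1}) \;=\; (z\cdot\id_{X_2})\circ f
\qquad\bigl(f\in\Mor(X_1,X_2),\ z\in Z(\BNAlgH)\bigr),
\]
which one obtains by unwinding the definitions of \(\circ\) and of the \(Z(\BNAlgH)\)-action: the first equality is purely formal, while the second uses that \(z\) is central, so that it may be commuted past the algebra output of \(f\). (Here \(z\cdot\id_X\) is a cycle in \(\Mor(X,X)\) precisely because \(z\) is central and \(\BNAlgH\) carries no differential, which is what makes the \(Z(\BNAlgH)\)-action descend to homology.) Applied to the homotopy equivalence \(\bar{\phi}_1\co\tilde{X}_1\to X_1\), this identity gives the naturality relation \((z\cdot\id_{X_1})\circ\bar{\phi}_1=\bar{\phi}_1\circ(z\cdot\id_{\tilde{X}_1})\). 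Combining it with the associativity of \(\circ\) then yields, for every \(f\in\Mor(X_1,X_2)\),
\[
\Phi(z\cdot f)=\phi_2\circ f\circ(z\cdot\id_{X_1})\circ\bar{\phi}_1=\phi_2\circ f\circ\bar{\phi}_1\circ(z\cdot\id_{\tilde{X}_1})=z\cdot\Phi(f) ,
\]
which is the first assertion. For the ``in particular'' part I would take \(X_1=X_2=X\), \(\tilde{X}_1=\tilde{X}_2=\tilde{X}\) and \(\phi_1=\phi_2=\phi\), so that \(\Phi(z\cdot\id_X)=z\cdot\Phi(\id_X)=z\cdot(\phi\circ\bar{\phi})\); since \(\phi\circ\bar{\phi}\simeq\id_{\tilde{X}}\) and the \(Z(\BNAlgH)\)-action carries homotopic morphisms to homotopic morphisms (apply \(z\cdot(-)\) to a chain homotopy), this gives \(\Phi(z\cdot\id_X)\simeq z\cdot\id_{\tilde{X}}\), as claimed for \(z\in\{H,D_\circ,D_\bullet,S^2\}\) in particular.

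I expect the only genuinely non-formal step to be the second equality in the displayed identity, \(z\cdot f=(z\cdot\id_{X_2})\circ f\), and more precisely the naturality relation \((z\cdot\id_{X_1})\circ\bar{\phi}_1=\bar{\phi}_1\circ(z\cdot\id_{\tilde{X}_1})\) that it yields: one has to keep track of exactly where the central element \(z\) lands among the iterated algebra multiplications hidden inside \(\circ\), and confirm that centrality of \(z\) is precisely what allows one to commute it past the algebra output of \(\bar{\phi}_1\). Everything else — the identification of the induced map on morphism complexes, associativity of composition, and the fact that the \(Z(\BNAlgH)\)-action respects chain homotopy — is routine homological bookkeeping.
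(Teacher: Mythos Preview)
Your proof is correct and is precisely the unpacking of what the paper records as ``Straightforward'': the paper gives no further details, and your argument via the identity \(z\cdot f=(z\cdot\id_{X_2})\circ f=f\circ(z\cdot\id_{X_1})\) for central \(z\), combined with the standard description of the induced map on morphism complexes, is the natural way to see this.
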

\begin{proof}
	Straightforward.
%
\end{proof}

\begin{proof}[Proof of Proposition~\ref{prop:higher_powers_when_wrapping}]
	For simplicity, let us assume that \(\BNr(T)\) carries no non-trivial local systems; the general argument is very similar to the one below.
	
  We have
  $$\DD^c_1(T)\simeq \DD_1(T)=[\DD(T)\xrightarrow{H\cdot \id}\DD(T)]\simeq [\DD^c(T)\xrightarrow{H\cdot \id}\DD^c(T)]$$
  where the last homotopy equivalence follows from $\DD(T)\simeq \DD^c(T)$ and Lemma~\ref{lem:morphisms_are_preserved}. Let us study the type D structure $[\DD^c(T)\xrightarrow{H\cdot \id}\DD^c(T)]$: If the differential of \(\DD^c(T)\) contains any component \(D^n\) or \(S^m\) starting at \(\DotB\) for \(n>1\) and \(m>2\), then the corresponding portions of the mapping cone look as follows (without the dotted arrows):
	\[
	\begin{tikzcd}
	\DD^c(T)
	\arrow{d}[swap]{H\cdot \operatorname{id}_{\DD^c(T)}}
	&
	\cdots
	\arrow[leftrightarrow,dashed]{r}{S^a}
	&
	\DotB
	\arrow{r}{D^n}
	\arrow{d}[swap]{H}
	&
	\DotB
	\arrow[leftrightarrow,dashed]{r}{S^b}
	\arrow{d}{H}
	&
	\cdots
	&
	\cdots
	\arrow[leftrightarrow,dashed]{r}{D^a}
	&
	\DotB
	\arrow{r}{S^m}
	\arrow{d}[swap]{H}
	&
	\DotC
	\arrow[leftrightarrow,dashed]{r}{D^b}
	\arrow{d}{H}
	&
	\cdots
	\\
	\DD^c(T)
	&
	\cdots
	\arrow[leftrightarrow,dashed]{r}{S^a}
	&
	\DotB
	\arrow{r}{D^n}
	\arrow[dotted]{ru}[description]{D^{n-1}}
	&
	\DotB
	\arrow[leftrightarrow,dashed]{r}{S^b}
	&
	\cdots
	&
	\cdots
	\arrow[leftrightarrow,dashed]{r}{D^a}
	&
	\DotB
	\arrow{r}{S^m}
	\arrow[dotted]{ru}[description]{S^{m-2}}
	&
	\DotC
	\arrow[leftrightarrow,dashed]{r}{D^b}
	&
	\cdots
	\end{tikzcd}
	\]
	Here, \(a\) and \(b\) are some positive integers.
	By applying the Clean-Up Lemma \cite[Lemma~2.17]{KWZ} to the dotted arrows, we can see that this type D structure is isomorphic to the following:
	\[
	\begin{tikzcd}
	\DD^c(T)
	\arrow{d}[swap]{H\cdot \operatorname{id}_{\DD^c(T)}}
	&
	\cdots
	\arrow[leftrightarrow,dashed]{r}{S^a}
	&
	\DotB
	\arrow{d}{H}
	&
	\DotB
	\arrow[leftrightarrow,dashed]{r}{S^b}
	\arrow{d}[swap]{H}
	&
	\cdots
	&
	\cdots
	\arrow[leftrightarrow,dashed]{r}{D^a}
	&
	\DotB
	\arrow{d}{H}
	&
	\DotC
	\arrow[leftrightarrow,dashed]{r}{D^b}
	\arrow{d}[swap]{H}
	&
	\cdots
	\\
	\DD^c(T)
	&
	\cdots
	\arrow[leftrightarrow,dashed]{r}{S^a}
	&
	\DotB
	&
	\DotB
	\arrow[leftrightarrow,dashed]{r}{S^b}
	&
	\cdots
	&
	\cdots
	\arrow[leftrightarrow,dashed]{r}{D^a}
	&
	\DotB
	&
	\DotC
	\arrow[leftrightarrow,dashed]{r}{D^b}
	&
	\cdots
	\end{tikzcd}
	\]
	Note that the type D structure remains the same outside of the shown region. The same argument can be used to replace components of the differential containing \(D^n\) or \(S^m\) starting at \(\DotC\); all we need to do in the diagrams above is to exchange \(\DotB\) and \(\DotC\). 
	
  Denote by $X$ the type~D structure obtained from $[\DD^c(T)\xrightarrow{H\cdot \id}\DD^c(T)]$ by cleaning up all higher powers.
  The above shows that $\DD_1^c(T)$ is homotopy equivalent to the type D structure $X$ with the property that the components of its differential are linear combinations of \(D,S,S^2\in\BNAlgH\). 
	It remains to see that this property is preserved under the algorithm that turns $X$ into $\DD_1^c(T)$. 
	The arrow-pushing algorithm from~\cite{HRW} that is applied for this purpose in~\cite[Section~5]{KWZ} only modifies the curve in the neighbourhoods of the arcs corresponding to the two objects \(\DotB\) and \(\DotC\). Therefore, it suffices to see that the simply-faced precurve associated with \(X\) (an auxiliary object introduced in~\cite[Section~5.3]{KWZ} to intermediate between type D structures and multicurves) also has this special property. 
	The algorithm in the proof of \cite[Proposition 5.10]{KWZ} unfortunately does not preserve the property in general. 
	For example, 
	\[
	\Bigg[
	\begin{tikzcd}
	\DotB
	\arrow{r}{S^2}
	&
	\DotB
	\arrow[leftarrow]{r}{S}
	&
	\DotC
	\arrow{r}{S^2}
	&
	\DotC
	\end{tikzcd}
	\Bigg]
	\cong
	\Bigg[
	\begin{tikzcd}
	\DotB
	\arrow[bend right=10,swap]{rrr}{S^3}
	&
	\DotB
	\arrow[leftarrow]{r}{S}
	&
	\DotC
	&
	\DotC
	\end{tikzcd}
	\Bigg]
	\] 
	However, for the particular type D structure under consideration, one can easily construct a corresponding simply-faced precurve by hand and verify that it has the desired property: 
	First, observe that after performing the homotopies above, the type D structure \(X\) can be built out of the following two pieces (without the dotted arrows) and the same pieces with \(\DotB\) and \(\DotC\) exchanged:
	\[
	\begin{tikzcd}[row sep=35pt,column sep=45pt]
	\cdots
	\arrow[leftrightarrow,dashed]{r}{S^a}
	&
	\DotB
	\arrow{r}{D}
	\arrow[bend left=15]{d}{D}
	\arrow[dashed,bend right=15]{d}[swap]{S^2}
	&
	\DotB
	\arrow[leftrightarrow,dashed]{r}{S^b}
	\arrow[bend right=15]{d}[swap]{D}
	\arrow[dashed,bend left=15]{d}{S^2}
	&
	\cdots
	\\
	\cdots
	\arrow[leftrightarrow,dashed]{r}{S^a}
	&
	\DotB
	\arrow{r}{D}
	\arrow[dotted,leftarrow]{ru}[description]{1}
	&
	\DotB
	\arrow[leftrightarrow,dashed]{r}{S^b}
	&
	\cdots
	\end{tikzcd}
	\quad
	\begin{tikzcd}[row sep=35pt,column sep=45pt]
	\cdots
	\arrow[leftrightarrow,dashed]{r}{D}
	&
	\DotB
	\arrow{r}{S^n}
	\arrow[bend left=15]{d}{S^2}
	\arrow[dashed,bend right=15]{d}[swap]{D}
	&
	\DotC
	\arrow[leftrightarrow,dashed]{r}{D}
	\arrow[bend right=15]{d}[swap]{S^2}
	\arrow[dashed,bend left=15]{d}{D}
	&
	\cdots
	\\
	\cdots
	\arrow[leftrightarrow,dashed]{r}{D}
	&
	\DotB
	\arrow{r}{S^n}
	\arrow[leftarrow,dotted]{ru}[description]{S^{2-n}}
	&
	\DotC
	\arrow[leftrightarrow,dashed]{r}{D}
	&
	\cdots
	\end{tikzcd}
	\]
	Here, \(n,a,b\in\{1,2\}\). 
	Note that we have replaced each vertical component \(H\) of the differential by two components \(D\) and \(S^2\), using the identity \(H=D+S^2\). 
	In the case of the second piece with \(n=1\), we can apply the Clean-Up Lemma to the dotted arrow to remove the vertical components \(S^2\) of the differential. In the case of the first piece and the second piece with \(n=2\), the dotted arrow is labelled by \(1\in\BNAlgH\); in each of these two cases, we do a base change along this dotted arrow, thereby simultaneously adding a crossover arrow to the precurve and removing the vertical arrows \(D\) and \(S^2\), respectively. The result is a simply-faced precurve, and its differential only consists of components that are linear combinations of \(D,S,S^2\in\BNAlgH\). 
\end{proof}

\begin{figure}[t]
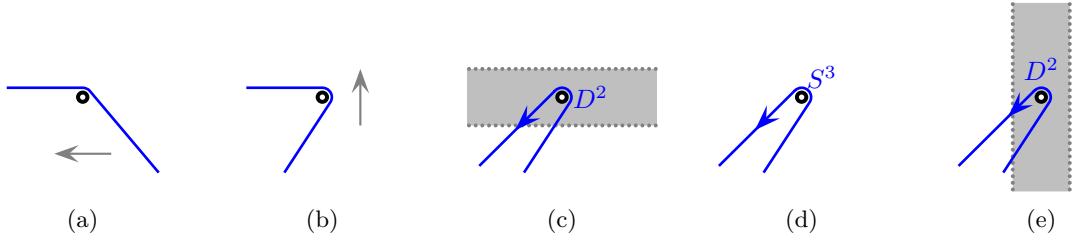

	\centering
	\begin{subfigure}{0.19\textwidth}
		\centering
		\(\WindingNonSpecialA\)
		\caption{}\label{thm:almost_no_wrapping:a}
	\end{subfigure}
	\begin{subfigure}{0.19\textwidth}
		\centering
		\(\WindingNonSpecialB\)
		\caption{}\label{thm:almost_no_wrapping:b}
	\end{subfigure}
	\begin{subfigure}{0.19\textwidth}
		\centering
		\(\WindingNonSpecialC\)
		\caption{}\label{thm:almost_no_wrapping:c}
	\end{subfigure}
	\begin{subfigure}{0.19\textwidth}
		\centering
		\(\WindingNonSpecialD\)
		\caption{}\label{thm:almost_no_wrapping:d}
	\end{subfigure}
	\begin{subfigure}{0.19\textwidth}
		\centering
		\(\WindingNonSpecialE\)
		\caption{}\label{thm:almost_no_wrapping:e}
	\end{subfigure}
	\caption{An illustration for the proof of Theorem~\ref{thm:almost_no_wrapping}. The arrows in (a) and (b) indicate the directions of the two shearing transformations to get from (a) to (b) and from (b) to (c), (d), or (e).}\label{fig:almost_wrapping}
\end{figure} 

\begin{proof}[Proof of Theorem~\ref{thm:almost_no_wrapping}]
	Suppose a curve changes its direction at a non-special puncture. 
	Then, by naturality under twisting (Theorem~\ref{thm:Kh:Twisting}), we can assume that one of the linear curve segments adjacent to it has slope 0, as shown in Figure~\ref{thm:almost_no_wrapping:a}. 
	Moreover, by adding twists corresponding to the shearing transformation
	\[
	\begin{bmatrix}
	1 & n \\
	0 & 1
	\end{bmatrix}\in\SL_2(\Z)
	\]
	for sufficiently large \(n\), we can achieve that the curve changes its direction by more than $90^\circ$, as shown in Figure~\ref{thm:almost_no_wrapping:b}. 
	By adding one additional twist corresponding to
	\[
	\begin{bmatrix}
	1 & 0 \\
	1 & 1
	\end{bmatrix}\in\SL_2(\Z)
	\] 
	we can make the first curve segment have slope 1 and the angle of the change of direction be more than $135^\circ$.
	Parts (c-e) of Figure~\ref{fig:almost_wrapping} show the three cases depending on which non-special puncture the curve wraps around. 
	In all three cases, the differential of the corresponding type~D structure either contains a component \(D^n\) for \(n>1\) or a component \(S^m\) for \(m>2\). 
	This contradicts Proposition~\ref{prop:higher_powers_when_wrapping}.
\end{proof}

\section{Wrapping around the special tangle end}\label{sec:Kh:geography:special}\label{sec:no_wrapping_around_special}

\begin{theorem}\label{thm:no_wrapping_around_special}
	A curve that wraps around the special puncture cannot be a component of \(\BNr(T)\) or \(\Khr(T)\) for any pointed Conway tangle \(T\). 
\end{theorem}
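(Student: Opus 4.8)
The plan is to imitate the strategy of the previous section: reduce "wrapping around the special puncture $\ast$" to the presence of a forbidden algebraic feature in a distinguished representative of the type~D structure, and then derive a contradiction. The key structural input that plays the role of Proposition~\ref{prop:higher_powers_when_wrapping} here should be a statement constraining how the differentials of $\DD^c(T)$ and $\DD^c_1(T)$ can interact with the two idempotents $\iota_\bullet,\iota_\circ$ near the arc dual to the special puncture. Concretely, I would first record the combinatorial translation: a curve segment turning at (a lift of) $\ast$ by a large angle forces, after twisting, a differential component of the form $S^m$ with $m$ large that connects the two distinct idempotents $\DotB$ and $\DotC$ in a specific pattern — this is where the special puncture differs from the non-special ones, since the mapping class group element we are allowed to use must fix $\ast$, so we only have the $PSL(2,\Z)$-action and not the full freedom used in Theorem~\ref{thm:almost_no_wrapping}.

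Next I would set up the reduction to $\BNr(T)$: by the mapping cone description $\DD_1(T)=[\DD(T)\xrightarrow{H\cdot\id}\DD(T)]$ together with Lemma~\ref{lem:morphisms_are_preserved}, wrapping of a component of $\Khr(T)$ around $\ast$ can be traced back to wrapping of $\BNr(T)$ around $\ast$ (or, more precisely, to a differential feature of $\DD^c(T)$ surviving the cone), so it suffices to prohibit this for $\BNr(T)$, i.e.\ for $\DD^c(T)$ itself. Then I would normalize using Theorem~\ref{thm:Kh:Twisting}: apply an element of $\operatorname{Mod}(\FourPuncturedSphereKh)$ fixing $\ast$ so that one of the two linear segments adjacent to the wrapping point has slope $0$, and then a shearing $\begin{bmatrix}1&n\\0&1\end{bmatrix}$ for large $n$ to make the turning angle exceed, say, $90^\circ$, exactly as in Figure~\ref{fig:almost_wrapping}. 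The difference is that now the puncture being wrapped is $\ast$, and the resulting forbidden feature will be a component of the differential incompatible with the algebra relations of $\BNAlgH$ — e.g.\ a path through $\ast$ that would have to read off a product like $S_\circ D_\circ$ or $D_\bullet S_\circ$, which vanish in $\BNAlgH$ by~\eqref{eq:B_quiver}, or alternatively a nonzero $S^m$ with $m\ge 3$ of the wrong idempotent type that cannot occur in the distinguished representative $\DD^c(T)$.

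The main obstacle I expect is establishing the analogue of Proposition~\ref{prop:higher_powers_when_wrapping} for the special arc — that is, ruling out the relevant high-power or idempotent-crossing differential components in $\DD^c(T)$ directly, since for the special puncture there is no extra $H$-action trick available (the element $H$ is "transverse" to the non-special arcs but not obviously to the special one), and one must instead argue from the structure of $\DD^c(T)$ as it arises from Bar-Natan's complex, presumably via the simply-faced precurve formalism of~\cite[Section~5.3]{KWZ} and a base-change / Clean-Up Lemma argument as in the proof of Proposition~\ref{prop:higher_powers_when_wrapping}. A secondary subtlety is handling local systems: as in that proof, I would first treat the trivial-local-system case and then remark that the general case follows by the same argument applied blockwise. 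Once the forbidden differential component is produced in all the cases corresponding to which "side" of $\ast$ the curve wraps around (the analogue of parts (c)--(e) of Figure~\ref{fig:almost_wrapping}), the contradiction is immediate, completing the proof for both $\BNr(T)$ and $\Khr(T)$.
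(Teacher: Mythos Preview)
Your proposal has a genuine and fundamental gap: the contradiction you are hoping to find does not exist within the algebra \(\BNAlgH\) alone. A curve wrapping around the special puncture \(\ast\) corresponds to a perfectly valid type~D structure over \(\BNAlgH\). After the twisting you describe, one indeed produces a sequence of differentials of the form
\[
\DotB\xrightarrow{D}\DotB\xrightarrow{S}\DotC\xrightarrow{D}\DotC\xrightarrow{S}\DotB\xrightarrow{D}\DotB,
\]
but each consecutive composite \(D\cdot S\) or \(S\cdot D\) vanishes in \(\BNAlgH\), which is precisely what the compatibility relation \(d^2=0\) demands; there is nothing ``forbidden'' here, no high power of \(S\) or \(D\), and no product that fails to vanish. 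The difficulty you flag---that no \(H\)-action trick is available---is not a technical obstacle to be overcome by a Clean-Up argument, but a genuine reflection of the fact that \(\BNAlgH\) by itself cannot see wrapping at \(\ast\).

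The paper's proof works by leaving Bar-Natan's framework entirely. It constructs, via the Khovanov--Rozansky matrix factorization invariant and homological mirror symmetry for the three-punctured sphere, an extension of \(\DD^c(T)^{\BNAlgH}\) and \(\DD_1^c(T)^{\BNAlgH}\) to type~D structures over a strictly larger \(A_\infty\) algebra \(\BinfU\) (Theorem~\ref{thm:extension}). In \(\BinfU\) there is a nontrivial higher operation \(\mu_4(S,D,S,D)=U^2\), and the sequence displayed above then contributes a \(U^2\) term to the compatibility relation of any such extension. A quantum-grading parity argument shows this \(U^2\) cannot be cancelled, contradicting the existence of the extension. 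The entire content of Section~\ref{sec:no_wrapping_around_special} after the statement of the theorem is devoted to establishing this extension property; your proposal contains no substitute for it.
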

The proof is based on a certain extension property of the complexes \(\DD(T)^{\B}\) and \(\DD_1(T)^{\B}\).
\subsection{The extension property}
Consider the two arcs \(\arcVer\) and \(\arcHor\) in \(\FourPuncturedSphereKh\) that are labelled in Figure~\ref{fig:wrap_subcat} by \(\DotC\) and \(\DotB\), respectively. Let \(\End_{\W(\FourPuncturedSphereKh)}(\arcVer\oplus \arcHor)\) denote the full subcategory of the wrapped Fukaya category \(\W(\FourPuncturedSphereKh)\) generated by these arcs. 
It is well-known that such a subcategory admits a quiver description in terms of chords; see for example~\cite[Theorem~7.6]{Bocklandt}. 
In fact, we obtain the algebra \(\BNAlgH\) from Equation~\eqref{eq:B_quiver}:
\[
\End_{\W(\FourPuncturedSphereKh)}(\arcVer\oplus \arcHor) \cong \B
\]
If \(\ThreePuncturedSphere\) denotes the three-punctured sphere obtained by removing the special puncture \(*\) from \(\FourPuncturedSphereKh\), the arcs \(\arcVer\) and \(\arcHor\) also define a subcategory \(\End_{\W(\ThreePuncturedSphere)}(\arcVer\oplus \arcHor)\) of \(\W(\ThreePuncturedSphere)\). 
The product structure on \(\End_{\W(\ThreePuncturedSphere)}(\arcVer\oplus \arcHor)\) again agrees with the one on \(\B\), but (unlike the four-punctured case) there are also non-trivial higher products.  We describe the resulting \(A_\infty\) algebra \(\Binf\) below. The fact that \(\End_{\W(\ThreePuncturedSphere)}(\arcVer\oplus \arcHor) \cong \Binf\) follows from~\cite[Section~3.3]{HKK} and~\cite[Theorem~4.1]{AAEKO} applied to the three arcs $\arcVer, \arcHor, \arcDiag$ from Figure~\ref{fig:wrap_subcat}.
\begin{definition}\label{def:Binf_algebra}
	Let \(\Binf\) be an \(A_\infty\) algebra with the same generators and product \(\mu_2\) as the algebra \(\B\), and with higher products defined as follows. Define \emph{disk sequences} of algebra elements in \(\B\)  inductively: First, declare the shortest disk sequences to be the following cyclic permutations:
  \[E_4=\{({}_{\circ}S_{\bullet},{}_{\bullet}D_{\bullet},{}_{\bullet}S_{\circ},{}_{\circ}D_{\circ}),~ ({}_{\bullet}D_{\bullet},{}_{\bullet}S_{\circ},{}_{\circ}D_{\circ},{}_{\circ}S_{\bullet}), ~ ({}_{\bullet}S_{\circ},{}_{\circ}D_{\circ},{}_{\circ}S_{\bullet},{}_{\bullet}D_{\bullet}), ~({}_{\circ}D_{\circ},{}_{\circ}S_{\bullet},{}_{\bullet}D_{\bullet},{}_{\bullet}S_{\circ}) \}\]
  Next, given a set  \(E_{2m}\) of disk sequences of length \(2m\geq4\), the elements of the set \(E_{2m+2}\) are constructed by interposing sequences from $E_{4}$ into any disk sequence in \(E_{2m}\) as follows:
	\begin{align*}
		(\ldots,D^k_\sol,{}_\sol S^\ell,\ldots) &\mapsto (\ldots,D^{k+1}_\sol,{}_\sol S_\hol,{}_\hol D_\hol,{}_\hol S^{\ell+1},\ldots) \\  (\ldots,D^k_\hol,{}_\hol S^\ell,\ldots) &\mapsto (\ldots,D^{k+1}_\hol,{}_\hol S_\sol,{}_\sol D_\sol,{}_\sol S^{\ell+1},\ldots)\\ 
		(\ldots,S^k_\sol,{}_\sol D^\ell,\ldots) &\mapsto (\ldots,S^{k+1}_\hol,{}_\hol D_\hol,{}_\hol S_\sol,{}_\sol D^{\ell+1},\ldots)  \\ (\ldots,S^k_\hol,{}_\hol D^\ell,\ldots) &\mapsto (\ldots,S^{k+1}_\sol,{}_\sol D_\sol,{}_\sol S_\hol,{}_\hol D^{\ell+1},\ldots) 
	\end{align*}
	Now, each disk sequence \((a_1,\ldots,a_{2m}) \in E_{2m}\) defines higher products 
	\[
	\mu^{\Binf}_{2m}(a_1,\ldots,a_{2m}b)=b
	\qquad\text{and}\qquad 
	\mu^{\Binf}_{2m}(b a_1,\ldots,a_{2m})=b
	\]
	for all \(b\in\{\id,S^n,D^n\mid n>1\}\) such that \(a_{2m}b \neq 0\) and \(ba_{1}\neq 0\), respectively. Finally, extend these higher multiplications multilinearly to maps \(\mu^{\Binf}_{2m}:(\Binf)^{\otimes 2m}\rightarrow\Binf\)
\end{definition}
The described disk sequences correspond to the geometric disk sequences from~\cite[Section~3.3]{HKK}, and thus the $A_\infty$ relations are satisfied. To illustrate the resulting higher products, all length four and some of the length six products in \(\Binf\) are (suppressing the idempotents)
\[ 
\begin{aligned}
&\mu_4(D,S,D,S^k)=S^{k-1}, \ \mu_4(D^k,S,D,S)=D^{k-1}, \\
&\mu_4(S,D,S,D^k)=D^{k-1}, \ \mu_4(S^k,D,S,D)=S^{k-1}, \\
&\mu_6(S,D,S^2,D,S,D^{k+1})=D^{k-1}, \ \mu_6(S^k,D,S^2,D,S,D^2 )=S^{k-1}, \ldots \\ 
\end{aligned}
\]
for any \(k\geq1\).

The usual grading on \(\Binf\) coming from the Fukaya category is not the one we will need. Thus, we define the following \emph{bigraded} deformation algebras. 

\begin{figure}[t]
	\begin{subfigure}{0.3\textwidth}
		\centering
    \(\FukayaFigureEightArcAlg\)
		\caption{}\label{fig:wrap_subcat}
	\end{subfigure}
	\begin{subfigure}{0.3\textwidth}
		\centering
		\(\LittleSpecialWrapping\)
		\caption{}\label{fig:special_wrapping_before_mcg}
	\end{subfigure}
	\begin{subfigure}{0.3\textwidth}
		\centering
		\(\MuchSpecialWrapping\)
		\caption{}\label{fig:special_wrapping_after_mcg}
	\end{subfigure}
	\caption{Figure (a) shows the arcs \(\arcVer\) and \(\arcHor\) that generate the full subcategory of \(\W(\FourPuncturedSphereKh)\) that corresponds to the algebra \(\B\). Figures (b) and (c) illustrate the proof of Theorem~\ref{thm:no_wrapping_around_special}. As in Figure~\ref{fig:almost_wrapping}, the grey arrow in (b) indicates the direction of the shearing transformation to get from (b) to (c).}\label{fig:special_wrapping}
\end{figure}

\begin{definition}\label{def:deformed_Binf_algebras}
	Define \(A_\infty\) algebras \(\BinfU\) and \(\BinfUU\) to be equal to \(\Binf \otimes \fieldTwoElements[U]\) and \(\Binf \otimes \fieldTwoElements[U,U^{-1}]\) as vector spaces over \(\fieldTwoElements\), with the \(A_\infty\) operations 
	\[
	\mu_k( a_1 \otimes U^{i_1}, a_2 \otimes U^{i_2}, \ldots , a_k \otimes U^{i_k})
	=
	\mu_k^{\Binf}(a_1,\ldots,a_k) \otimes U^{k-2} \cdot U^{i_1 +\cdots +i_k}
	\] 
	Setting 
	\[\text{gr}(U)=q^{-3}h^{-1}, \quad \text{gr}(S)=q^{-1}h^{0}, \quad \text{gr}(D)=q^{-2}h^{0}  \] 
	where the homological grading is defined by \(h=\tfrac 1 2 q -\delta\), the algebras \(\BinfU\) and \(\BinfUU\) become bigraded, that is \(\text{gr}(\mu_k)= q^0 h^{2-k}\).  
\end{definition}
Looking at Figure~\ref{fig:wrap_subcat}, one can think of the algebra \(\BinfU\) as a geometric deformation of the Fukaya category \(\Binf\) of the three-punctured sphere, where every polygon picks up \(U^{2\ell}\) if it covers the special puncture \(\ell\) times; this is reflected in our notation by using the asterisk in the superscript. 

There is an obvious quotient map  \(\BinfU \rightarrow \B\) which sends \(U\) to zero. This induces a functor between the corresponding categories of type D structures over the respective algebras, which allows us to state the following result about the objects $\DD(\Diag_T)$, $\DD_1(\Diag_T)$, $\DD^c(T)$, and $\DD_1^c(T)$ from Section~\ref{sec:review:Kh:definition}:

\begin{theorem}[Extension property]\label{thm:extension}
  Given a diagram $\Diag_T$ of an oriented pointed Conway tangle,
	there exist type~D structures \(\DD(\Diag_T)^{\BinfU}\) and  \(\DD^c(T)^{\BinfU}\) such that
	\[
	\DD(\Diag_T)^{\BinfU}\Big|_{U=0} = \DD(\Diag_T)^{\B} \quad \text{ and } \quad \DD^c(T)^{\BinfU}\Big|_{U=0} = \DD^c(T)^{\B} \]

	The same extension-existence statements hold for \(\DD_1(\Diag_T)^{\B}\) and \(\DD_1^c(T)^{\B}\).
\end{theorem}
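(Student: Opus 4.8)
The plan is to realize the algebra \(\B\), together with its \(A_\infty\) deformation \(\BinfU\), inside an \(A_\infty\) enrichment of Bar-Natan's cobordism category, and then to lift Bar-Natan's tangle complex to this enrichment. First I would construct, using the Khovanov--Rozansky matrix factorization model \cite{KR_mf_I,KR_mf_II}, an \(A_\infty\)-category \(\mathcal{C}^\infty\) whose objects are crossingless tangles, whose \(\mu_2\)-truncation recovers \(\Cobl\), and which carries a formal variable \(U\) weighting a polygon by \(U^{2\ell}\) when it covers the distinguished end \(\ast\) exactly \(\ell\) times; setting \(U=0\) then gives a strict \(A_\infty\)-functor \(\mathcal{C}^\infty\to\Cobl\). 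Delooping (\cite[Observation~4.18]{KWZ}) is available in this setting, so \(\mathcal{C}^\infty\) may be replaced by the endomorphism \(A_\infty\)-algebra of \(\Lo\oplus\Li\). The content of Definitions~\ref{def:Binf_algebra} and~\ref{def:deformed_Binf_algebras} is that this endomorphism algebra is \(A_\infty\)-quasi-isomorphic to \(\BinfU\), compatibly with the reductions to \(\B\): on the \(\mu_2\)-level this is the isomorphism \(\End_{\W(\FourPuncturedSphereKh)}(\arcVer\oplus\arcHor)\cong\B\), while the higher products — which all carry positive powers of \(U\) — are read off from the disk sequences of the three-punctured sphere via the homological mirror symmetry equivalence of \cite{AAEKO,LekPol,Orlov} together with \cite[Section~3.3]{HKK}, as in the discussion preceding Definition~\ref{def:Binf_algebra}.

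Next I would lift Bar-Natan's complex. Each elementary crossing complex — a two-term complex \([\Lo\to\Li]\) or \([\Li\to\Lo]\) whose differential is a single saddle — lifts to a twisted complex over \(\mathcal{C}^\infty\) with no higher-order \(U\)-corrections: for a two-step complex, \(\mu_k(\delta,\dots,\delta)\) vanishes for all \(k\ge 2\) simply because the differential lands in the other summand and is then killed. Since the matrix factorization enhancement is compatible with the planar-algebra (tangle-gluing) structure, Bar-Natan's cube of resolutions \(\KhTl{\Diag_T}\), being an iterated gluing of crossing complexes, lifts to a twisted complex over \(\mathcal{C}^\infty\). Delooping and transporting along the quasi-isomorphism to \(\BinfU\) then produces a type~D structure \(\DD(\Diag_T)^{\BinfU}\), and because the entire construction reduces to Bar-Natan's at \(U=0\) we obtain \(\DD(\Diag_T)^{\BinfU}\big|_{U=0}=\DD(\Diag_T)^{\B}\). (If one prefers to avoid the planar-algebra formalism, one instead solves the twisted-complex equation \(\sum_{k\ge 2}\mu_k(\delta,\dots,\delta)=0\) order by order in \(U\); the obstruction at each stage is a \([\delta_0,-]\)-cocycle in the endomorphism complex, and one must show it is a coboundary — and this is exactly where a geometric input beyond the formal algebra of \(\B\) is genuinely needed.)

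To propagate the statement to the remaining objects, I would argue as follows. Since \(\DD^c(T)^{\B}\simeq\DD(\Diag_T)^{\B}\), transport the \(\BinfU\)-structure along this homotopy equivalence; the standard homotopy transfer for type~D structures over an \(A_\infty\)-algebra yields \(\DD^c(T)^{\BinfU}\simeq\DD(\Diag_T)^{\BinfU}\), and one arranges the transfer to fix the \(U=0\) reduction so that \(\DD^c(T)^{\BinfU}\big|_{U=0}=\DD^c(T)^{\B}\). For \(\DD_1\): the element \(H=D+S^2\) still lies in \(\B\subset\BinfU\), and I would set \(\DD_1(\Diag_T)^{\BinfU}\) to be the mapping cone of a morphism \(\DD(\Diag_T)^{\BinfU}\to\DD(\Diag_T)^{\BinfU}\) lifting \(H\cdot\id\), which exists by the same argument as above applied to morphisms rather than objects (equivalently, the \(H\)-action and mapping cones are already defined inside \(\mathcal{C}^\infty\)); since forming mapping cones commutes with setting \(U=0\), this gives \(\DD_1(\Diag_T)^{\BinfU}\big|_{U=0}=\DD_1(\Diag_T)^{\B}\). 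Finally \(\DD_1^c(T)^{\BinfU}\) is obtained from \(\DD_1(\Diag_T)^{\BinfU}\) together with \(\DD_1^c(T)^{\B}\simeq\DD_1(\Diag_T)^{\B}\) exactly as for \(\DD^c\).

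The main obstacle is the first step: building the matrix factorization enhancement \(\mathcal{C}^\infty\) and verifying that, after delooping, its endomorphism algebra is \(A_\infty\)-quasi-isomorphic to precisely the algebra \(\BinfU\) of Definition~\ref{def:deformed_Binf_algebras} — with the \(U\)-filtration matching the count of coverings of \(\ast\) by polygons, so that \(U=0\) recovers \(\B\) on the nose. This amounts to making the homological mirror symmetry equivalence for the three-punctured sphere explicit enough to extract the higher products listed after Definition~\ref{def:Binf_algebra} and to track the deformation parameter \(U\) through the equivalence. Once this dictionary is established, steps two and three are essentially formal homotopical algebra; the geometric heavy lifting is entirely in identifying the enhancement.
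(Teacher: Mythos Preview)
Your proposal follows essentially the same route as the paper: build a matrix-factorization enhancement of Bar-Natan's category, identify its endomorphism algebra with \(\BinfU\) via homological mirror symmetry for \(\ThreePuncturedSphere\), lift the cube of resolutions, and then transfer to the curve-representative \(\DD^c\). The paper implements this concretely by defining a dg algebra \(\A=\End_{\tR^\partial}(\tM(\Diag_0)\oplus\tM(\Diag_1))\), proving \(\Hast(\A)\cong\B\) and \(\A\simeq\Binf\) (hence \(\AnUU\simeq\BinfUU\)), and reinterpreting the delooped multifactorization \(\tM_\varnothing(\Diag_T)\) as a type~D structure over \(\AnUU\) and then over \(\BinfUU\).

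There is one point where your sketch is too optimistic. For \(\DD^c(T)^{\BinfU}\) you invoke ``standard homotopy transfer for type~D structures over an \(A_\infty\)-algebra'' along \(\DD^c(T)^{\B}\simeq\DD(\Diag_T)^{\B}\). The problem is that the quasi-isomorphism and the transfer naturally live over \(\BinfUU\), and a generic transfer may introduce \emph{negative} powers of \(U\) into the differential, so the result need not restrict to a type~D structure over \(\BinfU\); and without that, ``setting \(U=0\)'' is meaningless. The paper does not appeal to abstract transfer here: it proves a dedicated lemma showing that each individual cancellation and clean-up in the arrow-sliding algorithm of \cite[Section~5]{KWZ} lifts to a homotopy over \(\AnUU\) that preserves \emph{positivity} of the differential (using grading arguments to force, e.g., that a cancelled identity arrow really lifts to the identity, and that clean-up morphisms square to zero on the \(\A\)-side). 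This positivity bookkeeping is the genuine content you are missing; once it is in place, the rest of your outline matches the paper's argument.
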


From the viewpoint of symplectic geometry, one should think of the result above as saying that the curves \(\BNr(T)\) and \(\Khr(T)\)---corresponding to \(\DD^c(T)\) and \(\DD^c_1(T)\) respectively---are unobstructed as curves in the three-punctured sphere, ie the fishtails enclosing the special puncture cancel out. Before proceeding to the lengthy proof of the extension property, let us explain how it implies the geography result of this section. 

\begin{proof}[Proof of Theorem~\ref{thm:no_wrapping_around_special}]
	Suppose the curve \(\BNr(T)\) has a component that wraps around the special puncture. For simplicity, let us assume that the local system is trivial; the general case follows similarly. We will prove that an extension $\DD^c(T)^{\BinfU}$ of $\DD^c(T)^{\B}$ cannot exist, contradicting Theorem~\ref{thm:extension}. 
	
	Using the naturality of \(\BNr(T)\) under the mapping class group action (Theorem~\ref{thm:Kh:Twisting}), we may assume that one of the two segments where the curve changes direction is horizontal. If the direction changes by less than \(90^\circ\), as for example in Figure~\ref{fig:special_wrapping_before_mcg}, we can add some twists to the bottom two tangle ends such that the angle becomes greater than \(90^\circ\), as in Figure~\ref{fig:special_wrapping_after_mcg}. In the corresponding type~D structure \(\DD^c(T)\), we then see a sequence of differentials
	\begin{equation}\label{eq:sequence}
	\begin{tikzcd}[column sep=15pt]
		\DotB
		\arrow{r}{D}
		&
		\DotB
		\arrow{r}{S}
		&
		\DotC
		\arrow{r}{D}
		&
		\DotC
		\arrow{r}{S}
		&
		\DotB
		\arrow{r}{D}
		&
		\DotB
	\end{tikzcd}
	\end{equation}
	Since \(\mu_4(S,D,S,D)=U^2\), the first four arrows contribute \(U^2\) to the compatibility relation (the analogue of \(d^2=0\)) of any extension \(\DD^c(T)^{\BinfU}\) of \(\DD^c(T)\). 
	There are only two other sequences of differentials that may contribute \(U^2\), namely 
	\[
	\begin{tikzcd}[column sep=15pt]
		\DotB
		\arrow{r}{S}
		&
		\DotC
		\arrow{r}{D}
		&
		\DotC
		\arrow{r}{S}
		&
		\DotB
		\arrow{r}{D}
		&
		\DotB
	\end{tikzcd}
	\quad\text{and}\quad
	\begin{tikzcd}[column sep=15pt]
		\DotB
		\arrow{r}{U}
		&
		\DotB
		\arrow{r}{U}
		&
		\DotB
	\end{tikzcd}
	\]
	The first sequence cannot contribute the same \(U^2\) term, since the last arrow in the sequence~\eqref{eq:sequence} points out of the penultimate generator. 
	The second sequence does not appear in \(\DD^c(T)^{\BinfU}\), because the differential of \(\DD^c(T)^{\BinfU}\) contains no component that is labelled by \(U\). 
	This follows from considering quantum gradings: 
	Since \(q(U)=-3\), the quantum gradings of the start and end generators of such a component of the differential would have to differ by 3. 
	However, the quantum gradings of generators in \(\DD^c(T)^\B\) have the same parity if they belong to the same idempotent. 
	This can be seen directly from the cube-of-resolution construction, in the same way that all quantum gradings are even in reduced Khovanov homology of a link. 

  In conclusion, the $U^2$ term in the compatibility relation cannot be canceled, and so the extension \(\DD^c(T)^{\BinfU}\) cannot exist. The same argument works for components of \(\Khr(T)\), since \(\DD_1^c(T)^{\B}\) also extends to some type~D structure over \(\BinfU\) by Theorem~\ref{thm:extension}.
\end{proof}

The rest of Section~\ref{sec:no_wrapping_around_special} is devoted to proving the extension property (Theorem~\ref{thm:extension}). The matrix factorization framework of Khovanov-Rozansky, as well as the homological mirror symmetry for \(\ThreePuncturedSphere\) will play the two central roles.

\subsection{Multifactorizations for tangle diagrams} \label{sec:mf}
Recall that the curve-invariant \(\BNr(T)\) comes from the type~D structure \(\DD(T)^{\B}\), which in turn is equivalent to Bar-Natan's tangle invariant \(\KhTl{T}\). 
A different Khovanov-theoretic tangle invariant was developed by Khovanov and Rozansky in~\cite{KR_mf_I,KR_mf_II}, in the form of a matrix factorization over a certain ring. 
Rasmussen used this construction to obtain the higher differential \(d_{-1}\) on Khovanov homology~\cite{Some_diff}, which---similar to Lee and Bar-Natan \(d_1\) differentials---results in a trivial homology theory. Topological applications of this differential were obtained by Ballinger~\cite{Ballinger}. Our goal is to understand what the existence of the \(d_{-1}\) differential implies for \emph{tangle} invariants; the answer turns out to be precisely the extension property for \(\DD(T) \cong \KhTl{T}\).

We will develop the matrix factorization invariant in the \(sl(2)\) case, closely following Ballinger's setup~\cite{Ballinger} (which, in turn, is the \(n=2\) case of Rasmussen's work~\cite{Some_diff}). In fact, our setups are so similar, that in the interest of brevity we will refer to~\cite{Ballinger} for most of the definitions and results, highlighting only the differences. The main distinction is that we tweak the potential in order to match the reduced version of the Frobenius extension \(\mathcal F_7\) (in the numbering of~\cite{Kh_frob}), while Ballinger works with the Frobenius extension \(\mathcal F_3\) in the unreduced setting. 

Despite the fact that we work over \(\fieldTwoElements\) throughout the paper, we will keep using some signs in our formul\ae, since they make many of the choices more natural. Grading-wise, we will sometimes use the \emph{internal} grading \(i=q-3h\), in addition to the homological and quantum gradings. 

The main objects for this subsection are \emph{matrix factorizations} and their generalizations called \emph{multifactorizations} \cite[Definitions~2.1, 2.2]{Ballinger}. The latter should be thought of as filtered chain complexes of matrix factorizations.

\begin{definition}\label{def:diagram}
\textbf{Elementary diagrams} are the diagrams on the left of Figures~\ref{fig:elementary_mf} and~\ref{fig:crossings_mf}. A general \textbf{diagram} is an oriented diagram of a tangle, together with some thick and dotted arcs whose neighbourhoods look like elementary diagrams.
\end{definition}
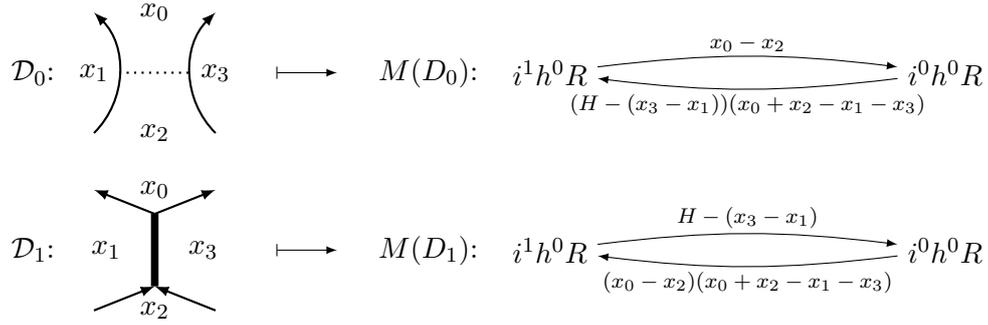
\begin{figure}[t]
\begin{gather*}
\begin{tikzpicture}[scale=0.8]
  \node at (-1,1){\(\Diag_0\):};
  \draw[thick,->] (2,0) arc (225:135:1.4142);
  \draw[thick,->] (0,0) arc (-45:45:1.4142);
  \draw[thick, dotted] (0.4142,1) to (1.5858,1);
  \node[](x0) at (1,2){\(x_0\)};
  \node[](x1) at (0,1){\(x_1\)};
  \node[](x3) at (2,1){\(x_3\)};
  \node[](x2) at (1,0){\(x_2\)};
  \draw[|->] (3,1) to (4,1);
  \node at (5.5,1){\(M(D_0)\):};
  \node[](R1) at (7.5,1){\(i^1h^0 R\)};
  \node[](R2) at (14,1){\(i^0 h^0 R\)};
  \draw[->] (R1) to[bend left=7,above] node[tight]{\(x_0-x_2\)} (R2);
  \draw[->] (R2) to[bend left=7,below] node[tight,fill=white]{\((H-(x_3-x_1)) (x_0+x_2-x_1-x_3)\)} (R1);
\end{tikzpicture} 
\\
\begin{tikzpicture}[scale=0.8]
  \node at (-1,1){\(\Diag_1\):};
  \draw[thick,->] (0,0) to (1,0.4);
  \draw[thick,->] (2,0) to (1,0.4);
  \draw[thick,->] (1,1.6) to (0,2);
  \draw[thick,->] (1,1.6) to (2,2);
  \draw[line width=0.1cm] (1,0.4) to (1,1.6);
  \node[](x0) at (1,2){\(x_0\)};
  \node[](x1) at (0.2,1){\(x_1\)};
  \node[](x3) at (1.8,1){\(x_3\)};
  \node[](x2) at (1,0){\(x_2\)};
  \draw[|->] (3,1) to (4,1);
  \node at (5.5,1){\(M(D_1)\):};
  \node[](R1) at (7.5,1){\(i^1h^0 R\)};
  \node[](R2) at (14,1){\(i^0 h^0 R\)};
  \draw[->] (R1) to[bend left=7,above] node[tight]{\(H-(x_3-x_1)\)} (R2);
  \draw[->] (R2) to[bend left=7,below] node[tight,fill=white]{\((x_0-x_2) (x_0+x_2-x_1-x_3)\)} (R1);
\end{tikzpicture} 
\end{gather*}
\caption{The matrix factorizations associated with dotted and thick arcs}
\label{fig:elementary_mf} 
\end{figure}

\begin{figure}[t]
\tikzstyle{tight}=[font=\scriptsize, inner sep=1pt, outer sep=1pt]
\begin{gather*}
\begin{tikzpicture}[scale=0.8]
  \node at (-1,1){\(\Diag_+\):};
  \draw[thick,->] (0,0) to (2,2);
  \draw[thick] (2,0) to (1.1,0.9);
  \draw[thick,->] (0.9,1.1) to (0,2);
  \node(x0) at (1,1.8){\(x_0\)};
  \node(x1) at (0.2,1){\(x_1\)};
  \node(x3) at (1.8,1){\(x_3\)};
  \node(x2) at (1,0.2){\(x_2\)};
  \draw[|->] (3,1) to (4,1);
  \node at (5.5,1){\(M(D_+)\):};
  \node(R1) at (7.5,2.75){\(i^2 h^0R\)};
  \node(R2) at (14,2.75){\(i^1h^0 R\)};
  \node(R3) at (7.5,-0.75){\(i^0h^1R\)};
  \node(R4) at (14,-0.75){\(i^{-1}h^1R\)};
  \draw[->] (R3) to[bend left=7,above] node[tight]{\(H-(x_3-x_1)\)} (R4);
  \draw[->] (R4) to[bend left=7,below] node[tight]{\((x_0-x_2)(x_0+x_2-x_1-x_3)\)} (R3);
  \draw[->] (R1) to[out=-90,in=90,looseness=0.5] node[tight,fill=white,pos=0.7]{\(-\id\)} (R4);
  \draw[->] (R2) to[out=-90,in=90,looseness=0.5] node[tight,fill=white,pos=0.7]{\(x_0+x_2-x_1-x_3\)} (R3);
  \draw[->] (R1) to[bend left=7,above] node[tight]{\(x_0-x_2\)} (R2);
  \draw[->] (R2) to[bend left=7,below] node[tight,fill=white]{\((H-(x_3-x_1))(x_0+x_2-x_1-x_3)\)} (R1);
\end{tikzpicture} 
\\
\begin{tikzpicture}[scale=0.8]
  \node at (-1,1){\(\Diag_-\):};
  \draw[thick,->] (2,0) to (0,2);
  \draw[thick] (0,0) to (0.9,0.9);
  \draw[thick,->] (1.1,1.1) to (2,2);
  \node(x0) at (1,1.8){\(x_0\)};
  \node(x1) at (0.2,1){\(x_1\)};
  \node(x3) at (1.8,1){\(x_3\)};
  \node(x2) at (1,0.2){\(x_2\)};
  \draw[|->] (3,1) to (4,1);
  \node at (5.5,1){\(M(D_-)\):};
  \node(R1) at (7.5,2.75){\(i^2 h^{-1}R\)};
  \node(R2) at (14,2.75){\(i^1h^{-1} R\)};
  \node(R3) at (7.5,-0.75){\(i^0h^0R\)};
  \node(R4) at (14,-0.75){\(i^{-1}h^0R\)};
  \draw[->] (R3) to[bend left=7,above] node[tight]{\(x_0-x_2\)} (R4);
  \draw[->] (R4) to[bend left=7,below] node[tight]{\((H-(x_3-x_1))(x_0+x_2-x_1-x_3)\)} (R3);
  \draw[->] (R1) to[out=-90,in=90,looseness=0.5] node[tight,fill=white,pos=0.7]{\(-\id\)} (R4);
  \draw[->] (R2) to[out=-90,in=90,looseness=0.5] node[tight,fill=white,pos=0.7]{\(x_0+x_2-x_1-x_3\)} (R3);
  \draw[->] (R1) to[bend left=7,above] node[tight]{\(H-(x_3-x_1)\)} (R2);
  \draw[->] (R2) to[bend left=7,below] node[tight,fill=white]{\((x_0-x_2)(x_0+x_2-x_1-x_3)\)} (R1);
\end{tikzpicture}  
\end{gather*}
\caption{The multifactorizations associated with crossings
}
\label{fig:crossings_mf} 
\end{figure}
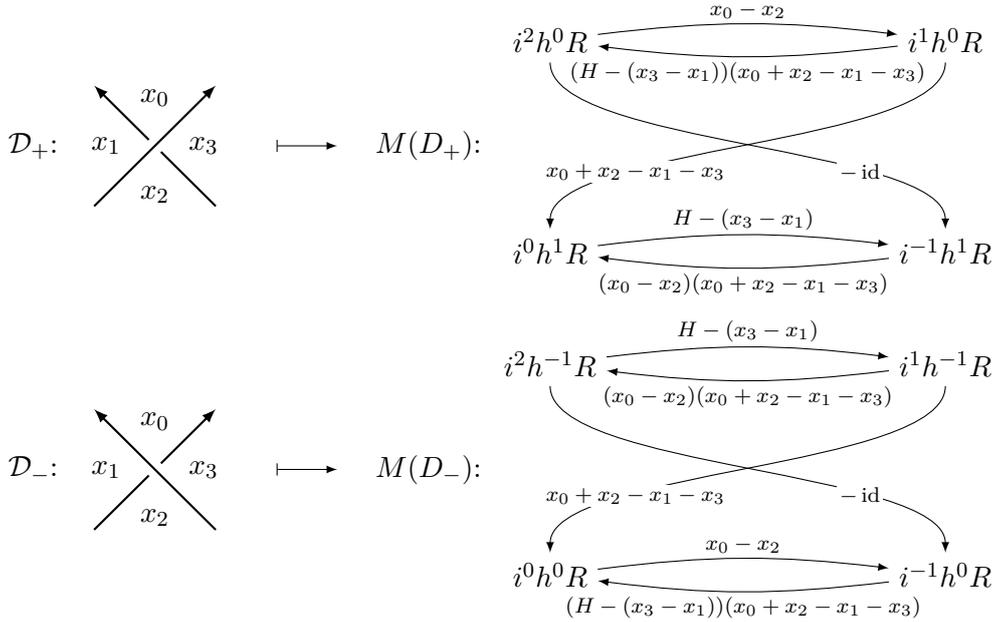

Examples of general diagrams are depicted in Figure~\ref{fig:general_diagram}. Setting 
\begin{equation}\label{eq:base_ring}
R=\fieldTwoElements[H,x_0-x_1,x_0-x_2,x_0-x_3], \quad \text{gr}(H)=\text{gr}(x_i)=i^{-2}h^0q^{-2}
\end{equation}
we first assign multifactorizations over \(R\) to elementary diagrams according to Figures~\ref{fig:elementary_mf} and~\ref{fig:crossings_mf}. 
In these multifactorizations, the horizontal arrows indicate the differential \(d_0\), which preserves homological grading and decreases both internal and quantum gradings by 3; in short, \(\text{gr}(d_0)=i^{-3}h^0q^{-3}\). The non-horizontal arrows in the multifactorizations \(M(D_+)\) and \(M(D_-)\) indicate the differential \(d_1\), whose grading is given by \(\text{gr}(d_1)=i^{-3}h^1q^{0}\). 
Note that \(M(D_+)\) and \(M(D_-)\) can be viewed as mapping cones
\begin{equation}\label{eq:mcs}
M(D_+)=\left[i^1 h^0 M(D_0) \xrightarrow{d_1} i^{-1} h^{1} M(D_1)\right]
\qquad 
M(D_-)=\left[ i^1 h^{-1} M(D_1) \xrightarrow{d_1}i^{-1} h^{0} M(D_0)\right]
\end{equation}

\begin{figure}[t]
	\begin{subfigure}{0.3\textwidth}
		\centering
		\labellist 
		\pinlabel \(\scriptstyle x_0\) at 37 52
		\pinlabel \(\scriptstyle x_2\) at 37 5
		\pinlabel \(\scriptstyle x_4\) at 36 29
		\pinlabel \(\scriptstyle x_5\) at 23 29
		\pinlabel \(\scriptstyle x_6\) at 45 18
		\pinlabel \(\scriptstyle x_1\) at 9 29
		\pinlabel \(\scriptstyle x_3\) at 58 29
		\endlabellist
		\includegraphics[scale=1.2]{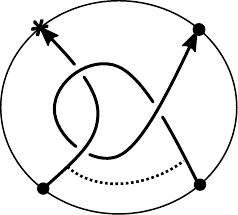}
		\caption{}\label{fig:general_diagram_tangle}
	\end{subfigure}
	\begin{subfigure}{0.3\textwidth}
		\centering
		\labellist 
		\pinlabel \(\scriptstyle x_0\) at 37 52
		\pinlabel \(\scriptstyle x_2\) at 37 4
		\pinlabel \(\scriptstyle x_4\) at 36 31
		\pinlabel \(\scriptstyle x_5\) at 23 29
		\pinlabel \(\scriptstyle x_6\) at 45 18
		\pinlabel \(\scriptstyle x_1\) at 9 29
		\pinlabel \(\scriptstyle x_3\) at 58 29
		\endlabellist
		\includegraphics[scale=1.2]{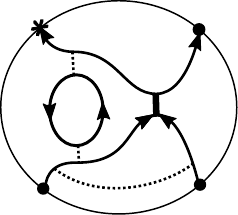}
		\caption{}\label{fig:general_diagram_resolution}
	\end{subfigure}
	\caption{A diagram of a tangle (a) and its $(1,1,1)$-resolution (b)}\label{fig:general_diagram}
\end{figure}

One can check that, for each elementary multifactorization from Figures~\ref{fig:elementary_mf} and~\ref{fig:crossings_mf}, the potential can be computed using the following formula:
\begin{equation}\label{eq:potential}
d_0^2=w\cdot \id, \qquad w=\sum_{e \in \partial\Diag} \varepsilon_e \left(\frac{X_e^3}{3} - H \frac{X_e^2}{2}\right) \in R
\end{equation}
Here, the sum is over all oriented edges \(e \in \partial\Diag\) on the boundary of the diagram, the \emph{edge variable} \(X_e=x_r-x_\ell\) is the difference between the variable  to the right (\(x_r\)) and left (\(x_\ell\)) of the edge \(e\), and \(\varepsilon_e=-1\) if the edge \(e\) points into the diagram and \(\varepsilon_e=+1\) if \(e\) points out of the diagram. Note that despite the denominators, the potential is indeed a linear combination of monomials in the variables \(x_i\) with integer coefficients.

Given a Conway tangle \(T\), fix a tangle diagram \(\Diag_T\). 
\begin{assumptions}\label{assums:tangle_diagram}
We will make the following assumptions: 
  \begin{itemize}
  \item All regions of  \(\Diag_T\) are labeled by unique variables \(x_i\), such that \(x_0\), \(x_1\), \(x_2\), and \(x_3\) are the open regions at the top, left, bottom, and right, respectively.
  \item The diagram \(\Diag_T\) is oriented such that the top left and top right are the outward pointing tangle ends, as in Figure~\ref{fig:general_diagram_tangle}. This can be achieved by changing the orientation on components and rotating \(\Diag_T\) by $90^\circ$.
  \item The diagram \(\Diag_T\) is connected in the sense that the projection of $T$ is a connected planar graph. This can be achieved by applying some Reidemeister moves.
  \item There is a single dotted arc that is parallel to the boundary of the region labelled \(x_2\).
  \item The region \(x_3\) is adjacent to a single crossing of the tangle, which can be achieved by applying the Reidemeister~II move. 
\end{itemize}
\end{assumptions}
An example of a diagram \(\Diag_T\) meeting all of the assumptions is given in Figure~\ref{fig:general_diagram_tangle}. The last two assumptions ensure that the open components in every crossingless resolution of \(\Diag_T\) are connected by some thick or dotted arc, which will simplify the proof of Lemma~\ref{lem:delooping:mf} below. Note that all resolutions of crossings inherit additional arcs, either thick (if the resolution is unoriented) or dotted (if the resolution is oriented). For example, the $(1,1,1)$ full resolution of \(\Diag_T\) from Figure~\ref{fig:general_diagram_tangle} is depicted in  Figure~\ref{fig:general_diagram_resolution}. 

\begin{definition}\label{def:mf_of_tangle}
	Define the \textbf{edge ring of \(\Diag_T\)} as the subring of the free polynomial ring in \(H\) and all \(x_i\) that is generated over \(\fieldTwoElements[H]\) by the differences of all variables:
	\[
		R(\Diag_T)
		\coloneqq
		\fieldTwoElements[H][\{x_i-x_j\}_{x_i,x_j \text{  regions in }\Diag_T}]
	\]
	Define the multifactorization \(M(\Diag_T)\) over \(R(\Diag_T)\) as
	\[
		M(\Diag_T) = \bigotimes_\mathrm{v} M(\Diag_\mathrm{v}) 
	\]
	where the tensor product is taken over \(R(\Diag_T)\), the index \(\mathrm{v}\) runs through crossings and arcs in  \(\Diag_T\), and \(\Diag_\mathrm{v}\) is the corresponding local elementary diagram from Figures~\ref{fig:elementary_mf} and~\ref{fig:crossings_mf} with \(R=R(\Diag_T)\) and labels changed accordingly.
\end{definition}

\begin{remark}\label{rem:mf:cube_of_resolutions}
	Similarly, we can define matrix factorizations of complete resolutions \(\Diag=\Diag_T(v)\) for \(v\in\{0,1\}^n\); we set
	\[
	M(\Diag) = \bigotimes_a M(\Diag_a) 
	\]
	where the index \(a\) runs through all arcs in \(\Diag\), and \(\Diag_a\) is the corresponding local elementary diagram \(\Diag_0\) or \(\Diag_1\) from Figure~\ref{fig:elementary_mf} with \(R=R(\Diag_T)\).
	Since \(M(D_+)\) and \(M(D_-)\) are mapping cones of these elementary diagrams, we can regard \(M(\Diag_T)\) as a cube of resolutions, at whose vertices there are matrix factorizations of diagrams with dotted and thick edges, but no crossings. 
\end{remark}

The potential of matrix factorizations is additive in the sense that for any two matrix factorizations \(M\) and \(M'\) with potentials \(w\) and \(w'\), respectively, the potential of \(M \otimes M'\) is given by \(w+w'\). Therefore, the potential of \(M(\Diag_T)\) is also given by formula~\eqref{eq:potential}. Note that this potential is an element of the \textbf{boundary edge ring} 
\[
R^\partial
=
\fieldTwoElements[H][x_0-x_1,x_0-x_2,x_0-x_3] 
\]
As such, we can consider \(M(\Diag_T)\) as a multifactorization over \(R^\partial\). 

\begin{definition}
	Given a diagram of a Conway tangle \(\Diag_T\)  meeting the Assumptions~\ref{assums:tangle_diagram}, recall that \(x_0\) and \(x_1\) are the labels of the two regions adjacent to the tangle end at the top left. We mark this tangle end by an asterisk \(\ast\) (see Figure~\ref{fig:general_diagram_tangle}), and define the reduced edge and boundary edge rings \(\tR(\Diag_T)\) and \(\tR^\partial=\tR^\partial(\Diag_T)\) by imposing the relation \(x_{0}=x_1\) and define the multifactorization associated with \(\Diag_T\) by
	\[
		\tM(\Diag_T)
		\coloneqq
		M(\Diag_T)\otimes_{R(\Diag_T)} \tR(\Diag_T)
	\]
	Again, we can consider this as a matrix factorization over \(\tR^\partial=\fieldTwoElements[H][x_0-x_2,x_0-x_3]\). 
\end{definition}

\begin{example}\label{exa:elementary_mf}
	We can change the basis in the reduced boundary edge ring so that
	\[\tR^\partial=\fieldTwoElements[x,y,z] \quad \text{ where } \quad
		x=x_0-x_2
		\qquad
		y=H-(x_3-x_0)
		\qquad
		z= x_2-x_3 
	\]
	In particular, \(H=x+y+z\in\tR^\partial\). 
	Note that the edge ring for the elementary diagrams from Figure~\ref{fig:elementary_mf} agrees with the boundary edge ring \(\tR(\Diag_0)=\tR^\partial,~\tR(\Diag_1)=\tR^\partial\). With respect to the new basis for \(\tR^\partial\), the reduced matrix factorizations can be rewritten as follows: 
	\[
		\tM(\Diag_0)=
		\begin{tikzcd}[column sep=1cm, ampersand replacement=\&]
			q^1h^0 \tR^\partial
			\arrow[r,bend left=10, "x"above]
			\&
			\tR^\partial
			\arrow[l,bend left=10, "yz"below]
		\end{tikzcd}
		\qquad
		\tM(\Diag_1)=
		\begin{tikzcd}[column sep=1cm, ampersand replacement=\&]
			q^1 h^0\tR^\partial
			\arrow[r,bend left=10, "y"above]
			\&
			\tR^\partial
			\arrow[l,bend left=10, "xz"below]
		\end{tikzcd}
	\]
\end{example}

We now begin to set the stage for the main result of this section, Lemma~\ref{lem:delooping:mf}, which describes the matrix factorization $\tM(\Diag_T)$ over $\tR^\partial$. 

Given two matrix factorizations \(M\) and \(M'\) over \(R\) with the same potential, we can consider the space of morphisms \(\Mor(M,M')\). By morphism, we mean any \(R\)-linear map between the underlying modules of \(M\) and \(M'\), irrespective of the differentials on \(M\) and \(M'\). \(\Mor(M,M')\) can be equipped with a differential defined by \(\partial(f)=(d_0)_{M'} \circ f + f\circ (d_0)_{M}\). Grading preserving morphisms in the kernel of this differential define \emph{homomorphisms} from \(M\) to \(M'\). 
Two maps are called \emph{homotopic} $f\simeq g$ if $f-g=\partial(h)$. Two matrix factorizations are called \emph{homotopy equivalent} $M\simeq M'$ if there exist homomorphisms $f\co M\to M'$ and $g\co M'\to M$ such that $f\circ g\simeq \id_{M'}$ and $g\circ f\simeq \id_{M}$. 

\begin{definition}\label{def:dg_algebra}
  Consider the two elementary matrix factorizations \(\tM(\Diag_0)\) and \(\tM(\Diag_1)\) from Example~\ref{exa:elementary_mf}.
  Let \(\A\) be their endomorphism dg algebra defined by
  \[
  \A=\End_{\tR^\partial}( \tM(\Diag_0) \oplus \tM(\Diag_1))=\bigoplus_{0\leq i,j \leq 1}\Mor_{\tR^\partial}(\tM(\Diag_i),\tM(\Diag_j)), \quad d^{\A}(f)=f\circ d_0 + d_0 \circ f
  \]
  The bigrading on the elements of \(\A\) is defined by \(\text{gr}(x \xrightarrow{a} y)=\text{gr}(y)+\text{gr}(a)-\text{gr}(x)\). The sum \(\id_{\tM(\Diag_0)} + \id_{\tM(\Diag_1)}\) of the two indecomposable idempotents is a unit. 
\end{definition}

The dg algebra \(\A\) is not bigraded in the usual sense, because \(\text{gr}(d^{\A})=q^{-3}h^0\), while in Khovanov homology, the grading of differentials is usually \(q^{0}h^1\). We will return to this issue in Definition~\ref{def:deformed_dg_algebra}.

\begin{lemma}\label{lem:iso_A_B}
  The elements of \(\A\) that are shown in Figure~\ref{fig:saddles_and_dots_in_A} represent algebra generators of \(\Hast(\A)\). Moreover, there is an isomorphism of \(\fieldTwoElements[H]\)-algebras 
  \(
  \Phi\co
  \Hast(\A)
  \rightarrow
  \B
  \)
  which is uniquely determined by
  \begin{align*}
    \id_{\tM(\Diag_0)} 
    &
    \mapsto 
    \iota_{\circ}
    &
    \id_{\tM(\Diag_1)} 
    &
    \mapsto 
    \iota_{\bullet}
    &
    s_{0}
    &
    \mapsto
    S_{\circ}
    &
    s_{1}
    &
    \mapsto
    S_{\bullet}
    &
    d_{0}
    &
    \mapsto
    D_{\circ}
    &
    d_{1}
    &
    \mapsto
    D_{\bullet}
  \end{align*}
\end{lemma}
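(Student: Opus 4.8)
The plan is to compute $\Hast(\A)$ directly from the explicit factorizations $\tM(\Diag_0)$ and $\tM(\Diag_1)$ over $\tR^\partial = \F[x,y,z]$ recorded in Example~\ref{exa:elementary_mf}, and then to match the answer with $\B$ on generators. First I would write out the four morphism complexes $\Mor_{\tR^\partial}(\tM(\Diag_i),\tM(\Diag_j))$, $i,j\in\{0,1\}$, as $\Z/2$-graded complexes of free $\tR^\partial$-modules with the differential $d^{\A}(f)=f\circ d_0+d_0\circ f$. Since $\tM(\Diag_0)$ is the rank-one factorization $(x\mid yz)$ of the common potential $w=xyz$ and $\tM(\Diag_1)$ is the rank-one factorization $(y\mid xz)$, each of these four complexes is small and its homology is a routine Koszul-type computation over the domain $\tR^\partial$.

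Carrying this out, one finds: in $\End_{\tR^\partial}(\tM(\Diag_0))$ the even cocycles are the diagonal scalar endomorphisms (a copy of $\tR^\partial$) and the even coboundaries are the ideal $(x,yz)$, so $\Hast\End_{\tR^\partial}(\tM(\Diag_0))\cong\tR^\partial/(x,yz)\cong\F[y,z]/(yz)$, concentrated in even $\Z/2$-degree; symmetrically $\Hast\End_{\tR^\partial}(\tM(\Diag_1))\cong\F[x,z]/(xz)$, again concentrated in even degree. For the two cross terms one gets $\Hast\Mor_{\tR^\partial}(\tM(\Diag_0),\tM(\Diag_1))\cong\tR^\partial/(x,y)\cong\F[z]$ and likewise $\Hast\Mor_{\tR^\partial}(\tM(\Diag_1),\tM(\Diag_0))\cong\F[z]$, both concentrated in odd $\Z/2$-degree. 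Because $d^{\A}$ is $\tR^\partial$-linear, each of these homologies is a $\tR^\partial$-module, hence an $\F[H]$-module via $H=x+y+z$, and the four pieces are free over $\F[H]$ of ranks $2,2,1,1$ --- exactly matching the idempotent truncations $\iota_{\circ}\B\iota_{\circ}$, $\iota_{\bullet}\B\iota_{\bullet}$, $\iota_{\bullet}\B\iota_{\circ}$, $\iota_{\circ}\B\iota_{\bullet}$ of $\B$. I would then identify the explicit cocycles drawn in Figure~\ref{fig:saddles_and_dots_in_A}: the scalar endomorphisms $d_0,d_1$ (for instance, multiplication by $y$ on both summands of $\tM(\Diag_0)$) and the odd morphisms $s_0,s_1$ between $\tM(\Diag_0)$ and $\tM(\Diag_1)$, each with the two components $1$ and $z$; a check against the grading rule in Definition~\ref{def:dg_algebra} places $d_0,d_1$ in bidegree $q^{-2}h^0$ and $s_0,s_1$ in bidegree $q^{-1}h^0$, matching $\text{gr}(D)$ and $\text{gr}(S)$ in $\B$.

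It then remains to see that composition in $\A$ reproduces $\B$ on these generators. There are three points. \emph{Generation:} on the component $\F[y,z]/(yz)$ the classes $1$ and $[d_0]$ form an $\F[H]$-basis, with $[s_1\circ s_0]=H\cdot 1-[d_0]$, and similarly on the other three components; hence $\id_{\tM(\Diag_0)},\id_{\tM(\Diag_1)},s_0,s_1,d_0,d_1$ generate $\Hast(\A)$ as an algebra, and any $\Phi$ as in the statement is unique. \emph{Relations:} each defining relation of $\B$ from Equation~\eqref{eq:B_quiver} --- e.g. ${}_{\bullet}D_{\bullet}\cdot{}_{\bullet}S_{\circ}=0$ and ${}_{\bullet}S_{\circ}\cdot{}_{\circ}D_{\circ}=0$, together with their $\circ\leftrightarrow\bullet$ mirrors --- holds in $\Hast(\A)$ by a short explicit null-homotopy; for instance the composite $s_0\circ d_0$ has components $(y,yz)$, which lies in the space of odd coboundaries $\{(\gamma,z\gamma):\gamma\in(x,y)\}$ found above. \emph{$\F[H]$-linearity:} since $d^{\A}$ and composition are $\tR^\partial$-linear, it only remains that $\Phi$ sends $H\cdot 1_{\Hast(\A)}$ to $H\in\B$, which amounts to the identity $[d_0]+[s_1\circ s_0]=[x+y+z]$ on each $\End$-component and follows from the first point. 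Together these checks show that the stated assignments extend to a well-defined homomorphism of $\F[H]$-algebras $\Phi\co\Hast(\A)\to\B$. It is surjective because its image is an $\F[H]$-subalgebra containing the generators $\iota_{\circ},\iota_{\bullet},S_{\circ},S_{\bullet},D_{\circ},D_{\bullet}$ of $\B$, and injective because on each of the four idempotent components source and target are free $\F[H]$-modules of the same rank and $\Phi$ carries an $\F[H]$-basis to an $\F[H]$-basis; hence $\Phi$ is an isomorphism.

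The step I expect to be the main obstacle is not any single computation but the combined bookkeeping in the middle: carrying the $\Z/2$-parity of morphisms along with the non-standard $(q,h)$-bigrading of $\A$ (whose differential has degree $q^{-3}h^0$, Definition~\ref{def:dg_algebra}), pinning down precisely which cocycle representatives are drawn in Figure~\ref{fig:saddles_and_dots_in_A}, and confirming that composition realizes the vanishing relations of $\B$ exactly, rather than up to some correction term. Everything after that reduces to linear algebra over $\F[H]$.
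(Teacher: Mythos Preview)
Your proposal is correct and is precisely the ``routine computation using the \(H\)-action on both sides'' that the paper invokes as its entire proof; you have simply written out the details. The identifications \(\Hast\End(\tM(\Diag_0))\cong\F[y,z]/(yz)\), \(\Hast\Mor(\tM(\Diag_0),\tM(\Diag_1))\cong\F[z]\), etc., the rank count over \(\F[H]\), and the explicit null-homotopy for \(s_0\circ d_0\) all check out, so there is nothing to add.
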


\begin{proof}
  This follows from a routine computation using the \(H\)-action on both sides. 
\end{proof}

\begin{figure}[H]
  \centering
  \begin{subfigure}{0.2\textwidth}
    \centering
    \(
    \begin{tikzcd}[column sep=1cm, ampersand replacement=\&]
      \tR^\partial
      \arrow[d,bend left=10, "y"right]
      \arrow[in=180,out=0,looseness=0.7]{rd}[inner sep=1pt, outer sep=1pt, description,fill=white, near start]{\id}
      \&
      \tR^\partial
      \arrow[d,bend left=10, "x"right]
      \\
      \tR^\partial
      \arrow[u,bend left=10, "xz"left]
      \arrow[in=180,out=0,looseness=0.7]{ru}[inner sep=1pt, outer sep=1pt, description,fill=white, near start]{z}
      \&
      \tR^\partial
      \arrow[u,bend left=10, "yz"left]
    \end{tikzcd}
    \)
    \caption{\(s_{1}\)}
  \end{subfigure}
  \begin{subfigure}{0.2\textwidth}
    \centering
    \(
    \begin{tikzcd}[column sep=1cm, ampersand replacement=\&]
      \tR^\partial
      \arrow[d,bend left=10, "x"right]
      \arrow[in=180,out=0,looseness=0.7]{rd}[inner sep=1pt, outer sep=1pt, description,fill=white, near start]{\id}
      \&
      \tR^\partial
      \arrow[d,bend left=10, "y"right]
      \\
      \tR^\partial
      \arrow[u,bend left=10, "yz"left]
      \arrow[in=180,out=0,looseness=0.7]{ru}[inner sep=1pt, outer sep=1pt, description,fill=white, near start]{z}
      \&
      \tR^\partial
      \arrow[u,bend left=10, "xz"left]
    \end{tikzcd}
    \)
    \caption{\(s_{0}\)}
  \end{subfigure}
  \begin{subfigure}{0.2\textwidth}
    \centering
    \(
    \begin{tikzcd}[column sep=1cm, ampersand replacement=\&]
      \tR^\partial
      \arrow[d,bend left=10, "y"right]
      \arrow[r, "x"above]
      \&
      \tR^\partial
      \arrow[d,bend left=10, "y"right]
      \\
      \tR^\partial
      \arrow[u,bend left=10, "xz"left]
      \arrow[r, "x"above]
      \&
      \tR^\partial
      \arrow[u,bend left=10, "xz"left]
    \end{tikzcd}
    \)
    \caption{\(d_{1}\)}
  \end{subfigure}
  \begin{subfigure}{0.2\textwidth}
    \centering
    \(
    \begin{tikzcd}[column sep=1cm, ampersand replacement=\&]
      \tR^\partial
      \arrow[d,bend left=10, "x"right]
      \arrow[r, "y"above]
      \&
      \tR^\partial
      \arrow[d,bend left=10, "x"right]
      \\
      \tR^\partial
      \arrow[u,bend left=10, "yz"left]
      \arrow[r, "y"above]
      \&
      \tR^\partial
      \arrow[u,bend left=10, "yz"left]
    \end{tikzcd}
    \)
    \caption{\(d_{0}\)}
  \end{subfigure}
  \caption{Four elements of \(\A\) generating the algebra \(\Hast(\A)\)}\label{fig:saddles_and_dots_in_A}
\end{figure}

We will be manipulating the multifactorization \(\tM(\Diag_T)\) over \(\tR^\partial\) using a notion of a \emph{special deformation retract}~\cite[Definition~2.6]{Ballinger}. This is a special case of a \emph{0-homotopy equivalence}, which is defined using the notion of \emph{chain maps between multifactorizations}~\cite[Definition~2.3]{Ballinger} and the notion of \emph{0-homotopy} between the chain maps~\cite[Definition~2.4]{Ballinger}. The reason to work with special deformation retracts is the ability to ‘‘extend'' them using a version of the homological perturbation lemma~\cite[Proposition~2.7]{Ballinger}.

\begin{remark}\label{prop:matrix_factorization_is_an_invariant}
The multifactorization \(\tM(\Diag_T)\) over \(\tR^\partial\) depends not only on the tangle \(T\), but also on the diagram \(\Diag_T\). To obtain a tangle invariant, it is natural to consider multifactorizations up to \emph{1-homotopy equivalence}, a notion which is slightly weaker than the 0-homotopy equivalence, see~\cite[Definition~2.4]{Ballinger}. In analogy with~\cite[Proposition~3.3]{Ballinger}, we expect that the 1-homotopy equivalence class of the multifactorization \(\tM(\Diag_T)\) over \(\tR^\partial\) is a tangle invariant, ie does not depend on the choice of diagram for \(T\). For the purposes of this paper the invariance of \(\tM(\Diag_T)\) is not needed, and thus we choose not to pursue this direction.
\end{remark}

The following lemmas, both of which are consequences of~\cite[Theorem~2.2]{KR_convolutions}, will be our sources of special deformation retracts. For the term \emph{Koszul factorization} see~\cite[Section~2.2]{Ballinger}.

\begin{lemma}\label{lem:lin_sp_def_retr}
Suppose a Koszul matrix factorization \(K((a_1,\ldots,a_n),(b_1,\ldots,b_n))\) over the polynomial ring \(R=R_0[x]\) has potential \(W-q x  \) where 
\(W\in R_0,~q\in R,~\text{gr}(x)=i^{-2}h^0q^{-2},~\text{gr}(q)=i^{-4}h^0q^{-4}\). Then there is a special deformation retract
\[K((a_1,\ldots,a_n,x),(b_1,\ldots,b_n,q)) \rightarrow
K\left((a_1\big|_{x=0},\ldots,a_n\big|_{x=0}),(b_1\big|_{x=0},\ldots,b_n\big|_{x=0})\right)
\]
where the matrix factorizations are considered over \(R_0\) with potential \(W \). In fact, the map is given by the horizontal quotient homomorphism below
\[
\begin{tikzcd}
i^{1}h^0  K((a_1,\ldots,a_n),(b_1,\ldots,b_n))
\arrow[bend left=10]{d}{x}
\\
K((a_1,\ldots,a_n),(b_1,\ldots,b_n))
\arrow[in=180,out=0]{r}{}
\arrow[bend left=10]{u}{q}
&
K\left((a_1\big|_{x=0},\ldots,a_n\big|_{x=0}),(b_1\big|_{x=0},\ldots,b_n\big|_{x=0})\right)
\end{tikzcd}
\]
\end{lemma}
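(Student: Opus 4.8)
The plan is to recognise the two matrix factorizations in the statement as an instance of Koszul reduction and then to quote the general convolution‑reduction result \cite[Theorem~2.2]{KR_convolutions}. First I would observe that, by definition of a Koszul matrix factorization, $K((a_1,\ldots,a_n,x),(b_1,\ldots,b_n,q))=M\otimes_R K((x),(q))$, where $M:=K((a_1,\ldots,a_n),(b_1,\ldots,b_n))$ and $K((x),(q))$ is the Koszul factorization with potential $xq$; since $M$ has potential $W-qx$, the tensor product has potential $(W-qx)+xq=W\in R_0$. Dually, the factorization $K((a_1|_{x=0},\ldots,a_n|_{x=0}),(b_1|_{x=0},\ldots,b_n|_{x=0}))$ over $R_0$ is precisely $M\otimes_R(R/xR)=M/xM$, again with potential $W$. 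So the lemma asserts that adjoining a Koszul pair in the polynomial variable $x$ and then reducing modulo $x$ are mutually inverse up to a special deformation retract, and that the retraction is the quotient $M\twoheadrightarrow M/xM$ applied to the ``bottom'' copy of $M$ inside $M\otimes_R K((x),(q))$.

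The mechanism is the elimination of a regular variable against its Koszul partner. The element $x$ is a non‑zero‑divisor in $R=R_0[x]$ and $M$ is free over $R$, so a choice of homogeneous $R_0$‑splitting of the quotient $\pi\co M\twoheadrightarrow M/xM$ produces a ``division by $x$'' operator $\xi\co M\to M$, of degree $\text{gr}(x)^{-1}$, with $\xi\circ(x\cdot)=\id_M$ and $(x\cdot)\circ\xi=\id_M-\sigma\pi$, together with the side conditions $\xi^2=0$, $\pi\xi=0$, $\xi\sigma=0$. This is exactly a special deformation retract, in the $x$‑direction of $M\otimes_R K((x),(q))$, of the two‑term piece $\bigl[i^1h^0 M\xrightarrow{\;x\;}M\bigr]$ onto $M/xM$. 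Feeding this retract, together with the rest of the differential—the internal $d_M$ and the second Koszul map $q$—into the homological perturbation machinery of \cite[Theorem~2.2]{KR_convolutions} (cf.\ \cite[Proposition~2.7]{Ballinger}) yields a special deformation retract of $M\otimes_R K((x),(q))$ onto $M/xM$. Because $x$ enters the entries $a_i,b_i$ and the element $q$ only linearly—this is the ``lin'' in the statement—the perturbation series is finite; and since $\pi$ already lands in $M/xM$, the perturbation corrections to $\sigma$ and $\xi$ are of positive order in $x$ and die after projecting, so the resulting retraction is exactly the horizontal quotient homomorphism drawn in the statement.

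The routine steps I would not spell out are the verification of the side conditions of a special deformation retract and the (finite) convergence of the perturbation series. The point requiring genuine care—and the main obstacle—is the grading bookkeeping: one must check that the hypotheses $\text{gr}(x)=i^{-2}h^0q^{-2}$ and $\text{gr}(q)=i^{-4}h^0q^{-4}$ amount precisely to the identity $\text{gr}(x)\,\text{gr}(q)=\text{gr}(d_0)^2$, which is what makes both components of the Koszul differential homogeneous of degree $\text{gr}(d_0)=i^{-3}h^0q^{-3}$ and hence makes $\pi$, $\sigma$, $\xi$—and their perturbed versions—grading‑homogeneous of the expected degrees. One must also keep in mind that a special deformation retract here compares matrix factorizations over two different rings, $R_0[x]$ and $R_0$, so the statement is to be read after restriction of scalars along $R_0\hookrightarrow R_0[x]$; once that is said, the content is the standard Koszul elimination described above.
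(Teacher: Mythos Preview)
Your proposal is correct and follows the same route as the paper: the paper does not give a separate proof but simply states that this lemma (together with the companion quadratic version) is a consequence of \cite[Theorem~2.2]{KR_convolutions}. Your write-up supplies the details of how that citation applies, which is more than the paper itself does; the only minor quibble is that the ``lin'' in the label refers to the degree of the eliminated factor $x$ (as opposed to the quadratic $p(x)$ in the next lemma), not to any linearity hypothesis on the $a_i,b_i$, so the finiteness of the perturbation series should be argued via the $x$-adic filtration rather than via linearity.
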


\begin{lemma}\label{lem:quad_sp_def_retr}
Suppose a Koszul matrix factorization \(K((a_1,\ldots,a_n),(b_1,\ldots,b_n))\) over the polynomial ring \(R=R_0[x]\) has potential 
\(W\in R_0\). 
Let \(p(x)\in R_0[x]\) be a homogeneous monic polynomial of degree 2, say \(p(x)=(x^2 + C x + D)\), with \(\text{gr}(x)=\text{gr}(C)=i^{-2}h^0q^{-2},~\text{gr}(D)=i^{-4}h^0q^{-4}\). 
Then there is a special deformation retract 
\[
K((a_1,\ldots,a_n,0),(b_1,\ldots,b_n,p(x))) \to
K((a_1,\ldots,a_n),(b_1,\ldots,b_n)) \otimes i^{1}h^0 R_0[x]/(p(x))
\]
where the matrix factorizations are considered over \(R_0\) with potential \(W\).  
In fact, this map is given by the horizontal quotient homomorphism below
\[
\begin{tikzcd}
i^{1}h^0 K((a_1,\ldots,a_n),(b_1,\ldots,b_n))
\arrow[in=180,out=0]{r}{}
&
K(a_1,\ldots,a_n),(b_1,\ldots,b_n)) \otimes_{R_0[x]} i^{1}h^0 R_0[x]/(p(x))
\\
K((a_1,\ldots,a_n),(b_1,\ldots,b_n))
\arrow{u}{p(x)}
\end{tikzcd}
\]
\end{lemma}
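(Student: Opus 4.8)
The plan is to split off the new Koszul pair $(0,p(x))$, recognize the resulting elementary factor as a two-periodic resolution of $R_0[x]/(p(x))$, and feed this into \cite[Theorem~2.2]{KR_convolutions} exactly as for Lemma~\ref{lem:lin_sp_def_retr}.

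First I would use that a Koszul matrix factorization is by definition the tensor product of its elementary factors, so that
\[
K((a_1,\ldots,a_n,0),(b_1,\ldots,b_n,p(x))) = K((a_1,\ldots,a_n),(b_1,\ldots,b_n)) \otimes_{R_0[x]} K(0,p(x)),
\]
where $K(0,p(x))$ is the elementary matrix factorization $(R_0[x]\xrightarrow{0}R_0[x]\xrightarrow{p(x)}R_0[x])$ of potential $0\cdot p(x)=0$; tensoring with it leaves the total potential equal to $W\in R_0$, and both sides will be regarded as matrix factorizations over $R_0$. The role of the hypotheses is that $p(x)=x^2+Cx+D$ is monic of degree $2$ in $x$: it is therefore a non-zero-divisor in $R_0[x]$, the quotient $R_0[x]/(p(x))$ is free of rank $2$ over $R_0$, and---using the stated degrees of $x$, $C$, $D$---$p(x)$ is homogeneous of internal degree $i^{-4}h^0q^{-4}$, so multiplication by $p(x)$ is a homogeneous component of $d_0$ of the expected degree, carrying the grading shift recorded by ``$i^1h^0$'' in the statement.

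Next I would write down the retraction of the elementary factor by hand. Over $R_0$ the quotient map $\pi\colon R_0[x]\to R_0[x]/(p(x))$ splits, say by the $R_0$-linear section $\sigma$ with $\sigma(1)=1$, $\sigma(x)=x$, so $R_0[x]=\sigma(R_0[x]/(p(x)))\oplus p(x)R_0[x]$ as $R_0$-modules. Viewing $K(0,p(x))$ over $R_0$, its homology is $R_0[x]/(p(x))$ concentrated in the summand receiving multiplication by $p(x)$; taking $\pi$ on that summand (and $0$ on the other), $\iota:=\sigma$, and the $R_0$-linear homotopy that divides by $p(x)$ after projecting to $p(x)R_0[x]$, I obtain a special deformation retract $K(0,p(x))\to i^1h^0\,R_0[x]/(p(x))$. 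The side conditions $h^2=0$, $\pi h=0$, $h\iota=0$ are immediate since $h$ vanishes on one of the two summands and $\iota$ lands in the complementary one.

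Finally I would extend this across the tensor product. Plugging the above special deformation retract into \cite[Theorem~2.2]{KR_convolutions}---in the same manner as for Lemma~\ref{lem:lin_sp_def_retr}, but with the Koszul pair $(0,p(x))$ playing the role of $(x,q)$---produces a special deformation retract
\[
K((a_1,\ldots,a_n,0),(b_1,\ldots,b_n,p(x))) \longrightarrow K((a_1,\ldots,a_n),(b_1,\ldots,b_n)) \otimes_{R_0[x]} i^1h^0\,R_0[x]/(p(x))
\]
of matrix factorizations over $R_0$ with potential $W$, and unwinding the construction identifies the map with $\id\otimes\pi$, i.e.\ the horizontal quotient homomorphism in the displayed diagram. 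The delicate point---and the reason one wants Theorem~2.2 rather than a bare ``tensor the retraction'' argument---is that the contracting data above is only $R_0$-linear, not $R_0[x]$-linear (there is no $R_0[x]$-linear section of $\pi$, as $p(x)$ is a non-zero-divisor), so one has to check that the $R_0$-linear special-deformation-retract data genuinely survives tensoring with the free $R_0[x]$-factorization $K((a_i),(b_i))$; that is the main obstacle, and it is precisely what Theorem~2.2 is designed to package.
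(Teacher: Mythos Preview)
Your proposal is correct and follows the same route the paper indicates: both this lemma and Lemma~\ref{lem:lin_sp_def_retr} are stated in the paper simply as consequences of \cite[Theorem~2.2]{KR_convolutions}, with no further argument given. You have supplied exactly the unpacking one would want---splitting off the elementary Koszul factor $K(0,p(x))$, writing down the explicit $R_0$-linear retraction onto $R_0[x]/(p(x))$ using monicity of $p(x)$, and then invoking \cite[Theorem~2.2]{KR_convolutions} to propagate this across the tensor with $K((a_i),(b_i))$---so your write-up is more detailed than the paper's but not different in substance.
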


\begin{definition}\label{def:induced_saddle_maps:mf}
	Given a circle \(c\) in the plane, we define
	\[
	V(c)= i^1 h^0 \fieldTwoElements[H,y]/(y^2 - H y)
	\]
	where \(\text{gr}(y)=i^{-2}h^0q^{-2}\). 
	Given a crossingless diagram \(\Diag\), such as the one in Figure~\ref{fig:general_diagram_resolution}, with \(n\) closed components \(c_1, \dots, c_n\), define
	\[
	V(\Diag)
	\coloneqq
	\bigotimes_{i=1}^n V(c_i)
	=
	i^{n}h^0\fieldTwoElements[H,y_1,\ldots,y_n]/(y_i^2 - H y_i)
	\]
	where the tensor product is taken over \(\fieldTwoElements[H]\). Recalling the elementary diagrams \(\Diag_{0}\) and \(\Diag_1\) from Figure~\ref{fig:elementary_mf}, define 
  \[\begin{aligned}
  \varepsilon(\Diag)&\coloneqq \begin{cases}
  0\text{ if the open strands of \(\Diag\) are connected as in \(\Diag_{0}\) }\\
  1\text{ if the open strands of \(\Diag\) are connected as in \(\Diag_{1}\) }
  \end{cases} \\
  o(\Diag) &\coloneqq \Diag_{\varepsilon(\Diag)}
  \end{aligned}
  \] 
	Let \(\Diag'\) be another diagram with closed components \(c'_1,\dots, c'_{m}\) for some integer \(m\). 
	We pick the basis on 
	\begin{equation}\label{eqn:mf:only_circles}
		\Mor_{\fieldTwoElements[H]}\big(V(\Diag), V(\Diag')\big)
		= 
		\Big(
		\bigotimes_{i=1}^{n}
		V^*(c_i)
		\Big)
		\otimes_{\fieldTwoElements[H]}
		\Big(
		\bigotimes_{j=1}^{m}
		V(c'_j)
		\Big)
	\end{equation}
	that is induced by the standard basis on each of tensor factor for \(i=1,\dots,n\) and \(j=1,\dots,m\):
	\begin{align*}
		V^*(c_i)
		= 
		\fieldTwoElements[H]
		\Big\langle
		1^*_i,y^*_i
		\Big\rangle 
		\qquad \quad
		V(c_j)
		= 
		\fieldTwoElements[H]
		\Big\langle
		1_j,y_j
		\Big\rangle
	\end{align*}
	If \(\Diag'\) differs from \(\Diag\) in a single dotted/thick arc, we define a map of multifactorizations over $\tR^\partial$
	\[
	\mathcal{S}(\Diag,\Diag')
	\co
	V(\Diag)
	\otimes_{\fieldTwoElements[H]}
	\tM(o(\Diag))
	\rightarrow
	V(\Diag')
	\otimes_{\fieldTwoElements[H]}
	\tM(o(\Diag'))
	\]
	as follows. If \((\varepsilon(\Diag),\varepsilon(\Diag'))=(0,1)\) or \((1,0)\), \(V(\Diag)=V(\Diag')\), we respectively define
	\[
	\mathcal{S}(\Diag,\Diag')=\id\otimes s_{0}
	\quad\text{or}\quad
	\mathcal{S}(\Diag,\Diag')=\id\otimes s_{1}
	\]
	Suppose \(\varepsilon(\Diag)=\varepsilon(\Diag')\). 
	Let \(o^*\) denote the special open component of \(\Diag\) and \(o\) the non-special one. 
	We also write 
	$I_{\bcancel{i},\bcancel{j}}\coloneqq\sum_{\ell\neq i,j} 1^*_\ell 1_\ell+y^*_\ell y_\ell$ and \(I_{\bcancel{i}}\coloneqq\sum_{\ell\neq i} 1^*_\ell 1_\ell+y^*_\ell y_\ell\). 
	Then, if \(\Diag'\) is obtained from \(\Diag\) by merging two components, we define
	\[
	\mathcal{S}(\Diag,\Diag') \coloneqq
	\begin{cases*}
		I_{\bcancel{i},\bcancel{j}} \big(
		1_i^*1_j^*1_k + y_i^*1_j^*y_k + 1_i^*y_j^*y_k +H\cdot y_i^*y_j^*y_k
		\big) 
		\otimes \id
		&
		if \(c_i\) and \(c_j\) merge to \(c_k\)
		\\
		 I_{\bcancel{i}} 1_i^* \otimes \id + I_{\bcancel{i}} y_i^*\otimes s_{\varepsilon(\Diag)+1}s_{\varepsilon(\Diag)}
		&
		if \(c_i\) merges with \(o\)
		\\
		I_{\bcancel{i}} 1_i^* \otimes \id + H\cdot  I_{\bcancel{i}} y_i^*\otimes \id 
		&
		if \(c_i\) merges with \(o^*\)
	\end{cases*}
	\]
	where \(s_2\coloneqq s_0\). 
	If \(\Diag'\) is obtained from \(\Diag\) by splitting two components, we define 
	\[
	\mathcal{S}(\Diag,\Diag') \coloneqq
	\begin{cases*}
		 I_{\bcancel{k}} \big(
		1_k^*y_i1_j + 1_k^*1_iy_j + H\cdot 1_k^*1_i1_j+y_k^* y_iy_j
		\big)
		\otimes \id
		&
		if \(c_k\) splits into \(c_i\) and \(c_j\)
		\\
    I_{\bcancel{i}} 
		1_i\otimes d_{\varepsilon(\Diag)}+ I_{\bcancel{i}} y_i\otimes \id 
		&
		if \(c_i\) splits from \(o\)
		\\
		I_{\bcancel{i}} y_i\otimes \id 
		&
		if \(c_i\) splits from \(o^*\)
	\end{cases*}
	\]	
	
\end{definition}

The following lemma describes a process known as delooping, which allows one to eliminate closed components of a crossingless tangle diagram, such as the one in Figure~\ref{fig:general_diagram_resolution}, at the expense of tensoring with a two-dimensional vector space. 
This lemma is well-known in the setting of closed link diagrams, however we require both a version for tangles as well as some control over morphisms induced by this process. 
The latter represents the bulk of the work; the proof is the most technically involved in this section and may be safely skipped on the first read. 

\begin{lemma}\label{lem:delooping:mf}
	Let \(\Diag_T\) be a diagram of a Conway tangle as in the Assumptions~\ref{assums:tangle_diagram} and \(\Diag\) some complete resolution of \(\Diag_T\). Then there exists a special deformation retract
	\[
	\varphi_\Diag
	\co 
	\tM(\Diag)
	\rightarrow
	V(\Diag)
	\otimes_{\fieldTwoElements[H]}
	\tM(o(\Diag))
	\]
  of matrix factorizations over $\tR^\partial$.

	Furthermore, if \(\Diag'\) is another complete resolution of \(\Diag_T\) which differs from \(\Diag\) in a single dotted/thick arc, and \(f\co\tM(\Diag)\rightarrow\tM(\Diag')\) is the map induced by the vertical maps in \(M(\Diag_+)\) or \(M(\Diag_-)\) (see Figure~\ref{fig:crossings_mf}), then the induced map
	\[
	V(\Diag)
	\otimes_{\fieldTwoElements[H]}
	\tM(o(\Diag))
	\rightarrow
	V(\Diag')
	\otimes_{\fieldTwoElements[H]}
	\tM(o(\Diag'))
	\]
	is equal to \(\mathcal{S}(\Diag,\Diag')\) up to homotopy of maps between matrix factorizations over $\tR^\partial$.
\end{lemma}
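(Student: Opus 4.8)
The plan is to realize the delooping retract $\varphi_\Diag$ as a composite of the two elementary Koszul-factorization retracts supplied by Lemmas~\ref{lem:lin_sp_def_retr} and~\ref{lem:quad_sp_def_retr}, and then to compute the induced map on the delooped factorizations by transporting $f$ through this composite using the homological perturbation lemma \cite[Proposition~2.7]{Ballinger}.

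\emph{Construction of $\varphi_\Diag$.} By Definition~\ref{def:mf_of_tangle} and Example~\ref{exa:elementary_mf}, the multifactorization $\tM(\Diag)$ is a tensor product over $\tR(\Diag)$ of rank-one Koszul factorizations, one per arc of $\Diag$, and hence is itself a Koszul factorization $K((a_1,\dots,a_N),(b_1,\dots,b_N))$ whose potential already lies in $\tR^\partial$. I would proceed by an ordered sequence of region-variable eliminations. Using that the open part of $\Diag$ is connected (a consequence of the last two items of Assumptions~\ref{assums:tangle_diagram}), one can repeatedly single out a region-difference variable that occurs linearly in the potential and forms one of the Koszul pairs, and apply Lemma~\ref{lem:lin_sp_def_retr}: each such step eliminates that variable, drops the corresponding Koszul pair, and descends to matrix factorizations over a smaller ring, producing a special deformation retract. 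Running this until no further linear elimination is possible contracts the open part of $\Diag$ to the single elementary diagram $o(\Diag)=\Diag_{\varepsilon(\Diag)}$ and leaves, for each closed component $c_i$ of $\Diag$, exactly one surviving variable $y_i$ subject to the monic quadratic relation $y_i^2 - H y_i$ (the Jacobi relation of the edge potential). Applying Lemma~\ref{lem:quad_sp_def_retr} to each $y_i$ peels off a tensor factor $i^1 h^0 \F[H,y_i]/(y_i^2 - H y_i)=V(c_i)$. Since special deformation retracts compose, the total composite is a special deformation retract $\varphi_\Diag\co\tM(\Diag)\to V(\Diag)\otimes_{\F[H]}\tM(o(\Diag))$ over $\tR^\partial$. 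The only genuine care here is to choose the elimination order so that at every stage the hypotheses of the two lemmas (which variable is linear, which is governed by a monic quadratic) are met; connectedness of the open part is exactly what makes this possible.

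\emph{The induced map.} By the mapping-cone descriptions \eqref{eq:mcs} and Figure~\ref{fig:crossings_mf}, the map $f\co\tM(\Diag)\to\tM(\Diag')$ is the tensor product of one of the vertical arrows $d_1$ of $M(\Diag_+)$ or $M(\Diag_-)$ — explicitly the pair $-\id$, $x_0+x_2-x_1-x_3$ — with identities on the arcs that $\Diag$ and $\Diag'$ share, so $f$ is entirely explicit before delooping. Each elementary retract of Lemmas~\ref{lem:lin_sp_def_retr} and~\ref{lem:quad_sp_def_retr} is given by an explicit quotient homomorphism together with explicit homotopy data, so the perturbation lemma computes the induced map as a bounded composition of these data. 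I would then split into cases according to how $\Diag$ and $\Diag'$ differ. If the arc modification changes how the two open strands connect, i.e.\ $\varepsilon(\Diag)\neq\varepsilon(\Diag')$, the change is a local saddle between $\Diag_0$ and $\Diag_1$, and after delooping everything else the induced map becomes $\id\otimes s_0$ or $\id\otimes s_1$ in the notation of Lemma~\ref{lem:iso_A_B}, matching the first case of Definition~\ref{def:induced_saddle_maps:mf}. If $\varepsilon(\Diag)=\varepsilon(\Diag')$, the modification instead merges or splits two closed circles, or a closed circle with one of the two open strands; propagating the $d_1$ arrow through the delooping retracts and re-expressing the output in the bases $\{1_i,y_i\}$ of the factors $V(c_i)$ (and their duals) recovers exactly the Frobenius multiplication/comultiplication formulas of Definition~\ref{def:induced_saddle_maps:mf}, with the composite $s_{\varepsilon(\Diag)+1}s_{\varepsilon(\Diag)}$ appearing when a circle merges with (or splits from) the non-special open strand $o$ and the $H$-weighted, asymmetric formula appearing for the special strand $o^\ast$, which is where the reduction $x_0=x_1$ at the marked end enters.

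The step I expect to be the main obstacle is this last one: faithfully tracking the morphism through the iterated delooping and then matching the normalization — the coefficients, the $H$-factors, the compositions $s_{\varepsilon(\Diag)+1}s_{\varepsilon(\Diag)}$, and the asymmetry between $o$ and $o^\ast$ — of each of the merge/split formulas in Definition~\ref{def:induced_saddle_maps:mf}. This is precisely the ``control over morphisms induced by delooping'' promised before the statement, and the bookkeeping with the bases $\{1_i,y_i\}$ and their duals, together with the homotopies coming from \cite[Proposition~2.7]{Ballinger}, is the technical heart of the argument. A secondary subtlety worth flagging is that all homotopies produced must be homotopies of maps of matrix factorizations over $\tR^\partial$, not merely over $\F[H]$, so $\tR^\partial$-linearity of every retract and every homotopy has to be preserved throughout.
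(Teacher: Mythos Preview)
Your overall architecture is the paper's: build $\varphi_\Diag$ by iterating Lemmas~\ref{lem:lin_sp_def_retr} and~\ref{lem:quad_sp_def_retr}, then transport $f$ via perturbation. But you are missing what is in fact the bulk of the paper's argument, and without it your edge-map computation will not close.

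The retract $\varphi_\Diag$ depends on the elimination order. In the paper's language, one first chooses an arc $a^*$ connecting the two open strands and then an \emph{arc system} $A\ni a^*$; the retract $\varphi_A=\varphi_A^3\circ\varphi_A^2\circ\varphi_A^1$ genuinely depends on $A$, not just on $\Diag$. The paper spends its entire Step~2 proving that for any two arc systems $A_1,A_2$ the maps $\varphi_{A_1}$ and $\varphi_{A_2}$ are homotopic, by an explicit two-case analysis and explicit homotopies $h$ (Figures~\ref{fig:delooping:mf:homotopy:i} and~\ref{fig:delooping:mf:homotopy:ii}). You assert only that \emph{some} order exists; you never argue that different orders give homotopic retracts.

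This omission is fatal for the second half. The reason the paper can compute the induced edge map at all is that, once independence is known, one may \emph{choose} compatible arc systems for $\Diag$ and $\Diag'$ adapted to the saddle: for a merge one takes $A_0=A_1\cup\{a_0\}$, for a split $A_1=A_0\cup\{a_1\}$. With that choice the two retracts differ only at the single arc in question, and the induced map is read off locally from Figure~\ref{fig:delooping:mf:edge_maps}. If instead you fix an arbitrary elimination order for $\Diag$ and another for $\Diag'$, there is no reason these are compatible (indeed, the differing arc may be the unique arc connecting the open strands in one diagram but not the other), and the perturbation sum you propose has no visible simplification. Your sentence ``The only genuine care here is to choose the elimination order\dots'' is exactly where the gap lies: the genuine care is to show the choice does not matter up to homotopy.

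Two smaller points. First, after the quadratic eliminations the relations you obtain are the $\kappa(a)$, which are \emph{not} literally $y_i^2-Hy_i$ but differences $[Y_\star](H-[Y_\star])-[Y_\vartriangle](H-[Y_\vartriangle])$; an extra identification $\varphi_A^3$ is needed to reach $V(\Diag)$. Second, even with adapted arc systems the induced edge map does not equal $\mathcal{S}(\Diag,\Diag')$ on the nose: there are correction terms (the highlighted entries in Table~\ref{tab:delooping:mf:edge_maps}, and the $\Delta_\star$ summands) which are only null-homotopic as endomorphisms of $\tM(o(\Diag))$. Your phrase ``recovers exactly'' overstates what is true.
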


\newcommand{\mybfheading}[1]{\medskip\noindent\textbf{#1.}}%
\newcommand{\myitheading}[1]{\medskip\noindent\textit{#1:}}%

\begin{proof}
	The proof of Lemma~\ref{lem:delooping:mf} proceeds in three steps: First, we construct a candidate \(\varphi_A\) for the special deformation retract \(\varphi_\Diag\) which depends on some additional data \(A\) on \(\Diag\), which we call arc system. In the second step, we show that while the maps \(\varphi_{A_1}\) and \(\varphi_{A_2}\) are indeed distinct for different arc systems \(A_1\) and \(A_2\) on \(\Diag\), their homotopy classes agree. In the last step, we show the second part of the lemma, using choices of arc systems of the diagrams \(\Diag\) and \(\Diag'\) that are adapted to the map \(f\). 	
	
	\mybfheading{0) Notation and preliminary observations}
	Before we start describing the construction of the maps \(\varphi_A\), we introduce some notation that we will be using throughout this proof. The symbol $\tR$ will be used for the reduced edge ring $\tR(\Diag)$.	Given any arc \(a\) in \(\Diag\), we define 
	\[
	\lambda(a)\coloneqq
	\begin{cases*}
	(x^a_0-x^a_2)
	&
	if \(\Diag_a = \Diag_0\)
	\\
	H-(x^a_3-x^a_1)
	&
	if \(\Diag_a = \Diag_1\)
	\end{cases*}
	\]
	where \(x^a_{i}\) for \(i=1,2,3,4\) are the labels of the local regions for \(\Diag_a\). 
	Similarly, we define
	\[
	\kappa(a)\coloneqq
	\begin{cases*}
	(H-(x^a_3-x^a_1))(x^a_0+x^a_2-x^a_1-x^a_3)
	&
	if \(\Diag_a = \Diag_0\)
	\\
	(x^a_0-x^a_2) (x^a_0+x^a_2-x^a_1-x^a_3)
	&
	if \(\Diag_a = \Diag_1\)
	\end{cases*}
	\]
	so that
	\[
		\tM(\Diag_a)=
		\begin{tikzcd}[column sep=1cm, ampersand replacement=\&]
		\tR
		\arrow[r,bend left=10, "\lambda(a)"above]
		\&
		\tR
		\arrow[l,bend left=10, "\kappa(a)"below]
		\end{tikzcd}
	\] 
	Next, given an arc \(a^*\) which connects the two open components of the diagram, let \(\mathfrak{a}\coloneqq \mathfrak{a}(a^*)\) be the sum of the two ideals \(\mathfrak{e}\) and \(\mathfrak{o}\) in \(\tR\) that are defined as follows:
	\(\mathfrak{e}\coloneqq\mathfrak{e}(a^*)\) is generated by all expressions \((x_i-x_j)\), where \(x_i\) and \(x_j\) vary over the labels of all pairs of regions that can be connected by a path \(\gamma\) which avoids all diagram components as well as the arc \(a^*\) and intersects the thick arcs an even number of times. 
	Similarly, \(\mathfrak{o}\coloneqq\mathfrak{o}(a^*)\) is generated by the expressions \((H-(x_i-x_j))\), where \(x_i\) and \(x_j\) are as before, except that the path \(\gamma\) should intersect the thick arcs an odd number of times. 
	In particular, \(\lambda(a)\in\mathfrak{a}\) for every arc \(a\neq a^*\). 
	Note that these expressions are independent of the particular path connecting \(x_i\) and \(x_j\). 
	(Also note that if we worked with signs, the signs in these expressions would depend on the direction of the path \(\gamma\) and how it intersects the thick edges.) An example of a diagram and the corresponding ideals is given in Figure~\ref{fig:ideals}.

  \begin{figure}[t]
  \begin{subfigure}{0.69\textwidth}
  \labellist 
  \pinlabel \(x_0\) at 27 52
  \pinlabel \(x_7\) at 44 50
  \pinlabel \(x_2\) at 37 4
  \pinlabel \(x_4\) at 36 31
  \pinlabel \(\scriptstyle a^*\) at 50 32
  \pinlabel \(\scriptstyle a_1\) at 49 42
  \pinlabel \(x_5\) at 23 31
  \pinlabel \(x_6\) at 45 18
  \pinlabel \(x_1\) at 9 31
  \pinlabel \(x_3\) at 58 31
  \pinlabel \(\mapsto\) at 76 31
  \pinlabel \(\mathfrak{e}=(x_4-x_1,x_6-x_2)\) at 116 41
  \pinlabel \(\mathfrak{o}=(H-(x_0-x_5))\) at 113 31
  \pinlabel \(\mathfrak{a}=(x_4-x_1,x_6-x_2,H-(x_0-x_5))\) at 138 21
  \endlabellist
  \includegraphics[scale=1.6]{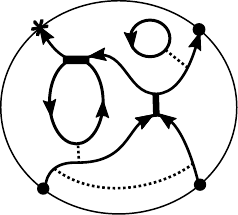} 
  \caption{}\label{fig:ideals}
  \end{subfigure}
  \begin{subfigure}{0.29\textwidth}
  \hspace*{0.2cm}
  \labellist 
  \pinlabel \(\scriptstyle a^*\) at 41 32
  \pinlabel \(c_H\) at 8 30
  \pinlabel \(c_1\) at 37 55
  \pinlabel \(c_{H+x}\) at 13 17
  \pinlabel \(c_{z}\) at 56 24
  \pinlabel \(c_{H+y}\) at 58 35
  \endlabellist
  \includegraphics[scale=1.6]{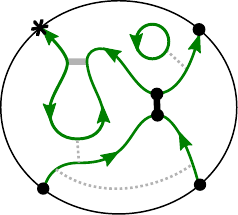}
  \caption{}
  \label{fig:segments}
  \end{subfigure}
  \caption{(a) A diagram and three associated ideals corresponding to the distinguished arc \(a^*\); (b)~The same diagram, but with the labels \(c_\star\) corresponding to the five diagram segments relative to the arc \(a^*\), which are drawn in green}
  \end{figure}
	
  Ends of \emph{all} arcs in $\Diag$ divide the diagram strands into \textbf{edges}, eg the diagram in Figure~\ref{fig:ideals} has $10$ edges. We now introduce some notation for keeping track of elements in \(\tR\) corresponding to edges in \(\Diag\).
	For each such edge~\(e\), consider the circle $c_e$ containing \(e\) in the \(\infty\)-closure \(\closureDiag\) of the diagram \(\Diag\). Define \(\sigma_{e}\in\ZZ\) as the mod 2 number of circles 
  in \(\closureDiag\) in which $c_e$ is nested in,  plus 1 if the orientation of \(e\) induces a counter-clockwise orientation on $c_e$. 
	We then set 
	\[
	Y_e=
	\begin{cases*}
	x_r-x_\ell & if \(\sigma_{e}=0\)
	\\
	H-(x_r-x_\ell) & if \(\sigma_{e}=1\)
	\end{cases*}
	\]
	where \(x_r\) and \(x_\ell\) are the labels of the two regions on the right and left of the edge \(e\), respectively. 
	The above is inspired by the definition of \textit{canonical generator} in~\cite{JakeSInvariant}.
	The key property of \(\sigma_e\) that we will be using in the last part of this proof is that its value stays invariant under merge and split operations.

	We will need to group edges together. Ends of the arc \(a^*\) divide the diagram strands  (without any arcs) into \textbf{diagram segments relative to \(\bm{a^*}\)}. For instance, the diagram in Figure~\ref{fig:ideals} has five segments, illustrated in Figure~\ref{fig:segments}. In other words, each circle amounts to one segment and each open diagram strand consists of two diagram segments that meet at an endpoint of \(a^*\). We will call the first type of segment closed, and the latter open.  We denote the open diagram segments, ordered by their ends on the boundary of \(\Diag\), starting at the top left and going in counter-clockwise direction, by \(c_H\), \(c_{H+x}\), \(c_z\), and \(c_{H+y}\), respectively. We enumerate the closed diagram segments and denote them by \(c_1,\dots,c_n\). 
	For each diagram segment \(c_\star\), we pick some oriented edge \(e\) on \(c_\star\) and write \(Y_{\star}\coloneqq Y_{e}\). 
	The element \([Y_\star]\in\tR/\mathfrak{a}\) is independent of the choice of edge \(e\), which can be seen inductively by following a path along \(c_\star\). In particular, if \(\star\in\{H,H+x,z,H+y\}\), \([Y_\star]=\star\in\tR/\mathfrak{a}\). 
	
  \bigskip

	Next, extend \(\{a^*\}\) to a minimal subset \(A\) of dotted or thick arcs that makes \(\Diag\) connected. Call such a subset \(A\) an  \textbf{arc system} for \(\{a^*\}\). 
	For example, the set \(\{a^*,a_1\}\) constitutes an arc system for the diagram in Figure~\ref{fig:ideals}.
	In general, we claim: 
  \begin{itemize}
  \item The ideal in $\tR$ generated by \(\lambda(a)\) for \(a\not\in A\) is equal to the ideal \(\mathfrak{a}\).
  \item The algebra \(\tR/\mathfrak{a}\) is freely generated by \(Y_1,\dots,Y_n\) as an \(\tR^\partial\)-algebra. 
  \end{itemize}   
	Both facts follow from graph combinatorics, by considering trees in the dual graph of \(\Diag\), which is formed by the edges dual to the arcs not in \(A\).
	
	A straightforward computation shows that 
	\[
	[\kappa(a)]
	=
	\big(H-[x^a_0-x^a_1]\big)[x^a_0-x^a_1]
	-
	\big(H-[x^a_3-x^a_2]\big)[x^a_3-x^a_2]
	\in\tR/(\lambda(a))
	\]
	So, in particular, if \(c_\star\) and \(c_\vartriangle\) are two segments connected by an arc \(a\neq a^*\), then
	\[
	[\kappa(a)]
	=
	[Y_\star]\big(H-[Y_\star]\big)
	-
	[Y_{\vartriangle}]\big(H-[Y_{\vartriangle}]\big)
	\in\tR/\mathfrak{a}
	\]
	For \(\star\in\{H,H+x,z,H+y\}\), observe that
	\[
	[Y_{\star}]\big(H-[Y_{\star}]\big)
	=
	\Delta_\star\in\tR/\mathfrak{a}
	\quad
	\text{where }
	\quad
	\Delta_\star
	\coloneqq
	\begin{cases}
	0
	&
	\star=H
	\\
	x(y+z)
	&
	\star=H+x
	\\
	z(x+y)
	&
	\star=z
	\\
	y(x+z)
	&
	\star=H+y
	\end{cases}
	\]	

	\mybfheading{1) Construction of the maps \(\bm{\varphi_A}\)}
	Pick some arc \(a^*\) connecting the two open tangle strands.  
	By the Assumptions~\ref{assums:tangle_diagram}, such an arc always exists.  
	Then pick some arc system \(A\) for \(a^*\). 
	We define the special deformation retract \(\varphi_A\) in three steps. 
	First, we define a special deformation retract 
	\[
		\varphi_A^1
		\co
		\tM(\Diag) = \bigotimes_{a}\tM(\Diag_a)
		\longrightarrow
		\bigotimes_{a\in A} \tM(\Diag_a) 
		\otimes 
		\bigotimes_{a\not\in A}\tR/(\lambda(a))
		\cong
		\bigotimes_{a\in A} 
		\left(
		\tM(\Diag_a) 
		\otimes 
		\tR/\mathfrak{a}
		\right)
	\]
  of matrix factorizations over $\tR/\mathfrak{a}=\tR^\partial[Y_1,\dots,Y_n]$.
	On the tensor factors for \(a\not\in A\), the map is induced by the special deformation retractions from Lemma~\ref{lem:lin_sp_def_retr}; on the other tensor factors, it is the identity. 
	For each \(a\in A\smallsetminus\{a^*\}\), \(\lambda(a)\in\mathfrak{a}\), so 
	\[
	\left(
	\tM(\Diag_a) 
	\otimes 
	\tR/\mathfrak{a}
	\right)
	\cong
	\tR/\mathfrak{a}\xrightarrow{\kappa(a)}\tR/\mathfrak{a}
	\]
	Let us now turn to \(\tM(\Diag_{a^*})\). 
	Suppose \(o(\Diag)=\Diag_0\). 
	Then by considering a path connecting the regions \(x_0\) and \(x_2\), we see that
	\([\lambda(a^*)]=x\in\tR/\mathfrak{a}\). 
	Note that this identity holds regardless of whether \(\Diag_{a^*}=\Diag_0\), \(\Diag_{a^*}=\Diag_1\), or how \(\Diag_{a^*}\) is connected to the rest of the diagram \(\Diag\). 
	Likewise, if \(o(\Diag)=\Diag_1\), we see that \([\lambda(a^*)]=y\in\tR/\mathfrak{a}\). 
	In summary,
	\[
	\left(
	\tM(\Diag_{a^*}) 
	\otimes 
	\tR/\mathfrak{a}
	\right)
	=
	\tM(o(\Diag))=
	\begin{tikzcd}[column sep=1cm, ampersand replacement=\&]
	\tR
	\arrow[r,bend left=10, "\lambda^{*}"above]
	\&
	\tR
	\arrow[l,bend left=10, "xyz/\lambda^{*}"below]
	\end{tikzcd}
	\quad
	\text{where } 
	\lambda^{*}=
	\begin{cases}
	x & \text{ if } o(\Diag)=\Diag_0\\
	y & \text{ if } o(\Diag)=\Diag_1
	\end{cases}
	\]
	as a matrix factorization over \(\tR/\mathfrak{a}\), since the overall potential is \(xyz\). 
	
	Let \(\mathfrak{A}\coloneqq\mathfrak{A}(A)\supseteq\mathfrak{a}\) be the sum of \(\mathfrak{a}\) with the ideal generated by the the expressions \(\kappa(a)\) for \(a\in A\smallsetminus\{a^*\}\). Applying Lemma~\ref{lem:quad_sp_def_retr}, we obtain the special deformation retraction 
	\[
		\varphi_A^2
		\co
		\bigotimes_{a\in A} 
		\left(
		\tM(\Diag_a) 
		\otimes 
		\tR/\mathfrak{a}
		\right)
		\rightarrow
		\tR/\mathfrak{A}\otimes \tM(o(\Diag))
	\]
  of matrix factorizations over $\tR^\partial$.
	Since the expressions \(\kappa(a)\) are quadratic polynomials in \(\tR/\mathfrak{a} \cong\tR^\partial[Y_1,\dots,Y_n]\), there exists an \(\tR^\partial\)-module isomorphism 
	\[
		\tR/\mathfrak{A}
		\rightarrow
		V(\Diag)\otimes_{\fieldTwoElements[H]} \tR^\partial
	\]
	which is uniquely defined by \(\prod_j Y_j^{\varepsilon_j}\mapsto \prod_j y_j^{\varepsilon_j}\) for any tuple \(\varepsilon = (\varepsilon_1,\dots,\varepsilon_n)\in\{0,1\}^n\). This isomorphism induces an isomorphism 
	\[
	\varphi_A^3
	\co
	\tR/\mathfrak{A}\otimes \tM(o(\Diag))
	\rightarrow
	V(\Diag)\otimes \tM(o(\Diag))
	\]
	We now define the special deformation retract \(\varphi_\Diag\coloneqq\varphi_A\) as the composition \(\varphi_A^3\circ\varphi_A^2\circ\varphi_A^1\). As the notation suggests, this map depends on the choice of an arc system \(A\). However, as we will show next, its homotopy type is independent of \(A\).

	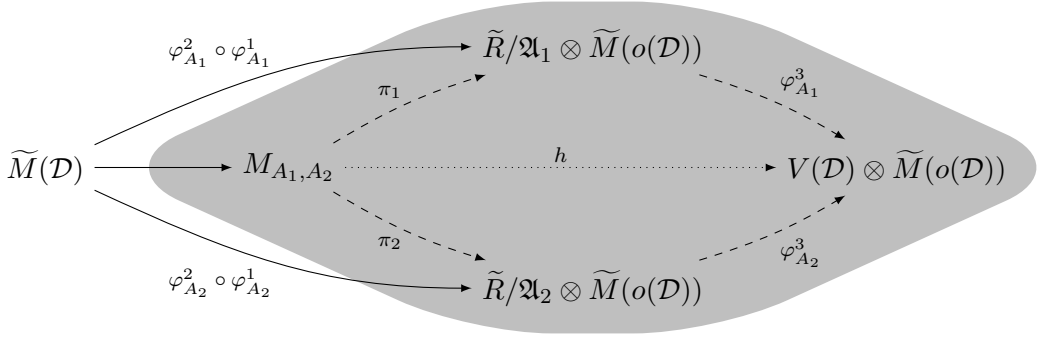
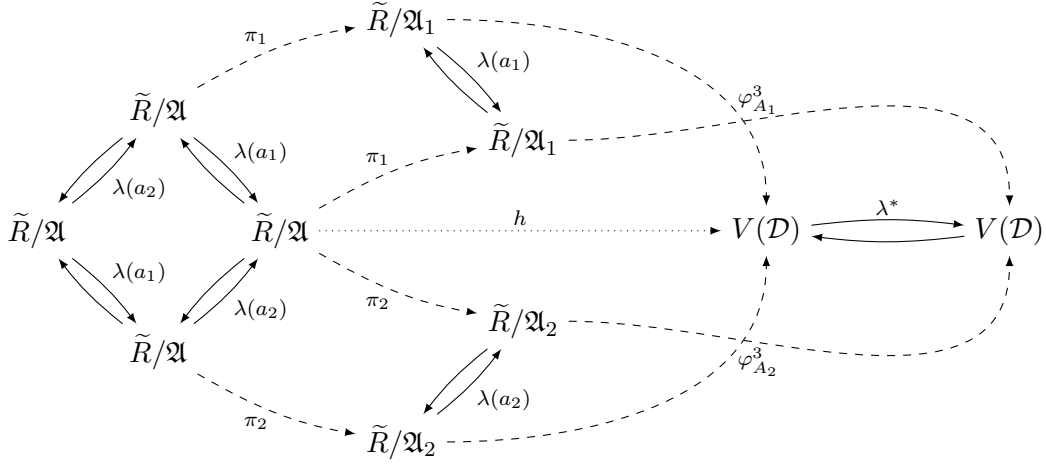
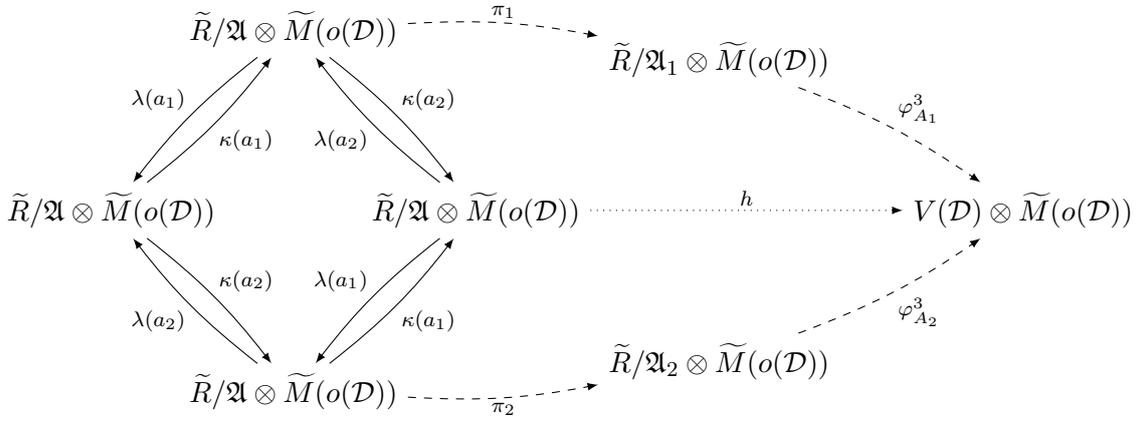
\begin{figure}[p]
		\begin{subfigure}{\textwidth}	
			\centering
			\begin{tikzpicture}[scale=0.8]
			\path[fill,lightgray,rounded corners=30pt] (-8,0) -- (-2,2.75) -- (2,2.75) -- (8,0) -- (2,-2.75) -- (-2,-2.75) -- cycle;
			\node(L) at (-9,0){\(\tM(\Diag)\)};
			\node(C) at (-5,0){\(M_{A_1,A_2}\)};
			\node(T) at (0,2){\(\tR/\mathfrak{A}_1\otimes\tM(o(\Diag))\)};
			\node(B) at (0,-2){\(\tR/\mathfrak{A}_2\otimes\tM(o(\Diag))\)};
			\node(R) at (5,0){\(V(\Diag)\otimes\tM(o(\Diag))\)};
			\draw[->,dotted] (C) to [tight,above]node{\(h\)} (R);
			\draw[->,dashed,bend left=7] (T) to [tight,above right]node{\(\varphi^3_{A_1}\)} (R);
			5		\draw[->,dashed,bend left=7] (C) to [tight,above left]node{\(\pi_1\)} (T);
			\draw[->,dashed,bend right=7] (C) to [tight,below left]node{\(\pi_2\)} (B);
			\draw[->,dashed,bend right=7] (B) to [tight,below right]node{\(\varphi^3_{A_2}\)} (R);
			\draw[->,in=180,out=25] (L) to [tight,above left]node{\(\varphi^2_{A_1}\circ\varphi^1_{A_1}\)} (T);
			\draw[->] (L) to node{} (C);
			\draw[->,in=180,out=-25] (L) to [tight,below left]node{\(\varphi^2_{A_2}\circ\varphi^1_{A_2}\)} (B);
			\end{tikzpicture}
			\caption{The general strategy for constructing the homotopy between \(\varphi_{A_1}\) and \(\varphi_{A_2}\)}\label{fig:delooping:mf:homotopy:overview}
		\end{subfigure}
		\begin{subfigure}{\textwidth}	
			\centering
			\begin{tikzpicture}[scale=0.8]
			\begin{scope}[shift={(-4,0)}]
			\node(Lt) at (0,2){\(\tR/\mathfrak{A}\)};
			\node(Lb) at (0,-2){\(\tR/\mathfrak{A}\)};
			\node(Lr) at (2,0){\(\tR/\mathfrak{A}\)};
			\node(Ll) at (-2,0){\(\tR/\mathfrak{A}\)};
			\draw[->] (Lt) to[bend left=7,above right,tight]node{\(\lambda(a_1)\)} (Lr);
			\draw[->] (Lr) to[bend left=7,above right,tight]node{} (Lt);
			\draw[->] (Ll) to[bend left=7,above right,tight]node{\(\lambda(a_1)\)} (Lb);
			\draw[->] (Lb) to[bend left=7,above right,tight]node{} (Ll);
			\draw[->] (Ll) to[bend right=7,below right,tight]node{\(\lambda(a_2)\)} (Lt);
			\draw[->] (Lt) to[bend right=7,below right,tight]node{} (Ll);
			\draw[->] (Lb) to[bend right=7,below right,tight]node{\(\lambda(a_2)\)} (Lr);
			\draw[->] (Lr) to[bend right=7,below right,tight]node{} (Lb);
			\end{scope}
			\begin{scope}[shift={(0,+1.5)}]
			\node(Tt) at (0,2){\(\tR/\mathfrak{A}_1\)};
			\node(Tr) at (2,0){\(\tR/\mathfrak{A}_1\)};
			\draw[->] (Tt) to[bend left=7,above right,tight]node{\(\lambda(a_1)\)} (Tr);
			\draw[->] (Tr) to[bend left=7,above right,tight]node{} (Tt);
			\end{scope}
			\begin{scope}[shift={(0,-1.5)}]
			\node(Bb) at (0,-2){\(\tR/\mathfrak{A}_2\)};
			\node(Br) at (2,0){\(\tR/\mathfrak{A}_2\)};
			\draw[->] (Bb) to[bend right=7,below right,tight]node{\(\lambda(a_2)\)} (Br);
			\draw[->] (Br) to[bend right=7,below right,tight]node{} (Bb);
			\end{scope}
			\begin{scope}[shift={(+8,0)}]
			\node(Rr) at (2,0){\(V(\Diag)\)};
			\node(Rl) at (-2,0){\(V(\Diag)\)};
			\draw[->] (Rr) to[bend left=7,below,tight]node{} (Rl);
			\draw[->] (Rl) to[bend left=7,above,tight]node{\(\lambda^{*}\)} (Rr);
			\end{scope}
			\draw[dashed,->,out=30,in=-170] (Lt) to [tight,above left]node{\(\pi_1\)} (Tt);
			\draw[dashed,->,out=0,in=90] (Tt) to node{} (Rl);
			\draw[dashed,->,out=30,in=-170] (Lr) to [tight,above left]node{\(\pi_1\)} (Tr);
			\draw[dashed,->,out=0,in=90] (Tr) to [tight,above,pos=0.33]node{\(\varphi^3_{A_1}\)} (Rr);
			\draw[dashed,->,out=-30,in=170] (Lb) to [tight,below left]node{\(\pi_2\)} (Bb);
			\draw[dashed,->,out=0,in=-90] (Bb) to node{} (Rl);
			\draw[dashed,->,out=-30,in=170] (Lr) to [tight,below left]node{\(\pi_2\)} (Br);
			\draw[dashed,->,out=0,in=-90] (Br) to [tight,below,pos=0.33]node{\(\varphi^3_{A_2}\)} (Rr);
			\draw[->,dotted] (Lr) to [tight,above]node{\(h\)} (Rl);
			\end{tikzpicture}
			\caption{%
				The homotopy \(h\)
				between \(\varphi^3_{A_1}\circ\pi_1\)
				and \(\varphi^3_{A_2}\circ\pi_2\)
				in case 1}
			\label{fig:delooping:mf:homotopy:i}
		\end{subfigure}
		\begin{subfigure}{\textwidth}	
			\centering
			\begin{tikzpicture}[scale=0.8]
			\begin{scope}[shift={(-4,0)}]
			\node(Lt) at (0,3){\(\tR/\mathfrak{A}\otimes\tM(o(\Diag))\)};
			\node(Lb) at (0,-3){\(\tR/\mathfrak{A}\otimes\tM(o(\Diag))\)};
			\node(Lr) at (3,0){\(\tR/\mathfrak{A}\otimes\tM(o(\Diag))\)};
			\node(Ll) at (-3,0){\(\tR/\mathfrak{A}\otimes\tM(o(\Diag))\)};
			\draw[->] (Lt) to[bend left=7,above right,tight]node{\(\kappa(a_2)\)} (Lr);
			\draw[->] (Lr) to[bend left=7,below left,tight]node{\(\lambda(a_2)\)} (Lt);
			\draw[->] (Ll) to[bend left=7,above right,tight]node{\(\kappa(a_2)\)} (Lb);
			\draw[->] (Lb) to[bend left=7,below left,tight]node{\(\lambda(a_2)\)} (Ll);
			\draw[->] (Ll) to[bend right=7,below right,tight]node{\(\kappa(a_1)\)} (Lt);
			\draw[->] (Lt) to[bend right=7,above left,tight]node{\(\lambda(a_1)\)} (Ll);
			\draw[->] (Lb) to[bend right=7,below right,tight]node{\(\kappa(a_1)\)} (Lr);
			\draw[->] (Lr) to[bend right=7,above left,tight]node{\(\lambda(a_1)\)} (Lb);
			\end{scope}
			\node(T) at (3,2.5){\(\tR/\mathfrak{A}_1\otimes\tM(o(\Diag))\)};
			\node(B) at (3,-2.5){\(\tR/\mathfrak{A}_2\otimes\tM(o(\Diag))\)};
			\node(R) at (8,0){\(V(\Diag)\otimes \tM(o(\Diag))\)};
			\draw[dashed,->,bend left=7] (Lt) to [tight, above]node{\(\pi_1\)}  (T);
			\draw[dashed,->,bend left=7] (T) to [tight, above right]node{\(\varphi^3_{A_1}\)} (R);
			\draw[dashed,->,bend right=7] (Lb) to [tight, below]node{\(\pi_2\)} (B);
			\draw[dashed,->,bend right=7] (B) to [tight, below right]node{\(\varphi^3_{A_2}\)} (R);
			\draw[->,dotted] (Lr) to [tight,above]node{\(h\)} (R);
			\end{tikzpicture}
			\caption{%
				The homotopy \(h\)
				between \(\varphi^3_{A_1}\circ\pi_1\) 
				and \(\varphi^3_{A_2}\circ\pi_2\)
				in case 2}
			\label{fig:delooping:mf:homotopy:ii}
		\end{subfigure}
		\caption{Commutative diagrams illustrating the second step in the proof of Lemma~\ref{lem:delooping:mf}
		}\label{fig:delooping:mf:homotopy}
	\end{figure}

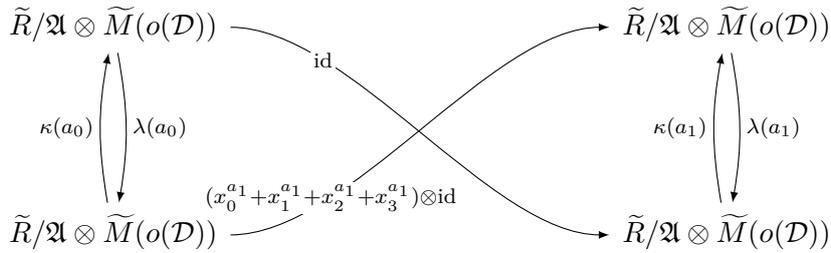
\begin{figure}[p]
	\centering
	\(
	\begin{tikzcd}[column sep=5cm,row sep=2cm]
	\tR/\mathfrak{A}\otimes\tM(o(\Diag))
	\arrow[d,bend left=10, "\lambda(a_0)"right]
	\arrow[in=180,out=0,looseness=0.7]{rd}[inner sep=1pt, outer sep=1pt, description,fill=white, near start]{\id}
	&
	\tR/\mathfrak{A}\otimes\tM(o(\Diag))
	\arrow[d,bend left=10, "\lambda(a_1)"right]
	\\
	\tR/\mathfrak{A}\otimes\tM(o(\Diag))
	\arrow[u,bend left=10, "\kappa(a_0)"left]
	\arrow[in=180,out=0,looseness=0.7]{ru}[inner sep=1pt, outer sep=1pt, description,fill=white, pos=0.27]{(x^{a_1}_0+x^{a_1}_1+x^{a_1}_2+x^{a_1}_3)\otimes\id}
	&
	\tR/\mathfrak{A}\otimes\tM(o(\Diag))
	\arrow[u,bend left=10, "\kappa(a_1)"left]
	\end{tikzcd}
	\)
	\caption{The edge map in the third step, case 2, in the proof of Lemma~\ref{lem:delooping:mf} 
	}\label{fig:delooping:mf:edge_maps}
\end{figure}

	\mybfheading{2) Independence of \(\bm{\varphi_\Diag}\) up to homotopy}
	Let \(A_1\) and \(A_2\) be two arc systems extending \(a_1^*\) and \(a_2^*\), respectively. We claim that the maps \(\varphi_{A_1}\) and \(\varphi_{A_2}\) are homotopic. One can easily see by induction on \(|A_1\smallsetminus (A_2\cap A_1)|=|A_2\smallsetminus (A_2\cap A_1)|\) that it suffices to show this claim in the case that \(A_1\) and \(A_2\) differ in a single arc. 
	So let \(A\coloneqq A_1 \cap A_2\) so that \(A_1=A\cup \{a_1\}\) and \(A_2=A\cup \{a_2\}\) for some arcs \(a_1\neq a_2\). We distinguish two cases.

	\myitheading{Case 1} Suppose the arc \(a_1\) connects the two open components of \(\Diag\); then so does \(a_2\), and both of them are the distinguished arcs \(a_1^*=a_1,~a_2^*=a_2\) (because of the minimality assumption of an arc system). Let \(\mathfrak{a}\) be the ideal generated by \(\lambda(a)\) where \(a\not\in A_1\cup A_2\). Then 
	\[
	\mathfrak{a}_1\coloneqq\mathfrak{a}(a^*_1)=\mathfrak{a}+(\lambda(a_2))
	\quad
	\text{ and }
	\quad
	\mathfrak{a}_2\coloneqq\mathfrak{a}(a^*_2)=\mathfrak{a}+(\lambda(a_1))
	\]
	Likewise, define \(\mathfrak{A}\) as the sum of \(\mathfrak{a}\) and the ideal generated by \(\kappa(a)\) where \(a\in A=A_1 \cap A_2\), so that 
	\[
	\mathfrak{A}_1\coloneqq\mathfrak{A}(A_1)=\mathfrak{A}+(\lambda(a_2))
	\quad
	\text{ and }
	\quad
	\mathfrak{A}_2\coloneqq\mathfrak{A}(A_2)=\mathfrak{A}+(\lambda(a_1))
	\]
	If \(o(\Diag)=\Diag_0\), there exists a path between the regions labelled \(x_2\) and \(x_0\), which avoids the arcs in \(A\) and crosses each of the arcs \(a_1\) and \(a_2\) exactly once. Hence, \([\lambda(a_1)]-[\lambda(a_2)]=x_0-x_2=x\in\tR/\mathfrak{a}\). If \(o(\Diag)=\Diag_1\), we similarly see that \([\lambda(a_1)]-[\lambda(a_2)]=H-(x_3-x_1)=y\in\tR/\mathfrak{a}\). So in either case, 
	\begin{equation}\label{eq:delooping:mf:delta_x_a:case1}
	[\lambda(a_1)]-[\lambda(a_2)]=\lambda^{*}\in\tR/\mathfrak{a}
	\end{equation}	
	For \(i=1,2\), Lemma~\ref{lem:lin_sp_def_retr} gives us a map 
	\[
	\pi_i\co
	M_{A_1,A_2}
	\coloneqq
	\tR/\mathfrak{A}
	\otimes
	\tM(\Diag_{a_1})
	\otimes
	\tM(\Diag_{a_2})
	\rightarrow
	\tR/\mathfrak{A}_i
	\otimes
	\tM(o(\Diag))
	\]
	and the map \(\varphi^2_{A_i}\circ\varphi^1_{A_i}\) factors through this map, as shown on the left of Figure~\ref{fig:delooping:mf:homotopy:overview}. 
	It therefore suffices to construct a homotopy \(h\) between \(\varphi^3_{A_1}\circ\pi_1\) and \(\varphi^3_{A_2}\circ\pi_2\).
	For this, we define \(b_{k,\varepsilon}\coloneqq(\lambda(a_2))^k\prod_j[Y_j]^{\varepsilon_j}\) for any integer \(k\geq0\) and any tuple \(\varepsilon = (\varepsilon_1,\dots,\varepsilon_n)\in\{0,1\}^n\). 
	The set \(\{b_{k,\varepsilon}\}\) freely generates \(\tR/\mathfrak{A}\) over \(\tR^\partial\). 
	Now consider Figure~\ref{fig:delooping:mf:homotopy:i}, which shows a more detailed version of the diagram contained in the shaded region of Figure~\ref{fig:delooping:mf:homotopy:overview}. The two pairs of consecutive dashed arrows in the top half of the diagram are equal to \(\varphi^3_{A_1}\circ\pi_1\). Similarly, the dashed arrows in the bottom half represent the map \(\varphi^3_{A_2}\circ\pi_2\). 
	We calculate the compositions of the four consecutive dashed arrows in this diagram as follows: 
	\[
	\left.(\varphi^3_{A_1}\circ\pi_1)\right|_{\tR/\mathfrak{A}}(b_{k,\varepsilon})
	=
	\begin{cases}
	\prod_j y_j^{\varepsilon_j}
	&
	k=0
	\\
	0 
	&
	k>0
	\end{cases}
	\quad
	\text{and}
	\quad
	\left.(\varphi^3_{A_2}\circ\pi_2)\right|_{\tR/\mathfrak{A}}(b_{k,\varepsilon})
	=
	(\lambda^{*})^k\textstyle\prod_j y_j^{\varepsilon_j}
	\]
	For the first identity, we use the fact that \([\lambda(a_2)]=0\in\tR/\mathfrak{A}_1\); for the second identity, we use \([\lambda(a_1)]=0\in\tR/\mathfrak{A}_2\) together with relation~\eqref{eq:delooping:mf:delta_x_a:case1}.
	A routine calculation shows that the map \(h\) represented by the dotted arrow in Figure~\ref{fig:delooping:mf:homotopy:i} and defined by 
	\[
	h(b_{k,\varepsilon})
	=
	\begin{cases}
	0
	&
	k=0
	\\
	(\lambda^{*})^{k-1} \prod_j y_j^{\varepsilon_j}
	&
	k>0
	\end{cases}
	\]
	establishes the desired homotopy \(h\). 
	
	\myitheading{Case 2} Suppose \(a_1\) does not connect the two open components; then neither does \(a_2\). Define \(\mathfrak{a}\), \(\mathfrak{a}_1\), \(\mathfrak{a}_2\), and \(\mathfrak{A}\) as in the previous case. Then
	\[
	\mathfrak{A}_1\coloneqq\mathfrak{A}(A_1)=\mathfrak{A}+(\lambda(a_2),\kappa(a_1))
	\quad
	\text{ and }
	\quad
	\mathfrak{A}_2\coloneqq\mathfrak{A}(A_2)=\mathfrak{A}+(\lambda(a_1),\kappa(a_2))
	\]
	By considering a closed path in \(\Diag\) that avoids all diagram strands and arcs in \(A\) and crosses each of the arcs \(a_1\) and \(a_2\) exactly once, we see that 
	\begin{equation}\label{eq:delooping:mf:delta_x_a:case2}
	[\lambda(a_1)]=[\lambda(a_2)]\in\tR/\mathfrak{a}
	\end{equation}
	Let \((a_{i_1},\dots,a_{i_m})\) be a finite sequence of arcs in \(A_1\cup A_2\) containing \(a_1\) and \(a_2\) such that for \(j=1,\dots,m-1\), the arcs \(a_{i_j}\) and \(a_{i_{j+1}}\) have ends on the same closed segment, and \(a_{i_1}\) and  \(a_{i_m}\) have an end on an open segment \(c_\star\) and \(c_\vartriangle\), respectively, for some \(\star,\vartriangle\in\{H,H+x,z,H+y\}\). 
	(Note that the \(a_{i_j}\) may repeat themselves, in which case the sum below is zero.)
	Then the telescoping sum results in
	\[
	\Delta
	\coloneqq
	\Delta_{\star}+\Delta_{\vartriangle}
	=
	\sum_{j=1}^m[\kappa(a_{i_j})]\in\tR/\mathfrak{a}_1
	\]
	so that 
	\begin{equation}\label{eq:delooping:mf:delta_y_a:case2}
	[\kappa(a_1)]+[\kappa(a_2)]=\Delta+\lambda(a_2)\alpha\in\tR/\mathfrak{A}
	\end{equation}
	for some \(\alpha\in\tR\).
	For \(i=1,2\), the map \(\varphi^2_{A_i}\circ\varphi^1_{A_i}\) factors through some map 
	\[
	\pi_i\co
	M_{A_1,A_2}
	\coloneqq
	\tR/\mathfrak{A}
	\otimes
	\tM(\Diag_{a_1})
	\otimes
	\tM(\Diag_{a_2})
	\otimes
	\tM(o(\Diag))
	\rightarrow
	\tR/\mathfrak{A}_i\otimes \tM(o(\Diag))
	\]
	As in Case~1, it therefore suffices to construct a homotopy between \(\varphi^3_{A_1}\circ\pi_1\) and \(\varphi^3_{A_2}\circ\pi_2\).
	For this, we define \(b_{k,\ell,\varepsilon}\coloneqq(\lambda(a_2))^k(\kappa(a_1))^\ell\prod_j[Y_j]^{\varepsilon_j}\) for any integers \(k,\ell\geq0\) and any tuple \(\varepsilon = (\varepsilon_1,\dots,\varepsilon_n)\in\{0,1\}^n\). 
	The set \(\{b_{k,\ell,\varepsilon}\}\) freely generates \(\tR/\mathfrak{A}\) over \(\tR^\partial\). 
  Figure~\ref{fig:delooping:mf:homotopy:ii} shows a more detailed version of the diagram contained in the shaded region of Figure~\ref{fig:delooping:mf:homotopy:overview}. 
  On the tensor factor $\tM(o(\Diag))$, the maps \(\varphi^3_{A_i}\circ\pi_i\) are equal to the identity. When restricted to the first tensor factors, they are determined by
	\[
	(\varphi^3_{A_1}\circ\pi_1)\co 
	b_{k,\ell,\varepsilon}
	\mapsto
	\begin{cases}
	\prod_j y_j^{\varepsilon_j}
	&
	k=0=\ell
	\\
	0 
	&
	k>0\text{ or } \ell>0
	\end{cases}
	\quad
	\text{and}
	\quad
	(\varphi^3_{A_2}\circ\pi_2)
	\co
	b_{k,\ell,\varepsilon}
	\mapsto
	\begin{cases}
	\Delta^\ell\prod_j y_j^{\varepsilon_j}
	&
	k=0
	\\
	0 
	&
	k>0
	\end{cases}
	\]
	For the first identity, we use the fact that \([\lambda(a_2)]=0\in\tR/\mathfrak{A}_1\); for the second identity, we also use the relations~\eqref{eq:delooping:mf:delta_x_a:case2} and~\eqref{eq:delooping:mf:delta_y_a:case2}.
	The desired homotopy \(h\) is now given by the map represented by the dotted arrow in Figure~\ref{fig:delooping:mf:homotopy:i} and defined by 
	\[
	h
	\co
	b_{k,\ell,\varepsilon}
	\mapsto
	\begin{cases}
	\Delta^{\ell-1}\prod_j y_j^{\varepsilon_j}
	&
	k=0\text{ and }\ell\neq0
	\\
	0 
	&
	\text{otherwise}
	\end{cases}
	\]
	on the first tensor fact and by the identity on the second tensor factor.

	\mybfheading{3) Identification of edge maps}
	For the second part of the lemma, let \(a_0\) and \(a_1\) be the arcs in \(\Diag\) and \(\Diag'\), respectively, in whose neighbourhoods these two diagrams differ. We distinguish the following two cases: 
	
	\myitheading{Case 1}
	Suppose one of \(a_0\) and \(a_1\) connects two open components. 
	Then so does the other. 
	So we can choose minimal subsets \(A\) and \(A'\) for \(\Diag\) and \(\Diag'\) such that \(A\smallsetminus\{a_0\}=A'\smallsetminus\{a_1\}\). 
	In this case, \(\mathfrak{a}=\mathfrak{a}(A)=\mathfrak{a}(A')\).
	Furthermore, by considering a pair of paths intersecting the diagram strands in a neighbourhood of the arc \(a_0\) and connecting the region \(x_0\) with \(x_1\) and \(x_2\) with \(x_3\), respectively, we see that \([x^a_0+x^a_1+x^a_2+x^a_3]=(x_0-x_1)+ (x_2-x_3)=z\in\tR/\mathfrak{a}\). 
	So clearly, the maps induced by \(M(\Diag_+)\) and \(M(\Diag_-)\) agree with the maps \(\mathcal{S}(\Diag,\Diag')\). 
	
	\myitheading{Case 2}
	Suppose that neither \(a_0\) nor \(a_1\) connect two open components. 
	Let \(a^*\) be an arc that connects two open components in \(\Diag\), and hence also in \(\Diag'\).
	Depending on whether \(\Diag\) has one more or one fewer component than \(\Diag'\), the edge corresponds to a merge or split map.
	We treat these two cases separately:
	
	\myitheading{Merge maps} 
	Suppose \(\Diag'\) is obtained from \(\Diag\) by merging two components. Let \(A_1\) be an arc system extending \(\{a^*\}\) for \(\Diag'\); note that \(a_1 \not\in A_1\).
	Then \(A_0=A_1\cup\{a_0\}\) is an arc system for~\(\Diag\). 
	Let \(\mathfrak{A}\) be the ideal generated by \(\lambda(a)\) for arcs \(a\not\in A_0\) of \(\Diag\) and \(\kappa(a)\) for all \(a\in A_1\smallsetminus \{a^*\}\).
	Then the left hand side of Figure~\ref{fig:delooping:mf:edge_maps} shows the special deformation retract for \(\tM(\Diag)\) obtained by applying the same procedure as in the construction of the map \(\varphi_{A_0}\), except that we do not simplify the matrix factorization for the arc \(a_0\). 
	The right hand side is a special deformation retract of \(\tM(\Diag')\). 
	It can be obtained from the first by replacing the matrix factorization for the arc \(a_0\) by the one for \(a_1\), noting that \(o(\Diag')=o(\Diag)\).
	The map induced by \(f\) is indicated by the two diagonal arrows. Only one of them matters to us, namely the diagonal arrow from the top left to the bottom right labelled by the identity. 
	This is because, in order to obtain \(V(\Diag)\otimes\tM(o(\Diag))\) and \(V(\Diag')\otimes\tM(o(\Diag))\), we need to eliminate \(\kappa(a_0)\) and \(\lambda(a_1)\), respectively, using Lemma~\ref{lem:lin_sp_def_retr}. To express the induced map in terms of our basis given by products of \(y_j\), we do a case analysis. 
	
	Suppose the saddle map merges two closed segments \(c_i\) and \(c_j\) of \(\Diag\) to some closed segment \(c_k\) of \(\Diag'\). Let \(C\) be the index set of closed segments not involved in the merge operation.  Observe that as elements of \(\tR/\mathfrak{a}(A_1)\), \([Y_i]=[Y_k]=[Y_j]\) and \([Y_k]^2=H\cdot[Y_k]+\Delta_\star\), where \(c_\star\) is the open segment that \(c_k\) is connected to via arcs in \(A_1\). Thus, the induced map is given by 
	\[
	y_i^{\varepsilon_i}y_j^{\varepsilon_j}
	\textstyle\prod_{\ell\in C} y_\ell^{\varepsilon_\ell}
	\mapsto
	\begin{cases}
	\textstyle\prod_{\ell\in C} y_\ell^{\varepsilon_\ell}
	&
	\varepsilon_i=0=\varepsilon_j
	\\
	y_k\textstyle\prod_{\ell\in C} y_\ell^{\varepsilon_\ell}
	&
	\{\varepsilon_i,\varepsilon_j\}=\{0,1\}
	\\
	(H\cdot y_k+\Delta_\star)\textstyle\prod_{\ell\in C} y_\ell^{\varepsilon_\ell}
	&
	\varepsilon_i=1=\varepsilon_j
	\end{cases}
	\]
	Observe that the endomorphism of \(\tM(o(\Diag))\) given by \(\Delta_\star\cdot\id\) is null-homotopic. So the induced map is homotopic to \(\mathcal{S}(\Diag,\Diag')\). 
	
	Suppose the saddle map merges a closed segment \(c_i\) with an open segment \(c_\star\). Let \(C\) be as above. Then \([Y_i]=[Y_\star]\in\tR/\mathfrak{a}(A_1)\) and the induced map is given by 
	\begin{equation}\label{eq:delooping:mf:merge_with_open}
	y_i^{\varepsilon_i}\textstyle\prod_{\ell\in C} y_\ell^{\varepsilon_\ell}
	\mapsto
	\begin{cases}
	\prod_{\ell\in C} y_\ell^{\varepsilon_\ell}
	&
	\varepsilon_i=0
	\\
	[Y_\star]\prod_{\ell\in C} y_\ell^{\varepsilon_\ell}
	&
	\varepsilon_i=1
	\end{cases}
	\end{equation}
	where we regard \([Y_\star]\) as an element of \(\tR^\partial\). The desired and actual values of \([Y_\star]\) are shown in the second and third column of Table~\ref{tab:delooping:mf:edge_maps}. Observe that they either agree or differ by the summand \(x\) for \(o(\Diag)=\Diag_0\) and the summand \(y\) for \(o(\Diag)=\Diag_1\). The endomorphism of \(\tM(\Diag_0)\) given by \(x\cdot\id\) is nullhomotopic, and so is the endomorphism of \(\tM(\Diag_1)\) given by \(y\cdot\id\). So the induced map is homotopic to \(\mathcal{S}(\Diag,\Diag')\). 
	
	\begin{table}[t]
		\centering
		\newcommand{\DW}[2]{\(#1\)\,\colorbox{lightgray}{\(+#2\)}}
			\begin{tabular}{c|c|c|c|c}
				\toprule
				&
				\multicolumn{2}{c|}{merge maps}
				&
				\multicolumn{2}{c}{split maps}
				\\
				\(\star\)
				&
				\(o(\Diag)=\Diag_0\)
				&
				\(o(\Diag)=\Diag_1\)
				&
				\(o(\Diag)=\Diag_0\)
				&
				\(o(\Diag)=\Diag_1\)
				\\
				\midrule
				\(H\)
				&
				\(H\)
				&
				\(H\)
				&
				\(y_i\)
				&
				\(y_i\)
				\\
				\(H+x\)
				&
				\DW{H}{x}
				&
				\DW{z}{y}
				&
				\DW{y_i}{x}
				&
				\(y_i+x\)
				\\
				\(z\)
				&
				\(z\)
				&
				\(z\)
				&
				\DW{y_i+y}{x}
				&
				\DW{y_i+x}{y}
				\\
				\(H+y\)
				&
				\DW{z}{x}
				&
				\DW{H}{y}
				&
				\(y_i+y\)
				&
				\DW{y_i}{y}
				\\
				\bottomrule
			\end{tabular}
		\medskip
		\caption{%
			Comparison of the edge maps induced by merging/splitting with an open component. 
			The first column indicates the open diagram segment relative to \(a^*\) that is involved in the merge/split operation. 
			The non-highlighted entries in the second and third column show the desired values of \([Y_\star]\in\tR^\partial\) for the map in \eqref{eq:delooping:mf:merge_with_open} to agree with \(\mathcal{S}(\Diag,\Diag')\);
			the complete entries show the actual values of \([Y_\star]\in\tR^\partial\). 
			Similarly, the last two columns show the desired values of \((y_i+[Y_\star]+H)\) for the map in \eqref{eq:delooping:mf:split_from_open} to agree with \(\mathcal{S}(\Diag,\Diag')\); the full entries show their actual values.}
		\label{tab:delooping:mf:edge_maps}
	\end{table}
	
	\myitheading{Split maps}
	Suppose \(\Diag'\) is obtained by splitting a component of \(\Diag\) into two.
	Let \(A_0\) be an arc system extending \(\{a^*\}\) for \(\Diag\); note that $a_0 \not\in A_0$.
	Then \(A_1=A_0\cup\{a_1\}\) is an arc system for \(\Diag'\). 
	Let \(\mathfrak{A}\) be the ideal generated by \(\lambda(a)\) for arcs \(a\not\in A_1\) of \(\Diag'\) and \(\kappa(a)\) for all \(a\in A_0\smallsetminus\{a^*\}\).
	As in the case of the merge maps, we apply special deformation retractions to all factors but the ones for \(a^*\), \(a_0\), and \(a_1\). We obtain the same diagram as before, namely the one in Figure~\ref{fig:delooping:mf:edge_maps}.
	However, when passing to \(V(\Diag)\otimes\tM(o(\Diag))\) and \(V(\Diag')\otimes\tM(o(\Diag))\), we now need to eliminate \(\lambda(a_0)\) and \(\kappa(a_1)\), respectively. So the edge map is induced by the map indicated by the diagonal arrow from the bottom left to the top right labelled \((x^{a_1}_0+x^{a_1}_1+x^{a_1}_2+x^{a_1}_3)\otimes\id\). 
	By inspection of the diagrams \(\Diag_0\) and \(\Diag_1\), one can see that  \((x^{a_1}_0+x^{a_1}_1+x^{a_1}_2+x^{a_1}_3)=[Y_\diamond]+[Y_{\vartriangle}]+H\in\tR/\mathfrak{a}(A_1)\), where \(c_\diamond\) and \(c_\vartriangle\) are the two segments connected by the arc \(a_1\). 
	Again, we distinguish two subcases:
	
	Suppose the saddle map splits a closed segment \(c_k\) of~\(\Diag\) into two closed segments \(c_i\) and \(c_j\) of~\(\Diag'\). 
	Let \(C\) be the index set of closed segments not involved in the merge operation. 
	Let \(c_\star\) be the open segment that \(c_k\) is connected to via arcs in \(A\). Since $[Y_k]=[Y_i]=[Y_j]\in \tR/\mathfrak{a}(A_0)$, we have $y_k\mapsto y_i(y_i+y_j+H)=y_j(y_i+y_j+H)=(y_i y_j+\Delta_\star)$, and the full induced map is given by 
\[
y_k^{\varepsilon_k}
\textstyle\prod_{\ell\in C} y_\ell^{\varepsilon_\ell}
\mapsto
\begin{cases}
(y_i+y_j+H)\prod_{\ell\in C} y_\ell^{\varepsilon_\ell}
&
\varepsilon_k=0
\\
(y_iy_j+\Delta_\star)\textstyle\prod_{\ell\in C} y_\ell^{\varepsilon_\ell}
&
\varepsilon_k=1
\end{cases}
\]
As in the case of the merge maps, we now use the fact that the endomorphism of \(\tM(o(\Diag))\) given by \(\Delta_\star\cdot\id\) is null-homotopic to conclude that the induced map is homotopic to \(\mathcal{S}(\Diag,\Diag')\). 

Suppose the saddle map splits a closed segment \(c_i\) off an open segment \(c_\star\). Let \(C\) be as above. Then \([Y_i]=[Y_\star]\in\tR/\mathfrak{a}(A_0)\) and the induced map is given by 
\begin{equation}\label{eq:delooping:mf:split_from_open}
\textstyle\prod_{\ell\in C} y_\ell^{\varepsilon_\ell}
\mapsto
(y_i+[Y_\star]+H)
\textstyle\prod_{\ell\in C} y_\ell^{\varepsilon_\ell}
\end{equation}
The desired and actual values of \((y_i+[Y_\star]+H)\) are shown in the last two columns of Table~\ref{tab:delooping:mf:edge_maps}. We now conclude as for merge maps. 
\end{proof}

In order to relate the multifactorization \(\tM(\Diag_T)\) to the invariant \(\BNr(T)\), we need to recall some details about the dotted cobordism category. There are several different versions of this category, which correspond to various different Frobenius extensions. Bar-Natan's original version corresponds to the $\mathcal F_1$ Frobenius extension \cite[Section~11.2]{BarNatanKhT}. 
In~\cite{KWZ}, we worked with a more general version that corresponds to $\mathcal F_7$. The objects of this category, which we will denote by \(\Cobb\), are crossingless diagrams in a disk, possibly with some number of endpoints on the boundary. 
For example, any of those crossingless diagrams \(\Diag\) considered above gives rise to an object in \(\Cobb\) simply by forgetting all dotted and thick arcs. 
The morphisms in \(\Cobb\) are \(\Z[H]\)-linear combinations of orientable abstract cobordisms that are decorated by dots, considered up to boundary preserving homeomorphisms, moving dots freely on their components, and the following relations: (cf~\cite[Page~1493]{BarNatanKhT})
\begin{equation}\label{eq:cobb_relations}
	\SpherePic=0,~  \Spheredot=1,~ \planedotdot=H\cdot\planedot\,,~ \tube=\DiscLdot\DiscR+\DiscL\DiscRdot -H\cdot \DiscL \DiscR
\end{equation}
We work in the reduced setting, which means that we mark one distinguished tangle end (the same one for all tangles) by an asterisk \(\ast\) and set any cobordism equal to 0 if the component containing this tangle end is marked by a dot: 
\[
\planedotstar =0
\] 
Given a diagram $\Diag_T$ of a tangle \(T\), the usual cube-of-resolutions construction results in a chain complex \(\KhTb{\Diag_T}\) over \(\Cobb\). The homotopy equivalence class of \(\KhTb{\Diag_T}\) denoted by \(\KhTb{T}\) is a tangle invariant, and the proof follows from Bar-Natan's original arguments \cite{BarNatanKhT}. When comparing this to Section~\ref{sec:review:Kh:definition} or~\cite{KWZ}, note that by \cite[Remark~4.12]{KWZ}, the category that we denote here by \(\Cobb\) is equivalent to the undotted cobordism category denoted by \(\Cobl\). 
\begin{example}
	The endomorphism algebra \(\End_{\Cob_{\bullet}}(\Li \oplus \Lo)\) is equal to the algebra \(\B\) from Equation~\eqref{eq:B_quiver}. Under this isomorphism, the saddle cobordisms correspond to \(S_{\bullet}\) and \(S_{\circ}\) and the identity cobordisms with a single dot on the non-special components correspond to \(D_{\bullet}\) and \(D_{\circ}\).
\end{example}
Just as in Section~\ref{sec:review:Kh:definition}, the isomorphism \(\End_{\Cob_{\bullet}}(\Li \oplus \Lo) \cong \BNAlgH\), in conjunction with delooping, allows to view \(\KhTb{T}\) as a type D structure $\DD(T)^{\BNAlgH}$.

\begin{definition}\label{def:induced_saddle_maps:cob}
	Given two crossingless diagrams \(\Diag\) and \(\Diag'\) as in Definition~\ref{def:induced_saddle_maps:mf}, consider them as objects in \(\Cobb\). Then, using the same notation as in Definition~\ref{def:induced_saddle_maps:mf}, define the map 
	\[
	\mathcal{S}'(\Diag,\Diag')\co
	V(\Diag)
	\otimes
	o(\Diag)
	\rightarrow
	V(\Diag')
	\otimes
	o(\Diag')
	\]
	as the image of \(\mathcal{S}(\Diag,\Diag')\) under the map \(\id_{[V(\Diag)^*\otimes V(\Diag')]}\otimes \Phi\), where \(\Phi\) is the isomorphism from Lemma~\ref{lem:iso_A_B}. 
	In other words, it is obtained from the rules for the map \(\mathcal{S}(\Diag,\Diag')\) in Definition~\ref{def:induced_saddle_maps:mf} by replacing \(s_0\), \(s_1\), \(d_0\), and \(d_1\) by \(S_{\bullet}\), \(S_{\circ}\), \(D_{\bullet}\), and \(D_{\circ}\), respectively. 
\end{definition}

The following lemma is the analogue of Lemma~\ref{lem:delooping:mf} in the setting of Bar-Natan's cobordism category. Its proof is much simpler.

\begin{lemma}\label{lem:delooping:cob}
	There exists an isomorphism of objects in $\Cobb$:
	\[
	\psi_\Diag
	\co 
	\Diag
	\rightarrow
	V(\Diag)
	\otimes
	o(\Diag)
	\]
	Moreover, if \(\Diag'\) is a diagram which differs from \(\Diag\) in a single dotted/thick arc, the saddle map \((\Diag\rightarrow\Diag')\in\ \Mor_{\Cobb}(\Diag,\Diag')\) induces the map \(\mathcal{S}'(\Diag,\Diag')\in \Mor_{\Cobb}(V(\Diag) \otimes o(\Diag), V(\Diag') \otimes o(\Diag')) \).
\end{lemma}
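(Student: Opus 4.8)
The plan is to construct \(\psi_\Diag\) by iterated delooping and then to establish naturality by a purely local computation, working through the cases listed in Definition~\ref{def:induced_saddle_maps:mf} one at a time.

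First I would produce \(\psi_\Diag\). Each closed component \(c_i\) of \(\Diag\), regarded as an object of \(\Cobb\), is isomorphic to a sum of two copies of the empty diagram with the appropriate grading shifts, via the usual delooping of Bar-Natan adapted to the \(H\)-deformed relations \eqref{eq:cobb_relations}: one direction is the pair (death, dotted death) on \(c_i\), and—because \(\planedotdot=H\cdot\planedot\) while \(\Spheredot=1\) and \(\SpherePic=0\)—the inverse is plain birth on one summand and \(\text{(dotted birth)}-H\cdot\text{(birth)}\) on the other. (This \(H\)-correction is exactly what makes the two maps mutually inverse in the \(H\)-deformed category.) Applying this to all \(n\) closed components and tensoring with the identity cobordism on the open part gives an isomorphism \(\psi_\Diag\co\Diag\to V(\Diag)\otimes o(\Diag)\) in \(\Cobb\), in which each factor \(V(c_i)=i^1h^0\fieldTwoElements[H,y_i]/(y_i^2-Hy_i)\) records the two summands for \(c_i\) and a dot on \(c_i\) acts as multiplication by \(y_i\).

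Next I would check naturality. The saddle \((\Diag\to\Diag')\) is the product cobordism outside the small disk where \(\Diag\) and \(\Diag'\) differ, so \(\psi_{\Diag'}\circ(\Diag\to\Diag')\circ\psi_\Diag^{-1}\) is the identity on all components disjoint from that disk, and it suffices to compute it on the one or two components meeting it. When \(\varepsilon(\Diag)\neq\varepsilon(\Diag')\) no closed component takes part and the saddle literally is \(s_{\varepsilon(\Diag)}\), which matches \(\mathcal S'(\Diag,\Diag')\) by definition. When two closed components merge or split, delooping turns the saddle into the multiplication \(m\), respectively the comultiplication \(\Delta\), of the Frobenius algebra \(\fieldTwoElements[H,y]/(y^2-Hy)\) with counit \(1\mapsto 0\), \(y\mapsto 1\); these are \(m(y^a\otimes y^b)=y^{a+b}\) (using \(y^2=Hy\)) and \(\Delta(1)=1\otimes y+y\otimes 1+H\cdot1\otimes1\), \(\Delta(y)=y\otimes y\), which are exactly the closed–closed formulas of Definition~\ref{def:induced_saddle_maps:mf} over \(\fieldTwoElements\) (the sign of \(H\) being immaterial there).

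The remaining cases—a closed component \(c_i\) merging with or splitting off the open part—are where I expect the only real work to lie. Here one again deloops \(c_i\) and evaluates the composite by neck-cutting: one neck-cut term deposits a dot on the open component and the other does not, while the \(-H\cdot\text{(birth)}\) term in the corrected delooping map contributes an extra \(-H\cdot\id\). For the special open component the dotted term dies by the reduced relation \(\planedotstar=0\), leaving precisely the dot-free and \(H\cdot\id\) contributions of the \(o^{*}\)-rows of Definition~\ref{def:induced_saddle_maps:mf}. For the non-special open component one must instead rewrite the dotted identity using the tube relation \(\tube=\DiscLdot\DiscR+\DiscL\DiscRdot-H\cdot\DiscL\DiscR\) of \eqref{eq:cobb_relations} together with \(\planedotstar=0\); on the open part this says \(s_{\varepsilon+1}s_\varepsilon=\text{(dotted identity on the non-special strand)}-H\cdot\id\), so the dotted-identity contribution and the \(-H\cdot\id\) correction combine to give exactly the \(s_{\varepsilon+1}s_\varepsilon\) (and \(\id\) on the \(1_i\)-summand) appearing in the formula. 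The main obstacle is therefore bookkeeping: fixing the delooping conventions, grading shifts, and the labels \(1_i/y_i\) once and for all so that these formulas come out on the nose, and in particular keeping straight the interplay between the \(H\) in the corrected delooping map and the \(H\) in the tube relation; once this is done, every case reduces to a direct application of the local relations \eqref{eq:cobb_relations}.
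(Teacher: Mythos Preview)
Your proposal is correct and follows the same approach as the paper: construct \(\psi_\Diag\) by iterated \(H\)-deformed delooping of the closed components, then verify the saddle formulas case by case using the local relations~\eqref{eq:cobb_relations} in \(\Cobb\). The paper's write-up differs only in packaging---it sets up a dictionary between a basis of simple cobordisms and the generators \(1_i^*,y_i^*,1_j,y_j\) and then defers the routine case check to \cite[Proposition~4.31]{KWZ}, whereas you sketch the cases explicitly.
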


\begin{proof}
	The isomorphism \(\psi_\Diag\) is constructed using Bar-Natan's delooping procedure, see~\cite[Observation~4.18]{KWZ}. If \(c_i\) is a closed component of \(\Diag\), the following two maps are inverse to each other:
	\[
	\left(
		\begin{tikzcd}[row sep=0.3cm, column sep=0.5cm]
			\Circle_i
			\arrow{r}{\mathbf{f}_i}
			&
			\big(V(c_i)\otimes\varnothing\big)
			\arrow{r}{\mathbf{g}_i}
			&
			\Circle_i
		\end{tikzcd}
		\right)
		=
		\left(
		\begin{tikzcd}[row sep=0.3cm, column sep=0.5cm]
			&
			i^{1}h^0 (y_i\otimes\varnothing)
			\arrow[dr,"\DiscRdot-H\cdot\DiscR", "\mathbf{g}_i" below]
			\\
			\Circle_i
			\arrow[ur,"\DiscL","\mathbf{f}_i" below]
			\arrow[dr, swap, "\DiscLdot", "\mathbf{f}_i" above]
			&&
			\Circle_i
			\\
			&
			i^{1}h^0(1_i\otimes\varnothing)
			\arrow[ur, swap, "\DiscR", "\mathbf{g}_i" above]
		\end{tikzcd}
		\right)
	\]
	The isomorphism \(\psi_\Diag\) is constructed by tensoring these isomorphisms together.
	
	To prove the second statement, let us assume for a moment the more general setup in which \(\Diag\) and \(\Diag'\) are any diagrams of crossingless Conway tangles, ie not necessarily connected by a saddle move. 
	In~\cite{KWZ}, we called a cobordism representing an element in \(\Cobb(\Diag,\Diag')\) \textbf{simple} if all its components are disks, the distinguished component carries no dot and any other component carries at most one dot. By applying the relations \eqref{eq:cobb_relations}, it is easy to see that any element in \(\Cobb(\Diag,\Diag')\) can be written as an \(\fieldTwoElements[H]\)-linear combination of simple cobordisms; in fact, simple cobordisms form an \(\fieldTwoElements[H]\)-linear basis of \(\Cobb(\Diag,\Diag')\) \cite[Proposition~4.15]{KWZ}. From this decomposition into simple cobordisms, it is clear that \(\Cobb(\Diag,\Diag')\) can be written as the tensor product of \(\Cobb(o(\Diag),o(\Diag'))\) over \(\fieldTwoElements[H]\) with
		\begin{equation}\label{eqn:cob:only_circles}
		\Big(
		\bigotimes_{i=1}^{n}
			\Cobb\big(\Circle_i,\varnothing\big)
		\Big)
		\otimes_{\fieldTwoElements[H]}
		\Big(
			\bigotimes_{j=1}^{m}
			\Cobb\big(\varnothing,\Circle_j\big)
		\Big)
	\end{equation}
	On each of these tensor factors, we modify the basis as follows:
	\begin{align*}
		\Cobb\big(\Circle_i,\varnothing\big)
		&= 
		\fieldTwoElements[H]
		\Big\langle
			\prescript{}{i}{\DiscLdot}, \prescript{}{i}{\DiscL}
		\Big\rangle 
		\quad\text{for }i=1,\dots,n
		\\
		\Cobb\big(\varnothing,\Circle_j\big)
		&= 
		\fieldTwoElements[H]
		\Big\langle
			\DiscR_j, \DiscRdot_j-H\cdot\DiscR_j
		\Big\rangle 
		\quad\text{for }j=1,\dots,m
	\end{align*}
	This new basis has the advantage that it is compatible with the isomorphism \(\psi_\Diag\) in the sense that the basis on the space of \(\fieldTwoElements[H]\)-linear maps
	\[
	V(\Diag)
	\rightarrow
	V(\Diag')
	\]
	given by tensor products of \(y_i^*\), \(1_i^*\), \(y_j\), and \(1_j\)
	corresponds to the new basis on \eqref{eqn:cob:only_circles}
	via the following dictionary:
	\[
		\prescript{}{i}{\DiscL} \leftrightarrow y_i^*
		\quad
		\prescript{}{i}{\DiscLdot} \leftrightarrow 1_i^*
		\quad
		\DiscR_j \leftrightarrow 1_j
		\quad
		\DiscRdot_j-H\cdot\DiscR_j \leftrightarrow y_j
	\]
	It now remains to verify that in the case that \(\Diag\) and \(\Diag'\) are related by a single saddle cobordism \(C\), the map \(\psi_{\Diag'}^{-1}\circ C\circ \psi_{\Diag}\) agrees with \(\mathcal{S}'(\Diag,\Diag')\). Given the above dictionary, this computation is straightforward and similar to the one carried out in \cite[Proposition~4.31]{KWZ}, so we leave it to the reader.
\end{proof}

\subsection{Reframing: from multifactorizations to type~D structures}\label{sec:reframing}

As noted in Remark~\ref{rem:mf:cube_of_resolutions}, we may regard the multifactorization \(\tM(\Diag_T)\) as a cube of resolutions at whose vertices \(v\in \{0,1\}^{n}\) there are matrix factorizations \((\tM(\Diag_T(v)),d_0)\) and at whose edges, there are differentials induced by the vertical maps in Figure~\ref{fig:crossings_mf}. 
The special deformation retracts \(\varphi_{\Diag_T(v)}\) from Lemma~\ref{lem:delooping:mf} give rise to a special deformation retract of the whole multifactorization \(\tM(\Diag_T)\), see~\cite[Proposition~2.7]{Ballinger}. 
Let us denote the resulting ``delooped'' multifactorization by \(\tM_{\varnothing}(\Diag_T)\). 
At each vertex \(v\) of the cube for \(\tM_{\varnothing}(\Diag_T)\), there is now a matrix factorization \(V(\Diag_T(v))\otimes_{\fieldTwoElements[H]}\tM(o(\Diag_T(v)))\). Moreover, the differentials along each edge \(v\rightarrow v'\) of the cube are morphisms that agree with the maps \(\mathcal{S}(\Diag_T(v),\Diag_T(v'))\) from Definition~\ref{def:induced_saddle_maps:mf} up to homotopy. However, there are possibly also higher differentials \(d_{k}\) for \(k>1\). 

The matrix factorization \((\tM_{\varnothing}(\Diag_T),d_0)\) is equal to a direct sum of copies of either \(\tM(\Diag_0)\) or \(\tM(\Diag_1)\) (possibly shifted in bigrading).
We can therefore regard all components of the differentials \(d_k\) for \(k\geq1\) as elements of the endomorphism algebra \(\A\) from Definition~\ref{def:dg_algebra}. 
Hence, by replacing each \(\tM(o(\Diag_T(v)))\) by a generator in idempotent \(\id_{\tM(o(\Diag_T(v)))} \in \A\), we may regard \(\tM_{\varnothing}(\Diag_T)\) as a type~D structure \(\tM(\Diag_T)^{\A}\) over \(\A\).
If we unwrap the \(D^2=0\) relations in the multifactorization \(\tM_{\varnothing}(\Diag_T)\) (where the differential is \(D=d_0+d_1+d_2+\cdots\)), we obtain exactly the compatibility relation for the type~D structure  \(\tM(\Diag_T)^{\A}\) (where the differential is \(\delta^1=d_1+d_2+\cdots\)). Moreover, chain maps and $1$-homotopies between multifactorizations translate into type~D structure homomorphisms and homotopies.

Considering the first order of the differential \(\delta^1=d_1+d_2+\cdots\) in \(\tM(\Diag_T)^{\A}\), the compatibility relation \((d_1)^2=d_0 \circ d_2 + d_2 \circ d_0= d^{\A}(d_2)\) implies that components of \((d_1)^2\) are null-homologous in \(\A\). Thus, if we consider the type~D structure \(\tM(\Diag_T)^{\Hast(\A)}\) with the same generators and the differential \(\delta^1=[d_1]\), the compatibility relation \([d_1]^2=0\) holds.


\begin{corollary}\label{cor:identification_of_type_D_structures}
	Under the isomorphism \(\Phi\) from Lemma~\ref{lem:iso_A_B}, the type~D structures  \(\DD(T)^{\B}\) and \(\tM(\Diag_T)^{\Hast(\A)}\) are isomorphic. 
\end{corollary}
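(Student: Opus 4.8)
The plan is to observe that $\DD(\Diag_T)^{\B}$ and $\tM(\Diag_T)^{\Hast(\A)}$ are both assembled from the very same cube of resolutions of $\Diag_T$, and that the two delooping procedures---Bar-Natan delooping $\psi_\Diag$ from Lemma~\ref{lem:delooping:cob} on the cobordism side, and the matrix factorization delooping $\varphi_\Diag$ from Lemma~\ref{lem:delooping:mf}---produce data which are identified by $\Phi$ edge by edge. First I would fix the diagram $\Diag_T$ and realize a representative of $\DD(T)^{\B}$ as follows: conjugating the cube-of-resolutions differential of $\KhTb{\Diag_T}$ by the isomorphism $\bigoplus_{v}\psi_{\Diag_T(v)}$ of (formal direct sums of) objects in $\Cobb$ produces a genuine chain complex over $\End_{\Cobb}(\Li\oplus\Lo)\cong\B$, that is, a type~D structure over $\B$. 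Its generators at a vertex $v\in\{0,1\}^n$ are the chosen basis of $V(\Diag_T(v))$, placed in the idempotent $\iota_\circ$ or $\iota_\bullet$ according to whether $o(\Diag_T(v))=\Diag_0$ or $\Diag_1$; and, by the second part of Lemma~\ref{lem:delooping:cob}, the differential along each cube edge $v\to v'$ is exactly $\mathcal{S}'(\Diag_T(v),\Diag_T(v'))$. No higher terms appear, since a complex over the ordinary category $\Cobb$ deloops to an honest complex over $\B$.

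On the matrix factorization side, the discussion preceding the corollary exhibits $\tM(\Diag_T)^{\A}$ as a cube with the same vertex set $\{0,1\}^n$ and, at vertex $v$, the same basis of $V(\Diag_T(v))$---now placed in the idempotent $\id_{\tM(o(\Diag_T(v)))}$---whose first-order differential $d_1$ restricts on the edge $v\to v'$ to the map induced on the delooped factorizations by the vertical maps in $M(\Diag_+)$ or $M(\Diag_-)$. By the second part of Lemma~\ref{lem:delooping:mf}, this edge map equals $\mathcal{S}(\Diag_T(v),\Diag_T(v'))$ up to homotopy of maps of matrix factorizations over $\tR^\partial$, that is, modulo a $d^{\A}$-exact term. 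Passing to $\Hast(\A)$ removes this ambiguity: in $\tM(\Diag_T)^{\Hast(\A)}$ the edge differential along $v\to v'$ is precisely the class $[\mathcal{S}(\Diag_T(v),\Diag_T(v'))]$, and the relation $[d_1]^2=0$ holds because $(d_1)^2=d^{\A}(d_2)$, as already noted.

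It then remains to match the two descriptions through $\Phi$. The entries of $\mathcal{S}(\Diag,\Diag')$ lie among the generators $\id,s_0,s_1,d_0,d_1$ of $\Hast(\A)$ from Figure~\ref{fig:saddles_and_dots_in_A} together with their $\fieldTwoElements[H]$-multiples, so applying the $\fieldTwoElements[H]$-algebra isomorphism $\Phi$ from Lemma~\ref{lem:iso_A_B} entry by entry carries $[\mathcal{S}(\Diag,\Diag')]$ to $\mathcal{S}'(\Diag,\Diag')$ by the very definition of the latter (Definition~\ref{def:induced_saddle_maps:cob}); likewise $\Phi$ carries the idempotents $\id_{\tM(\Diag_0)},\id_{\tM(\Diag_1)}$ to $\iota_\circ,\iota_\bullet$ and, by construction of the gradings, is a map of bigraded objects. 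Hence the identity map on the two identically indexed sets of generators, combined with $\Phi$ on the coefficient algebra, is an isomorphism of bigraded type~D structures $\tM(\Diag_T)^{\Hast(\A)}\cong\DD(\Diag_T)^{\B}$, which proves the corollary.

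The step I expect to demand the most care is the bookkeeping behind the second paragraph: verifying that the per-vertex special deformation retracts $\varphi_{\Diag_T(v)}$ assemble, via the homological perturbation lemma \cite[Proposition~2.7]{Ballinger}, into a special deformation retract of the whole cube whose first-order differential is exactly the edge-by-edge conjugation of Lemma~\ref{lem:delooping:mf}, and in particular that the homotopy ambiguity there really does land in the $d^{\A}$-exact part so that it evaporates in $\Hast(\A)$. The higher differentials $d_k$ for $k>1$, which have no analogue on the cobordism side (where $\Cobb$ carries no higher structure), are precisely what the passage to homology discards, so they pose no obstruction once the identification of the first-order data is secured.
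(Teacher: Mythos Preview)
Your proposal is correct and takes essentially the same approach as the paper, which simply states that the corollary is an immediate consequence of Lemmas~\ref{lem:delooping:mf} and~\ref{lem:delooping:cob}. You have accurately unpacked that one-line appeal: both type~D structures arise from the same cube of resolutions, the two delooping lemmas identify the edge maps as $\mathcal{S}$ and $\mathcal{S}'$ respectively (the former only up to $d^{\A}$-exact terms, which vanish in $\Hast(\A)$), and Definition~\ref{def:induced_saddle_maps:cob} then says precisely that $\Phi$ carries one to the other.
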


\begin{proof}
This is an immediate consequence of Lemmas~\ref{lem:delooping:mf} and~\ref{lem:delooping:cob}.
\end{proof}

We now address the issue of gradings. 
\begin{definition}\label{def:deformed_dg_algebra}
Define a deformation $\AnUU$ of the algebra \(\A\) via adjoining a variable~\(U\), \(\text{gr}(U)=i^0h^{-1}q^{-3}\):
\[\AnUU=\A\otimes \F[U,U^{-1}], \qquad d^{\AnUU}(x \otimes U^k)=d^{\A}(x) \otimes U^{k-1}\]
\end{definition}
Note that the differential of this algebra is deformed by \(U^{-1}\), compared to the algebra \(\A\). This makes the algebra \(\AnUU\) a \emph{bigraded dg algebra} in the usual sense, that is the multiplication preserves the bigrading, and the differential preserves the quantum and raises the homological grading by one.

Next, note that so far we worked with filtered differentials in multifactorizations, according to \cite[Definition~2.2]{Ballinger}. We now adjoint a variable \(U\), and require the differentials \(d_k\) to pick up \(U^{k-1}\): 
\[(d_0,d_1,d_2,d_3,\ldots) \ \mapsto \ (U^{-1} d_0, d_1,U d_2,U^2 d_3,\ldots),\]
We have 
\[\text{gr}(d_k)=i^{-3}h^k q^{-3+3k} \ \implies \ \text{gr}(U^{k-1}d_k )=i^{-3}h^1q^0\]
In other words, gradings now behave in the same way as in Khovanov homology, preserving the quantum and raising by one the homological. The chain maps and $1$-homotopies between multifactorizations are changed accordingly, namely \[(F_0,F_1,F_2,\ldots)\ \mapsto \ (F_0,U F_1,U^2 F_2,\ldots), \quad ( H_{-1}, H_0, H_1,\ldots) \ \mapsto \ (H_{-1},U  H_0, U^2 H_1,\ldots)\] 

We will call a map \emph{positive} if it does not pick up a negative power of \(U\). Note that all the differentials (with the exception of \(d_0\)), chain maps, and $1$-homotopies are positive.

With the above changes in mind, we claim that the delooped multifactorization \(\tM_\varnothing(T)\) can be interpreted as a \emph{bigraded} type~D structure \(\tM(\Diag_T)^{\AnUU}\). The differential \(\delta^1=d_1+U d_2 + U^2 d_3 + \cdots\) preserves the quantum and raises the homological grading by one. Moreover, the differential is positive, since the \(d_0\) is not included in \(\delta^1\) (because it is absorbed into the generators of \(\tM(\Diag_T)^{\AnUU}\)).

In conclusion, given a pointed Conway tangle \(T\), we have now constructed a bigraded type~D structure 
\[\tM(\Diag_T)^{\AnUU}\] 
with a positive differential. 

\subsection{Quasi-isomorphic algebras via homological mirror symmetry} \label{sec:quasi-iso} 
	\begin{theorem} \label{thm:from_hms}
	There is a quasi-isomorphism between the following two \(A_\infty\) algebras:
	\[\text{(Definition~\ref{def:dg_algebra})}\quad \A \  \simeq \  \Binf \quad \text{(Definition~\ref{def:Binf_algebra})}\]
	\end{theorem}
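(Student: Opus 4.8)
The plan is to identify both $A_\infty$ algebras as endomorphism algebras of objects in the wrapped Fukaya category of the three-punctured sphere $\ThreePuncturedSphere$, and then invoke the homological mirror symmetry equivalence established in \cite{AAEKO,LekPol,Orlov}. Recall from Definition~\ref{def:Binf_algebra} and the discussion around Figure~\ref{fig:wrap_subcat} that $\Binf \cong \End_{\W(\ThreePuncturedSphere)}(\arcVer\oplus\arcHor)$, where this identification uses \cite[Section~3.3]{HKK} together with \cite[Theorem~4.1]{AAEKO}. On the other hand, the matrix factorization algebra $\A$ from Definition~\ref{def:dg_algebra} is, by construction, the endomorphism dg algebra of the two elementary Koszul factorizations $\tM(\Diag_0)$ and $\tM(\Diag_1)$ over $\tR^\partial = \fieldTwoElements[x,y,z]$ (see Example~\ref{exa:elementary_mf}), whose common potential is $xyz$. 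The matrix factorization category $\MF(\tR^\partial, xyz)$ is precisely the $B$-model category appearing in homological mirror symmetry for the pair of pants, and under the HMS equivalence the objects $\tM(\Diag_0)$ and $\tM(\Diag_1)$ correspond to (twisted complexes on) the arcs $\arcVer$ and $\arcHor$. So the strategy is: (i) produce the HMS equivalence $\MF(\tR^\partial, xyz) \simeq D^\pi\W(\ThreePuncturedSphere)$; (ii) check that our specific generators $\tM(\Diag_0), \tM(\Diag_1)$ are sent to $\arcVer, \arcHor$; (iii) conclude that the $A_\infty$ endomorphism algebras are quasi-isomorphic.

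For step (i), I would cite Orlov's theorem (or the Abouzaid--Auroux--Efimov--Katzarkov--Orlov result \cite{AAEKO}, which is the most directly applicable since it treats punctured surfaces including the pair of pants) giving an $A_\infty$ quasi-equivalence between the category of matrix factorizations of $xyz$ (more precisely, the triangulated category of singularities, or equivalently $\MF$ of the superpotential) and the wrapped Fukaya category of $\ThreePuncturedSphere$. For step (ii), the point is to match generators: Lemma~\ref{lem:iso_A_B} already tells us that $\Hast(\A) \cong \B$ as $\fieldTwoElements[H]$-algebras, and on the geometric side $\End_{\W(\ThreePuncturedSphere)}(\arcVer\oplus\arcHor)$ has cohomology algebra $\B$ as well (since passing to the four-punctured sphere and restricting to $\ThreePuncturedSphere$ does not change the degree-$0$ product structure). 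Because $\B$ is a directed-type quiver algebra generated by $S_\circ, S_\bullet, D_\circ, D_\bullet$ with the stated relations, and the idempotents $\iota_\circ,\iota_\bullet$ correspond to the two indecomposable objects on each side, matching the generators pins down the correspondence $\tM(\Diag_0)\leftrightarrow\arcVer$, $\tM(\Diag_1)\leftrightarrow\arcHor$ uniquely up to the equivalence. Then step (iii) is automatic: an $A_\infty$ quasi-equivalence of categories restricts to an $A_\infty$ quasi-isomorphism of endomorphism algebras of corresponding objects, giving $\A \simeq \Binf$.

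The main obstacle, and the step requiring the most care, is the bookkeeping of \emph{gradings}: the algebra $\A$ as defined carries the ``wrong'' grading ($\text{gr}(d^\A) = q^{-3}h^0$), which is why the paper introduces $\AnUU$ in Definition~\ref{def:deformed_dg_algebra}, while $\Binf$ inherits the Fukaya-category grading which the text explicitly says ``is not the one we will need'' (hence the bigraded deformations $\BinfU, \BinfUU$ in Definition~\ref{def:deformed_Binf_algebras}). So although the theorem as stated is only an ungraded (or singly-graded) quasi-isomorphism, I expect the real content to be compatibility of the quasi-isomorphism with the relevant extra structure, so that afterwards one can match $\AnUU$ with $\BinfUU$. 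The cleanest route is to first prove the ungraded statement by the HMS argument above, then verify that the quasi-isomorphism respects the $\fieldTwoElements[H]$-module structure (which tracks one of the gradings via $\text{gr}(H)$) and the homological grading, so that adjoining $U$ on both sides with $\text{gr}(U) = i^0 h^{-1} q^{-3}$ yields a bigraded quasi-isomorphism $\AnUU \simeq \BinfUU$. A secondary technical point is that $\A$ is an honest dg algebra whereas $\Binf$ has genuine higher products; the quasi-isomorphism is therefore necessarily an $A_\infty$ quasi-isomorphism with nontrivial higher components, and one should not expect a strict dg map — the existence of the $A_\infty$ quasi-isomorphism is exactly what HMS provides, and transferring the $A_\infty$ structure along the explicit disk-sequence formulas of Definition~\ref{def:Binf_algebra} (which match the geometric disk sequences of \cite[Section~3.3]{HKK}) is what makes the comparison concrete.
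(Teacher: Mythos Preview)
Your strategy is the same as the paper's: both invoke homological mirror symmetry for the three-punctured sphere, passing through $D_{\sg}(X_0)$ with $X_0=\{xyz=0\}$ and using the equivalences $\Tw\W(\ThreePuncturedSphere)\simeq D_{\sg}(X_0)$ from \cite{AAEKO} and $\MF(\mathbb A^3,xyz)\simeq D_{\sg}(X_0)$ from \cite{Orlov}, then restricting to endomorphism algebras of corresponding generators.

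The one soft spot is your step~(ii). You argue that because $\Hast(\A)\cong\B$ and the Fukaya-side endomorphism cohomology is also $\B$, ``matching the generators pins down the correspondence $\tM(\Diag_0)\leftrightarrow\arcVer$, $\tM(\Diag_1)\leftrightarrow\arcHor$ uniquely up to the equivalence.'' This is not a valid inference: an equivalence of categories need not send a given object to another object merely because their endomorphism cohomologies are isomorphic. (Indeed, by the evident $S_3$-symmetry permuting $x,y,z$ and the three punctures, all three arcs and all three elementary factorizations look alike at the level of cohomology rings, so cohomology alone cannot single out which arc corresponds to which factorization.) The paper closes this gap by citing explicit computations: \cite[Theorem~6.1]{AAEKO} identifies the images of $\arcVer$ and $\arcHor$ under the HMS equivalence as the modules $\fieldTwoElements[x,y,z]/(x)$ and $\fieldTwoElements[x,y,z]/(y)$ in $D_{\sg}(X_0)$, and \cite[Section~3.2]{Orlov} identifies those with the Koszul factorizations of Example~\ref{exa:elementary_mf}. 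That is the missing ingredient in your outline.

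Your discussion of gradings and the deformed algebras $\AnUU,\BinfUU$ is on target but belongs to the next theorem (Theorem~\ref{thm:quasi-iso}), not to Theorem~\ref{thm:from_hms} itself, which is stated and proved only at the $\Z/2$-graded level.
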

	\begin{proof} 
	This is a consequence of homological mirror symmetry for the three-punctured sphere. Consider the following categories: 
	\begin{itemize}
	\item The \(\Z/2\)-graded \(A_\infty\) category of twisted complexes over the wrapped Fukaya category of the three-punctured sphere 
	\(\Tw \W (\ThreePuncturedSphere)\), defined according to~\cite{AAEKO}.
	\item The \(\Z\)-graded dg-enhancement \(\Coh(X_0)\) of the derived category of bounded coherent complexes of sheaves on \(X_0=\{xyz=0\}\subset \mathbb A^3\). 
	Because \(X_0\) is an affine variety, \(\Coh(X_0)\) can be defined as the dg category of dg modules \(N\) over \(\F[x,y,z]/(xyz)\) that are coherent and bounded (ie whose homologies \(\text{H}_i(N)\) are finitely generated and vanish for \(i\ll0\) and \(i\gg 0\)),
	quotiented out by the full dg subcategory of acyclic modules, using the quotient construction for dg categories~\cite{Drinfeld, Keller}. 
	There are other models for \(\Coh(X_0)\), including \(A_\infty\) and projective modules (see \cite[Proposition~2.4.1]{LOTBimodules}), and all of them are quasi-equivalent~\cite{Lunts_Orlov}. 

	\item The full dg subcategory \(\Perf(X_0) \subset \Coh(X_0)\) consisting of perfect dg modules (ie quasi-isomorphic to a dg module \(N\) such that \(N_i\) are finitely generated and projective, and \(N_i = 0\) for \(i\gg0\) and \(i\ll0\)).

	\item The \(\Z/2\)-graded dg quotient \(D_{\sg}(X_0)=\Coh(X_0)/\Perf(X_0)\) called the category of singularities of \(X_0\), introduced in~\cite{Orlov}. The \(\Z/2\)-grading on \(D_{\sg}(X_0)\) comes from the two-periodicity of \(\Z\)-grading, see~\cite[Sections~2.1 and~2.2]{Nadler}.

	\item The \(\Z/2\)-graded dg category \(\MF(\mathbb A^3,xyz)\)  of matrix factorizations over \(\F[x,y,z]\), ie free modules  \(V_0 \oplus V_1\) with maps \(p_0:V_0\to V_1\) and \(p_1:V_1\to V_0\) such that \(p_0p_1 + p_1 p_0=xyz\cdot \id\). 
	\end{itemize}
	The following are two quasi-equivalences of $\Z/2$-graded $A_\infty$ categories: 
	\begin{align}
	&\Tw\W(\ThreePuncturedSphere) \simeq D_{\sg}(X_0) & \quad &\text{\cite{AAEKO}} \label{eq:HMS}\\
	&\MF(\mathbb A^3,xyz) \simeq D_{\sg}(X_0) & \quad &\text{\cite{Orlov}} \label{eq:Orlov}
	\end{align}
	The first describes the homological mirror symmetry between the three puncture sphere \(\ThreePuncturedSphere\) and the Landau-Ginzburg model \((\mathbb A^3, W=xyz)\). The second establishes the matrix factorization model for the category of singularities. 
	Combining the quasi-equivalences above, we obtain \[\MF(\mathbb A^3,xyz) \simeq \Tw\W(\ThreePuncturedSphere)\] 
	Moreover, from~\cite[Theorem~6.1]{AAEKO} it follows that under quasi-equivalence~\eqref{eq:HMS} the two generating arcs \(\arcVer\) and \(\arcHor\) in Figure~\ref{fig:wrap_subcat} correspond to dg modules with vanishing differentials \(\F[x,y,z]/(x)\) and \(\F[x,y,z]/(y)\) over the ring \(\F[x,y,z]/(xyz)\), respectively.
	Under quasi-equivalence~\eqref{eq:Orlov}, these two modules correspond to the two basic matrix factorizations from Example~\ref{exa:elementary_mf}, respectively (see~\cite[Section~3.2]{Orlov}). 
	Thus, we obtain the quasi-isomorphism of the corresponding endomorphism algebras: 
	\begin{equation*}
		\End_{\MF(\mathbb A^3,xyz)}(M(\Diag_0) \oplus M(\Diag_1))=\A \simeq \Binf = \End_{\W(\ThreePuncturedSphere)}(\arcVer \oplus \arcHor)
		\qedhere
	\end{equation*}
	\end{proof}

\begin{remark}
	We also note that 
	\(\MF(\mathbb A^3,xyz)\simeq \Coh(\Spec \F[x,y]/(xy))\)~\cite[Proposition~2.3]{Nadler}, and the corresponding version of homological mirror symmetry for \(\ThreePuncturedSphere\) was proved in~\cite{LekPol}: \(\text{H}_0(\Tw\W(\ThreePuncturedSphere)) \simeq D^b \Coh(\Spec \F[x,y]/(xy)).\)
\end{remark}
Theorem~\ref{thm:from_hms} holds in the bigraded deformed setting as well:
	\begin{theorem} \label{thm:quasi-iso}
	There is a quasi-isomorphism between the following two bigraded \(A_\infty\) algebras
	\[\text{(Definition~\ref{def:deformed_dg_algebra})}\quad \AnUU \  \simeq \  \BinfUU \quad \text{(Definition~\ref{def:deformed_Binf_algebras})}\]
	Moreover, the quasi-isomorphism can be given by two \(A_\infty\) homomorphisms \(F^U\) and \(G^U\) that are positive on \(\A =\A\otimes 1 \subset \AnUU\).
	\end{theorem}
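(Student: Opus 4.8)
The plan is to upgrade the quasi-isomorphism $\A\simeq\Binf$ of Theorem~\ref{thm:from_hms} to the bigraded deformed setting by tensoring everything with $\F[U,U^{-1}]$ and carefully tracking gradings. First I would recall the precise structure: both $\AnUU$ and $\BinfUU$ are obtained from $\A$ and $\Binf$ (as $\F$-vector spaces) by extending scalars along $\F\hookrightarrow\F[U,U^{-1}]$, with the $A_\infty$ operations rescaled by powers of $U$ according to arity ($\mu_k$ picks up $U^{k-2}$ in $\BinfUU$, and in $\AnUU$ the differential $d^\A$ picks up $U^{-1}$, which is the $k=1$ instance of the same convention). The key observation is that these $U$-rescalings are \emph{formal} — they are exactly the rescalings that make a quasi-isomorphism of the underlying $\Z/2$-graded (undeformed) algebras automatically $\Z$-bigraded once one knows it respects the internal/homological gradings appropriately. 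So the task reduces to (i) producing $A_\infty$-homomorphisms $F,G$ realizing $\A\simeq\Binf$ that are homogeneous with respect to the \emph{bigrading} (not merely the $\Z/2$-grading used in the Fukaya/matrix-factorization categories), and then (ii) defining $F^U,G^U$ by the same $U$-rescaling recipe and checking the $A_\infty$-relations and positivity survive.

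For step (i) the point is that the quasi-equivalences~\eqref{eq:HMS} and~\eqref{eq:Orlov}, together with the identification of the generating objects in Example~\ref{exa:elementary_mf}, already identify $\A$ and $\Binf$ as $\Z/2$-graded $A_\infty$ algebras over $\F[H]$; one then checks that the induced maps on generators (sending $\id_{\tM(\Diag_i)}\mapsto$ the corresponding idempotent, $s_i\mapsto S$, $d_i\mapsto D$, as in Lemma~\ref{lem:iso_A_B}) are homogeneous for the bigrading $\text{gr}(S)=q^{-1}h^0$, $\text{gr}(D)=q^{-2}h^0$ on both sides — this is forced, since on homology $\Phi\co\Hast(\A)\xrightarrow{\sim}\B$ is already a bigraded isomorphism by Lemma~\ref{lem:iso_A_B}, and the higher components $F_k$ ($k\geq2$) of any $A_\infty$-homomorphism lifting $\Phi$ must automatically have the degree dictated by $\text{gr}(F_k)=q^0h^{1-k}$ (or they can be killed/adjusted by a homotopy, since the relevant obstruction/deformation groups are concentrated in the right degrees). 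One can either invoke abstract homological-perturbation transfer to build $F,G$ along a chosen deformation retract of $(\A,d^\A)$ onto $\Hast(\A)$ — this produces the $F_k,G_k$ explicitly and manifestly homogeneously — or argue that since $\A$ is formal as a bigraded algebra (again via the matrix-factorization description, where $d^\A$ is the single internal differential and the higher structure is determined) the comparison with $\Binf$ is degree-by-degree.

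Step (ii): having homogeneous $F=(F_1,F_2,\dots)$ and $G=(G_1,G_2,\dots)$, define $F^U$ by $F^U_k(a_1\otimes U^{i_1},\dots,a_k\otimes U^{i_k}) = F_k(a_1,\dots,a_k)\otimes U^{k-1}\cdot U^{i_1+\cdots+i_k}$ (so $F^U_1$ is just $F_1\otimes\id$, i.e.\ the identity power of $U$ is $0$), and analogously for $G^U$; this is the unique $U$-linear extension compatible with the $\mu_k\mapsto U^{k-2}\mu_k$ rescaling on both sides, so the $A_\infty$-relations for $F^U,G^U$ are obtained from those for $F,G$ by multiplying each term by a fixed power of $U$ determined only by the total arity — hence they hold. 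Positivity on $\A=\A\otimes 1$ is immediate: $F^U_k$ applied to elements with $U$-exponent $0$ lands in $U$-exponent $k-1\geq 0$. Finally $F^U$ and $G^U$ are mutually inverse up to $A_\infty$-homotopy because $F,G$ are, and the homotopies rescale by the same recipe; that $F^U$ is a quasi-isomorphism follows since $F^U_1=F_1\otimes\id$ induces an isomorphism on homology after inverting $U$.

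The main obstacle I expect is step (i) — specifically, verifying that the quasi-isomorphism coming from homological mirror symmetry can be chosen \emph{bigraded} rather than merely $\Z/2$-graded, since the $\Z$-grading $i=q-3h$ we need is \emph{not} the intrinsic Fukaya-categorical $\Z/2$-grading. This is exactly the grading mismatch flagged after Definition~\ref{def:dg_algebra} ($\text{gr}(d^\A)=q^{-3}h^0$ instead of $q^0h^1$), and resolving it is precisely what adjoining $U$ with $\text{gr}(U)=i^0h^{-1}q^{-3}$ accomplishes: one must check that the identification of generators respects the refined bigrading and that no higher component $F_k$ is forced into an inconsistent degree — equivalently, that the bigraded Hochschild-type obstruction groups controlling the lift vanish in the offending bidegrees, which follows from the explicit (small) structure of $\A$ and $\Binf$.
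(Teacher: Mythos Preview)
Your proposal is broadly correct, and step~(ii)---the $U$-rescaling formula $F^U_k(a_1\otimes U^{i_1},\dots,a_k\otimes U^{i_k})=F_k(a_1,\dots,a_k)\otimes U^{k-1+i_1+\cdots+i_k}$, the observation that the $A_\infty$ relations survive because each relation becomes homogeneous in $U$, and the positivity check---matches the paper exactly (up to an index shift: the paper writes $F_k\co\A^{k+1}\to\Binf$, so its $F^U_k$ picks up $U^k$).

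Where you diverge is step~(i). You treat the passage to a \emph{bigraded} quasi-isomorphism as the main obstacle and propose to resolve it by obstruction theory or homological perturbation, invoking vanishing of certain Hochschild groups. The paper sidesteps this entirely with two elementary observations. First, every homogeneous element of $\A$ and of $\Binf$ has $h$-grading zero, so the maps $F_k$ coming from Theorem~\ref{thm:from_hms} automatically have $h$-degree zero; after the $U$-rescaling, $F^U_k$ then has the required $h$-degree with no further argument. Second, for the $q$-grading the paper simply \emph{discards} all homogeneous components of $F^U$ that do not preserve $q$. Since the operations $\mu_k$ on both $\AnUU$ and $\BinfUU$ preserve $q$, the $A_\infty$-homomorphism relations decompose by $q$-degree, so the $q$-degree-zero part of $F^U$ is still an $A_\infty$ homomorphism. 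It is still a quasi-isomorphism because on cycles the linear part $F_0\co\mathrm{Z}(\A)\to\Binf$ is the projection $\mathrm{Z}(\A)\to\Hast(\A)\cong\B$, which already preserves $q$ by Lemma~\ref{lem:iso_A_B}. The same truncation works for $G^U$.

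So your outline is sound, but the paper's truncation-by-grading argument is both shorter and more robust than the obstruction-theoretic route you sketch: it requires no computation in Hochschild cohomology and no control over the specific form of the higher $F_k$ beyond the fact that the algebras themselves are bigraded.
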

	\begin{proof}
	Suppose the quasi-isomorphism between \(\A\) and \(\Binf\) from Theorem~\ref{thm:from_hms} is given by 
	\begin{align*}
		F&=(F_0,F_1,F_2,\ldots),\quad F_k\co\A^{k+1}\to\Binf
		\\
		G&=(G_0,G_1,G_2,\ldots),\quad G_k\co(\Binf)^{k+1}\to \A
	\end{align*}
	Because the homological grading of all elements in \(\A\) and \(\Binf\) vanishes, we can promote \(F\) and \(G\) to \(h\)-graded (ie \(\text{gr}(F^U_k)=\text{gr}(G^U_k)=h^{-k}\)) quasi-isomorphisms of deformed algebras
	\[
	F^U\co\AnUU \xrightarrow{\simeq} \BinfUU, \ F^U_{k}(U^{i_{0}} a_0,\ldots,U^{i_{k}}a_{k})=F_{k}( a_0,\ldots,a_{k}) \otimes U^{i_0 +\cdots +i_k} \cdot U^k  \]
	and the same formula for \(G^U\co\BinfUU \xrightarrow{\simeq} \AnUU\)

	We now make the quasi-isomorphism \(F^U\) preserve the quantum grading. First, note that we know the map on cycles \(F_0:\mathrm{Z}(\A) \to \Binf\) is the projection \(\mathrm{Z}(\A) \to \Hast(\A)\cong \Binf \cong \B\), which preserves the quantum grading, since the isomorphism \(\Phi\) from Lemma~\ref{lem:iso_A_B} does. 
	This implies that excluding all the components of \(F^U\) which do not preserve the quantum grading makes \(F^U\) preserve the quantum grading.

	The same procedure works to make \(G^U\) preserve the quantum grading.
	\end{proof}
	
The quasi-isomorphism \(\AnUU \simeq \BinfUU\) allows us to rewrite the type~D structure \(\tM(\Diag_T)^{\AnUU}\) from Section~\ref{sec:reframing} as a type D structure \(\tM(\Diag_T)^{\BinfUU}\) over \(\BinfUU\). 
The positivity of the differential in \(\tM(\Diag_T)^{\AnUU}\), together with the positivity of the quasi-isomorphisms \(F^U\) and \(G^U\) on \(\A\subset \AnUU\), implies the positivity of the differential in \(\tM(\Diag_T)^{\BinfUU}\). This makes it possible to view the type~D structure \(\tM(\Diag_T)^{\BinfUU}\) as a type~D structure over \(\BinfU\), which we denote by 
\(\DD(\Diag_T)^{\BinfU}\).

\begin{remark}
If \(M(\Diag_T)\) is indeed a tangle invariant, see Remark~\ref{prop:matrix_factorization_is_an_invariant}, then so is the type~D structure \(\tM(\Diag_T)^{\AnUU}\). Moreover, positive homotopy equivalences up to which \(\tM(\Diag_T)^{\AnUU}\) is defined induce positive homotopy equivalences for \(\tM(\Diag_T)^{\BinfUU}\). Therefore, if the multifactorization \(M(\Diag_T)\) is a tangle invariant, then so is \(\DD(\Diag_T)^{\BinfU}\).
\end{remark}

We are now ready to prove the extension property.
\begin{proof}[Proof of Theorem~\ref{thm:extension}]

The fact that \(\DD(\Diag_T)^{\BinfU}\big|_{U=0} = \DD(\Diag_T)^{\B}\) is a consequence of Corollary~\ref{cor:identification_of_type_D_structures}, together with the fact that \(F^U_0\) projects the cycles onto the homology: \(F^U_0: \mathrm{Z}(\A) \to \Hast(\A)\cong\B\). 
The statement about \(\DD_1(\Diag_T)^\B\) is completely analogous, except instead of \(\tM(\Diag_T)\) one should start with a multifactorization \([q^{-2}h^{-1}\tM(\Diag_T)\xrightarrow{H}\tM(\Diag_T)]\).

Finally, we need to show that the type~D structure \(\DD^c(T)^{\B}\) representing the curve \(\BNr(T)\) is extendable. Such a type~D structure is obtained from \(\DD(\Diag_T)\) via the so-called arrow sliding algorithm. This algorithm, which is described in~\cite[Section~5]{KWZ}, consists of a sequence of basic homotopies, namely cancellations and clean-ups. 
Lemma~\ref{lem:cancel_n_clean} below implies that extendability is preserved under such homotopies.
\end{proof}

\begin{lemma}\label{lem:cancel_n_clean}
	Suppose a type~D structure \(X^\B\) over \(\B\) is extendable to a type~D structure over \(\BinfU\), and \(Y^\B\) is obtained from \(X^\B\) by a single application of the Cancellation and Clean-Up Lemmas \cite[Lemmas~2.16 and~2.17]{KWZ} in the arrow sliding algorithm~\cite[Section~5]{KWZ}. Then \(Y^\B\) is also extendable to a type~D structure over \(\BinfU\).
\end{lemma}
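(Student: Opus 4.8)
The plan is to argue that extendability is preserved under each of the two basic operations in the arrow-sliding algorithm — a single Cancellation and a single Clean-Up — by lifting the operation from $\B$ to $\BinfU$. The key conceptual point is that both operations are instances of the same move: a change of basis (homotopy) on a type~D structure that removes a component of the differential. Over $\B$, the Clean-Up Lemma says that if $\delta^1$ has a component $c\colon x\to y$ labelled by an algebra element that, roughly speaking, can be cancelled against an isomorphism-type piece or against $1\in\B$, then one may change basis to eliminate it at the cost of introducing other arrows. Over $\BinfU$, the analogous algebraic move is available because $\BinfU$ is an $A_\infty$ algebra containing $\B$ as the $U=0$ quotient and the only elements used as labels in $\DD(T)^{\BinfU}$ are (powers of) $D$, $S$ and $U$, none of which acquires new relations obstructing the homotopy.

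The steps I would carry out are: (1) Recall precisely the form of the Cancellation and Clean-Up Lemmas as stated in \cite[Lemmas~2.16 and~2.17]{KWZ}, in particular the explicit formula for the new differential $\tilde\delta^1$ in terms of the old one after cancelling an arrow (a geometric series in the ``to-be-cancelled'' component). (2) Given $X^\B$ with an extension $\tilde X^{\BinfU}$, observe that $\tilde X^{\BinfU}$ has the same underlying module and a differential $\tilde\delta^1_U$ reducing mod $U$ to $\delta^1_X$; in particular every component of $\delta^1_X$ that is a $1\in\B$ or a unit-type isomorphism lifts to a component of $\tilde\delta^1_U$ of the form $1 + (\text{higher $U$-order})$, which is still invertible. (3) Apply the corresponding cancellation/clean-up homotopy to $\tilde X^{\BinfU}$ — this is legitimate because the relevant component of $\tilde\delta^1_U$ is invertible and the $A_\infty$ compatibility relation over $\BinfU$ lets one form the requisite geometric series (it terminates mod any fixed power of $U$, and one checks the total differential satisfies $\partial^2=0$ using the $A_\infty$ relations of $\BinfU$ just as one uses associativity of $\B$ in the uncurved case). (4) Check that the resulting $\tilde Y^{\BinfU}$ restricts to $Y^\B$ at $U=0$: this is automatic, since reducing the whole homotopy mod $U$ recovers exactly the Cancellation/Clean-Up move that produced $Y^\B$ from $X^\B$. (5) Verify that positivity of the differential is preserved, so that the output is genuinely a type~D structure over $\BinfU$ (not merely over $\BinfUU$); this follows because the cancelled component has nonnegative $U$-order and the geometric-series corrections only raise $U$-order.

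The main obstacle I expect is step (3), specifically justifying the homotopy over the $A_\infty$ algebra $\BinfU$ rather than over the ordinary algebra $\B$: one must confirm that the formulas for cancellation/clean-up, which are standard for type~D structures over dg or ordinary algebras, go through verbatim for type~D structures over a (possibly curved, here uncurved but genuinely $A_\infty$) algebra, using $\mu_2,\mu_3,\dots$ in place of a single product. In practice this amounts to recalling the $A_\infty$ version of the homotopy-transfer / cancellation lemma and noting that convergence is not an issue because the $U$-adic filtration makes all the relevant sums finite modulo each power of $U$. A secondary subtlety is that in the clean-up step one may need the label on the cancelled arrow to be exactly $1\in\B$ (as opposed to some other algebra element); here I would invoke the explicit description of the arrow-sliding algorithm in \cite[Section~5]{KWZ}, which guarantees that the clean-ups used there are of this special type, together with the observation from the proof of Theorem~\ref{thm:no_wrapping_around_special}-type arguments that such components lift to $1 + O(U)$ in any extension.
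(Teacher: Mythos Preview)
Your overall shape is reasonable, but the route you take diverges from the paper's in a way that matters, and there are concrete gaps.

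\textbf{Different route.} You attempt to lift Cancellation and Clean-Up directly to the $A_\infty$ algebra \(\BinfU\). The paper instead passes through the dg algebra \(\AnUU\) using the quasi-isomorphism of Theorem~\ref{thm:quasi-iso}: it views \(X^{\BinfU}\) as a type~D structure over \(\BinfUU\), transports it to \(X^{\AnUU}\) via \(G^U\), performs the cancellation or clean-up there, and transports back via \(F^U\). The point of this detour is precisely to avoid your step~(3): over a dg algebra the Cancellation and Clean-Up Lemmas of \cite{KWZ} apply verbatim, with no need to develop or invoke $A_\infty$ analogues. Positivity of the resulting differential over \(\BinfUU\) (hence restriction to \(\BinfU\)) then follows from positivity of the homotopy on the \(\AnUU\) side together with the positivity of \(F^U\) and \(G^U\) on \(\A\).

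\textbf{Gaps in your direct approach.} First, you conflate Clean-Up with Cancellation. In Clean-Up there is no ``cancelled arrow labelled \(1\)''; one performs a basis change along a morphism \(g=(\mathbf{x}\xrightarrow{g_\B}\mathbf{y})\) whose label is typically \emph{not} an identity. Lifting this requires producing a lift \(\tilde g\) of \(g\) satisfying the hypotheses of the Clean-Up Lemma (\(\tilde g^2=0\), \(d(\tilde g)=0\), \(\tilde g\,\delta^1\,\tilde g=0\)); the paper checks the last of these via a homological-grading argument using positivity, which you do not address. Second, for Cancellation your claim that the lift of a \(1\)-labelled arrow is ``\(1+O(U)\)'' is both weaker than needed and problematic: \(\BinfU=\Binf\otimes\F[U]\) is polynomial in \(U\), not complete, so \(1+O(U)\) is not invertible there, and your geometric-series argument does not terminate. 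The paper instead shows by a quantum-grading count (any nonidentity endomorphism of \(\tM(\Diag_i)\) has strictly negative quantum grading) that the lifted label is \emph{exactly} \(1\), so ordinary cancellation applies. Without either the detour through \(\AnUU\) or these grading arguments, your steps~(2)--(3) do not go through as written.
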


\begin{proof}
	We can regard \(X^{\BinfU}\) as a type~D structure over \(\BinfUU\) and by Theorem~\ref{thm:quasi-iso} as a type~D structure over \(\AnUU\).
	By the positivity of the differential of \(X^{\BinfUU}\), the differential of \(X^{\AnUU}\) is also positive. 
	Note that since \(F^U_0\circ G^U_0=\id\), \(F^U(X^{\AnUU})|_{U=0}=F^U(G^U(X^{\BinfUU}))|_{U=0}=X^{\B}\), so let us replace  \(X^{\BinfUU}\) by  \(F^U(X^{\AnUU})\). 
	In both cases of this proof, the strategy is to find a homotopy of \(X^{\AnUU}\) to some type~D structure over \(\AnUU\) with the following properties: First, the corresponding type~D structure over \(\BinfUU\) should have a positive differential, so that it can be viewed as a type~D structure \(Y^{\BinfU}\) over \(\BinfU\). Secondly, \(Y^{\BinfU}\) should define an extension of \(Y^\B\). This is summarized in the following diagram: 
	\begin{equation}\label{eq:diagram_for_homotopies}
	\begin{tikzcd}[column sep = 2cm, row sep = 1cm, ampersand replacement = \&]
		X^{\AnUU}
		\arrow[d, "\simeq"left, "\substack{\text{cancellation}\\\text{or clean-up}}"right]
		\arrow[r,mapsto, "\text{Theorem~\ref{thm:quasi-iso}}"above]
		\&
		X^{\BinfUU}
		\arrow[r,mapsto, "\substack{\text{positivity of}\\\text{the differential}}"above]
		\&
		X^{\BinfU}
		\arrow[r,mapsto, "U=0"above]
		\&
		X^{\B}
		\arrow[d, "\simeq"right, "\substack{\text{cancellation}\\\text{or clean-up}}"left]
		\\
		Y^{\AnUU}
		\arrow[r,mapsto, "\text{Theorem~\ref{thm:quasi-iso}}"above]
		\&
		Y^{\BinfUU}
		\arrow[r,mapsto, "\substack{\text{positivity of}\\\text{the differential}}"above]
		\&
		Y^{\BinfU}
		\arrow[r,mapsto, "U=0"above]
		\&
		Y^{\B}
	\end{tikzcd}
	\end{equation}
	\noindent\textbf{Case 1: Clean-up. } 
	The clean-ups that are involved in the process of homotoping \(X^{\B}\) to a model associated with an immersed curve are quite simple: Each one consists of applying the Clean-Up Lemma to some \(g=(\mathbf{x}\xrightarrow{g_{\B}}\mathbf{y}) \in \End(X^{\B})\), where \(\mathbf{x}\) and \(\mathbf{y}\) are two distinct homogeneous generators of \(X^\B\) and \(\text{gr}(g)=q^0h^0\). The resulting type~D structure \(Y^{\B}\) has the same generators, while its differential is changed: 
	\(\delta^1_{Y}=\delta^1_{X}+ g \delta^1_{X}  + \delta^1_{X} g\). 
	Our task is to lift this process to the type~D structure \(Y^{\AnUU}\). 
	
	With the endomorphism \(g\) we associate \(\tg=(\mathbf{x}\xrightarrow{\tg_{\A}}\mathbf{y}) \in \End(X^{\AnUU})\), where \(\tg_{\A}\in \A \subset \AnUU\) corresponds to \(g_{\B}\) under the isomorphism \(\Phi\) from Lemma~\ref{lem:iso_A_B}. We now want to apply the Clean-Up Lemma to \(\tg\).  For this, we first check that the hypotheses are satisfied. By \cite[Remark~2.18]{KWZ}, it suffices to verify that \(\tg^2\), \(d_{\A}(\tg_{\A})\), and \(\tg \delta^1_X\tg\) vanish. The first identity is obvious and the second follows from the choice of \(\tg_{\A}\). For the third, observe that the homological grading of \(g\) vanishes, so \(h(\mathbf{x})=h(\mathbf{y})\). If there existed a non-zero component \(\mathbf{y} \xrightarrow{a\cdot U^k}\mathbf{x}\) of \(\delta^1_{X}\), then its homological grading is equal to \(1=h(\mathbf{x})-h(\mathbf{y})-k=-k\). So \(k=-1\), which contradicts our assumption that the differential of \(X^{\AnUU}\) is positive, ie \(k\geq 0\). 
	
	Let \(Y^{\AnUU}\) be the type~D structure obtained by applying the Clean-Up Lemma to \(\tg\). Its differential is positive since the additional components \(\tg \delta^1_{X}  + \delta^1_{X} \tg\) do not contain any negative powers of \(U\), and the same is true for the differential in \(Y^{\BinfUU}\). So we may view it as a type~D structure over \(\BinfU\), which, by construction, is an extension of \(Y^{\B}\).
	
	\medskip\noindent\textbf{Case 2: Cancellation. } 
	The cancellations that are involved in the process of homotoping \(X^{\B}\) to a model associated with an immersed curve are also quite simple: 
	Each one consists of applying the Cancellation Lemma to some component \((\mathbf{x}\xrightarrow{1}\mathbf{y})\) of the differential of \(X^{\B}\). 
	The resulting type~D structure \(Y^{\B}\) is obtained from \(X^{\B}\) by removing the generators \(\mathbf{x}\) and \(\mathbf{y}\), removing all components of the differential involving these generators, and then adding components \(\mathbf{z}_1 \xrightarrow{ba}\mathbf{z}_2\) for each zigzag \(\mathbf{z}_1 \xrightarrow{a} \mathbf{y} \xleftarrow{1} \mathbf{x} \xrightarrow{b} \mathbf{z}_2\). We now lift this process to \(X^{\AnUU}\). 
	
	The component of the differential of \(X^{\AnUU}\) that corresponds to \((\mathbf{x}\xrightarrow{1}\mathbf{y})\) in \(X^{\B}\) is given by an arrow \(\mathbf{x}\xrightarrow{e}\mathbf{y}\), where \([e]=1 \in \Hast(\A)\). Being part of the differential of \(X^{\AnUU}\), this arrow preserves quantum grading. By inspection, the quantum grading of any homogeneous endomorphism of \(\tM(\Diag_0)\) or \(\tM(\Diag_1)\) that does not contain any identity component is strictly less than 0. Since the differential of \(X^{\AnUU}\) is positive, this implies that in fact \(e=1\). 
	
	Let \(Y^{\AnUU}\) be the type~D structure obtained by cancelling \(e\). Then by construction, it fits into Diagram~\eqref{eq:diagram_for_homotopies}. 
\end{proof}
\section{\texorpdfstring{Further restrictions on \(\Khr(T)\)}{Further restrictions on Khr(T)}}\label{sec:further_restrictions}
\subsection{Geography of special components}
\begin{theorem}\label{thm:geography:special_curves}
	Every special component of \(\Khr(T)\) is equal to the curve \(\sKh_{2n}(\tfrac p q)\) for some $n\geq 1,~\tfrac p q \in \QPI$, equipped with the trivial local system.
\end{theorem}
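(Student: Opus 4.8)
The plan is to upgrade the linearity theorem (Theorem~\ref{thm:geography_of_Khr}) to the full geography of the special components by combining naturality under the mapping class group with the extension property, following the ``unobstructedness'' picture explained in Section~\ref{sec:no_wrapping_around_special}.

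\emph{Reduction to slope $0$.} Let $\gamma$ be a special component of $\Khr(T)$. By Theorem~\ref{thm:geography_of_Khr} the curve $\gamma$ is linear, so its singular peg-board representative lies on a single line; since $\gamma$ is special, that line contains a lift of $\ast$ and hence has some rational slope $\tfrac pq\in\QPI$. Choose $r,s\in\Z$ with $qs-pr=1$ and let $M=\left(\begin{smallmatrix}q&r\\p&s\end{smallmatrix}\right)\in\Mod(\FourPuncturedSphereKh)\cong\PSL(2,\Z)$; by construction $M$ takes lines of slope $0$ to lines of slope $\tfrac pq$ and $M(\sKh_{2n}(0))=\sKh_{2n}(\tfrac pq)$. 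By Theorem~\ref{thm:Kh:Twisting}, $M^{-1}(\gamma)$ is a special slope-$0$ component of $\Khr(M^{-1}(T))$, carrying the same local system as $\gamma$ up to conjugacy, and a homeomorphism preserves triviality of a local system. It therefore suffices to prove the theorem when $\gamma$ has slope $0$; applying $M$ afterwards recovers the general case.

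\emph{Structure of a special slope-$0$ component.} So assume $\gamma$ has slope $0$ and let $X^\B$ be the indecomposable summand of $\DD_1^c(T)$ whose associated curve is $\gamma$. Since wrapping around any puncture---including a $k\pi$-turn---is excluded for components of $\Khr(T)$ by Theorems~\ref{thm:almost_no_wrapping} and~\ref{thm:no_wrapping_around_special}, the curve $\gamma$ coincides, outside small neighbourhoods of the punctures, with the horizontal line through the lifts of $\ast$ in $\PuncturedPlane$; the remaining freedom consists only of the period with which this line closes up, the side on which $\gamma$ skirts each lifted puncture it meets, and the local system. Reading off $X^\B$ from $\gamma$ as in Figure~\ref{fig:exa:classification:curves} and invoking Proposition~\ref{prop:higher_powers_when_wrapping} (the differential of $\DD_1^c(T)$ only involves $D,S,S^2\in\B$), the type~D structure $X^\B$ has the shape of a cyclic ``special zigzag'': generators alternating between the idempotents $\DotC$ and $\DotB$ joined by $S$-arrows, with $D$-arrows recording the passes near the non-special puncture on the line. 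One checks from Figure~\ref{fig:geography} that the curves $\sKh_{2n}(0)$ are exactly the curves whose type~D structure is a cyclic special zigzag of this shape of ``length'' $2n$ carrying the trivial local system, so the theorem is reduced to two claims about $X^\B$: that its length is even and that its local system is trivial.

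\emph{Evenness and triviality via the extension property.} For this I would feed in Theorem~\ref{thm:extension}: every summand $X^\B$ of $\DD_1^c(T)^\B$ extends to a type~D structure over $\BinfU$ reducing to it at $U=0$. If the special zigzag $X^\B$ had odd length, or carried a non-trivial (Jordan-block) local system, then one of its $S$-$D$ sub-patterns, respectively the $S$-labelled arrow produced by the local-system twist, would complete to a sub-sequence $\DotB\xrightarrow{S}\DotC\xrightarrow{D}\DotC\xrightarrow{S}\DotB\xrightarrow{D}\DotB$ (or the analogous sequence with the two idempotents exchanged). By the computation already carried out in the proof of Theorem~\ref{thm:no_wrapping_around_special}---using $\mu_4(S,D,S,D)=U^2$ together with the quantum-grading parity argument that forbids any differential component labelled by $U$---this contributes an uncancellable $U^2$ term to the compatibility relation of any $\BinfU$-extension, a contradiction. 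Hence the length of $X^\B$ is $2n$ for some $n\geq1$ and its local system is one-dimensional, i.e.\ trivial over $\F$; by the reduction above the theorem follows for every slope. (This is consistent with Proposition~\ref{prop:Khr_not_embedded}: the curves $\sKh_{2n}(\tfrac pq)$ are visibly not embedded.)

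\emph{Main obstacle.} The delicate point is the middle step: translating the geometric description of $\gamma$ (linear, special, slope $0$) into a canonical algebraic normal form for $X^\B$, and verifying that the curves realised by these normal forms are exactly the members of the $\sKh_{2n}$ family drawn in Figure~\ref{fig:geography:Upstairs}. This requires matching the peg-board data of $\gamma$ against the $\{D,S,S^2\}$-pattern of its type~D structure arc by arc, and in particular keeping careful track of which side $\gamma$ passes each puncture it meets---this is the data that, once constrained by the $\mu_4(S,D,S,D)=U^2$ obstruction, forces the $\sKh_{2n}$ shape rather than some exotic special curve. Everything else is either a direct citation of the results above or a verbatim repetition of the argument in Section~\ref{sec:no_wrapping_around_special}.
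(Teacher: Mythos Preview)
Your reduction to slope $0$ is fine, but the core argument in the third step has a genuine gap.

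First, your $\mu_4$ contradiction proves too much. The sequence $\DotB\xrightarrow{S}\DotC\xrightarrow{D}\DotC\xrightarrow{S}\DotB\xrightarrow{D}\DotB$ is present in \emph{every} special slope-$0$ curve, including $\sKh_{2n}(0)$ itself (look at the type~D structure displayed at the start of the paper's proof). So the existence of such a sub-sequence is not a contradiction; rather, the $\mu_4$-contribution \emph{forces} a $U^2$-labelled arrow in the extension, and one must track the resulting cascade of forced $U^2$-arrows along the two ``legs'' of the complex. The paper shows these $U^2$-arrows close up consistently precisely when the two legs have equal length $R=L$, and that in this case the local system is forced to satisfy $X\cdot X^{-1}=\id$, i.e.\ is trivial. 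Your argument as written would rule out all special curves, not just the unwanted ones.

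Second, and more seriously, the extension property alone is \emph{not} sufficient here, and the paper says so explicitly (Remark~\ref{rem:comparison_extendibility}): there exist type~D structures over $\B$ that extend to $\BinfU$ but are not of the form $\sKh_{2n}$; an explicit example is given in Figure~\ref{fig:CKMC}. The paper's proof needs a second ingredient, Lemma~\ref{lem:H_is_nullhomotopic_on_Khr}, which says that $H\cdot\id$ is null-homotopic on each summand of $\DD_1^c(T)$. This is what rules out the ``asymmetric'' case $L>R$ (with the wrong arrow orientations), where a direct Clean-Up argument splits off a summand on which $H\cdot\id$ is not null-homotopic. You never invoke this lemma, so your argument cannot distinguish the bad extendable complexes from the genuine $\sKh_{2n}$ ones.
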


\begin{lemma}\label{lem:H_is_nullhomotopic_on_Khr}
The morphism \((H\cdot\id)\) is null-homotopic on every direct summand of the type~D structure \(\DD^{c}_1(T)\) corresponding to a multicurve $\Khr(T)$.
\end{lemma}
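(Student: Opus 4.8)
The plan is to reduce the statement to an algebraic fact about the central element \(H = D+S^2\in\BNAlgH\) and then use the explicit description of \(\DD_1^c(T)\) guaranteed by Theorem~\ref{thm:geography_of_Khr} and Theorem~\ref{thm:further_geography_of_Khr}. Recall that \(\DD_1^c(T)\) is, by definition, a type~D structure from which the multicurve \(\Khr(T)\) can be read off directly, and that \(\Khr(T)\) consists only of compact curves, each of which is (by the already-established geography results) one of the curves \(\rKh_n(\nicefrac pq)\) or \(\sKh_{2n}(\nicefrac pq)\). Correspondingly, \(\DD_1^c(T)\) splits as a direct sum of indecomposable type~D structures, one for each component of \(\Khr(T)\); it suffices to show that \(H\cdot\id\) is null-homotopic on each such summand. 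By naturality under the mapping class group action (Theorem~\ref{thm:Kh:Twisting}) together with Lemma~\ref{lem:morphisms_are_preserved}, the action of \(H\cdot\id\) is compatible with twisting, so up to a change of parametrization of \(\FourPuncturedSphereKh\) we may assume each component has slope \(0\), i.e.\ is one of \(\rKh_n(0)\) or \(\sKh_{2n}(0)\).

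First I would write down, for the model type~D structures underlying \(\rKh_n(0)\) and \(\sKh_{2n}(0)\) (which are explicit finite zig-zag complexes over \(\BNAlgH\) with differentials built from the basic elements \(D\) and \(S\), as in Figure~\ref{fig:exa:classification:curves}), an explicit homotopy \(h\in\End(X)\) with \(\delta^1 h + h\delta^1 = H\cdot\id\). The key computational input is the relation \(H = D + S^2\), the relations \({}_{\bullet}D_{\bullet}\cdot{}_{\bullet}S_{\circ}=0={}_{\bullet}S_{\circ}\cdot{}_{\circ}D_{\circ}\) etc.\ in \(\BNAlgH\), and Proposition~\ref{prop:higher_powers_when_wrapping}, which tells us the differential of \(\DD_1^c(T)\) involves only \(D\), \(S\), \(S^2\). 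On a loop-type summand (the curves \(\sKh_{2n}\), which close up), \(H\cdot\id\) is literally a boundary because the differential there already contains a component labelled \(H=D+S^2\) (cf.\ Figure~\ref{fig:exa:classification:curves:loop}); on an arc-type zig-zag the homotopy is the standard ``telescoping'' contraction, sliding \(D\)'s and \(S\)'s along the zig-zag so that the two summands \(D\cdot\id\) and \(S^2\cdot\id\) are each a boundary after pairing consecutive arrows appropriately. Since \(H\) is central, \(H\cdot h = h\cdot H\) poses no ordering issues, and the computation is essentially local to each edge of the zig-zag.

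The main obstacle I expect is the bookkeeping for the generic summand: one must argue uniformly over all lengths \(n\) and over both idempotent types \(\DotB,\DotC\), making sure that the homotopy \(h\) one writes down is well-defined globally (it must respect the idempotent decomposition: a component of \(h\) from a \(\DotC\)-generator can only be labelled by an element of \(\iota_\circ\BNAlgH\iota_\circ\) or \(\iota_\circ\BNAlgH\iota_\bullet\), etc.) and that the leftover ``error terms'' from \(\delta^1 h + h\delta^1\) cancel in pairs along the zig-zag. A clean way to organize this is to note that each model complex for \(\rKh_n(0)\) or \(\sKh_{2n}(0)\) is a mapping cone \([\DD^c \xrightarrow{H\cdot\id} \DD^c]\) of the corresponding \(\BNr\)-model (this is exactly how \(\DD_1\) was built from \(\DD\) in Section~\ref{sec:review:Kh:definition}), and then use that the endomorphism \(H\cdot\id\) of a mapping cone of \(H\cdot\id\) is null-homotopic by the abstract fact that \(\mathrm{Cone}(f)\) kills \(f\) — more precisely, the diagonal map and the identity on the two copies of \(\DD^c\) assemble into the required homotopy. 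This sidesteps the case analysis entirely, provided one checks that the distinguished representative \(\DD^c_1(T)\) can indeed be taken in this mapping-cone form after the arrow-sliding algorithm, which follows from Lemma~\ref{lem:morphisms_are_preserved} as in the proof of Proposition~\ref{prop:higher_powers_when_wrapping}. I would present the mapping-cone argument as the main line and relegate the explicit zig-zag homotopy to a remark or omit it.
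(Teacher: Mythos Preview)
Your proposal has a circularity problem: you invoke Theorem~\ref{thm:further_geography_of_Khr} to reduce each summand to one of the explicit curves \(\rKh_n(0)\) or \(\sKh_{2n}(0)\), but in the paper's logical order Lemma~\ref{lem:H_is_nullhomotopic_on_Khr} is proved \emph{before} and \emph{is used in} the proofs of Theorems~\ref{thm:geography:special_curves} and~\ref{thm:geography:rational_curves} (which together give Theorem~\ref{thm:further_geography_of_Khr}). So neither the explicit zig-zag homotopy route nor the component-by-component mapping-cone route can rely on that classification.

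Your mapping-cone idea is the right one, but it is applied at the wrong level. There is no ``corresponding \(\BNr\)-model'' for an individual component of \(\Khr(T)\): the passage \(\DD(T)\mapsto\DD_1(T)\) is global, and the decomposition into indecomposables of \(\DD_1^c(T)\) bears no simple relation to that of \(\DD^c(T)\) (already the pretzel example in Figure~\ref{fig:Kh:example} illustrates this). The paper instead argues on the whole object: \(\DD_1(T)=[\DD(T)\xrightarrow{H\cdot\id}\DD(T)]\) is by definition a mapping cone, so the identity-on-the-second-copy map is a null-homotopy for \(H\cdot\id_{\DD_1(T)}\); Lemma~\ref{lem:morphisms_are_preserved} then transports this to a null-homotopy \(F\) for \(H\cdot\id_{\DD_1^c(T)}\). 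The step you are missing is how to get from here to individual summands: writing \(\DD_1^c(T)=\bigoplus_i\DD_i\), one observes that both the differential and \(H\cdot\id\) are block-diagonal, so the block-diagonal part \(F^{\mathrm{diag}}=\bigoplus_iF_i\) of \(F\) is again a null-homotopy, and each \(F_i\) witnesses \(H\cdot\id_{\DD_i}\simeq0\). This avoids any appeal to the geography results.
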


\begin{proof}
	By definition, \(\DD^c_1(T)\) is obtained from
	\(
		\DD_1(T)\coloneqq\left[\DD(T)\xrightarrow{H\cdot\id} \DD(T)\right]
	\) by cancellations and clean-ups (see \cite[Section~5]{KWZ}). The morphism \(H\cdot\id_{\DD_1}\) is null-homotopic, since the null-homotopy can be given by the diagonal dashed arrow: 
  \[
  \begin{tikzcd}[column sep=2cm]
  \DD(T)
  \arrow{r}{H\cdot\id}
  \arrow{d}{H\cdot\id}
  &
   \DD(T)
   \arrow{d}{H\cdot\id}
   \arrow[dashed, ld, "\id",swap]
  \\
  \DD(T)
  \arrow{r}{H\cdot\id}
  &
   \DD(T)
  \end{tikzcd}
  \]
	Therefore, by Lemma~\ref{lem:morphisms_are_preserved}, \(H\cdot\id_{\DD^c_1(T)}\) is also null-homotopic. Let us write \(\DD^c_1(T)=\bigoplus_i (\DD_i,d_i)\), where each \((\DD_i,d_i)\) corresponds to the \(i^\text{th}\) component of \(\Khr(T)\). If \(F\) is a null-homotopy for \(H\cdot\id_{\DD^c_1(T)}\) and \(F_i\co(\DD_i,d_i)\rightarrow(\DD_i,d_i)\) denotes the restriction of \(F\) to the \(i^\text{th}\) component, \(F^\text{diag}\coloneqq\bigoplus_i F_i\) is also a null-homotopy for \(H\cdot\id_{\DD^c_1(T)}\), which restricts to null-homotopies for each \(H\cdot\id_{(\DD_i,d_i)}\).  
\end{proof}

\begin{proof}[Proof of Theorem~\ref{thm:geography:special_curves}]
	By naturality of \(\Khr\) under the action of the mapping class group (Theorem~\ref{thm:Kh:Twisting}), we may assume without loss of generality that the slope of the special component is zero. Denote the type D structure associated with this component by $\DD^\sKh$. Our task is to prove that $\DD^\sKh$ corresponds to the curve \(\sKh_{2n}(0)\) for some $n\geq 1$. 
	More explicitly, we need to show that $\DD^\sKh$ is equal to the following type~D structure containing \(4n\) generators in idempotent \(\iota_\bullet\):
  \[
  \begin{tikzcd}
    \DotC
    \arrow{r}{S}
    \arrow{d}[swap]{D}
    &
    \DotB
    \arrow{r}{D}
    &
    \DotB
    \arrow{r}{S^2}
    &
    \cdots
    &
    \cdots
    \arrow{r}{S^2}
    &
    \DotB
    \arrow{r}{D}
    &
    \DotB
    \arrow{r}{S}
    &
    \DotC
    \arrow{d}{D}
    \\
    \DotC
    \arrow{r}{S}
    &
    \DotB
    \arrow{r}{D}
    &
    \DotB
    \arrow{r}{S^2}
    &
    \cdots
    &
    \cdots
    \arrow{r}{S^2}
    &
    \DotB
    \arrow{r}{D}
    &
    \DotB
    \arrow{r}{D}
    &
    \DotC
  \end{tikzcd}
  \]

\begin{figure}[t]
  \centering
  \(\GeographyForSpecials\)
  \caption{Some curve segments of a special component of slope 0 in the covering space \(\PuncturedPlane\), illustrating the proof of Theorem~\ref{thm:geography:special_curves}}\label{fig:GeographyForSpecials}
\end{figure}

	First, suppose that the local system on the component is trivial. 
	Some generators of the type~D structure \(\DD^\sKh\) belong to the idempotent \(\iota_{\circ}\); otherwise, it would be a rational component of slope $0$. By Proposition~\ref{prop:higher_powers_when_wrapping}, the differential of \(\DD^\sKh\) only contains linear combinations of \(S\), \(S^2\), and \(D\). Therefore, it contains some differential \(\DotC\xrightarrow{D}\DotC\). It looks either like
	\begin{equation}\label{eq:geography:special_curves}
	\begin{tikzcd}
	\DotC
	\arrow{r}{S}
	\arrow{d}[swap]{D}
	&
	\DotB
	\arrow{r}{D}
	&
	\DotB
	\arrow{r}{}
	&
	\cdots
	\\
	\DotC
	\arrow{r}{S}
	&
	\DotB
	\arrow{r}{D}
	&
	\DotB
	\arrow{r}{}
	&
	\cdots
	\end{tikzcd}
	\quad
	\text{or}
	\quad
		\begin{tikzcd}
	\DotC
	\arrow[leftarrow]{r}{S}
	\arrow[leftarrow]{d}[swap]{D}
	&
	\DotB
	\arrow[leftarrow]{r}{D}
	&
	\DotB
	\arrow[leftarrow]{r}{}
	&
	\cdots
	\\
	\DotC
	\arrow[leftarrow]{r}{S}
	&
	\DotB
	\arrow[leftarrow]{r}{D}
	&
	\DotB
	\arrow[leftarrow]{r}{}
	&
	\cdots
	\end{tikzcd}
	\end{equation}
	Let us focus on the first case first. 
	A lift of the corresponding portion of the curve \(\Khr(T)\) to the covering space \(\PuncturedPlane\) is shown in the middle of Figure~\ref{fig:GeographyForSpecials}. 
	Let \(R\) and \(L\) be the number of consecutive generators in idempotent \(\iota_{\bullet}\) on the upper and lower legs of the type~D structure, respectively. These integers are equal to the number of intersection points of the vertical parametrizing arcs with the right and left segments of the curve up to the point where they intersect the horizontal parametrizing arcs again. In particular, this implies that \(R\) and \(L\) are even integers. 
	Suppose \(L\leq R\). If \(L=2\), there is a non-trivial \(\mu_4\)-action, which can be canceled if and only if \(R=2\) and the type~D structure looks like the one for the curve \(\sKh_2(0)\). If \(L>2\), the type~D structure looks as follows (without the dotted and dashed arrows): 
	\[
	\begin{tikzcd}
	\DotC
	\arrow{r}{S}
	\arrow{d}[swap]{D}
	&
	\DotB
	\arrow{r}{D}
	\arrow[dotted,in=135,out=-45]{drr}[description]{U^2}
	&
	\DotB
	\arrow{r}{S^2}
	\arrow[dotted,in=135,out=-45]{drr}[description]{U^2}
	&
	\cdots
	&
	\cdots
	\arrow{r}{S^2}
	\arrow[dotted,in=135,out=-45]{drr}[description]{U^2}
	&
	\DotB
	\arrow{r}{D}
	&
	\DotB
	\arrow[dashed]{r}{S}
	&
	\DotC
	\arrow[dashed]{d}{D}
	\\
	\DotC
	\arrow{r}{S}
	&
	\DotB
	\arrow{r}{D}
	&
	\DotB
	\arrow{r}{S^2}
	&
	\cdots
	&
	\cdots
	\arrow{r}{S^2}
	&
	\DotB
	\arrow{r}{D}
	&
	\DotB
	\arrow{r}{S}
	&
	\DotC
	\end{tikzcd}
	\]
	In this case, the \(\mu_4\)-action forces the existence of a \(U^2\)-differential in the extended type~D structure, namely the one indicated in the complex above by the first dotted arrow on the left. 
	As in the proof of Theorem~\ref{thm:no_wrapping_around_special}, the extended type~D structure contains no differential labelled \(U\) nor \(U\cdot S\), since the quantum gradings of generators in the same idempotent have the same parity, and generators in different idempotents different parity. This forces the existence of the other \(U^2\)-differentials in the complex above. 
	The contribution to the structure relation of the composition of the last \(U^2\)-differential with the differential \(\DotC\xrightarrow{S}\DotB\) on the lower leg of the complex can be cancelled if and only if \(R=L\) and the complex is equal to \(\sKh_{L}(0)\). 
	
	If the curve carries an \(n\)-dimensional local system \(X\), we may choose the corresponding complex to be the same as before, except that we tensor each generator by an \(n\)-dimensional vector space \(W\), replace the differential \(\DotC\xrightarrow{D}\DotC\) on the left by \(\DotC\otimes W\xrightarrow{D\otimes X}\DotC\otimes W\) and tensor all other differentials of \(\DD^\sKh\) by \(\id_{W}\). Then the \(U^2\)-differentials in the extended type~D structure need to be tensored by \(X\), and so does the final differential \(\DotC\xrightarrow{D}\DotC\) on the right. This means that the curve carries the local system \(X\cdot X^{-1}=\id\), thus proving the claim. 
	
	The second case in \eqref{eq:geography:special_curves} with \(L\leq R\) follows from reversing all arrows in the arguments above. So it remains to consider those two cases for \(L>R\). We claim that a curve containing a portion of this kind cannot be a component of \(\Khr(T)\). To see this, consider a shortest leg of length \(R\) of such a curve. By the previous arguments, the corresponding portion of the type~D structure looks as follows: 
	\[
	\begin{tikzcd}
		\cdots
		&
		\DotC
		\arrow{l}[swap]{S}
		&
		\DotC
		\arrow[dashed]{l}[swap]{D}
		\arrow{r}{S}
		&
		\DotB
		\arrow{r}{D}
		&
		\DotB
		\arrow{r}{}
		&
		\cdots
		\arrow{r}{}
		&
		\DotB
		\arrow{r}{D}
		&
		\DotB
		\arrow{r}{S}
		&
		\DotC
		&
		\DotC
		\arrow[dashed]{l}[swap]{D}
		&
		\cdots
		\arrow{l}[swap]{S}
	\end{tikzcd}
	\]
	(Note the direction of the two dashed arrows.)
	A simple application of the Clean-Up Lemma \cite[Lemma~2.17]{KWZ} shows that the mapping cone of such a complex is chain isomorphic to a complex containing a direct summand that corresponds to the curve \(\sKh_{R}\). By the classification of complexes over \(\BNAlgH\) \cite[Theorem~1.5]{KWZ}, this contradicts Lemma~\ref{lem:H_is_nullhomotopic_on_Khr}. 
	Any local system may be pushed outside of the relevant region of the type~D structure in which these isotopies are non-trivial, so this argument works in general. 
\end{proof}

\begin{figure}[t]
	\centering
	\(
	\begin{tikzcd}[column sep=30pt,row sep=30pt]
	&&
	\DotB
	\arrow{r}{S}
	&
	\DotC
	\arrow{d}{D}
	\\
	&&
	\DotB
	\arrow{u}{D}
	&
	\DotC
	\\
	\DotC
	\arrow[swap]{d}{D}
	&
	\DotC
	\arrow[swap]{l}{S^2}
	\arrow[dotted,swap]{d}{U^2}
	&
	\DotC
	\arrow{u}{S^3}
	\arrow[swap]{l}{D}
	\arrow[dotted]{r}{S\cdot U^2}
	\arrow[dotted,swap]{d}{U^2}
	&
	\DotB
	\arrow[swap]{u}{S}
	\arrow{r}{D}
	&
	\DotB
	\arrow{d}{S}
	\\
	\DotC
	\arrow[swap]{d}{S}
	&
	\DotC
	\arrow{d}{S}
	&
	\DotC
	\arrow{l}{D}
	\arrow[dotted,swap]{r}{U^2}
	&
	\DotC
	&
	\DotC
	\arrow{l}{D}
	\\
	\DotB
	\arrow[swap]{r}{D}
	&
	\DotB
	&
	\DotC
	\arrow{u}{S^2}
	\arrow[swap]{d}{D}
	\arrow[dotted,swap]{r}{U^2}
	&
	\DotC
	\arrow[swap]{u}{S^2}
	\arrow{d}{D}
	\\
	&&
	\DotC
	\arrow[swap]{d}{S^2}
	\arrow[dotted,swap]{r}{U^2}
	&
	\DotC
	\arrow{d}{S}
	\\
	&&
	\DotC
	\arrow[swap]{d}{D}
	&
	\DotB
	\\
	&&
	\DotC
	\arrow[swap]{r}{S}
	&
	\DotB
	\arrow[swap]{u}{D}
	\end{tikzcd}
	\)
\hspace*{40pt}
\labellist \tiny
	\pinlabel $D$ at 32 407 \pinlabel $S$ at 45 407 \pinlabel $D$ at 45 420 
	\pinlabel $D$ at 87 407 \pinlabel $S$ at 75 407 \pinlabel $D$ at 75 420 
	\pinlabel $D$ at 32 376 \pinlabel $S$ at 45 376 \pinlabel $D$ at 45 364
	\pinlabel $D$ at 87 376 \pinlabel $S$ at 75 376 \pinlabel $D$ at 75 364
	\pinlabel $S^3$ at 62 382
		\endlabellist
\raisebox{-170pt}{\includegraphics[scale=0.75]{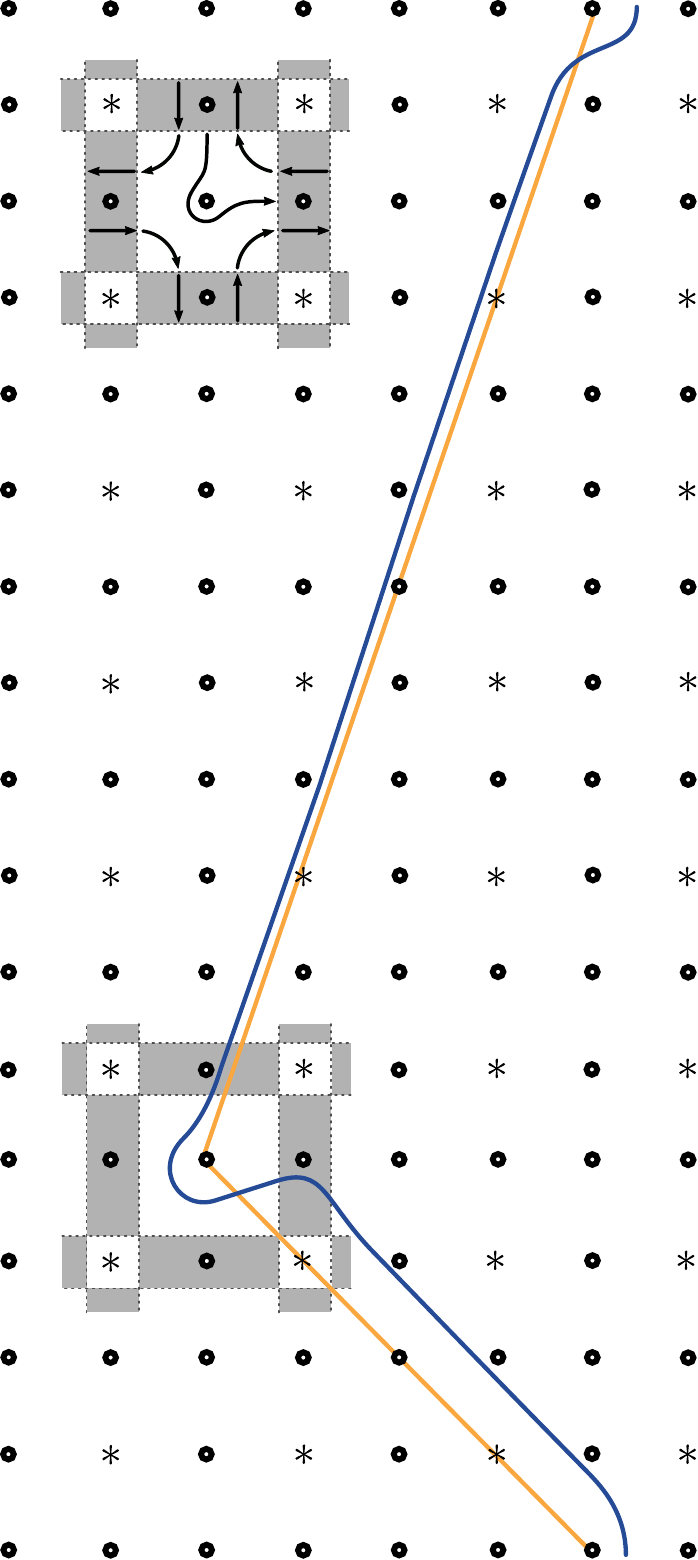}}
\caption{The extended type~D structure over \(\BinfU\) discussed in Remark~\ref{rem:comparison_extendibility} and the lift of its corresponding curve to \(\PuncturedPlane\). 
	At the top right, a shorthand for constructing curve segments in the cover associated with algebra elements arising in a type~D structure over~$\BNAlgH$ is given. Using this, one can check that the blue curve indeed describes the type~D structure on the left; on the right, the single instance of a differential labelled $S^3$ is highlighted. 	
}
\label{fig:CKMC}
\end{figure}

\begin{remark}\label{rem:comparison_extendibility}
  The above proof hinges on two properties: the existence of the extension  $\DD_1(T)^{\BNAlgH^*[U]}$ of $\DD_1(T)^{\BNAlgH}$ and the fact that $H\cdot \id_{\DD_1(T)} \simeq 0$.
	It is interesting to compare this to the proof given in~\cite{pqSym} for \(\HFT(T)\). 
	For the Heegaard Floer invariant, the extendibility property \emph{alone} suffices to show that every special component is equal to \(\sKh_{2n}(\tfrac p q)\) for some positive integer \(n\) and slope \(\tfrac p q\in\QPI\). 
	Remarkably, the same is not true for \(\Khr(T)\), as the extended type~D structure over \(\BinfU\) in Figure~\ref{fig:CKMC} illustrates.  Its restriction to a type~D structure over \(\B\) is in fact a component of \(\BNr\) of a Conway tangle, namely the tangle called \texttt{T\_CKMC} in \cite{tangle-atlas}. See Figure \ref{fig:CKMC} for this type D structure realized as a curve. 
\end{remark}

\subsection{Geography of rational components}
\begin{theorem}\label{thm:geography:rational_curves}
  Every rational component of \(\Khr(T)\) is equal to the curve \(\rKh_{n}(\tfrac p q)\) for some $n\geq 1,~\tfrac p q \in \QPI$, equipped with the trivial local system.
\end{theorem}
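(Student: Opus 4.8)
The plan is to mirror the structure of the proof of Theorem~\ref{thm:geography:special_curves}, but now working with the \emph{rational} case, where the shortest leg of the type~D structure does not pass through a lift of the special puncture. By naturality of $\Khr$ under the mapping class group action (Theorem~\ref{thm:Kh:Twisting}), we may assume the slope of the rational component is $0$, so that the corresponding type~D structure $\DD^{\rKh}$ consists entirely of generators in idempotent $\iota_\circ$ (if any generator were in $\iota_\bullet$, the segment would leave the horizontal line, and minimality of the peg-board representative together with linearity — Theorem~\ref{thm:geography_of_Khr} — rules this out). By Proposition~\ref{prop:higher_powers_when_wrapping}, the differential of $\DD^{\rKh}$ only contains linear combinations of $D,S,S^2$; since all generators are in $\iota_\circ$, the only admissible algebra elements are $D_\circ$ and, for connecting paths, $S^2_\circ$. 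So the differential is built purely out of $D$'s (the $S^2$'s would land in $\iota_\bullet$, contradiction), and $\DD^{\rKh}$ takes the form of a zigzag chain $\DotC \xleftrightarrow{D}\DotC\xleftrightarrow{D}\cdots$ whose associated curve is a straight horizontal line wrapping $k\pi$ times at the two non-special punctures it passes through.

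The next step is to pin down the length $n$ and to rule out nontrivial local systems, using the extension property (Theorem~\ref{thm:extension}) exactly as in Section~\ref{sec:no_wrapping_around_special}. The key computation is the $\mu_4$-action: whenever the curve passes through a puncture it traverses a disk-sequence contributing a power of $U$ to the compatibility relation of any extension $\DD^c_1(T)^{\BinfU}$. Concretely, a rational curve of slope $0$ of length $n$ gives a type~D structure $[\DotC\xrightarrow{D}\DotC\xrightarrow{D}\cdots]$ with $2n$ generators (two legs of length $n$ joined at both ends); the $\mu_4$-terms $\mu_4(D,\ast,D,\ast)$ at the turning punctures force $U^2$-differentials in the extension, and as in the proof of Theorem~\ref{thm:no_wrapping_around_special} these cannot be cancelled by any $U$-labelled component (parity of quantum gradings within an idempotent forbids it) unless the two legs have equal length, i.e. the complex is exactly $\rKh_n(0)$. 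A nontrivial $n$-dimensional local system $X$ enters and exits the relevant region symmetrically — one tensors the left-hand $D$ by $X$ and finds the right-hand $D$ must be tensored by $X$ as well, so the curve carries $X\cdot X^{-1}=\id$, forcing triviality. One also needs the ``$L>R$'' configuration (a short leg sitting inside a longer zigzag, as in the second half of the proof of Theorem~\ref{thm:geography:special_curves}): here a Clean-Up Lemma argument \cite[Lemma~2.17]{KWZ} shows the mapping cone splits off a $\rKh_R$-summand, and Lemma~\ref{lem:H_is_nullhomotopic_on_Khr} — which says $H\cdot\id$ is null-homotopic on $\DD^c_1(T)$ — together with the classification of complexes over $\BNAlgH$ \cite[Theorem~1.5]{KWZ} gives a contradiction, since $H\cdot\id$ is \emph{not} null-homotopic on the type~D structure of a $\rKh_R$-curve.

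I expect the main obstacle to be a subtlety absent from the special case: because the rational curve lies entirely in one idempotent, one has to be careful that the extension-and-$\mu_4$ argument actually produces an obstruction when the two legs differ in length. In the special case the short leg genuinely passes through the distinguished puncture $\ast$ and the disk-sequences that do the work are the ones recorded in Definition~\ref{def:Binf_algebra}; for a rational curve the relevant disk-sequences live around a non-special puncture, and one must verify that the $\mu_4$-products over $\Binf$ (pulled back via the quasi-isomorphism of Theorem~\ref{thm:quasi-iso} and the extension of Theorem~\ref{thm:extension}) still force the $U^2$-cascade. A clean way to sidestep delicate bookkeeping is to observe that a rational curve of slope $0$ and a \emph{special} curve of some other slope are related, up to the mapping class group, by which puncture the short leg hits; but since the mapping class group fixes $\ast$, this cannot be done directly, and one really does need the $\Khr$-specific input. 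The fallback, and probably the cleanest route overall, is to reduce to Lemma~\ref{lem:H_is_nullhomotopic_on_Khr} plus the classification \cite[Theorem~1.5]{KWZ}: any linear curve of the prescribed slope is, by the Clean-Up Lemma, a direct sum of $\rKh_n$'s and (possibly) curves with nontrivial $H$-action, and the latter are excluded by $H\cdot\id\simeq 0$ on $\DD^c_1(T)$. Assembling these pieces — linearity (Theorem~\ref{thm:geography_of_Khr}), the restricted differential (Proposition~\ref{prop:higher_powers_when_wrapping}), the extension property, and $H$-null-homotopy — yields the claim that every rational component equals $\rKh_n(\tfrac pq)$ with trivial local system.
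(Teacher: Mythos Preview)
Your proposal has a genuine gap in the main line of argument. After reducing to slope $0$, the type~D structure lives entirely in a single idempotent (the paper uses $\iota_\bullet$, not $\iota_\circ$, but that is a convention issue). The real problem is your claim that the differential is built purely out of $D$'s: the element $S^2$ preserves the idempotent, so the differential of $\DD^{\rKh}$ is an alternating sequence of $D$'s and $S^2$'s over $\iota_\bullet\BNAlgH\iota_\bullet\cong\fieldTwoElements[D,S^2]/(DS^2)$, not a chain of $D$'s only.

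More seriously, the extension/$\mu_4$ argument you propose does not apply here. All higher products in $\Binf$ (Definition~\ref{def:Binf_algebra}) come from disk sequences around the \emph{special} puncture, and every such sequence contains single $S$'s that switch idempotents. A type~D structure supported in one idempotent has no arrows labelled by a single $S$, so no $\mu_{2m}$ with $m\geq2$ ever fires, and the extension to $\BinfU$ imposes no constraint whatsoever on $\DD^{\rKh}$. The ``$U^2$-cascade'' you describe simply does not exist for rational curves.

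The paper's proof bypasses the extension entirely and works directly with the null-homotopy $H\cdot\id_{\DD^{\rKh}}\simeq 0$ (Lemma~\ref{lem:H_is_nullhomotopic_on_Khr}). This is your ``fallback'', but it is in fact the only route, and it requires a careful inductive argument (Lemma~\ref{lem:cone_curves}): one picks a shortest run of consecutively oriented arrows, builds an auxiliary type~D structure $Y$ with an inclusion $f\co Y\hookrightarrow\DD^{\rKh}$, and uses the null-homotopy of $H\cdot\id\circ f$ to force the two legs to have equal length and the local system to be trivial. Your sketch of this fallback (``Clean-Up Lemma splits off an $\rKh_R$-summand, contradiction'') is not how the argument goes and would need to be replaced by this explicit homotopy-chasing.
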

\begin{proof}
As before, thanks to the naturality of \(\Khr\) under the action of the mapping class group, we may assume with out loss of generality that the slope of the rational component is 0. 
Let us denote the type~D structure associated with this component by $\DD^\rKh$.
It suffices to show that $\DD^\rKh$ is the type~D structure corresponding to the curve \(\rKh_{n}(0)\) from Figure~\ref{fig:geography} for some $n\geq 1$.

Up to homotopy, any slope $0$ rational curve can be chosen such that it does not intersect the horizontal parametrizing arc corresponding to the idempotent \(\DotC\).
Therefore, the type~D structure $\DD^\rKh$ only contains generators in the idempotent \(\DotB\) and we can consider $\DD^\rKh$ as a type~D structure over the subalgebra 
$
\F[D,S^2]/(DS^2)
\coloneqq
\iota_{\bullet} \BNAlgH \iota_{\bullet}
\subset 
\BNAlgH
$.
By Lemma~\ref{lem:H_is_nullhomotopic_on_Khr}, $H\cdot \id_{\DD^\rKh} \simeq 0$, and by Lemma~\ref{prop:higher_powers_when_wrapping}, there are no high powers of $D$ and $S^2$ in the differential of $\DD^\rKh$. 
Thus, the lemma below finishes the proof.
\end{proof}

\begin{lemma}\label{lem:cone_curves}
Let $\DD^\rKh$ be a type~D structure over the algebra $\F[D,S^2]/(DS^2)$ which is associated with an immersed curve and whose differential only contains linear combinations of \(D\) and \(S^2\). 
Suppose further that the homomorphism $H \cdot\id_{\DD^\rKh} \in \Mor(\DD^\rKh, \DD^\rKh)$ is null-homotopic. 
Then there exists some \(n\geq1\), such that $\DD^\rKh$ is equal to one or multiple copies of \(\DD^{\rKh_n}\), where 

\[
\DD^{\rKh_1}=\left[
\begin{tikzcd}[column sep=1cm]
\DotB
\arrow[r,"D" above, bend left]
\arrow[r,"S^2" below, bend right]
&
\DotB
\end{tikzcd}
\right]
\quad
\text{and}
\quad
\DD^{\rKh_2}=\left[
\begin{tikzcd}[row sep=-5pt,column sep=1cm]
&
\DotB
\arrow[rd,"S^2"]
&
\\
\DotB
\arrow[ru,"D"]
\arrow[rd,"S^2", swap]
&
&
\DotB
\\
&
\DotB
\arrow[ru,"D", swap]
&
\end{tikzcd}
\right]
\]
and for \(n>2\),
\[
\DD^{\rKh_n}=
\left[
\begin{tikzcd}[row sep=-5pt,column sep=1cm]
&
\DotB
\arrow[start anchor=north, end anchor=north, no head, yshift=0.5em, decorate, decoration={brace}]{rr}{(n-2) \textnormal{ arrows}}
\arrow{r}{S^2}
&
\cdots
\arrow{r}{}
&
\DotB
\arrow{rd}{S^2 \textnormal{ or } D}
&
\\
\DotB
\arrow{ru}{D}
\arrow{rd}[swap]{S^2}
&
&
&
&
\DotB
\\
&
\DotB
\arrow{r}[swap]{D}
&
\cdots
\arrow{r}{}
&
\DotB
\arrow{ru}[swap]{D \textnormal{ or } S^2}
&
\end{tikzcd}
\right]
\]
\end{lemma}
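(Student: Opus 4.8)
The plan is to feed the null-homotopy hypothesis through the curve-classification of \cite[Theorem~1.5]{KWZ}, with the combinatorial propagation of a null-homotopy around the curve as the real workhorse. By that classification together with the hypothesis that the differential only involves $D$ and $S^2$, the type~D structure $\DD^\rKh$ has the following normal form: its generators (each tensored with the vector space carrying the local system $X$, which I suppress for now) lie on a single cyclic sequence $x_0,x_1,\dots,x_{L-1}$, with $x_i$ joined to $x_{i+1}$ by one arrow labelled $D$ or $S^2$, pointing either forward or backward, and with the local system matrix $X$ inserted on exactly one of these arrows. Call $x_i$ a \emph{source}, a \emph{sink}, or a \emph{pass-through} according as its two incident arrows both point away from it, both point toward it, or one of each. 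Since $D$ and $S^2$ commute with $DS^2=0$ while $D^2\neq0\neq S^4$, the relation $(\delta^1)^2=0$ forces the two labels at any pass-through to be $D$ and $S^2$ in some order. Consequently the sources and sinks alternate around the loop, their numbers agree, say both equal to $k\geq 1$, and between a source and the next sink the arrows form a maximal alternating $D,S^2,D,\dots$-chain (a \emph{leg}).

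The heart of the argument is to prove that $k=1$. For bigrading reasons — $(H\cdot\id)$ lowers the quantum grading by $2$ and preserves the homological grading, whereas each arrow of $\delta^1$ raises the homological grading by $1$ — any null-homotopy $h$ of $(H\cdot\id_{\DD^\rKh})$ is built from arrows labelled $\iota_\bullet$ running \emph{backward} along directed segments of $\delta^1$. Writing the equation $\delta^1 h+h\delta^1=H\cdot\id$ at each generator and propagating it along a leg forces: at every source the two outgoing labels are $D$ and $S^2$; the identity component of $h$ is transported from a source down each of its two legs to the sink at the far end; and at a pass-through this transported component is uniquely pinned down. If $k\geq 2$, a sink $t$ is the common endpoint of two legs emanating from two \emph{distinct} sources, so $h(t)$ is forced to carry identity components from both; propagating either one back along its leg then produces, after one step, a term supported on a generator on which nothing can cancel it — in characteristic $2$ the two ``leaked'' contributions do not annihilate each other. (For $k=2$ and legs of length one this is an immediate $2\times 2$ computation; the general case runs on the same mechanism.) This contradiction gives $k=1$.

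With a unique source $s$ and unique sink $t$, $\DD^\rKh$ consists of two legs from $s$ to $t$. The vertex equation at $s$ forces its outgoing labels to be $D$ and $S^2$; the one at $t$ forces its incoming labels to be $D$ and $S^2$; and re-running the propagation along the legs forces them to have equal length $n$, so that one leg reads $D,S^2,D,\dots$ and the other $S^2,D,S^2,\dots$. Comparing with the definitions of $\DD^{\rKh_1}$, $\DD^{\rKh_2}$, and $\DD^{\rKh_n}$ for $n>2$, this is exactly the type~D structure of $\rKh_n(0)$. Finally, re-inserting the local system matrix $X$ on the designated arrow and carrying it through the same vertex equations — exactly as in the proof of Theorem~\ref{thm:geography:special_curves} — shows that $X$ must be conjugate to an identity matrix, so the local system is trivial and $\DD^\rKh$ is a direct sum of copies of $\DD^{\rKh_n}$.

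I expect the main difficulty to lie in the propagation argument of the second paragraph: setting up the system of vertex equations so that the non-cancellation obstruction for $k\geq 2$, and the forced equality of leg lengths for $k=1$, come out cleanly and uniformly in the leg lengths, all while tracking the local system. Everything else — establishing the normal form and the finite comparison with the list $\DD^{\rKh_1},\DD^{\rKh_2},\DD^{\rKh_n}$ — is routine once that step is in place.
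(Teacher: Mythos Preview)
Your proposal shares the paper's core idea — propagate the constraints a null-homotopy imposes around the cycle — but there is a real gap. You assert that any null-homotopy $h$ of $H\cdot\id_{\DD^\rKh}$ is built from identity arrows running backward along $\delta^1$. Bigrading only forces each component of $h$ to be labelled $\iota_\bullet$ and to connect generators whose bigradings differ by that of a single arrow; when $k\geq 2$ (and even when $k=1$ with long legs), distinct legs contain many non-adjacent pairs satisfying this, so $h$ can and will have cross-leg components. Your $k\geq 2$ contradiction and your equal-leg-length argument for $k=1$ both silently exclude these, and neither is actually carried out. You flag this paragraph as the main difficulty, but the difficulty is not bookkeeping: once cross-terms are present, the claim that ``the two leaked contributions do not annihilate'' needs a genuine argument, and the uniqueness-of-propagation claims at pass-throughs are no longer automatic.

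The paper avoids analysing $h\in\End(\DD^\rKh)$ altogether. It selects a \emph{shortest} maximal directed chain in the cycle, regards it as an auxiliary linear type~D structure $Y$, and considers the inclusion $f\co Y\hookrightarrow\DD^\rKh$. Null-homotopy of $H\cdot\id_{\DD^\rKh}$ implies null-homotopy of $f_H\coloneqq(H\cdot\id)\circ f\in\Mor(Y,\DD^\rKh)$, and because the source is just a chain, the forced components of any null-homotopy of $f_H$ can be chased explicitly: they first walk one step forward along the chosen leg, then — thanks to the shortest-leg hypothesis, which guarantees enough room on the other side — are forced to walk along the opposite leg. After two passes the endpoints are pinned to coincide with the starting generators, so the cycle closes after exactly $2n$ vertices and the conclusion $\DD^\rKh=\DD^{\rKh_n}$ drops out with no separate $k\geq 2$ case. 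The local-system step is then exactly as you describe: carrying $X$ through the same chase forces $X=\id$.
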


\begin{proof}[Proof of Lemma~\ref{lem:cone_curves}] 
	Assume first that the local system on the curve is trivial. 
	Then we may represent the type D structure $\DD^\rKh$ as a graph with two-valent vertices $\DotB$ and edges whose labels alternate between $D$ and $S^2$. 
	In the following, we will use super- and subscripts to distinguish vertices of this graph. 
	Every edge 
	\(\!\!
	\begin{tikzcd}[column sep=12pt]
	\DotB_1 
	\arrow[r]
	& 
	\DotB_2
	\end{tikzcd}
	\!\!\)
	implies the quantum grading relation $q(\DotB_2)=q(\DotB_1)+2$. 
	Consequently, there has to be a pair of vertices that look like 
	\[
	\begin{tikzcd}[column sep=12pt]
	\phantom{}
	& 
	\DotB
	\arrow[l]
	\arrow[r]
	&
	\phantom{}
	\end{tikzcd}
	\quad
	\text{and}
	\quad
	\begin{tikzcd}[column sep=12pt]
	\phantom{}
	& 
	\DotB
	\arrow[leftarrow,l]
	\arrow[leftarrow,r]
	&
	\phantom{}
	\end{tikzcd}
	\] 
	Choose a shortest sequence of identically oriented consecutive arrows that connect two such vertices:
	\[
	\begin{tikzcd}[column sep=12pt, row sep =1.3cm]
	\DotB_0 
	\arrow[r]
	& 
	\DotB_1   
	& 
	\DotB_2  
	\arrow[l]
	& 
	\cdots  
	\arrow[l] 
	&  
	\DotB_{n} 
	\arrow[l]  
	&  
	\DotB_{n+1} 
	\arrow[l] 
	\arrow[r] 
	&  
	\DotB_{n+2}
	\end{tikzcd}
	\]
	Let us assume that the arrow 
	\begin{tikzcd}[column sep=12pt]
	\DotB_1 
	& 
	\DotB_2
	\arrow[l]
	\end{tikzcd}
	is labelled by \(S^2\). If it is labelled by \(D\), the argument is completely analogous.   
	By the minimality hypothesis, the generator $\DotB_{n+1}$ is followed by (at least) $n$ arrows pointing to the right: 
	\[
	\begin{tikzcd}[column sep=12pt, row sep =1.3cm]
	\DotB_0 
	\arrow[r]
	& 
	\DotB_1   
	& 
	\DotB_2  
	\arrow{l}[swap]{S^2}
	& 
	\cdots  
	\arrow[l] 
	&  
	\DotB_{n} 
	\arrow[l]  
	&  
	\DotB_{n+1} 
	\arrow[l] 
	\arrow[r] 
	&  
	\DotB_{n+2} 
	\arrow[r] 
	&  
	\cdots 
	\arrow[r] 
	& 
	\DotB_{2n}
	\arrow[r] 
	& 
	\DotB_{2n+1}
	\end{tikzcd}
	\]
	We claim that $\DotB_{2n}=\DotB_{0}$ and $\DotB_{2n+1}=\DotB_{1}$. 
	To show this, let $Y$ be the type D structure defined by the full subgraph consisting of the vertices \(\DotB_{i}\) for \(i=1,\dots,n\), which we relabel \(\DotB^{i}\):
	\[
	Y=
	\left[
	\begin{tikzcd}[column sep=12pt, row sep =1.3cm]
	\DotB^1   
	& 
	\DotB^2  
	\arrow{l}[swap]{S^2}
	& 
	\DotB^3 
	\arrow[l]   
	& 
	\cdots  
	\arrow[l] 
	&  
	\DotB^{n} 
	\arrow[l]
	\end{tikzcd}
	\right]
	\]
	There is an obvious inclusion map $f\co Y\hookrightarrow \DD^\rKh$ given by \(f(\DotB^i)=\DotB_i \otimes 1\). 
	$H\cdot \id_{\DD^\rKh}$ is null-homotopic, and hence so too is the morphism 
	$$
	f_H\coloneqq (H\cdot \id_{\DD^\rKh} \circ f)\in \Mor(Y,\DD^\rKh),
	\quad
	f_H (\DotB^i)=\DotB_i \otimes H
	$$
	Any null-homotopy for \(f_H\) necessarily contains components 
	\(\!\!
	\begin{tikzcd}[column sep=12pt]
	\DotB^i 
	\arrow[dashed]{r}{\id}
	& 
	\DotB_{i+1}
	\end{tikzcd}
	\!\!\)
	for \(i=1,\dots, n-1\):
	\[
	\begin{tikzcd}[column sep=20pt, row sep=25pt]
	&
	\DotB^1 
	\arrow[d,"H"]
	\arrow[rd, dashed]
	&
	\DotB^2 
	\arrow[d,"H"]
	\arrow[rd, dashed]  
	\arrow{l}[swap]{S^2}
	&
	\cdots  
	\arrow[l]
	\arrow[d,"H"]
	\arrow[rd, dashed ]
	&
	\DotB^{n} 
	\arrow[l] 
	\arrow[d,"H"]   
	\\
	\DotB_0 
	\arrow[r]
	&
	\DotB_1   
	&
	\DotB_2  
	\arrow{l}[swap]{S^2} 
	&
	\cdots  
	\arrow[l] 
	&
	\DotB_{n} 
	\arrow[l]  
	&
	\DotB_{n+1} 
	\arrow[l] 
	\arrow[r] 
	&
	\DotB_{n+2} 
	\arrow[r] 
	&
	\cdots 
	\arrow[r] 
	&
	\DotB_{2n}
	\arrow[r] 
	&
	\DotB_{2n+1}
	\end{tikzcd}
	\]
	If \(h_1\) denotes the morphism given by all dashed arrows above, the morphism $f_H+d_{\DD^r}\circ h_1 + h_1 \circ d_Y$ consists of the two solid vertical arrows below
	\[
	\begin{tikzcd}[column sep=20pt, row sep=25pt]
	&
	\DotB^1 
	\arrow[d,"D" swap] 
	\arrow[drrrrrrr, dashed,looseness=0.2, in=135,out=-45]    
	&
	\DotB^2  
	\arrow[drrrrr,dashed,looseness=0.2, in=135,out=-45]    
	\arrow{l}[swap]{S^2}
	&
	\cdots  
	\arrow[l] 
	\arrow[drrr, dashed,looseness=0.2, in=135,out=-45] 
	&
	\DotB^{n} 
	\arrow[l] 
	\arrow[d,"D\text{ or }S^2" left,pos=0.7] 
	\arrow[dr, dashed,looseness=0.2, in=135,out=-45] 
	\\
	\DotB_0 
	\arrow[r]
	&
	\DotB_1   
	&
	\DotB_2  
	\arrow{l}[swap]{S^2}  
	&
	\cdots  
	\arrow[l] 
	&
	\DotB_{n} 
	\arrow[l]  
	&
	\DotB_{n+1} 
	\arrow[l] 
	\arrow[r] 
	&
	\DotB_{n+2} 
	\arrow[r] 
	&
	\cdots 
	\arrow[r] 
	&
	\DotB_{2n} 
	\arrow[r] 
	&
	\DotB_{2n+1}
	\end{tikzcd}
	\]
	In order to eliminate the component
	\(\!\! 
	\begin{tikzcd}[column sep=25pt]
	\DotB^n
	\arrow[r,"D\text{ or }S^2"]
	& 
	\DotB_{n}
	\end{tikzcd}
	\!\!\)
	(whose label depends on the parity of~\(n\)), 
	the null-homotopy also needs to include the components 
	\(\!\!
	\begin{tikzcd}[column sep=12pt]
	\DotB^i 
	\arrow[dashed]{r}{\id}
	& 
	\DotB_{2n+1-i}
	\end{tikzcd}
	\!\!\)
	for \(i=1,\dots, n\). Let $h_2$ denote the sum of all these components with \(h_1\). The morphism $f_H+d_{\DD^r}\circ h_2 + h_2 \circ d_Y$ consist of two arrows below:
	$$
	\begin{tikzcd}[column sep=20pt, row sep=25pt]
	&
	\DotB^1 
	\arrow[d,"D" left] 
	\arrow[drrrrrrrr,"D" above,looseness=0.2, in=135,out=-45]    
	&
	\DotB^2    
	\arrow{l}[swap]{S^2}
	&
	\cdots  
	\arrow[l]  
	&
	\DotB^{n} 
	\arrow[l]   
	\\
	\DotB_0 
	\arrow[r]
	&
	\DotB_1   
	&
	\DotB_2  
	\arrow{l}[swap]{S^2}
	&
	\cdots  
	\arrow[l] 
	&
	\DotB_{n} 
	\arrow[l]  
	&
	\DotB_{n+1} 
	\arrow[l] 
	\arrow[r] 
	&
	\DotB_{n+2} 
	\arrow[r] 
	&
	\cdots 
	\arrow[r] 
	&
	\DotB_{2n} 
	\arrow{r}{D} 
	&
	\DotB_{2n+1}
	\end{tikzcd}
	$$ 
	The arrow 
	\(\!\!
	\begin{tikzcd}[column sep=12pt]
	\DotB^1 
	\arrow{r}{D}
	& 
	\DotB_{2n+1}
	\end{tikzcd}
	\!\!\)
	cannot be homotoped away, since the label to the right of $\DotB_{2n+1}$ is labelled by $S^2$. 
	Thus the arrow 
	\(\!\!
	\begin{tikzcd}[column sep=12pt]
	\DotB^1 
	\arrow{r}{D}
	& 
	\DotB_{2n+1}
	\end{tikzcd}
	\!\!\)
	should be cancelled by the other arrow 
	\(\!\!
	\begin{tikzcd}[column sep=12pt]
	\DotB^1 
	\arrow{r}{D}
	& 
	\DotB_{1}
	\end{tikzcd}
	\!\!\).
	In other words, \(h_2\) is a null-homotopy for \(f_H\) and \(\DotB_0=\DotB_{2n}\) and \(\DotB_1=\DotB_{2n+1}\). 
	Hence $\DD^\rKh=\DD^{\rKh_{n}}$. 
	
	The above argument can be easily adapted to curves with non-trivial local systems. 
	Given a local system $X \in \GL_m(\F)$, replace each vertex \(\DotB_i\) of the graph representing \(\DD^\rKh\) by \(\F^m\otimes\DotB_i\) and each arrow labelled \(a\in\{D,S^2\}\) by $\id_{\F^m}\otimes\,a$, except for the arrow 
	\(\!\!
	\begin{tikzcd}[column sep=15pt]
	\DotB_0
	\arrow{r}{}
	& 
	\DotB_1
	\end{tikzcd}
	\!\!\)
	which we replace by
	\[
	\begin{tikzcd}[column sep=30pt]
	\F^m \otimes \DotB_0
	\arrow{r}{X\otimes\,a}
	& 
	\F^m \otimes \DotB_1
	\end{tikzcd}
	\]
	With these changes, the above proof goes through: 
	In the end, two arrows 
	\[
	\begin{tikzcd}[column sep=30pt]
	\F^m \otimes \DotB^1
	\arrow{r}{\id_{\F^m}\otimes D}
	& 
	\F^m \otimes \DotB_{1}
	\end{tikzcd}
	\quad
	\text{and}
	\quad
	\begin{tikzcd}[column sep=30pt]
	\F^m \otimes \DotB^1
	\arrow{r}{X\otimes D}
	& 
	\F^m \otimes \DotB_{2n+1}
	\end{tikzcd}
	\]
	remain that need agree, so $X=\id_{\F^m}$.
\end{proof}

Every rational curve $\rKh_d(\tfrac p q)$ twines around non-special punctures in the covering space \(\PuncturedPlane\) of~$\FourPuncturedSphereKh$ (see Figure~\ref{fig:geography}).
These punctures project to two punctures in $\FourPuncturedSphereKh$; 
we say that the curve $\rKh_d(\tfrac p q)$ \emph{is based} on these two punctures. 
For example, the component $\rKh_1(\tfrac{1}{2})$ from Figure~\ref{fig:Kh:example:Curve:Downstairs} is based on $\rightPunctures$. 
In fact, it is easy to see that for any \(\tfrac{p}{q}\in\QPI\), 
\begin{align*}
\rKh_d(\tfrac p q)
\text{ is based on }
\bottomPunctures
&
\iff
\text{$p$ is even, $q$ is odd.}
\\
\rKh_d(\tfrac p q)
\text{ is based on }
\diagonalPunctures
&
\iff
\text{\(p\) and \(q\) are both odd;}
\\
\rKh_d(\tfrac p q)
\text{ is based on }
\rightPunctures
&
\iff
\text{$p$ is odd, $q$ is even;}
\end{align*}
We now prove that odd-length rational curves detect the connectivity of a tangle. 
\begin{theorem}\label{thm:connectivity_detection}
Suppose \(\Khr(T)\) contains a rational component \(\rKh_{d}(\tfrac p q)\) for some odd integer $d$. 
Then \(\rKh_{d}(\tfrac p q)\) is based on ends that are connected by the tangle \(T\). 
More explicitly,
\begin{align}
\tag{Case~1}\label{thm:connectivity_detection:case:i}
\rKh_d(\tfrac p q)
\text{ is based on }
\bottomPunctures
&
\iff
\text{$T$ has connectivity $\Lo$.}
\\
\tag{Case~2}\label{thm:connectivity_detection:case:ii}
\rKh_d(\tfrac p q)
\text{ is based on }
\diagonalPunctures
&
\iff
\text{$T$ has connectivity \(\ConnectivityX\);}
\\
\tag{Case~3}\label{thm:connectivity_detection:case:iii}
\rKh_d(\tfrac p q)
\text{ is based on }
\rightPunctures
&
\iff
\text{$T$ has connectivity $\Li$;}
\end{align}
\end{theorem}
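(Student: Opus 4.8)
The plan is to detect the connectivity of $T$ by closing it up with rational tangles and applying the gluing theorem (Theorem~\ref{thm:GlueingTheorem:Kh}). As a first step I would reduce to \eqref{thm:connectivity_detection:case:i}: the three non-special punctures of $\FourPuncturedSphereKh$ are permuted by $\Mod(\FourPuncturedSphereKh)$, and this permutation is compatible with the corresponding relabelling of the connectivity types $\Lo$, $\Li$, and $\ConnectivityX$, so by the naturality of $\Khr$ under the mapping class group (Theorem~\ref{thm:Kh:Twisting}) it suffices to prove that if $\Khr(T)$ contains a rational component based on $\bottomPunctures$ — which, after one further twist, may be taken to be $\rKh_{d}(0)$ with $d$ odd — then the non-special strand of $T$ joins the two $\bottomPunctures$-ends. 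I would also reduce to the case that $T$ has no closed components, since each closed component alters $\Khr(T)$, $\BNr(T)$, and the reduced Khovanov homology of any closure of $T$ only by tensoring with the two-dimensional space $V$, and hence leaves the underlying multicurves, and in particular the lengths and basings of their components, unchanged.

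Given this, I would glue $T$ to the rational tangle $Q$ whose non-special strand joins the $\bottomPunctures$-ends and set $L=Q\cup T$. On the topological side, $|L|=2$ if the non-special strand of $T$ also joins the $\bottomPunctures$-ends and $|L|=1$ otherwise; equivalently, $\Khr(L)$ is supported in even quantum gradings in the second case and in odd quantum gradings in the first, since reduced Khovanov homology of an $n$-component link lives in quantum gradings $\equiv n-1 \bmod 2$. On the algebraic side, the gluing theorem gives $\Khr(L)\otimes V\cong\HF(\Khr(Q^{\ast}),\Khr(T))$, where $\Khr(Q^{\ast})$ is a figure-eight curve of the appropriate slope, and also $\Khr(L)\cong\HF(\Khr(Q^{\ast}),\BNr(T))$, where the (single, since $|T|=0$) non-compact arc of $\BNr(T)$ records the connectivity of $T$ directly. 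The statement to be proved then becomes a comparison: the contribution of the component $\rKh_{d}(0)$ to $\HF(\Khr(Q^{\ast}),\Khr(T))$ must be consistent, in dimension and in relative quantum grading, with the value of $|L|$ dictated by the remaining components of $\Khr(T)$ together with the arc of $\BNr(T)$.

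The crux is therefore an explicit computation, using the classification of rational curves from Lemma~\ref{lem:cone_curves}, of the Lagrangian Floer homology $\HF(\Khr(Q^{\ast}),\gamma)$ — its dimension and its bigrading — for $\gamma$ a rational curve $\rKh_{n}(\tfrac pq)$ of arbitrary length, slope and basing, a special curve $\sKh_{2n}(\tfrac pq)$, or the non-compact arc of $\BNr(T)$. This is carried out by placing the curves in minimal position on $\FourPuncturedSphereKh$, equivalently by counting intersections of singular peg-board representatives in $\PuncturedPlane$ while keeping track of the bigrading on intersection points; the outcome one expects is that an odd-length rational curve based on $\bottomPunctures$ pairs with $\Khr(Q^{\ast})$ in the quantum-grading parity forced by $|L|=2$, so that the mere presence of such a curve in $\Khr(T)$ is incompatible with $|L|=1$ and hence forces the non-special strand of $T$ to join the $\bottomPunctures$-ends. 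I expect this combinatorial–geometric computation to be the main obstacle: it is elementary but must be done with care, both to rule out cancellation in the Floer differential so that the dimension count is exact and to pin down the relative quantum gradings across the three possible basings; a priori non-trivial local systems cause no real difficulty, since Theorems~\ref{thm:geography:special_curves} and~\ref{thm:geography:rational_curves} show that all local systems occurring are in fact trivial.
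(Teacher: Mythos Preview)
Your approach is genuinely different from the paper's, and I think there is a real gap.

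The paper's proof does not pass through the gluing theorem at all. Instead it factors the statement into two algebraic halves. One half (Proposition~\ref{prop:connectivity:to_action}) uses the Basepoint Moving Lemma in Bar-Natan's cobordism category: sliding a dot across a crossing replaces the dot-endomorphism $D_p$ by $H-D_p$, so if the basepoint $p$ sits on the strand joined to the special end $\ast$, the parity of the number of crossings along that strand --- i.e.\ the connectivity of $T$ --- decides which of the three central morphisms $D_\bullet\!\cdot\!\id$, $D_\circ\!\cdot\!\id$, $S^2\!\cdot\!\id$ is null-homotopic on $\DD_1(T)$. The other half (Proposition~\ref{prop:curve_based_detection}) checks, by direct calculation with the explicit complexes $\DD^{\rKh_d}$, that for an odd-length rational curve exactly one of these three morphisms is null-homotopic, and which one determines the basing. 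The mapping class group acts compatibly on the three central morphisms (Lemma~\ref{lem:mcgaction:morphisms}) and on the three basings, so it suffices to verify everything at slope~$0$.

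Your parity strategy runs into the following problem. The quantum-grading parity of the summand $\HF(\Khr(Q^\ast),\gamma)$ is not an invariant of the underlying curve $\gamma$; it depends on the absolute bigrading carried by $\gamma$ as a component of $\Khr(T)$, and that bigrading is part of the data of the invariant. What the paper does record (and you implicitly use) is that generators in the same idempotent share a $q$-parity --- but that parity is fixed once and for all by $T$, and is the \emph{same} for every component of $\Khr(T)$. So every summand $\HF(\Khr(Q^\ast),\gamma)$ lands in the same $q$-parity, namely $|L|\bmod 2$, and the presence of a particular $\gamma$ gives no new constraint on $|L|$. If instead you try to argue via $\dim\Khr(L)\bmod 2$ (odd iff $L$ is a knot), you find that pairing with the arc $\BNr(Q)$ only controls the \emph{total} parity of the count of odd-length rationals not based on the matching ends; it does not force that count to be zero. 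Concretely, the dimension argument is consistent with $\Khr(T)$ containing two odd-length rationals based on two \emph{different} pairs of ends, which is exactly what the theorem rules out. To exclude this you need a sharper tool that sees each component individually --- and that is precisely the role the null-homotopy of the central morphisms $D_\bullet\!\cdot\!\id$, $D_\circ\!\cdot\!\id$, $S^2\!\cdot\!\id$ plays in the paper's proof.
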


An analogous result holds for the invariant \(\HFT\).
Interestingly, the proof of this fact for \(\HFT\) follows from a simple observation about the Alexander grading on \(\HFT\) \cite[Observation~6.1]{pqMod}. 
In contrast, the proof of Theorem~\ref{thm:connectivity_detection} will rely on the basepoint action on Khovanov homology, see Lemma~\ref{lem:Basepoint_Moving_Lemma} below. 
The only difference between the connectivity detection results for \(\HFT\) and \(\Khr\) is that all rational components of \(\HFT\) have the same length and that they may carry non-trivial local systems. 
Note, however, that at the time of writing, no tangle is known whose invariant \(\Khr\) contains a rational component of length \(\geq 3\) or whose invariant \(\HFT\) contains a rational component with non-trivial local system. 

The statement and proof of the following corollary of connectivity detection is analogous to \cite[Proposition~3.13]{LMZ}. 

\begin{corollary}\label{cor:odd_number_of_odd_length_rational_curves}
For any tangle \(T\) without closed components, the total number of odd-length rational components of \(\Khr(T)\) of the form \(\rKh_{2n+1}(\tfrac p q)\), where \(n\) and \(\tfrac{p}{q}\) may vary, is odd.
\end{corollary}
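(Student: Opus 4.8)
The plan is to glue $T$ with a carefully chosen pointed rational tangle $Q$ and to evaluate $\dim_{\F}\Khr(T\cup Q)$ modulo $2$ in two different ways. Since $T$ has no closed components, it consists of two arcs, one of which contains the special end $\ast$; let $\{p,p'\}$ be the pair of non-special punctures of $\FourPuncturedSphereKh$ joined by the other arc, and let $p''$ be the remaining non-special puncture. By Theorem~\ref{thm:connectivity_detection}, every odd-length rational component of $\Khr(T)$ is based on $\{p,p'\}$. A routine analysis of the connectivities of rational tangles shows that one may choose $Q$ so that (i) $L\coloneqq T\cup Q$ is a knot, and (ii) the embedded arc $\BNr(Q)$ joins one puncture of $\{p,p'\}$ to $p''$, so that $\BNr(Q)$ meets a representative of the class $[p]+[p']\in H_1(\FourPuncturedSphereKh;\F)$ in an odd number of points. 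With this choice, the gluing theorem (Theorem~\ref{thm:GlueingTheorem:Kh}) gives $\Khr(L)\cong\HF\bigl(\Khr(T^*),\BNr(Q)\bigr)$ as $\F$-vector spaces.

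First I would reduce the right-hand side to a mod-$2$ intersection count. By Theorem~\ref{thm:further_geography_of_Khr} all local systems occurring in $\Khr(T)$ are trivial, so the Floer complex computing $\HF(\gamma,\BNr(Q))$ for a component $\gamma$ has $\F$-basis the intersection points of transverse representatives of $\gamma$ and $\BNr(Q)$; since passing to homology changes the dimension by an even number, $\dim_{\F}\HF(\gamma,\BNr(Q))\equiv\#\bigl(\gamma\cap\BNr(Q)\bigr)$, and this count is in turn congruent mod $2$ to the algebraic intersection number $[\gamma]\cdot[\BNr(Q)]$. Summing over components, using additivity of $\HF$ and $\Khr(T^*)=\mirror(\Khr(T))$ (Lemma~\ref{lem:mirroring:Khr}), and noting that the involution $\mirror$ fixes all four punctures — hence acts trivially on $H_1(\FourPuncturedSphereKh;\F)$ and preserves the type, length and base punctures of every component — I obtain
\[
\dim_{\F}\Khr(L)\ \equiv\ \sum_{c}\,[c]\cdot[\BNr(Q)]\pmod 2 ,
\]
the sum running over the components $c$ of $\Khr(T)$.

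Next I would compute the mod-$2$ homology classes of the model curves from their explicit descriptions in Figure~\ref{fig:geography}: the class of $\rKh_n(\tfrac pq)$ is the nonzero class supported on its two base punctures when $n$ is odd and is zero when $n$ is even, while $\sKh_{2n}(\tfrac pq)$ is always null-homologous. Consequently $[c]\cdot[\BNr(Q)]=0$ for every special component and every even-length rational component of $\Khr(T)$, whereas for an odd-length rational component $c$ — which by Theorem~\ref{thm:connectivity_detection} is based on $\{p,p'\}$, so $[c]=[p]+[p']$ — one has $[c]\cdot[\BNr(Q)]=1$ by the choice of $Q$. Hence $\dim_{\F}\Khr(L)$ is congruent mod $2$ to the number of odd-length rational components of $\Khr(T)$. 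On the other hand $L$ is a knot, so $\dim_{\F}\Khr(L)$ is odd, since its graded Euler characteristic is the reduced Jones polynomial of $L$, whose value at $q=1$ is $\pm1$. Combining the two computations proves the corollary.

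The main obstacle is the homology-class computation of the third paragraph — verifying that even-length rational model curves and all special model curves are null-homologous mod $2$, while odd-length rational curves carry the class of their two base punctures. This comes down to a careful bookkeeping of the peg-wrapping behaviour of these curves in the covering space $\PuncturedPlane$ (cf.~Figure~\ref{fig:geography}); everything else reduces to the gluing theorem together with standard facts about Lagrangian Floer homology over $\F$ and about reduced Khovanov homology of knots.
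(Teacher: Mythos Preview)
Your proof is correct and follows essentially the same strategy as the paper's: both glue $T$ to a rational tangle so that the result is a knot, invoke the gluing theorem, and combine the geography and connectivity-detection theorems with the fact that $\dim_{\F}\Khr$ of a knot is odd. The paper normalises via the mapping class group to connectivity $\Li$ and pairs with $\BNr(\Lo)$, asserting the parity of each $\HF(\BNr(\Lo),c)$ without further comment, whereas you keep a general $Q$ and justify this parity via mod-$2$ homology classes—so the ``main obstacle'' you flag is precisely what the paper's ``This implies'' leaves implicit.
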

\begin{proof}[Proof of Corollary~\ref{cor:odd_number_of_odd_length_rational_curves}]
By naturality of \(\Khr\) under the action of the mapping class group, we may assume without loss of generality that the connectivity of $T$ is $\Li$. 
Suppose for contradiction that $\Khr(T)$ has an even number of curves of the form $\rKh_{2n+1}(\tfrac p q)$. 
Theorem~\ref{thm:connectivity_detection} implies that these odd-length rational components are based on the same two ends.
Furthermore, by Theorem~\ref{thm:further_geography_of_Khr}, the remaining components of $\Khr(T)$ are of the form $\sKh_{2n}(\tfrac p q)$ or $\rKh_{2n}(\tfrac p q)$.
This implies that
$$
\HF(\BNr(\Lo),\Khr(T))\cong \Khr( \Lo\cup T)
$$
is even-dimensional, contradicting the fact that reduced Khovanov homology of a knot is odd-dimensional. The latter follows from the identity $V_K(1)=1$.
\end{proof}

\pagebreak[1]
The remainder of this section is devoted to the proof of Theorem~\ref{thm:connectivity_detection}, which we divide into two steps:
\begin{proposition}\label{prop:curve_based_detection}
Suppose \(\DD^{\rKh}\) is the type D structure associated with the curve \(\rKh_{d}(\tfrac p q)\) for some odd integer \(d\). Then: 
\begin{align}
\tag{Case~1}\label{prop:curve_based_detection:case:i}
\rKh_d(\tfrac p q)
\text{ is based on }
\bottomPunctures
&
\iff
D_{\circ}\cdot \id_{\DD^{\rKh}} 
\simeq 
0
\text{ and }
D_{\bullet} \cdot \id_{\DD^{\rKh}} 
\simeq 
S^2 \cdot  \id_{\DD^{\rKh}}
\not\simeq
0
\\
\tag{Case~2}\label{prop:curve_based_detection:case:ii}
\rKh_d(\tfrac p q)
\text{ is based on }
\diagonalPunctures
&
\iff
S^2 \cdot  \id_{\DD^{\rKh}} \simeq 0
\text{ and }
D_{\bullet}\cdot \id_{\DD^{\rKh}} 
\simeq 
D_{\circ} \cdot \id_{\DD^{\rKh}}
\not\simeq
0;
\\
\tag{Case~3}\label{prop:curve_based_detection:case:iii}
\rKh_d(\tfrac p q)
\text{ is based on }
\rightPunctures
&
\iff
D_{\bullet}\cdot \id_{\DD^{\rKh}} 
\simeq 
0
\text{ and }
S^2 \cdot \id_{\DD^{\rKh}} 
\simeq 
D_{\circ} \cdot  \id_{\DD^{\rKh}}
\not\simeq
0; 
\end{align}
\end{proposition}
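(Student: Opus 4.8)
The plan is to work directly with the explicit type~D structure $\DD^{\rKh}$ for $\rKh_d(\tfrac pq)$. By naturality under the mapping class group action (Theorem~\ref{thm:Kh:Twisting}), the three cases are permuted by the $PSL(2,\Z)$-action, and since that action permutes the three non-special punctures transitively while permuting $D_\bullet$, $D_\circ$, $S^2$ accordingly, it suffices to treat one case — say \ref{prop:curve_based_detection:case:iii}, where we normalize the slope to $0$ so that $\rKh_d(0)$ is based on $\rightPunctures$. For this slope the curve can be homotoped off the horizontal arc labelled $\DotC$ (as in the proof of Theorem~\ref{thm:geography:rational_curves}), so $\DD^{\rKh}$ consists only of generators in idempotent $\iota_\bullet$, with differential built from $D$ and $S^2$ alternately; concretely it is one copy of $\DD^{\rKh_n}$ from Lemma~\ref{lem:cone_curves} (with $d = n$ here, and $d$ odd).

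The computation then reduces to evaluating the three endomorphisms $D_\bullet\cdot\id$, $D_\circ\cdot\id$, $S^2\cdot\id$ on this explicit complex. First, $D_\circ\cdot\id_{\DD^{\rKh}} = 0$ on the nose, since every generator lies in idempotent $\iota_\bullet$ and $D_\circ\cdot\id$ kills such generators by the formulas in~\eqref{eq:morphisms}. Next I would show that $D_\bullet\cdot\id \simeq 0$: the map sending each generator $x$ to $x\otimes D_\bullet$ should be exhibited as a null-homotopy-boundary by finding a degree-$(q^{-2}\delta^{-1})$-shifted homotopy $h$ whose components are identity arrows along the $D$-labelled edges of the graph; because $d$ is odd the graph $\DD^{\rKh_n}$ has a $D$-arrow in the "return" position (the arrow $\DotB_{2n}\to\DotB_{2n+1}$ in the notation of Lemma~\ref{lem:cone_curves}'s proof is labelled $D$ precisely when $n$ is odd), so the zigzags close up and $D_\bullet\cdot\id$ becomes $d\circ h + h\circ d$. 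The same $h$, or a companion homotopy along the $S^2$-edges, then identifies $S^2\cdot\id$ with $D_\circ\cdot\id + D_\bullet\cdot\id = D_\circ\cdot\id$ up to homotopy (using $H = D + S^2$ and that $H\cdot\id$ is null-homotopic by Lemma~\ref{lem:H_is_nullhomotopic_on_Khr}, combined with $D_\circ\cdot\id + D_\bullet\cdot\id \simeq H\cdot\id + (\text{correction})$ on the $\iota_\bullet$-part). Finally one must check $S^2\cdot\id \not\simeq 0$, i.e.\ that $D_\circ\cdot\id\not\simeq0$: this is the non-degeneracy statement, and it is where the parity of $d$ is essential — for $d$ even, a homotopy along the alternating edges would exist and the map would vanish. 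I would verify non-triviality by pairing $\DD^{\rKh}$ with the curves $\BNr(\Lo)$ and $\BNr(\Li)$ (Theorem~\ref{thm:GlueingTheorem:Kh}) and tracking the basepoint action, or more directly by computing $\Mor(\DD^{\rKh},\DD^{\rKh})$ through its grading and showing the class of $S^2\cdot\id$ is a nonzero element of the relevant graded piece.

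The forward direction of each biconditional (curve based on such-and-such punctures $\Rightarrow$ the stated homotopy relations) follows from the normalization argument above once the model $\DD^{\rKh_n}$ is fixed. For the reverse direction, I would use Theorem~\ref{thm:further_geography_of_Khr} together with Theorem~\ref{thm:geography:rational_curves}: any rational component of odd length is $\rKh_d(\tfrac pq)$ for some $\tfrac pq$, and the basing on one of the three puncture-pairs is determined by the parities of $p$ and $q$ as recalled just before Theorem~\ref{thm:connectivity_detection}; the three sets of homotopy relations in the proposition are mutually exclusive (each picks out a different one of $D_\bullet, D_\circ, S^2$ as the surviving non-trivial one), so the relations pin down the parity class, hence the basing.

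The main obstacle I anticipate is the non-vanishing claim $S^2\cdot\id_{\DD^{\rKh}}\not\simeq0$ (equivalently $D_\circ\cdot\id\not\simeq 0$): the null-homotopy-construction half is a routine if slightly fiddly zigzag bookkeeping, but ruling out a null-homotopy requires a genuine invariant. I expect the cleanest route is to compute the homology of $(\Mor(\DD^{\rKh},\DD^{\rKh}), \partial)$ in the bidegree of $S^2\cdot\id$ — using that $\DD^{\rKh}$ is a "zigzag" complex over $\F[D,S^2]/(DS^2)$ whose morphism complex is tractable — and to identify the cycle $S^2\cdot\id$ with a generator there; the parity of $d$ will enter exactly in showing this bidegree is one-dimensional rather than zero-dimensional. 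Alternatively, one invokes the basepoint action machinery of Lemma~\ref{lem:Basepoint_Moving_Lemma} (referenced in the surrounding text), which is presumably the intended tool and reduces non-triviality to a statement about the action of $D_\circ$ on reduced Khovanov homology of a closed-up link.
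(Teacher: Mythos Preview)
Your plan has a concrete error in the case identification that propagates through the rest of the argument. For slope $0$ one has $p=0$ even and $q=1$ odd, so by the parity classification stated just before Theorem~\ref{thm:connectivity_detection}, the curve $\rKh_d(0)$ is based on $\bottomPunctures$, which is \ref{prop:curve_based_detection:case:i}, not \ref{prop:curve_based_detection:case:iii}. Consequently, what you must establish for $\DD^{\rKh_d}$ at slope $0$ is that $D_\circ\cdot\id\simeq 0$, that $D_\bullet\cdot\id\simeq S^2\cdot\id$, and that $D_\bullet\cdot\id\not\simeq 0$. Your first observation ($D_\circ\cdot\id=0$ on the nose because there are no $\iota_\circ$-generators) is correct, and the equivalence $D_\bullet\cdot\id\simeq S^2\cdot\id$ does follow from $H\cdot\id\simeq 0$ since on a complex with only $\iota_\bullet$-generators one has $H\cdot\id=(D_\bullet+S^2)\cdot\id$. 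But your attempt to construct a null-homotopy for $D_\bullet\cdot\id$ is aimed at a false statement: $D_\bullet\cdot\id$ is \emph{not} null-homotopic here, and the zigzag you sketch cannot close up. Likewise, your ``non-vanishing'' target $D_\circ\cdot\id\not\simeq 0$ is plainly impossible since $D_\circ\cdot\id=0$.

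Once the cases are corrected, the genuine content is the non-vanishing $D_\bullet\cdot\id_{\DD^{\rKh_d}}\not\simeq 0$. The paper handles this by the same device as in Lemma~\ref{lem:cone_curves}: one introduces the auxiliary ``half-chain'' $Y=[\DotB^1\leftarrow\cdots\leftarrow\DotB^d]$ with the inclusion $f\co Y\hookrightarrow\DD^{\rKh_d}$, and shows that any null-homotopy for $f_D\coloneqq(D_\bullet\cdot\id)\circ f$ would be forced to contain components $\DotB^i\dashrightarrow\DotB_{2d-i+1}$ which, when $d$ is odd, leave behind an uncancellable term $\DotB^d\xrightarrow{S^2}\DotB_d$. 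This direct obstruction argument is cleaner than the routes you propose (computing the graded piece of $H_*\Mor(\DD^{\rKh},\DD^{\rKh})$ or invoking pairings), and it makes the role of the parity of $d$ transparent. Finally, the mapping-class-group reduction you invoke is correct in spirit but not a triviality: the statement that the bimodules $(\tau_i^{\pm1})$ permute the homotopy classes of $D_\bullet\cdot\id$, $D_\circ\cdot\id$, $S^2\cdot\id$ according to the action on the three non-special punctures is the content of Lemma~\ref{lem:mcgaction:morphisms}, and requires an explicit computation with the bimodule of Figure~\ref{fig:twisting:bimodule}.
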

\begin{proposition}\label{prop:connectivity:to_action}
	For any type~D structure \(\DD^{\rKh}\) corresponding to some component of \(\Khr(T)\),
	\begin{align}
	\tag{Case~1}\label{prop:connectivity:to_action:case:i}
	\text{$T$ has connectivity $\Lo$}
	&
	\implies
	D_{\circ}\cdot \id_{\DD^{\rKh}} 
	\simeq 
	0;
	\\
	\tag{Case~2}\label{prop:connectivity:to_action:case:ii}
	\text{$T$ has connectivity $\ConnectivityX$}
	&
	\implies
	S^2 \cdot  \id_{\DD^{\rKh}} \simeq 0;
	\\
	\tag{Case~3}\label{prop:connectivity:to_action:case:iii}
	\text{$T$ has connectivity $\Li$}
	&
	\implies
	D_{\bullet}\cdot \id_{\DD^{\rKh}} 
	\simeq 
	0.
	\end{align}
\end{proposition}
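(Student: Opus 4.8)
The plan is to realize each of the morphisms $D_\bullet\cdot\id$, $D_\circ\cdot\id$ and $(D_\bullet+D_\circ)\cdot\id$ as a \emph{basepoint action} on the Bar-Natan tangle complex $\KhTb{T}\simeq\DD(T)^\B$, and then to drag the basepoint along the appropriate strand of $T$ until it reaches the special end $\ast$, where the reduced relation $\planedotstar=0$ forces it to vanish.

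\emph{Reductions.} By Lemma~\ref{lem:morphisms_are_preserved} the morphisms $z\cdot\id$ for $z\in\{D_\bullet,D_\circ,S^2,H\}$ are preserved under homotopy equivalences of type~D structures and are compatible with direct sums, and (exactly as in the proof of Lemma~\ref{lem:H_is_nullhomotopic_on_Khr}) a null-homotopy on a direct sum may be replaced by a block-diagonal one; so it suffices to prove the three implications with $\DD^{\rKh}$ replaced by $\DD_1(T)^\B=[\DD(T)\xrightarrow{H\cdot\id}\DD(T)]$. Since $z$ is central, $z\cdot\id$ commutes with every type~D morphism, so a null-homotopy $F$ of $z\cdot\id_{\DD(T)}$ produces the null-homotopy $F\oplus F$ of $z\cdot\id_{\DD_1(T)}$ (the terms involving $H\cdot\id$ cancel over $\F$). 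Hence \eqref{prop:connectivity:to_action:case:i} and \eqref{prop:connectivity:to_action:case:iii} reduce to $D_\circ\cdot\id_{\DD(T)}\simeq0$ and $D_\bullet\cdot\id_{\DD(T)}\simeq0$, and, using $H=D_\bullet+D_\circ+S^2$ together with $H\cdot\id_{\DD_1(T)}\simeq0$ from Lemma~\ref{lem:H_is_nullhomotopic_on_Khr}, \eqref{prop:connectivity:to_action:case:ii} reduces to $(D_\bullet+D_\circ)\cdot\id_{\DD(T)}\simeq0$.

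\emph{The basepoint action.} Fix a diagram $\Diag_T$ with its four ends at the corners and $\ast$ at the top-left, as in Assumptions~\ref{assums:tangle_diagram}. For a point $p$ in the interior of an edge of $\Diag_T$, let $x_p$ be the chain endomorphism of $\KhTb{\Diag_T}$ that adds a dot to the component through $p$ in every resolution. It depends only on the edge of $p$; by the basepoint moving Lemma~\ref{lem:Basepoint_Moving_Lemma}, $x_p\simeq x_{p'}$ when $p,p'$ lie on edges meeting at a crossing; and $x_p=0$ when the edge of $p$ is incident to $\ast$, by $\planedotstar=0$. Consequently $x_p\simeq0$ whenever $p$ lies on the strand of $T$ emanating from $\ast$. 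On the other hand, a short case check---passing through delooping and the identification $\B\cong\End_{\Cobb}(\Li\oplus\Lo)$, and using that in a crossingless resolution the open arc through a given non-special end $e$ is the special arc precisely when that resolution's connectivity pattern pairs $e$ with $\ast$---shows that, for $p$ close to the boundary, $x_p=D_\bullet\cdot\id$ if $p$ is on the non-special end that $\Li$ joins to $\ast$, $x_p=D_\circ\cdot\id$ if $p$ is on the non-special end that $\Lo$ joins to $\ast$, and $x_p=(D_\bullet+D_\circ)\cdot\id$ if $p$ is on the remaining, ``diagonal'' end.

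\emph{Conclusion, and the hard part.} If $T$ has connectivity $\Lo$, the strand of $T$ through $\ast$ ends at the non-special end that $\Lo$ joins to $\ast$; placing $p$ there gives $D_\circ\cdot\id_{\DD(T)}=x_p\simeq0$, proving \eqref{prop:connectivity:to_action:case:i}. If $T$ has connectivity $\Li$, the same argument at the end joined to $\ast$ by $\Li$ gives $D_\bullet\cdot\id_{\DD(T)}\simeq0$, proving \eqref{prop:connectivity:to_action:case:iii}. If $T$ has connectivity $\ConnectivityX$, the strand through $\ast$ ends at the diagonal end, so $(D_\bullet+D_\circ)\cdot\id_{\DD(T)}\simeq0$, and combined with $H\cdot\id_{\DD_1(T)}\simeq0$ this gives $S^2\cdot\id_{\DD_1(T)}\simeq0$, proving \eqref{prop:connectivity:to_action:case:ii}. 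I expect the part requiring genuine work to be the basepoint moving Lemma~\ref{lem:Basepoint_Moving_Lemma}---producing the local homotopy at a crossing inside $\Cobb$ over $\F$ and checking it globalizes---together with the bookkeeping in the identification of $x_p$, where one must track which open arc of a resolution is special, and hence the resulting transposition between the pair $\bullet,\circ$ and the two non-special ends. This parallels, with the basepoint action playing the role of the Alexander grading, the connectivity-detection argument for $\HFT$ in~\cite{pqMod}.
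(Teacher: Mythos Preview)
Your overall strategy matches the paper's: realize the three central morphisms as basepoint actions on $\KhTb{T}$, then slide the basepoint to $\ast$ and use the reduced relation $\planedotstar=0$. Your identifications of $x_p$ near the three non-special ends with $D_\circ\cdot\id$, $D_\bullet\cdot\id$, and $(D_\bullet+D_\circ)\cdot\id$ are correct and agree with the paper.

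However, you misstate the Basepoint Moving Lemma. The lemma you cite (Lemma~\ref{lem:Basepoint_Moving_Lemma}) does \emph{not} say $x_p\simeq x_{p'}$ across a crossing; it says
\[
D_p\cdot\id_{\KhTb{\Diag_T}}\;\simeq\;(H-D_{p'})\cdot\id_{\KhTb{\Diag_T}}.
\]
Iterating along a strand through $n$ crossings therefore gives $D_p\simeq D_{p'}$ only when $n$ is even; for $n$ odd one obtains $D_p\simeq H-D_{p'}$ (over $\F$). The parity of $n$ is determined by the connectivity: two properly immersed arcs in the disk cross an odd number of times precisely when their endpoints interleave on the boundary. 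Hence $n$ is even for connectivities $\Lo$ and $\Li$ (Cases~1 and~3), but \emph{odd} for $\ConnectivityX$ (Case~2). Your Cases~1 and~3 happen to survive because the parity is even, but this is an essential point you omit.

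In Case~2 your reduction is actually to a false statement: with the correct lemma and odd parity one gets $(D_\bullet+D_\circ)\cdot\id_{\DD(T)}\simeq H\cdot\id_{\DD(T)}$, not $\simeq 0$ (and $H\cdot\id$ is generally \emph{not} null-homotopic on $\DD(T)$, only on $\DD_1(T)$). The paper instead reads this as $S^2\cdot\id_{\DD(T)}\simeq 0$ directly from $(D_p-H)\simeq -D_{p'}=0$, avoiding the detour through $H\cdot\id_{\DD_1(T)}\simeq 0$. Once you correct the lemma and insert the parity argument, your proof and the paper's coincide.
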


\begin{proof}[Proof of Theorem~\ref{thm:connectivity_detection}]
	Immediate from Propositions~\ref{prop:curve_based_detection} and~\ref{prop:connectivity:to_action}. 
\end{proof}

Before we start with the proof of Proposition~\ref{prop:curve_based_detection}, let us make some preliminary observations and comments. 
The mapping class group $\operatorname{Mod}(\FourPuncturedSphereKh)$ is generated by two braid moves: 
\[
\tau_1 = \tauRight
\quad\text{and}\quad
\tau_2 = \tauBottom
\] 
The following diagram describes how these two braid moves act on the left hand sides of the three cases in Proposition~\ref{prop:curve_based_detection}:
\begin{equation}\label{eqn:mcgaction:cases}
\begin{tikzcd}[column sep=40pt]
\text{(Case~1)}
\arrow[in=-170,out=170,looseness=5,swap,"\tau_2"]
\arrow[r,bend right=8,"\tau_1"]
&
\text{(Case~2)}
\arrow[l,bend right=8]
\arrow[r,bend right=8,"\tau_2"]
&
\text{(Case~3)}
\arrow[l,bend right=8]
\arrow[in=10,out=-10,looseness=5,swap,"\tau_1"]
\end{tikzcd}
\end{equation}
For example, if $\rKh_d(\tfrac p q)$ is based on \(\rightPunctures\) (Case~3), $\tau_2(\rKh_d(\tfrac p q))$ is based on \(\diagonalPunctures\) (Case~2).

For the proof of the naturality of the invariants \(\Khr\) and \(\BNr\) under the action of the mapping class group $\operatorname{Mod}(\FourPuncturedSphereKh)$ in~\cite[Section~8]{KWZ}, we constructed certain type AD bimodules ${}_\BNAlgH(\tau_i)^{\BNAlgH}$ for \(i=1,2\) that translate the geometric operations \(\tau_i\) into algebraic ones.
In particular, if a type~D structure $\DD^\rKh$ corresponds to a curve $\rKh_{d}(\tfrac p q)$, then the type~D structure that corresponds to the curve $\tau_i (\rKh_{d}(\tfrac p q))$ is homotopy equivalent to $\DD^\rKh \boxtimes {}_\BNAlgH(\tau_i)^{\BNAlgH}$.
The type AD bimodule ${}_\BNAlgH(\tau_1)^{\BNAlgH}$ is depicted in Figure~\ref{fig:twisting:bimodule:full}; ${}_\BNAlgH(\tau_2)^{\BNAlgH}$ is obtained from ${}_\BNAlgH(\tau_1)^{\BNAlgH}$ by interchanging both left and right idempotents $\DotB \leftrightarrow \DotC$. 
Naturally, there are also type AD bimodules ${}_\BNAlgH(\tau^{\pm 1}_i)^{\BNAlgH}$ that are inverse to ${}_\BNAlgH(\tau_i)^{\BNAlgH}$ up to homotopy. 
\begin{lemma}\label{lem:mcgaction:morphisms}
Given a type D structure \(\DD^\rKh\) associated with a component of \(\Khr(T)\), the bimodules \(\tau_1^{\pm 1}\) and \(\tau_2^{\pm 1}\) act on the morphisms as follows (up to homotopy):
\begin{align*}
D_{\bullet}\cdot \id_{\DD^\rKh} 
&
\xmapsto{-\boxtimes{}_\BNAlgH(\tau_1^{\pm 1})^{\BNAlgH}} 
D_{\bullet}\cdot \id_{\DD^\rKh \boxtimes{}_\BNAlgH(\tau_1^{\pm 1})^{\BNAlgH}}
&
D_{\bullet}\cdot \id_{\DD^\rKh} 
&
\xmapsto{-\boxtimes{}_\BNAlgH(\tau_2^{\pm 1})^{\BNAlgH}} 
S^2\cdot \id_{\DD^\rKh \boxtimes{}_\BNAlgH(\tau_2^{\pm 1})^{\BNAlgH}}
\\
S^2\cdot \id_{\DD^\rKh}
&
\xmapsto{-\boxtimes{}_\BNAlgH(\tau_1^{\pm 1})^{\BNAlgH}}  
D_{\circ}\cdot \id_{\DD^\rKh \boxtimes{}_\BNAlgH(\tau_1^{\pm 1})^{\BNAlgH}}
&
S^2\cdot \id_{\DD^\rKh} 
&
\xmapsto{-\boxtimes{}_\BNAlgH(\tau_2^{\pm 1})^{\BNAlgH}}
D_{\bullet}\cdot \id_{\DD^\rKh \boxtimes{}_\BNAlgH(\tau_2^{\pm 1})^{\BNAlgH}}
\\
D_{\circ}\cdot \id_{\DD^\rKh} 
&
\xmapsto{-\boxtimes{}_\BNAlgH(\tau_1^{\pm 1})^{\BNAlgH}}  
S^2\cdot \id_{\DD^\rKh \boxtimes{}_\BNAlgH(\tau_1^{\pm 1})^{\BNAlgH}}
&
D_{\circ}\cdot \id_{\DD^\rKh} 
&
\xmapsto{-\boxtimes{}_\BNAlgH(\tau_2^{\pm 1})^{\BNAlgH}} 
D_{\circ}\cdot \id_{\DD^\rKh \boxtimes{}_\BNAlgH(\tau_2^{\pm 1})^{\BNAlgH}}
\end{align*}
\end{lemma}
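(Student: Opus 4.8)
\textbf{Proof proposal for Lemma~\ref{lem:mcgaction:morphisms}.}

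The plan is to reduce the claimed transformations of the central-element actions to the explicit shape of the type AD bimodules \({}_\BNAlgH(\tau_i^{\pm1})^\BNAlgH\), which were constructed in \cite[Section~8]{KWZ} and are recalled (for \(\tau_1\)) in Figure~\ref{fig:twisting:bimodule:full}. Recall from the discussion preceding Lemma~\ref{lem:morphisms_are_preserved} that for any type~D structure \(X^\BNAlgH\) and any \(z\in Z(\BNAlgH)\), the morphism \(z\cdot\id_X\) is the composite \((\id_X\otimes\mu_\BNAlgH)\circ(\id_X\otimes z)\); in particular it is determined, up to homotopy and hence after any homotopy equivalence by Lemma~\ref{lem:morphisms_are_preserved}, by which central element \(z\) is being multiplied. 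So the content of the lemma is purely a statement about the four central elements \(H\), \(S^2\), \(D_\bullet\), \(D_\circ\): one must show that pairing with \({}_\BNAlgH(\tau_i^{\pm1})^\BNAlgH\) sends the morphism \(z\cdot\id\) to the morphism \(z'\cdot\id\) for the prescribed \(z'\).

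First I would set up the box-tensor-product formalism concretely: given a null-homotopy-free description of \(\DD^\rKh\boxtimes{}_\BNAlgH(\tau_i^{\pm1})^\BNAlgH\), the morphism \(z\cdot\id_{\DD^\rKh}\) induces a morphism \((z\cdot\id_{\DD^\rKh})\boxtimes\id_{(\tau_i^{\pm1})}\) on the pairing, and I would compute the effect of this induced morphism directly from the bimodule's \(\delta\)-operations. Because the bimodule \({}_\BNAlgH(\tau_1)^\BNAlgH\) is small and its right-algebra outputs are built from \(S_\bullet, S_\circ, D_\bullet, D_\circ\), multiplying the incoming central element \(z\) against these outputs and simplifying using the relations in \(\BNAlgH\) will produce, up to homotopy, the central element \(z'\) acting on the paired type~D structure. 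The cleanest way to organize this is probably to verify it first for the single generator complexes \(\iota_\bullet\) and \(\iota_\circ\) — i.e. to check that \(D_\bullet\cdot\id\), \(D_\circ\cdot\id\), \(S^2\cdot\id\) act on \({}_\BNAlgH(\tau_1^{\pm1})^\BNAlgH\) itself (viewed as a type~D structure over the right algebra after forgetting, or as an endofunctor) in the claimed way, since the general case then follows by functoriality of \(-\boxtimes{}_\BNAlgH(\tau_i^{\pm1})^\BNAlgH\) and the naturality of the \(Z(\BNAlgH)\)-action established in Lemma~\ref{lem:morphisms_are_preserved}. Alternatively, and perhaps more robustly, one can argue geometrically: the morphism \(z\cdot\id\) on a multicurve invariant corresponds to a specific local picture near a parametrizing arc (a ``dotting'' operation for \(D_\star\), a ``double-saddle'' for \(S^2\)), and the mapping classes \(\tau_1,\tau_2\) permute the parametrizing arcs and the punctures in the manner recorded in diagram~\eqref{eqn:mcgaction:cases}; tracking which arc-local operation is carried to which under this permutation immediately yields the table. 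The second bimodule \({}_\BNAlgH(\tau_2^{\pm1})^\BNAlgH\) is obtained from the first by the idempotent swap \(\DotB\leftrightarrow\DotC\), which exchanges the roles of \(D_\bullet\) and \(D_\circ\) and fixes \(S^2\), so the \(\tau_2\)-column follows formally from the \(\tau_1\)-column once the latter is known — except for the entry \(D_\bullet\cdot\id\mapsto S^2\cdot\id\), which must be checked directly but is the idempotent-swap image of the \(\tau_1\) entry \(S^2\cdot\id\mapsto D_\circ\cdot\id\) combined with \(D_\circ\leftrightarrow D_\bullet\).

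The main obstacle I anticipate is bookkeeping rather than conceptual: the box tensor product with an AD bimodule can introduce auxiliary terms in the induced morphism (coming from the higher \(\delta\)-operations of the bimodule paired against the differential of \(\DD^\rKh\)), and one has to produce an explicit homotopy killing exactly those terms so that what remains is literally \(z'\cdot\id\) and not merely something chain-homotopic to it by an unspecified homotopy. This is where Lemma~\ref{lem:morphisms_are_preserved} does the real work — it guarantees that the \(Z(\BNAlgH)\)-action is transported strictly along homotopy equivalences — so the argument is: compute the induced morphism, observe its homology class lands in the correct isotypic piece by a quantum/\(\delta\)-grading count (the gradings of \(H\), \(S^2\), \(D_\star\), \(U\) are all distinct, so no accidental mixing occurs), and conclude. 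I would present the \(\tau_1\) case in full detail and then dispatch \(\tau_2\) by the idempotent-swap symmetry, and dispatch the inverses \(\tau_i^{-1}\) by noting that \({}_\BNAlgH(\tau_i^{-1})^\BNAlgH\) is a homotopy inverse of \({}_\BNAlgH(\tau_i)^\BNAlgH\) and that the displayed correspondences are involutive on the three-element set \(\{D_\bullet, S^2, D_\circ\}\) in the pattern of diagram~\eqref{eqn:mcgaction:cases}.
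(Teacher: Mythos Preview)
Your overall strategy---compute directly with the explicit bimodule for \(\tau_1\), deduce \(\tau_2\) by the idempotent swap, and handle inverses by homotopy invertibility---matches the paper's. However, there is a genuine gap in the execution.

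The grading argument you propose as the clincher does not work: the central elements \(D_\bullet\), \(D_\circ\), \(S^2\), and \(H\) all have the \emph{same} bigrading \(q^{-2}\delta^{-1}\), so a grading count cannot distinguish \(D_\circ\cdot\id\) from \(S^2\cdot\id\) on the target. This is precisely why the paper's proof of the \(S^2\mapsto D_\circ\) case is hands-on: it first applies an explicit homotopy to \(S^2\cdot\id_{\DD^\rKh}\) that kills its components near each arrow \(\DotC\xrightarrow{S}\DotB\), then decomposes \(\DD^\rKh\) (ignoring all but the \(S\)-arrows) into pieces \([\DotC\xrightarrow{S}\DotB]\), \([\DotC]\), \([\DotB]\), computes the induced morphism on each piece, and finally applies another explicit homotopy to reach \(D_\circ\cdot\id\). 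These homotopies depend on the differential of \(\DD^\rKh\), so your idea of checking only the single-generator complexes \(\iota_\bullet\), \(\iota_\circ\) and invoking functoriality is not sufficient: the bimodule's length-two \(A_\infty\)-actions (those with input label \(S\)) pair against the \(S\)-arrows in \(\DD^\rKh\) and produce the extra terms that must be explicitly cancelled.

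Two further inputs the paper uses that you do not invoke: Proposition~\ref{prop:higher_powers_when_wrapping} guarantees the differential of \(\DD^\rKh\) contains only \(D\), \(S\), \(S^2\), which makes the piecewise decomposition above tractable; and Lemma~\ref{lem:H_is_nullhomotopic_on_Khr} (\(H\cdot\id_{\DD^\rKh}\simeq 0\)) is what gives the third case \(D_\circ\mapsto S^2\) for free once the first two are established, via \(D_\bullet+D_\circ+S^2=H\). Lemma~\ref{lem:morphisms_are_preserved} alone does not do this work---it only transports central actions along homotopy equivalences, whereas box-tensoring with \({}_\BNAlgH(\tau_i)^\BNAlgH\) is a genuinely different functor.
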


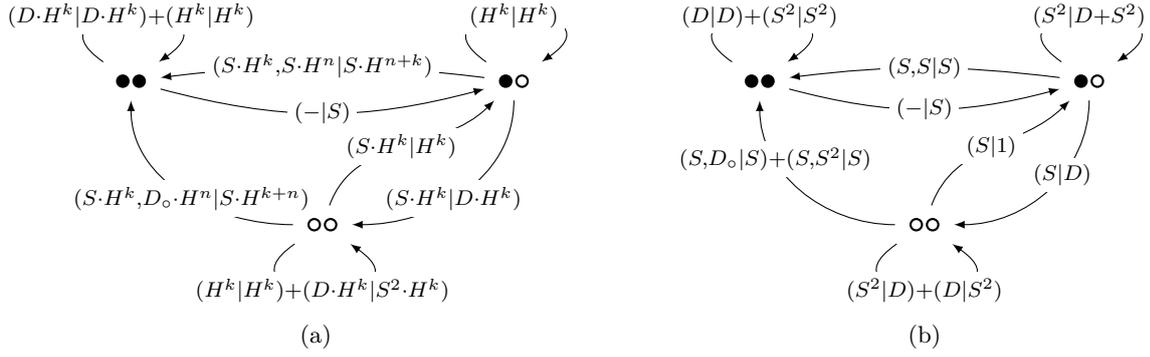
\begin{figure}[t]
	\centering
	\begin{subfigure}{0.52\textwidth}
		\centering
		\(
		\begin{tikzcd}[column sep=50pt,row sep=40pt]
		\DotB \DotB
		\arrow[bend right=15]{rr}[description]{(-\vert S)}
		\arrow[in=35,out=145,looseness=5]{rl}[description]{(D \cdot H^k \vert D \cdot H^k)+ (H^k \vert H^k)}
		&&
		\DotB \DotC
		\arrow[bend right=7]{ll}[description]{(S\cdot H^k,S \cdot H^n\vert S \cdot H^{n+k})}
		\arrow[in=0,out=-90,pos=0.55]{dl}[description]{(S \cdot H^k \vert D \cdot H^k )}
		\arrow[in=35,out=145,looseness=5]{lr}[description]{(H^k \vert H^k)}
		\\
		&
		\DotC \DotC
		\arrow[out=180,in=-90,pos=0.45]{lu}[description]{(S \cdot H^k ,D_\circ\cdot H^n \vert S\cdot H^{k+n})}
		\arrow[out=70,in=-135]{ru}[description]{(S\cdot H^k \vert H^k)}
		\arrow[in=-35,out=-145,looseness=5]{rl}[description]{(H^k \vert H^k)+(D \cdot H^k \vert S^2 \cdot H^k )}
		\end{tikzcd}
		\)
		\caption{}\label{fig:twisting:bimodule:full}
	\end{subfigure}
	\begin{subfigure}{0.47\textwidth}
		\centering
		\(
		\begin{tikzcd}[column sep=40pt,row sep=40pt]
		\DotB \DotB
		\arrow[bend right=15]{rr}[description]{(-\vert S)}
		\arrow[in=35,out=145,looseness=5]{rl}[description]{(D\vert D) + (S^2 \vert S^2)}
		&&
		\DotB \DotC
		\arrow[bend right=7]{ll}[description]{(S,S\vert S)}
		\arrow[in=0,out=-90,pos=0.4]{dl}[description]{(S \vert D )}
		\arrow[in=35,out=145,looseness=5]{lr}[description]{(S^2 \vert D+S^2)}
		\\
		&
		\DotC \DotC
		\arrow[out=180,in=-90,pos=0.7]{lu}[description]{(S ,D_\circ \vert S)+(S ,S^2 \vert S)}
		\arrow[out=70,in=-135]{ru}[description]{(S \vert 1)}
		\arrow[in=-35,out=-145,looseness=5]{rl}[description]{(S^2 \vert D)+(D \vert S^2)}
		\end{tikzcd}
		\)
		\caption{}\label{fig:twisting:bimodule:simplified}
		\end{subfigure}
	\caption{%
		(a) The bimodule \({}_{\BNAlgH}(\tau_1)^{\BNAlgH}\) and (b) the actions that are relevant for the computation of morphisms.  Note that the differential on the bimodule on the left is written with respect to the basis \(\{H^k,D\cdot H^k,S\cdot H^k\}_{k\geq0}\) of \(\BNAlgH\); the action on the right is written with respect to the generators \(\{S,D,S^2\}\). 
	}\label{fig:twisting:bimodule}
\end{figure}

\begin{proof}
We prove the statement for $\tau_1$. 
For $\tau_2$ the proof is the same. 
For $\tau_i^{-1}$ the statement follows from the fact that $-\boxtimes {}_\BNAlgH(\tau_i^{-1})^{\BNAlgH}$ is an inverse for $-\boxtimes {}_\BNAlgH(\tau_i)^{\BNAlgH}$ up to homotopy.

Figure~\ref{fig:twisting:bimodule:full} shows the bimodule \({}_\BNAlgH(\tau_1)^{\BNAlgH}\) from~\cite[Figure~44]{KWZ}. 
Such bimodules act in the obvious way on morphism spaces; see \cite[Lemma~2.3.3]{LOTBimodules}. 
For the computation of the action of \({}_\BNAlgH(\tau_1)^{\BNAlgH}\) on our morphisms, only those actions that involve the subspace of \(\BNAlgH\) spanned by \(\{S_\circ,D_\circ,S^2_\circ,S_\bullet,D_\bullet,S^2_\bullet\}\) is relevant, since these are the only input labels. 
These actions are shown in Figure~\ref{fig:twisting:bimodule:simplified}. 
So obviously
$$ (D_{\bullet}\cdot \id_{\DD^{\rKh}}) \boxtimes  {}_{\BNAlgH}(\tau_1)^{\BNAlgH} = D_{\bullet}\cdot \id_{\DD^{\rKh} \boxtimes  {}_{\BNAlgH}(\tau_1)^{\BNAlgH}} $$
The proof of the equivalence
$$ (S^2\cdot \id_{\DD^{\rKh}}) \boxtimes  {}_{\BNAlgH}(\tau_1)^{\BNAlgH} \simeq D_{\circ}\cdot \id_{\DD^{\rKh} \boxtimes  {}_{\BNAlgH}(\tau_1)^{\BNAlgH}} $$
is slightly more involved. 
First, we eliminate those components of the morphism $S^2\cdot \id_{\DD^{\rKh}}$ near each differential $\DotC\xrightarrow{S}\DotB$ in $\DD^{\rKh}$, using the dotted homotopy below:
$$\begin{tikzcd}
\cdots
\arrow[r,"D" above, leftrightarrow, dashed]
&
\DotC 
\arrow[r,"S" above]
\arrow[d,"S^2" left]
&
\DotB
\arrow[d,"S^2" left]
\arrow[dl,"S" description, dotted]
&
\cdots
\arrow[l,"D" above, leftrightarrow, dashed]
\\
\cdots
\arrow[r,"D" above, leftrightarrow, dashed]
&
\DotC 
\arrow[r,"S" above]
&
\DotB
&
\cdots
\arrow[l,"D" above, leftrightarrow, dashed]
\end{tikzcd}
\mapsto
\begin{tikzcd}
\cdots
\arrow[r,"D" above, leftrightarrow, dashed]
&
\DotC 
\arrow[r,"S" above]
&
\DotB
&
\cdots
\arrow[l,"D" above, leftrightarrow, dashed]
\\
\cdots
\arrow[r,"D" above, leftrightarrow, dashed]
&
\DotC 
\arrow[r,"S" above]
&
\DotB
&
\cdots
\arrow[l,"D" above, leftrightarrow, dashed]
\end{tikzcd}
$$
Denote the resulting morphism by $(S^2\cdot \id_{\DD^{\rKh}})'$. 
To study how ${}_{\BNAlgH}(\tau_1)^{\BNAlgH}$ acts on this morphism, let us first ignore all differentials in $\DD^{\rKh}$ except the ones labelled \(S\). 
Then $\DD^{\rKh}$ decomposes into three types of pieces, namely
\([\DotC\xrightarrow{S}\DotB]\) and the singletons \([\DotC]\) and \([\DotB]\). 
On these, ${}_{\BNAlgH}(\tau_1)^{\BNAlgH}$ acts as follows: 
$$
[\DotC\xrightarrow{S}\DotB]
\mapsto
[\DotC \xrightarrow{1} \DotC \xleftarrow{S} \DotB]
\qquad
[\DotC]
\mapsto
[\DotC]
\qquad
[\DotB]
\mapsto
[\DotB\xrightarrow{S}\DotC]
$$
The restriction of the morphism $(S^2\cdot \id_{\DD^{\rKh}})'$ to the first piece vanishes. 
On the restriction of $(S^2\cdot \id_{\DD^{\rKh}})'$ to the second and third piece, ${}_{\BNAlgH}(\tau_1)^{\BNAlgH}$ acts as follows:
$$
\left[
	\begin{tikzcd}[ampersand replacement = \&]
	\DotC
	\arrow[d,"S^2"]
	\\
	\DotC
	\end{tikzcd}
\right]
\mapsto
\left[
	\begin{tikzcd}[ampersand replacement = \&]
	\DotC
	\arrow[d,"D"]
	\\
	\DotC
	\end{tikzcd}
\right]
\qquad
\left[
	\begin{tikzcd}[ampersand replacement = \&]
	\DotB
	\arrow[d,"S^2"]
	\\
	\DotB
	\end{tikzcd}
\right]
\mapsto
\left[
	\begin{tikzcd}[ampersand replacement = \&]
	\DotB
	\arrow[r,"S"]
	\arrow[d,"S^2"]
	\&
	\DotC
	\arrow[d,"D+S^2"]
	\\
	\DotB
	\arrow[r,"S"]
	\&
	\DotC
	\end{tikzcd}
\right]
$$
Observe that all higher actions on the type~A side of the bimodule ${}_{\BNAlgH}(\tau_1)^{\BNAlgH}$ are length 2 and they involve the morphism \(S\). 
By construction, there are no components of $(S^2\cdot \id_{\DD^{\rKh}})'$ that start or end at a generator adjacent to an arrow labelled \(S\). 
Hence, the above are all non-zero components of the morphism $(S^2\cdot \id_{\DD^{\rKh}})'\boxtimes  {}_{\BNAlgH}(\tau_1)^{\BNAlgH}$. 
This morphism is homotopic to the identity multiplied by \(D_{\circ}\), which can be seen as follows: 
The component on the image of the second piece already has the desired form. 
On the image of the third piece, we would like to perform the homotopy indicated by the following dotted arrow:
$$
\left[
\begin{tikzcd}[ampersand replacement = \&]
\DotB
\arrow[r,"S"]
\arrow[d,"S^2"]
\&
\DotC
\arrow[d,"D+S^2"]
\arrow[dl,"S" description, dotted]
\\
\DotB
\arrow[r,"S"]
\&
\DotC
\end{tikzcd}
\right]
$$
This cancels the two vertical arrows labelled \(S^2\), but may contribute other differentials. 
The additional contributions come from arrows that are labelled by \(1\) or a power of \(S\) and that either end at the top right generator \(\DotC\) or start at the bottom left generator \(\DotB\). 
By inspection of Figure~\ref{fig:twisting:bimodule:simplified}, we see that the only such arrows are images of a component of the differential \([\DotB\xrightarrow{S^2}\DotB]\) in $\DD^{\rKh}$. 
In fact, these additional contributions from the homotopies cancel in pairs: 
$$
\begin{tikzcd}[ampersand replacement = \&]
\DotB
\arrow[r,"S^2" above]
\arrow[d,"S^2" right]
\&
\DotB 
\arrow[d,"S^2" right]
\\
\DotB
\arrow[r,"S^2" above]
\&
\DotB
\end{tikzcd}
\mapsto
\begin{tikzcd}[ampersand replacement = \&]
\DotB
\arrow[r,"S" below]
\arrow[rr,"S^2" above, bend left=14]
\arrow[d,"S^2" right]
\&
\DotC
\arrow[d,"D+S^2" right]
\arrow[dl,"S" description, dotted]
\arrow[rr,"S^2+D" above, bend left=14]
\&
\DotB 
\arrow[r,"S" below]
\arrow[d,"S^2" right]
\&
\DotC
\arrow[dl,"S" description, dotted]
\arrow[d,"S^2+D" right]
\\
\DotB
\arrow[rr,"S^2" below, bend right=14]
\arrow[r,"S" above]
\&
\DotC
\arrow[rr,"S^2+D" below, bend right=14]
\&
\DotB 
\arrow[r,"S" above]
\&
\DotC
\end{tikzcd}
$$
Finally, we may cancel the identity component on the images of the first piece; 
this does not contribute any additional components to the morphism. 
The result is indeed equal to the identity multiplied by \(D_{\circ}\). 

Finally, the equivalence
$$
( D_{\circ}\cdot \id_{\DD^{\rKh}} )\boxtimes  {}_{\BNAlgH}(\tau_1)^{\BNAlgH} 
\simeq
S^2\cdot \id_{\DD^{\rKh} \boxtimes  {}_{\BNAlgH}(\tau_1)^{\BNAlgH}} 
$$
follows from $(D_{\circ} + D_{\bullet} + S^2)\cdot \id_{\DD^{\rKh}} = H\cdot \id_{\DD^{\rKh}} \simeq 0$ (Lemma~\ref{lem:H_is_nullhomotopic_on_Khr}).
\end{proof}

\begin{proof}[Proof of Proposition~\ref{prop:curve_based_detection}]
Lemma~\ref{lem:mcgaction:morphisms} implies that Diagram~\eqref{eqn:mcgaction:cases} on page~\pageref{eqn:mcgaction:cases} also describes how the braid moves act on the right hand sides of the three cases in Proposition~\ref{prop:curve_based_detection}. 
So it suffices to prove the proposition for $\tfrac p q=0$; 
more explicitly, it suffices to prove that for every odd integer~$d$, the type~D structure $\DD^{\rKh_d}$ from Lemma~\ref{lem:cone_curves} satisfies 
$D_{\circ}\cdot \id_{\DD^{\rKh_d}} \simeq 0$, 
$ D_{\bullet} \cdot \id_{\DD^{\rKh_d}} \simeq S^2 \cdot  \id_{\DD^{\rKh_d}}\not\simeq 0$.

Label the generators in $\DD^{\rKh_d}$ as follows:
\[
\begin{tikzcd}[column sep=20pt]
\DotB_{d+1}
\arrow[r, "D" above]
& 
\DotB_{d+2} 
\arrow[r, "S^2" above]   
& 
\DotB_{d+3}  
\arrow[r, "D" above]  
& 
\cdots  
\arrow[r, "S^2" above] 
&  
\DotB_{2d} 
\arrow[r, "D" above]  
&  
\DotB_{1}  
&  
\DotB_{2} 
\arrow[l, "S^2" above] 
&  
\cdots 
\arrow[l, "D" above] 
& 
\DotB_{d} 
\arrow[l, "D" above] 
& 
\DotB_{d+1}
\arrow[l, "S^2" above]
\end{tikzcd}
\]
The morphism $D_{\circ}\cdot \id_{\DD^{\rKh_d}}$ is trivially zero, because there are no generators with idempotent $\DotC$. The fact that $D_{\bullet}\cdot \id_{\DD^{\rKh_d}} \simeq S^2_{\bullet}\cdot \id_{\DD^{\rKh_d}}$ follows from $(D+S^2)\cdot \id_{\DD^{\rKh_d}} = H \cdot \id_{\DD^{\rKh_d}} \simeq 0$ (Lemma~\ref{lem:H_is_nullhomotopic_on_Khr}). It remains to prove that $D_{\bullet}\cdot \id_{\DD^{\rKh_d}} \not\simeq  0$.

As in the proof of Lemma~\ref{lem:cone_curves}, consider the type D structure $Y$ and the morphism $f$: 
$$
Y=
\left[
\begin{tikzcd}[column sep=20pt]
\DotB^{1}  
&  
\DotB^{2} 
\arrow[l, "S^2" above] 
&  
\cdots 
\arrow[l, "D" above] 
& 
\DotB^{d} 
\arrow[l, "D" above] 
\end{tikzcd}
\right], 
\qquad 
f\in \Mor(Y,\DD^{\rKh_d}),~ 
f(\DotB^i)=\DotB_i \otimes 1
$$ 
Suppose for contradiction that $D_{\bullet} \cdot \id_{\DD^{\rKh_d}}$ is null-homotopic. Then so too is the morphism 
$$
f_D
\coloneqq 
(D_{\bullet} \cdot \id_{\DD^{\rKh_{d}}} \circ f)
\in
\Mor(Y,\DD^{\rKh_{d}}),
\quad
f_D (\DotB^i)=\DotB_i \otimes D
$$
which is indicated by the solid vertical arrows in the following diagram:
\[
\begin{tikzcd}[column sep=20pt, row sep=25pt]
&&&&
\DotB^{1}
\arrow[d, "D" right, near end]
\arrow[dl, dashed,looseness=0.2, in=45,out=-135]
&
\DotB^{2}
\arrow[d, "D" right, near end]
\arrow[dlll,dashed,looseness=0.2, in=45,out=-135]
\arrow[l, "S^2" above]
&
\cdots
\arrow[l, "D" above]
&
\DotB^{d-1}
\arrow[d, "D"]
\arrow[l,"S^2",swap]
\arrow[dllllll,dashed,looseness=0.2, in=45,out=-135]
&
\DotB^{d}
\arrow[d, "D"]
\arrow[l, "D",swap]
\arrow[dllllllll,dashed,looseness=0.2, in=45,out=-135]
\\
\DotB_{d+1} 
\arrow[r, "D" above]
& 
\DotB_{d+2}
\arrow[r, "S^2" above]   
& 
\cdots  
\arrow[r, "S^2" above] 
&  
\DotB_{2d} 
\arrow[r, "D" above]  
& 
\DotB_{1}
&
\DotB_{2}
\arrow[l, "S^2",swap]
&
\cdots
\arrow[l, "D",swap]
&
\DotB_{d-1}
\arrow[l,"S^2",swap]
&
\DotB_{d}
\arrow[l, "D",swap]
&
\DotB_{d+1}
\arrow[l, "S^2",swap]
\end{tikzcd}
\]
By considering the component 
\(\!\! 
\begin{tikzcd}[column sep=12pt]
\DotB^1
\arrow[r,"D"]
& 
\DotB_1
\end{tikzcd}
\!\!\)
we see that any null-homotopy for \(f_D\) contains components 
\(\!\! 
\begin{tikzcd}[column sep=12pt]
\DotB^{i}
\arrow[dashed,r,"\id"]
& 
\DotB_{2d-i+1}
\end{tikzcd}
\!\!\)
for \(i=1,\dots,d\),
ie the dashed arrows in the above diagram. Let \(h_1\) be the sum of all these components. Then $f_D + d_{\DD^{\rKh_{d}}}\circ h_1 + h_1\circ d_Y$ contains a component 
\(\!\! 
\begin{tikzcd}[column sep=12pt]
\DotB^{d}
\arrow[r,"S^2"]
& 
\DotB_{d}
\end{tikzcd}
\!\!\), which cannot be further homotoped away. 
So $f_D \not\simeq 0$, contradicting our assumption.
\end{proof}

In preparation for the proof of Proposition~\ref{prop:connectivity:to_action}, recall from Page~\pageref{eq:cobb_relations} that the chain complex $\KhTb{\Diag_T}$ over the category $\Cobb$ is obtained from a given pointed tangle diagram $\Diag_T$ via a cube of resolutions.
The identity morphism on $\KhTb{\Diag_T}$ can be written as
$$
\id_{\KhTb{\Diag_T}}
\coloneqq
\bigoplus_{\text{resolutions }\Diag\text{ of }\Diag_T} 
\big(
\Diag \times [0,1]
\big)
$$
Let $p$ be a point on a strand of the diagram $\Diag_T$ away from a crossing. 
Then \(p\) distinguishes one component of each resolution \(\Diag\) of \(\Diag_T\). 
We define an associated endomorphism
\[
D_p \cdot \id_{\KhTb{\Diag_T}} 
\in
\Mor_{\Cobb}\big(\KhTb{\Diag_T},\KhTb{\Diag_T}\big)
\]
that is obtained from \(\id_{\KhTb{\Diag_T}}\) by placing a dot on the component of \(\Diag \times [0,1]\) containing $p\times [0,1]$ for every resolution \(\Diag\) of \(\Diag_T\). 
Furthermore, there is an endomorphism 
\(H \cdot \id_{\KhTb{\Diag_T}}\)
which we define componentwise. 
The following lemma is due to Bar-Natan~\cite{BN_mutation}:
\begin{lemma}[Basepoint Moving Lemma]\label{lem:Basepoint_Moving_Lemma}
If two basepoints \(p\) and \(p'\) are separated by a single crossing like \(\CrossingLBasepoint\) or \(\CrossingRBasepoint\), then
$$D_p \cdot \id_{\KhTb{\Diag_T}} \simeq (H-D_{p'})\cdot \id_{\KhTb{\Diag_T}}$$
\end{lemma}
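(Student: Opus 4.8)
This is Bar-Natan's basepoint moving map~\cite{BN_mutation}; I recall the argument, which is local and carries over verbatim to the reduced category $\Cobb$. Since we work over $\F$, signs will play no role.

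The plan is to localize the problem to the neighbourhood $N$ of the crossing separating $p$ from $p'$ and then to exhibit an explicit null-homotopy built from the reverse saddle. First I would fix the resolutions of all crossings of $\Diag_T$ other than the distinguished one; the cube-of-resolutions formalism then presents $\KhTb{\Diag_T}$, up to an overall shift, as a mapping cone $C=[A_0 \xrightarrow{S} A_1]$, where $A_0$ and $A_1$ are the chain complexes obtained by $0$- and $1$-resolving the distinguished crossing and $S$ is the chain map induced by the elementary saddle supported in $N$. Label the four endpoints of $N$ by $a,b,c,d$ so that the strand carrying $p$ and $p'$ runs from $a$ to $d$, with $p$ near $a$ and $p'$ near $d$. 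Then inside each resolution this strand becomes a pair of arcs: in $A_0$ the arcs $ab$ and $cd$, with $p$ on the $ab$-arc and $p'$ on the $cd$-arc; in $A_1$ the arcs $ac$ and $bd$, with $p$ on the $ac$-arc and $p'$ on the $bd$-arc. (For the mirror crossing $\CrossingRBasepoint$ one swaps $A_0$ and $A_1$; nothing else changes.) In particular the endomorphism $\Psi\coloneqq D_p\cdot\id + D_{p'}\cdot\id - H\cdot\id$ of $C$, which we must show is null-homotopic, has components $D_{ab}\cdot\id + D_{cd}\cdot\id - H\cdot\id$ on $A_0$ and $D_{ac}\cdot\id + D_{bd}\cdot\id - H\cdot\id$ on $A_1$, where $D_{\alpha}\cdot\id$ denotes the identity cobordism with a dot on the sheet over the arc $\alpha$.

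Next I would take $h\co A_1\to A_0$ to be the chain map induced by the reverse elementary saddle $S'$ in $N$ (it commutes with the differentials coming from the other crossings for exactly the same reason $S$ does, so it really is a chain map). The associated degree $-1$ endomorphism of the cone $C$ then has boundary $\partial h = (S'\circ S)\oplus(S\circ S')$, acting on $A_0$ and $A_1$ respectively, so it remains to verify the two equalities $S'\circ S = \Psi|_{A_0}$ and $S\circ S' = \Psi|_{A_1}$ in $\Cobb$. This is the one step with genuine content: the composite $S'\circ S\in\End_{\Cobb}(A_0)$ is the identity cobordism on $A_0$ carrying a compressible tube that joins the sheet over the $ab$-arc to the sheet over the $cd$-arc, and the neck-cutting relation --- the last relation in~\eqref{eq:cobb_relations} --- rewrites this tube as a sum of capped-off cobordisms, giving $S'\circ S = D_{ab}\cdot\id + D_{cd}\cdot\id - H\cdot\id$ on $A_0$. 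The identical computation on $A_1$ (now the tube joins the $ac$- and $bd$-sheets) yields $S\circ S' = D_{ac}\cdot\id + D_{bd}\cdot\id - H\cdot\id$. Hence $\partial h = \Psi$, and therefore $D_p\cdot\id \simeq (H-D_{p'})\cdot\id$.

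The main obstacle is bookkeeping rather than anything conceptual: one must take care that the description of $\KhTb{\Diag_T}$ as $\mathrm{Cone}(S)$ and the claim that the reverse saddle $S'$ is a chain map $A_1\to A_0$ hold with all the other crossings reinstated --- but this is precisely the content of the cube-of-resolutions construction of $\KhTb{\Diag_T}$ and of Bar-Natan's invariance argument, so it can be quoted rather than redone. The dot-sliding identities needed to identify the two-saddle composites and simplify them are then immediate from connectedness of the elementary saddle surface together with the relations~\eqref{eq:cobb_relations}.
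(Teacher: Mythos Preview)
Your proof is correct and follows essentially the same approach as the paper: both use the reverse saddle as the null-homotopy and verify the identity via the neck-cutting relation in~\eqref{eq:cobb_relations}. The only cosmetic difference is that the paper invokes Bar-Natan's gluing naturality to reduce to the single-crossing tangle $\Diag_T=\CrossingL$, whereas you write $\KhTb{\Diag_T}$ as a cone on the saddle map and carry out the computation in situ; the content is identical.
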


\begin{proof}
	Since all the maps are equal to the identity away from the crossing that separates \(p\) and \(p'\) and $\KhTb{\Diag_T}$ is natural with respect to gluing \cite[Section~5]{BarNatanKhT}, it suffices to show the Lemma for \(\Diag_T=\CrossingL\). 
	This goes as follows:
	Suppose \(p\) lies on the top right and \(p'\) on the bottom left. 
	In the following diagram, the morphisms \(D_p \cdot \id_{\KhTb{\Diag_T}}\) and \(D_{p'} \cdot \id_{\KhTb{\Diag_T}}\) are indicated by the vertical dashed arrows:
	$$
	\begin{tikzcd}[column sep=4cm,row sep=1cm]
	\Ni
	\arrow[r,"\Nil" description]
	\arrow[dashed]{d}[left,solid]{\NiDotL=D_{p'}}
	\arrow[phantom]{d}[right,solid]{\NiDotR=D_p}
	&
	\No
	\arrow[dotted]{dl}[description,solid]{\Nol}
	\arrow[dashed]{d}[left,solid]{\NoDotB=D_{p'}}
	\arrow[phantom]{d}[right,solid]{\NoDotT=D_p}
	\\
	\Ni
	\arrow[r,"\Nil" description]
	&
	\No
	\end{tikzcd}
	$$
	Here, \(\Nil\) and \(\Nol\) denote saddle cobordisms and \(\NoDotT\), \(\NiDotR\), \(\NoDotB\), and \(\NiDotL\) dot cobordisms. 
	The dotted arrow is the desired homotopy, which can be checked using the relation 
	$$
	\tube=\DiscLdot\DiscR+\DiscL\DiscRdot -H\cdot \DiscL \DiscR
	$$
	If \(p\) lies on the top left and \(p'\) on the bottom right, the desired homotopy is the same. 
\end{proof}

\begin{proof}[Proof of Proposition~\ref{prop:connectivity:to_action}]
Suppose $T$ has connectivity $\Lo$ (\ref{prop:connectivity:to_action:case:i}). 
Given a diagram $\Diag_T$, place the basepoints $p$ and $p'$ on the top-right and top-left ends respectively. 
Then there is an even number of crossings separating them.
Thus, the Basepoint Moving Lemma implies 
$$D_p \cdot \id_{\KhTb{\Diag_T}} \simeq D_{p'} \cdot \id_{\KhTb{\Diag_T}}=0$$
where the second equality is due to the relation $\planedotstar =0$. 
After delooping $\KhTb{\Diag_T}$ and recasting it as a type D structure $\DD(T)^{\BNAlgH}$, the morphism $D_p \cdot \id_{\KhTb{\Diag_T}}$ becomes the morphism $D_{\circ} \cdot \id_{\DD(T)}$, which is therefore null-homotopic as well. 
Since $\DD_1(T)$ is simply the mapping cone of $H\cdot \id_{\DD(T)}$, also \(D_{\circ} \cdot \id_{\DD_1(T)} \simeq 0\). 
Finally, by the same argument as in the proof of Lemma~\ref{lem:H_is_nullhomotopic_on_Khr}, we deduce 
$D_{\circ} \cdot \id_{\DD^{\rKh}}\simeq0$. 

\ref{prop:connectivity:to_action:case:iii} is analogous, except the basepoint $p$ is placed on the bottom-left end of $\Diag_T$.

In~\ref{prop:connectivity:to_action:case:ii}, the basepoint $p$ is placed on the bottom-right end of $\Diag_T$, and there is an odd number of intersections between $p$ and $p'$. This means that 
$$(D_p-H) \cdot \id_{\KhTb{\Diag_T}} \simeq -D_{p'} \cdot \id_{\KhTb{\Diag_T}}=0$$
The morphism $(D_p-H) \cdot \id_{\KhTb{\Diag_T}}$ corresponds to $(D-H) \cdot \id_{\DD(T)}=S^2\cdot \id_{\DD(T)}$. 
\end{proof}


\begin{small}
	\pdfbookmark[section]{Acknowledgements}{Acknowledgements}
	\noindent\textbf{Acknowledgements.}
	The authors thank William Ballinger, Adeel Khan, Yank\i\ Lekili, and Lukas Lewark for helpful conversations. 
\end{small}

\nocite{khtpp}
\newcommand*{\arxiv}[1]{\href{http://arxiv.org/abs/#1}{ArXiv:\ #1}}
\newcommand*{\arxivPreprint}[1]{\href{http://arxiv.org/abs/#1}{ArXiv preprint #1}}
\bibliographystyle{alpha}
\bibliography{main}

\newcommand{\etalchar}[1]{$^{#1}$}
\begin{thebibliography}{KLM{\etalchar{+}}21}

\bibitem[AAE{\etalchar{+}}13]{AAEKO}
Mohammed Abouzaid, Denis Auroux, Alexander~I. Efimov, Ludmil Katzarkov, and
  Dmitri~O. Orlov.
\newblock Homological mirror symmetry for punctured spheres.
\newblock {\em J. Amer. Math. Soc.}, 26(4):1051--1083, 2013.
\newblock \arxiv{1103.4322v2}.

\bibitem[Bal20]{Ballinger}
William Ballinger.
\newblock {C}oncordance invariants from the {E}(-1) spectral sequence on
  {K}hovanov homology, 2020.
\newblock \arxivPreprint{2004.10807v1}.

\bibitem[BN]{BN_mutation}
Dror Bar-Natan.
\newblock
  \url{http://drorbn.net/?title=Mutation_Invariance_of_Khovanov_Homology}.
\newblock Accessed: 2021-04-29.

\bibitem[BN05]{BarNatanKhT}
Dror Bar-Natan.
\newblock Khovanov's homology for tangles and cobordisms.
\newblock {\em Geom. Topol.}, 9:1443--1499, 2005.
\newblock \arxiv{math/0410495v2}.

\bibitem[Boc16]{Bocklandt}
Ralf Bocklandt.
\newblock Noncommutative mirror symmetry for punctured surfaces.
\newblock {\em Trans. Amer. Math. Soc.}, 368(1):429--469, 2016.
\newblock With an appendix by Mohammed Abouzaid, \arxiv{1111.3392v2}.

\bibitem[Dri04]{Drinfeld}
Vladimir Drinfeld.
\newblock D{G} quotients of {DG} categories.
\newblock {\em J. Algebra}, 272(2):643--691, 2004.
\newblock \arxiv{math/0210114v7}.

\bibitem[HKK17]{HKK}
Fabian Haiden, Ludmil Katzarkov, and Maxim Kontsevich.
\newblock Flat surfaces and stability structures.
\newblock {\em Publ. Math. Inst. Hautes \'{E}tudes Sci.}, 126:247--318, 2017.

\bibitem[HN10]{KhDetectsTwoComponentUnlink}
Matthew Hedden and Yi~Ni.
\newblock Manifolds with small {H}eegaard {F}loer ranks.
\newblock {\em Geom. Topol.}, 14(3):1479--1501, 2010.
\newblock \arxiv{0906.4771v1}.

\bibitem[HRW16]{HRW}
Jonathan Hanselman, Jacob~A. Rasmussen, and Liam Watson.
\newblock Bordered {F}loer homology for manifolds with torus boundary via
  immersed curves, 2016.
\newblock \arxivPreprint{1604.03466v2}.

\bibitem[Kel06]{Keller}
Bernhard Keller.
\newblock On differential graded categories.
\newblock In {\em International {C}ongress of {M}athematicians. {V}ol. {II}},
  pages 151--190. Eur. Math. Soc., Z\"{u}rich, 2006.
\newblock \arxiv{math/0601185v5}.

\bibitem[Kho06]{Kh_frob}
Mikhail Khovanov.
\newblock Link homology and {F}robenius extensions.
\newblock {\em Fund. Math.}, 190:179--190, 2006.
\newblock \arxiv{math/0411447v2}.

\bibitem[KLM{\etalchar{+}}21]{KLMWZ}
Artem Kotelskiy, Tye Lidman, Allison Moore, Liam Watson, and Claudius
  Zibrowius.
\newblock {Conway spheres in Heegaard Floer and Khovanov homology}, 2021.
\newblock In preparation.

\bibitem[KM10]{KhDetectsUnknot}
Peter Kronheimer and Tomasz Mrowka.
\newblock Khovanov homology is an unknot-detector.
\newblock {\em Publications mathématiques de l'IHÉS}, 113, 05 2010.
\newblock \arxiv{1005.4346v1}.

\bibitem[KR07]{KR_convolutions}
Mikhail Khovanov and Lev Rozansky.
\newblock Virtual crossings, convolutions and a categorification of the {${\rm
  SO}(2N)$} {K}auffman polynomial.
\newblock {\em J. G\"{o}kova Geom. Topol. GGT}, 1:116--214, 2007.
\newblock \arxiv{math/0701333v1}.

\bibitem[KR08a]{KR_mf_I}
Mikhail Khovanov and Lev Rozansky.
\newblock Matrix factorizations and link homology.
\newblock {\em Fund. Math.}, 199(1):1--91, 2008.
\newblock \arxiv{math/0401268v2}.

\bibitem[KR08b]{KR_mf_II}
Mikhail Khovanov and Lev Rozansky.
\newblock Matrix factorizations and link homology. {II}.
\newblock {\em Geom. Topol.}, 12(3):1387--1425, 2008.
\newblock \arxiv{math/0505056v2}.

\bibitem[KWZ19]{KWZ}
Artem Kotelskiy, Liam Watson, and Claudius Zibrowius.
\newblock {I}mmersed curves in {K}hovanov homology, 2019.
\newblock \arxivPreprint{1910.14584v2}.

\bibitem[KWZ21a]{tangle-atlas}
Artem Kotelskiy, Liam Watson, and Claudius Zibrowius.
\newblock Database of {K}hovanov tangle invariants that are used in this paper
  and that were computed with {\texttt{kht++}} \cite{khtpp}.
\newblock URL:
  \url{https://cbz20.raspberryip.com/code/khtpp/examples/ThinLinksAndConwaySpheres.html},
  2021.

\bibitem[KWZ21b]{KWZ-strong}
Artem Kotelskiy, Liam Watson, and Claudius Zibrowius.
\newblock {K}hovanov homology and strong inversions, 2021.
\newblock \arxivPreprint{2104.13592v1}.

\bibitem[KWZ21c]{KWZ-thin}
Artem Kotelskiy, Liam Watson, and Claudius Zibrowius.
\newblock {T}hin links and {C}onway spheres, 2021.
\newblock \arxivPreprint{2105.06308}.

\bibitem[LMZ20]{LMZ}
Tye Lidman, Allison~H. Moore, and Claudius Zibrowius.
\newblock {L}-space knots have no essential {C}onway spheres, 2020.
\newblock \arxivPreprint{2006.03521v2}, accepted for publication in Geom.
  Topol.

\bibitem[LO10]{Lunts_Orlov}
Valery~A. Lunts and Dmitri~O. Orlov.
\newblock Uniqueness of enhancement for triangulated categories.
\newblock {\em J. Amer. Math. Soc.}, 23(3):853--908, 2010.
\newblock \arxiv{0908.4187v5}.

\bibitem[LOT15]{LOTBimodules}
Robert Lipshitz, Peter~S. Ozsv\'{a}th, and Dylan~P. Thurston.
\newblock Bimodules in bordered {H}eegaard {F}loer homology.
\newblock {\em Geom. Topol.}, 19(2):525--724, 2015.
\newblock \arxiv{1003.0598v4}.

\bibitem[LP20]{LekPol}
Yank\i\ Lekili and Alexander Polishchuk.
\newblock Homological mirror symmetry for higher-dimensional pairs of pants.
\newblock {\em Compos. Math.}, 156(7):1310--1347, 2020.
\newblock \arxiv{1811.04264v6}.

\bibitem[Nad16]{Nadler}
David Nadler.
\newblock Wrapped microlocal sheaves on pairs of pants, 2016.
\newblock \arxiv{1604.00114v1}.

\bibitem[Orl04]{Orlov}
Dmitri~O. Orlov.
\newblock Triangulated categories of singularities and {D}-branes in
  {L}andau-{G}inzburg models.
\newblock {\em Tr. Mat. Inst. Steklova}, 246(Algebr. Geom. Metody, Svyazi i
  Prilozh.):240--262, 2004.
\newblock \arxiv{math/0302304v2}.

\bibitem[OS04]{OSHFK}
Peter~S. Ozsv\'{a}th and Zolt\'{a}n Szab\'{o}.
\newblock Holomorphic disks and knot invariants.
\newblock {\em Adv. Math.}, 186(1):58--116, 2004.
\newblock \arxiv{math/0209056v4}.

\bibitem[Ras03]{Jake}
Jacob A.~Andrew Rasmussen.
\newblock {\em {F}loer homology and knot complements}.
\newblock ProQuest LLC, Ann Arbor, MI, 2003.
\newblock Thesis (Ph.D.)--Harvard University. \arxiv{math/0306378v1}.

\bibitem[Ras10]{JakeSInvariant}
Jacob~A. Rasmussen.
\newblock Khovanov homology and the slice genus.
\newblock {\em Invent. Math.}, 182(2):419--447, 2010.
\newblock \arxiv{math/0402131v1}.

\bibitem[Ras15]{Some_diff}
Jacob~A. Rasmussen.
\newblock Some differentials on {K}hovanov-{R}ozansky homology.
\newblock {\em Geom. Topol.}, 19(6):3031--3104, 2015.
\newblock \arxiv{math/0607544v2}.

\bibitem[Shu14]{Shum_torsion}
Alexander~N. Shumakovitch.
\newblock Torsion of {K}hovanov homology.
\newblock {\em Fund. Math.}, 225(1):343--364, 2014.
\newblock \arxiv{math/0405474v2}.

\bibitem[Wan20]{Wang}
Joshua Wang.
\newblock The cosmetic crossing conjecture for split links, 2020.
\newblock To appear in \textit{Geom. Topol.} \arxivPreprint{2006.01070v1}.

\bibitem[Wat17]{Watson2017}
Liam Watson.
\newblock Khovanov homology and the symmetry group of a knot.
\newblock {\em Adv. Math.}, 313:915--946, 2017.
\newblock \arxiv{1311.1085v4}.

\bibitem[Zib19]{pqSym}
Claudius Zibrowius.
\newblock {O}n symmetries of peculiar modules; or, {\(\delta\)}-graded link
  {F}loer homology is mutation invariant, 2019.
\newblock \arxivPreprint{1909.04267v2}, accepted for publication in J. Eur.
  Math. Soc.

\bibitem[Zib20]{pqMod}
Claudius Zibrowius.
\newblock Peculiar modules for 4-ended tangles.
\newblock {\em J. Topol.}, 13(1):77--158, 2020.
\newblock \arxiv{1712.05050v3}.

\bibitem[Zib21]{khtpp}
Claudius Zibrowius.
\newblock \texttt{kht++}, a program for computing {K}hovanov invariants for
  links and tangles.
\newblock URL: \url{https://cbz20.raspberryip.com/code/khtpp/docs/}, 2021.

\end{thebibliography}
\end{document}